\documentclass[11pt]{amsart} \usepackage{latexsym, amssymb}
\usepackage{amssymb,latexsym,amsmath,amsthm,amscd}

\newtheorem{thm}{Theorem}[section]
\newtheorem{lemma}[thm]{Lemma}

\newtheorem{cor}[thm]{Corollary}

\theoremstyle{definition}
\newtheorem{df}[thm]{Definition}
\newtheorem{nrmk}[thm]{Remark}
\newtheorem{ex}[thm]{Example}

\theoremstyle{remark}
\newtheorem*{rmk}{Remark}



\renewcommand{\r}{\mathbb{R}}
\newcommand{\n}{\mathbb{N}}
\newcommand{\z}{\mathbb{Z}}
\newcommand{\x}{\mathbb{X}}
\newcommand{\y}{\mathbb{Y}}
\renewcommand{\O}{\mathbb{O}}
\renewcommand{\k}{\mathcal{K}}
\renewcommand{\u}{\mathcal{U}}
\newcommand{\w}{\mathcal{W}}
\renewcommand{\o}{\Omega}
\renewcommand{\t}{\Lambda}
\newcommand{\muu}{\boldsymbol{\mu}}
\def \a{\operatorname{Ad}}
\def \st{\operatorname{st}}
\def \Ns{\operatorname{NSS}}
\def \Nsc{\operatorname{NSCS}}
\def \ord{\operatorname{ord}}
\def \ns{\operatorname{ns}}
\def \supp {\operatorname{supp}}
\def \bigO{\operatorname{O}}
\def \lilo{\operatorname{o}}
\def \Aut{\operatorname{Aut}}
\def \dom{\operatorname{domain}}
\def \im{\operatorname{image}}
\def \dim{\operatorname{dim}}
\def \interior{\operatorname{interior}}
\def \GL{\operatorname{GL}}
\def \id{\operatorname{id}}
\def \C{\operatorname{C}}

\begin{document}

\title{Hilbert's Fifth Problem for Local Groups}
\author{Isaac Goldbring}
\address {University of Illinois, Department of Mathematics, 1409 W. 
Green street, Urbana, IL 61801}

\email{igoldbr2@math.uiuc.edu}
\urladdr{www.math.uiuc.edu/~igoldbr2}

\begin{abstract}
We solve Hilbert's fifth problem for local groups:  every locally euclidean local group is locally isomorphic to a Lie group.  Jacoby claimed a proof of this in 1957, but this proof is seriously flawed.  We use methods from nonstandard analysis and model our solution after a treatment of Hilbert's fifth problem for global groups by Hirschfeld.
\end{abstract}
\date{}
\maketitle

\section{Introduction}

\noindent The most common version of Hilbert's fifth problem asks whether every locally euclidean topological group is a Lie group.  More precisely, if $G$ is a locally euclidean topological group, does there always exist a $C^\omega$ structure on $G$ such that the group operations become $C^\omega$?  In 1952, Gleason, Montgomery, and Zippin answered this question in the affirmative; see ~\cite{G} and \cite{MZ1}.  (``$C^\omega$" means ``real analytic," and by a $C^\omega$ structure on a topological space $X$ we mean a real analytic manifold, everywhere of the same finite dimension, whose underlying topological space is $X$.)  

\

However, Lie groups, as first conceived by Sophus Lie, were not actually groups but rather ``local groups" where one could only multiply elements that were sufficiently near the identity.  (A precise definition of ``local group" is given in the next section.)   It is thus natural to pose a local version of Hilbert's fifth problem (henceforth referred to as the Local H5):  Is every locally euclidean local group locally isomorphic to a Lie group?  In ~\cite{J}, Jacoby claims to have solved the Local H5 affirmatively.  However, as pointed out by Plaut in ~\cite{P}, Jacoby fails to recognize a subtlety in the way the associative law holds in local groups, as we will now explain.

\

In our local groups, if the products $xy, yz, x(yz),$ and $(xy)z$ are all defined, then $x(yz)=(xy)z$.  This condition is called ``local associativity" by Olver in \cite{O}.  A much stronger condition for a local group to satisfy is ``global associativity" in which, given any finite sequence of elements from the local group and two different ways of introducing parentheses in the sequence, if both products thus formed exist, then these two products are in fact equal.  According to \cite{O}, ``Jacoby's Theorem 8, which is stated without proof, essentially makes the false claim that local associativity implies global associativity."   Contrary to this ``Theorem 8", ~\cite{O} has examples of connected local Lie groups that are not globally associative.  Since, according to \cite{O}, ``the rest of Jacoby's paper relies heavily on his incorrect Theorem 8", this error invalidates his proof.  Olver also indicates that one could probably rework Jacoby's paper to extract the truth of the Local H5.  He suggests that this would be a worthwhile endeavor, as, for example, the solution of Hilbert's fifth problem for cancellative semigroups on manifolds heavily relies on the truth of the Local H5; see ~\cite{B} and ~\cite{Ho}.

\

Rather than reworking Jacoby's paper, we have decided to mimic the nonstandard treatment of Hilbert's fifth problem given by Hirschfeld \cite{H}.  Whereas nonstandard methods simplified the solution of Hilbert's fifth problem, such methods are even more natural in dealing with the Local H5.  The nonstandard proof of Hilbert's fifth problem involves ``infinitesimals"--elements in the nonstandard extension of the group infinitely close to the identity; therefore, much of \cite{H} goes through in the local setting, since the infinitesimals form an actual group.  These infinitesimals are used to generate local one-parameter subgroups, as in \cite{S} and \cite{H}.

\

While our solution of the Local H5 is along the lines of the proof given by Hirschfeld, many of the arguments have grown substantially in length due to the care needed in working with local groups.  We assume familiarity with elementary nonstandard analysis; otherwise, consult ~\cite{D} or ~\cite{He}.  We should also mention that McGaffey \cite{ Mc} has a completely different
nonstandard approach to Hilbert's fifth problem for local groups, using the connection
between local Lie groups and finite-dimensional real Lie algebras to reduce to a
problem of approximating locally euclidean local groups in a
certain metric by local Lie groups.  This approach has not yet led to a solution.

\

I would like to thank Lou van den Dries for many helpful suggestions. 

\

\section{Preliminaries}

\

\noindent  In this section, we define the notion of local group and state precisely the Local H5 (in two forms).  We also introduce the fundamental objects in the entire story, the local 1-parameter subgroups of $G$.  After defining the local versions of familiar constructions in group theory, e.g. morphisms between local groups, sublocal groups, and local quotient groups, we prove some elementary properties of local groups that are used heavily throughout the paper.  We end this section by fixing some notation concerning the nonstandard setting.

\

\noindent Throughout, $m$ and $n$ range over $\n:=\{0,1,2,\ldots\}$.

\newpage

\noindent \textbf{Statement of the Local H5}

\

\begin{df} A \textbf{local group} is a tuple $(G,1,\iota,p)$ where $G$ is a hausdorff topological space with a distinguished element $1\in G$,  and $\iota:\t \rightarrow G$ (the inversion map) and $p:\o\rightarrow G$ (the product map) are continuous functions with open $\t \subseteq G$ and open $\o\subseteq G\times G$, such that $1\in \t$, $\{1\}\times G \subseteq \o$, $G \times \{1\} \subseteq \o$, and for all $x,y,z\in G$:

\begin{enumerate}
\item $p(1,x)=p(x,1)=x$;
\item if $x\in \t$, then $(x,\iota(x))\in \o$, $(\iota(x),x)\in \o$ and $$p(x,\iota(x))=p(\iota(x),x)=1;$$
\item if $(x,y),(y,z)\in \o$ and $(p(x,y),z),(x,p(y,z))\in \o$, then 
\begin{equation}
p(p(x,y),z)=p(x,p(y,z)).\tag{A}
\end{equation}
\end{enumerate}
\end{df}

\

\noindent  In the rest of this paper $(G,1,\iota,p)$ is a local group; for simplicity we denote it just by $G$. For the sake of readability, we write $x^{-1}$ instead of $\iota(x)$, and $xy$ or $x\cdot y$ instead of $p(x,y)$.  Note that $\iota(1)=1$ and if $x,y\in \t$, $(x,y)\in \o$, and $xy=1$, then $y=x^{-1}$ and $x=y^{-1}$.

\

\begin{df}

\

\begin{enumerate}
\item Let $U$ be an open neighborhood of $1$ in $G$.  Then the \textbf{restriction of $G$ to $U$} is the local group $G|U:=(U,1,\iota|\t_U,p|\o_U)$, where $$\t_U:=\t \cap U \cap \iota^{-1}(U) \text{ and }\o_U:=\o \cap (U\times U)\cap p^{-1}(U).$$
\item $G$ is \textbf{locally euclidean} if there is an open neighborhood of $1$ homeomorphic to an open subset of $\r^n$ for some $n$.
\item $G$ is a \textbf{local Lie group} if $G$ admits a $C^\omega$ structure such that the maps $\iota$ and $p$ are $C^\omega$.
\item $G$ is \textbf{globalizable} if there is a topological group $H$ and an open neighborhood $U$ of $1_H$ in $H$ such that $G=H|U$.
\end{enumerate}
\end{df}

\

\noindent By a \textit{restriction of $G$} we mean a local group $G|U$ where $U$ is an open neighborhood of $1$ in $G$.

\

\noindent \textbf{Local H5-First Form}:  If $G$ is a locally euclidean local group, then some restriction of $G$ is a local Lie group.

\

\noindent \textbf{Local H5-Second Form}:  If $G$ is a locally euclidean local group, then some restriction of $G$ is globalizable.

\

\noindent The equivalence of the two forms of the Local H5 is seen as follows.  It is known that if $G$ is a local Lie group, then some restriction of $G$ is equal to some restriction of a Lie group.  Thus the first form implies the second form.  Conversely, by the Montgomery-Zippin-Gleason solution to the original H5, if $G|U$ is globalizable, the global group containing $G|U$ will be locally euclidean and there will be a $C^\omega$ structure on it making it a Lie group.  Then $G|U$ will be a local Lie group.

\

\begin{df}
A \textbf{subgroup} of $G$ is a subset $H\subseteq G$ such that $1\in H$, $H\subseteq \t$, $H\times H\subseteq \o$ and for all $x,y\in H$, $x^{-1}\in H$ and $xy\in H$.
\end{df}

\

\noindent Note that then $H$ is an actual group with product map $p|H\times H$.  We will say $G$ has \textbf{no small subgroups}, abbreviated $G$ is $\Ns$, if there is a neighborhood $\mathcal{U}$ of $1$ such that there are no subgroups $H$ of $G$ with $H\not= \{1\}$ and $H\subseteq \mathcal{U}.$  In analogy with the solution to Hilbert's fifth problem in the global case, we will first show that every locally compact NSS local group has a restriction which is a local Lie group and then we will prove that every locally euclidean local group is NSS.

\bigskip

\noindent \textbf{The Sets $\u_n$}

\

\noindent As noted in the introduction, one obstacle in the local theory is the potential lack of generalized associativity.  The next definition captures the informal idea of being able to unambiguously multiply a finite sequence of elements.  

\

\begin{df}
Let $a_1,\ldots,a_n,b\in G$ with $n\geq 1$.  We define the notion $(a_1,\ldots,a_n)$ \textbf{represents} $b$, denoted $(a_1,\ldots,a_n)\rightarrow b$, by induction on $n$ as follows:

\begin{itemize}
\item $(a_1)\rightarrow b$ iff $a_1=b$;
\item $(a_1,\ldots,a_{n+1})\rightarrow b$ iff for every $i\in \{1,\ldots,n\}$, there exists $b_i',b_i''\in G$ such that $(a_1,\ldots,a_i)\rightarrow b_i'$, $(a_{i+1},\ldots,a_{n+1})\rightarrow b_i''$, $(b_i',b_i'')\in \o$ and $b_i'\cdot b_i''=b$.
\end{itemize}
By convention, we say that $(a_1\ldots a_n)$ represents $1$ when $n=0$.
\end{df}

\noindent We will say $a_1\cdots a_n$ is \textbf{defined} if there is $b\in G$ such that $(a_1,\ldots,a_n)$ represents $b$; in this case we will write $a_1\cdots a_n$ for this (necessarily unique) $b$.  If $a_1\cdots a_n$ is defined, then, informally speaking, all possible $n$-fold products are defined and equal.  If $a_1\cdots a_n$ is defined and $a_i=a$ for all $i\in \{1,\ldots,n\}$, we say that $a^n$ is defined and denote the unique product by $a^n$.  In particular, $a^0$ is always defined and equals $1$.  

\

\noindent From now on, put $A^{\times n}:=\underbrace{A\times \cdots \times A}_{n \text{ times}}$.  Also, call $W\subseteq G$ \textbf{symmetric} if $W\subseteq \t$ and $W=\iota(W)$.  Note that $U\cap \iota^{-1}(U)$ is a symmetric open neighborhood of $1$ in $G$ for every open neighborhood $U$ of $1$ contained in $\t$. 

\

\noindent We now prove a lemma which says that we can multiply any number of elements unambiguously provided that the elements are sufficiently close to the identity.  

\

\begin{lemma}
There are open symmetric neighborhoods $\u_n$ of $1$ for $n>0$ such that $\u_{n+1}\subseteq \u_n$ and for all $(a_1,\ldots,a_n)\in \u_n^{\times n}$, $a_1\cdots a_n$ is defined.
\end{lemma}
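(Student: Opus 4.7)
Proceed by induction on $n$. For $n=1$, let $\u_1$ be any symmetric open neighborhood of $1$ contained in $\t$; since $(a_1)\to a_1$ holds tautologically, the conclusion is immediate.

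For the inductive step, assume $\u_1\supseteq\cdots\supseteq\u_n$ have been constructed with the required property. By the induction hypothesis the iterated-product map $q_k\colon\u_n^{\times k}\to G$, $(a_1,\dots,a_k)\mapsto a_1\cdots a_k$, is well defined for each $1\le k\le n$. It is also continuous (being computable, say, left-to-right as an iterated composition of the continuous map $p$) and satisfies $q_k(1,\dots,1)=1$. Openness of $\o$ together with continuity of each pair $(q_i,q_{n+1-i})$ at $(1,\dots,1)$ then lets us choose a symmetric open neighborhood $\u_{n+1}\subseteq\u_n$ of $1$ small enough that, for every $(a_1,\dots,a_{n+1})\in\u_{n+1}^{\times(n+1)}$ and every $i\in\{1,\dots,n\}$,
$$\bigl(q_i(a_1,\dots,a_i),\;q_{n+1-i}(a_{i+1},\dots,a_{n+1})\bigr)\in\o.$$

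Fix such an $(a_1,\dots,a_{n+1})$ and set $b_i':=a_1\cdots a_i$, $b_i'':=a_{i+1}\cdots a_{n+1}$. It suffices to show $b_1'b_1''=b_2'b_2''=\cdots=b_n'b_n''$, since their common value $b$ will then witness $(a_1,\dots,a_{n+1})\to b$. For the step $b_i'b_i''=b_{i+1}'b_{i+1}''$ I apply local associativity (A) to $x:=b_i'$, $y:=a_{i+1}$, $z:=b_{i+1}''$. Unpacking the definition of ``$a_1\cdots a_{i+1}$ is defined'' at the split point $i$ yields $(x,y)\in\o$ and $xy=b_{i+1}'$; unpacking ``$a_{i+1}\cdots a_{n+1}$ is defined'' at the split point $1$ yields $(y,z)\in\o$ and $yz=b_i''$. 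The remaining two $\o$-conditions of (A), namely $(xy,z)=(b_{i+1}',b_{i+1}'')\in\o$ and $(x,yz)=(b_i',b_i'')\in\o$, are exactly what was arranged in the previous paragraph. Hence (A) gives $x(yz)=(xy)z$, i.e.\ $b_i'b_i''=b_{i+1}'b_{i+1}''$.

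The only real subtlety is disentangling which $\o$-memberships come for free and which must be secured by shrinking $\u_{n+1}$: the two ``inner'' pairs $(x,y),(y,z)$ are absorbed into the inductive definition of the relation $\to$ via the two particular splits indicated above, so the continuity-plus-openness shrinking step only needs to control the two ``outer'' pairs $(b_i',b_i''),(b_{i+1}',b_{i+1}'')$. Once this bookkeeping is in place, the induction runs without further difficulty.
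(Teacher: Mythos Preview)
Your proof is correct and follows essentially the same inductive strategy as the paper. The only cosmetic difference is in how $\u_{n+1}$ is shrunk: the paper arranges that the $n$-fold product map $\varphi_n$ sends $\u_{n+1}^{\times n}$ into $\u_2$ (and then uses $\u_2\times\u_2\subseteq\o$ to get the required $\o$-memberships), whereas you directly arrange each split-pair $(q_i,q_{n+1-i})$ to land in $\o$; both accomplish the same thing, and your explicit verification of the four hypotheses of (A) spells out what the paper summarizes as ``using (A) several times.''
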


\begin{proof}
We let $\u_1$ be any open symmetric neighborhood of $1$ and choose $\u_2$ to be a symmetric open neighborhood of $1$ such that $\u_2 \subseteq \u_1$ and $\u_2 \times \u_2\subseteq \o$.  

\

\noindent Assume inductively that $n\geq 2$ and that for $m=1,\ldots,n$, 
\begin{itemize}
\item $\u_m$ is a symmetric open neighborhood of $1$;
\item $\u_{m+1}\subseteq \u_m$ if $m<n$;
\item for all $(a_1,\ldots,a_m)\in \u_m^{\times m}$, $a_1\cdots a_m$ is defined;
\item the map $\varphi_m:\u_m^{\times m}\rightarrow G$ defined by $\varphi_m(a_1,\ldots,a_m)=a_1\cdots a_m$ is continuous.
\end{itemize} 

\noindent Let $\u_{n+1}$ be a symmetric open neighborhood of $1$ such that $\u_{n+1}\subseteq \u_n$ and $\u_{n+1}^{\times n}\subseteq \varphi_n^{-1}(\u_2)$.  We show that this choice of $\u_{n+1}$ works.  Let $(a_1,\ldots,a_{n+1})\in \u_{n+1}^{\times (n+1)}$.  We must show that $(a_1,\ldots,a_{n+1})$ represents $a_1\cdot(a_2\cdots a_{n+1})$.  However, this is immediate by the inductive assumptions, using (A) several times.
\end{proof}

\

\noindent From now on, the sets $\u_n$ will be as in the previous lemma.  Also, if $A\subseteq \u_n$, put $A^n:=\{a_1\cdots a_n \ | \ (a_1,\ldots,a_n)\in A^{\times n}\}$.

\

\noindent \textbf{Local 1-parameter subgroups}

\

\begin{df}
A \textbf{local $1$-parameter subgroup} of $G$, henceforth abbreviated \textbf{local $1$-ps} of $G$, is a continuous map $X:(-r,r)\rightarrow G$, for some $r\in (0,\infty]$, such that 
\begin{enumerate}
\item $\im(X)\subseteq \t$, and 
\item if $r_1,r_2,r_1+r_2\in (-r,r)$, then $(X(r_1),X(r_2))\in \o$ and $$X(r_1+r_2)=X(r_1)\cdot X(r_2).$$
\end{enumerate}
\end{df}

\

\noindent Suppose $X:(-r,r)\rightarrow G$ is a local 1-ps of $G$ and $s\in (-r,r)$.  It is easy to verify, by induction on $n$, that if $ns\in (-r,r)$, then $X(s)^n$ is defined and $X(ns)=X(s)^n$.

\

\noindent We now define the local analog of the space which played a key role in the solution of Hilbert's fifth problem in the global setting.  

\

\begin{df}
Let $X,Y$ be local 1-parameter subgroups of $G$.  We say that $X$ is \textbf{equivalent to} $Y$ if there is $r\in \r^{>0}$ such that $$r\in \dom(X)\cap \dom(Y) \text{ and }X|(-r,r)=Y|(-r,r).$$  We let $[X]$ denote the equivalence class of $X$ with respect to this equivalence relation.  We also let $L(G):=\{[X] \ | \ X \text{ is a local }1\text{-ps of } G\}$.  
\end{df}

\

\noindent In other words, $L(G)$ is the set of germs at $0$ of local 1-parameter subgroups of $G$.  We often write $\x$ for an element of $L(G)$ and write $X\in \x$ to indicate that the local $1$-ps $X$ is a representative of the class $\x$.  We have a \textbf{scalar multiplication map} $(s,\x)\mapsto s\cdot \x:\r \times L(G)\rightarrow L(G)$, where $s \cdot \x$ is defined as follows.  Let $X\in \x$ with $X:(-r,r)\rightarrow G$.  If $s=0$, then $0\cdot \x=\O$, where $O:\r \rightarrow G$ is defined by $O(t)=1$ for all $t\in \r$ and $\O=[O]$.  Else, define $s X:(\frac{-r}{|s |},\frac{r}{|s|})\rightarrow G$ by $(s X)(t)=X(s t)$.  Then $sX$ is a local 1-ps of $G$, and we set $s \cdot \x=[s X]$.  It is easy to verify that for all $\x\in L(G)$ and $s,s' \in \r$, $1\cdot \x=\x$ and $s \cdot (s' \cdot \x)=(ss')\cdot \x$.

\

\noindent Suppose $X_1$ and $X_2$ are local $1$-parameter subgroups of $G$ with $[X_1]=[X_2]$.  Suppose also that $t\in \dom(X_1)\cap \dom(X_2)$.  Then $X_1(t)=X_2(t)$.  To see this, let $r\in \r^{>0}$ be such that $X_1|(-r,r)=X_2|(-r,r)$ and choose $n>0$ such that $\frac{t}{n}\in (-r,r)$.  Then $X_1(t)=(X_1(\frac{t}{n}))^n=(X_2(\frac{t}{n}))^n=X_2(t)$.  Hence, it makes sense to define, for $\x\in L(G)$, 
$$\dom(\x):=\bigcup_{X\in \x}\dom(X)$$ and for $t\in \dom(\x)$, we will write $\x(t)$ to denote $X(t)$ for any $X\in \x$ with $t\in \dom(X)$.  

\

\noindent \textbf{The Category of Local Groups}

\

\noindent In this subsection, we define the analogs of a few ordinary group theoretic notions in the local group setting.  We will not need most of this material until Section 8.  The presentation given here borrows from ~\cite{MZ} and ~\cite{P}.

\

\begin{df}
Suppose $G=(G,1,\iota, p)$ and $ G'=(G',1',\iota',p')$ are local groups with $\dom(\iota)=\Lambda$, $\dom(p)=\o$, $\dom(\iota')=\Lambda'$ and $\dom(p')=\o'$.  A \textbf{morphism} from $G$ to $G'$ is a continuous function $f:G\rightarrow G'$ such that:
\begin{enumerate}
\item $f(1)=1'$, $f(\Lambda)\subseteq \Lambda'$ and $(f\times f)(\o)\subseteq \o'$,
\item $f(\iota(x))=\iota'(f(x))$ for $x\in \Lambda$, and
\item $f(p(x,y))=p'(f(x),f(y))$ for $(x,y)\in \o$.
\end{enumerate}
\end{df}

\

\noindent Suppose $U_1$ and $U_2$ are open neighborhoods of $1$ in $G$ and suppose that $f_1:G|U_1\rightarrow G'$ and $f_2:G|U_2\rightarrow G'$ are both morphisms.  We say that $f_1$ and $f_2$ are \textbf{equivalent} if there is an open neighborhood $U_3$ of $1$ in $G$ such that $U_3\subseteq U_1\cap U_2$ and $f_1|U_3=f_2|U_3$.  This defines an equivalence relation on the set of morphisms $f:G|U \rightarrow G'$, where $U$ ranges over all open neighborhoods of $1$ in $G$.  We will call the equivalence classes \textbf{local morphisms} from $G$ to $G'$.  To indicate that $f$ is a local morphism from $G$ to $G'$, we write ``$f:G\rightarrow G'$ is a local morphism," even though $f$ is not a function from $G$ to $G'$, but rather an equivalence class of certain partial functions.

\

\noindent Suppose $G,G',G''$ are local groups and $f:G \rightarrow G'$ and $g:G' \rightarrow G''$ are local morphisms.  Choose a representative of $f$ whose image lies in the domain of a representative of $g$.  Then these representatives can be composed and the composition is a morphism.  We denote by $g\circ f$ the equivalence class of this composition.  The equivalence class of the identity function on $G$ is a local morphism and will be denoted by $\id_G$.  We thus have the category \textbf{LocGrp} whose objects are the local groups and whose arrows are the local morphisms with the composition defined above.  A \textbf{local isomorphism} is an isomorphism in the category \textbf{LocGrp}.  Hence, if $f:G \rightarrow G'$ is a local isomorphism, then there is a representative $f:G|U \rightarrow G'|U'$, where $U$ and $U'$ are open neighborhoods of $1$ and $1'$ in $G$ and $G'$ respectively, $f:U\rightarrow U'$ is a homeomorphism, and $f:G|U\rightarrow G'|U'$ and $f^{-1}:G'|U'\rightarrow G|U$ are morphisms.  If there exists a local isomorphism between the local groups $G$ and $G'$, then we say that $G$ and $G'$ are \textbf{locally isomorphic}.  

\

\begin{ex}
One can consider the additive group $\r$ as a local group and $(-r,r)$ for $r\in (0,\infty]$ as a restriction of this local group, so that a local 1-ps $(-r,r)\rightarrow G$ is a morphism of local groups.  In this way, the elements of $L(G)$ are local morphisms from $\r$ to $G$.  Moreover, given any local morphism $f:G \rightarrow G'$, one gets a map of sets $L(f):L(G)\rightarrow L(G')$ given by $L(f)(\x)=f\circ \x$.  It can easily be checked that this yields a functor $L:\textbf{LocGrp}\rightarrow \textbf{Sets}$.
\end{ex}

\

\begin{df} 
A \textbf{sublocal group} of $G$ is a set $H\subseteq G$ containing $1$ for which there exists an open neighborhood $V$ of $1$ in $G$ such that
\begin{enumerate}
\item $H\subseteq V$ and $H$ is closed in $V$;
\item if $x\in H\cap \t$ and $x^{-1}\in V$, then $x^{-1}\in H$;
\item if $(x,y)\in (H\times H)\cap \o$ and $xy\in V$, then $xy\in H$.
\end{enumerate}
With $H$ and $V$ as above, we call $H$ a sublocal group of $G$ with \textbf{associated neighborhood} $V$.  A \textbf{normal sublocal group} of $G$ is a sublocal group $H$ of $G$ with an associated neighborhood $V$ such that $V$ is symmetric and 
\begin{enumerate}
\item[(4)] if $y\in V$ and $x\in H$ are such that $yxy^{-1}$ is defined and $yxy^{-1}\in V$, then $yxy^{-1}\in H$.
\end{enumerate}
For such $H$ and $V$, we also say that $H$ is a normal sublocal group of $G$ with associated \textbf{normalizing} neighborhood $V$.
\end{df}

\

\noindent One checks easily that if $H$ is a sublocal group of $G$ with associated
neighborhood $V$, then
$$(H,1,\iota|H\cap \Lambda \cap \iota^{-1}(V), p|(H\times H) \cap \Omega \cap
p^{-1}(V))$$ is a local group, to be denoted by $H$ for simplicity, and that
then the inclusion $H \hookrightarrow G$ is a morphism.  Sublocal groups $H$ and $H'$ of $G$ are \textbf{equivalent} if there is an open neighborhood $U$ of $1$ in $G$ such that $H\cap U=H'\cap U$.

\

\

\begin{ex}
Suppose $f:G\rightarrow G'$ is a morphism and $\t=G$.  Then $\ker(f):=f^{-1}(\{1\})$ is a normal sublocal group of $G$ with associated normalizing neighborhood $G$.
\end{ex}

\

\noindent The following lemma has a routine verification.

\

\begin{lemma}
Suppose $H$ is a normal sublocal group of $G$ with associated normalizing neighborhood $V$.  Suppose $U\subseteq H$ is open in $H$ and symmetric.  Let $U'\subseteq V$ be a symmetric open neighborhood of $1$ in $G$ such that $U=H\cap U'$. Then $H|U$ is a normal sublocal group of $G$ with associated normalizing neighborhood $U'$. 
\end{lemma}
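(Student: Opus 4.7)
The plan is to verify directly the four defining clauses of a normal sublocal group for the set $U$ (the underlying set of $H|U$) with proposed normalizing neighborhood $U'$. The key observation is that the hypothesis gives the set-theoretic identity $U = H \cap U'$ together with the containment $U' \subseteq V$; these two facts will let me derive each axiom for $H|U$ from the corresponding axiom for $H$ by intersecting with $U'$. The proof is therefore bookkeeping rather than a genuinely new argument.

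First I would handle clause (1): that $1 \in U \subseteq U'$ is immediate since $U$ is a neighborhood of $1$ in $H$, and closedness of $U$ in $U'$ follows from closedness of $H$ in $V$ together with $U' \subseteq V$, using $U = H \cap U'$. For clauses (2) and (3) I would pick an element (or pair of elements) of $U$, observe that it lies in $H$ because $U \subseteq H$, invoke the corresponding clause for $H$ (which applies because the resulting inverse or product lies in $U' \subseteq V$), and then conclude that the inverse or product lies in $H \cap U' = U$.

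For the normalization clause (4), symmetry of $U'$ is supplied by hypothesis. Given $y \in U' \subseteq V$ and $x \in U \subseteq H$ with $yxy^{-1}$ defined and in $U'$, normality of $H$ with respect to $V$ yields $yxy^{-1} \in H$, and hence $yxy^{-1} \in H \cap U' = U$. No step presents a real obstacle; the only thing to keep straight is the identity $U = H \cap U'$ and the inclusion $U' \subseteq V$, which together translate every property of $H$ into the corresponding property of $H|U$. This is presumably why the author calls the verification routine.
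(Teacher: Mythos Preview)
Your verification is correct and is exactly the routine check the paper has in mind; the paper itself omits the proof entirely, merely stating that the lemma ``has a routine verification.'' One small cosmetic point: the hypothesis only says $U$ is open in $H$, not that it is a neighborhood of $1$, so $1\in U$ should be deduced from $1\in H$ and $1\in U'$ via $U=H\cap U'$ rather than asserted directly---but this does not affect the argument.
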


\

\noindent We now investigate quotients in this category.  For the rest of this subsection, assume that $H$ is a normal sublocal group of $G$ with associated normalizing neighborhood $V$.

\

\begin{lemma}\label{L:localcoset}
Let $W$ be a symmetric open neighborhood of $1$ in $G$ such that $W\subseteq \u_{6}$ and $W^{6}\subseteq V$.  Then
\begin{enumerate}
\item The binary relation $E_H$ on $W$ defined by $$E_H(x,y)\text{ if and only if }x^{-1}y\in H$$ is an equivalence relation on $W$.
\item For $x\in W$, let $xH:=\{xh \ | \ h\in H \text{ and }(x,h)\in \o\}$.  Then for $x,y\in W$, $E_H(x,y)$ holds if and only if $(xH)\cap W = (yH)\cap W$.  In other words, if $E_H(x)$ denotes the equivalence class of $x$, then $E_H(x)=(xH)\cap W$.  We call the equivalence classes \textbf{local cosets} of $H$.
\end{enumerate}
\end{lemma}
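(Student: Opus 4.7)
The plan is to verify the three equivalence-relation axioms for part (1) directly from the sublocal group definition, and to deduce the coset description in part (2) by standard local-group manipulations. The bounds $W \subseteq \u_6$ and $W^6 \subseteq V$ are tuned precisely to keep every intermediate product defined and inside $V$, so that conditions (2) and (3) of the sublocal group definition can be applied whenever needed.

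For (1), reflexivity is immediate from $x^{-1}x = 1 \in H$. For symmetry, $x^{-1}y \in W^2 \subseteq V$; since $V$ is symmetric (hence contained in $\t$), the inverse $(x^{-1}y)^{-1}$ exists and lies in $V$, and condition (2) of the sublocal group axioms puts it in $H$. A short use of axiom (A) identifies this element with $y^{-1}x$. For transitivity, the $4$-fold product $x^{-1}\cdot y\cdot y^{-1}\cdot z$ is defined because all factors lie in $W \subseteq \u_4$; iterating (A) yields $(x^{-1}y)(y^{-1}z) = x^{-1}z$. Both factors on the left lie in $H$, their product lies in $W^4 \subseteq V$, and $(x^{-1}y, y^{-1}z) \in \o$ (the $4$-fold product associates this way), so condition (3) gives $x^{-1}z \in H$.

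For (2), the direction $(\Leftarrow)$ is easy: since $y = y\cdot 1 \in (yH)\cap W = (xH)\cap W$, we may write $y = xh$ with $h \in H$ and $(x,h)\in \o$, and a single application of (A) to the triple $(x^{-1},x,h)$ (whose required hypotheses all hold because $x^{\pm 1}, xh = y \in W \subseteq \u_2$) yields $h = x^{-1}y \in H$. For $(\Rightarrow)$, assume $x^{-1}y \in H$; by (1) also $y^{-1}x \in H$, and by symmetry in $x,y$ it suffices to prove $(xH)\cap W \subseteq (yH)\cap W$. Given $z = xh \in W$ with $h \in H$, set $k := y^{-1}z$; the $3$-fold product $y\cdot y^{-1}\cdot z$ in $\u_3$ yields $yk = z$. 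To see $k \in H$, substitute $h = x^{-1}z$ and observe that the $4$-fold product $y^{-1}\cdot x\cdot x^{-1}\cdot z$ in $\u_4$ is defined; its two groupings show $(y^{-1}x)h = y^{-1}z = k$ and force $(y^{-1}x,h) \in \o$. Since $y^{-1}x, h \in H$ and $k \in W^2 \subseteq V$, condition (3) concludes $k \in H$.

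The main obstacle, and the reason the constant $6$ in $W \subseteq \u_6$ and $W^6 \subseteq V$ is chosen as large as it is, is that associativity (A) in a local group is available only when the compound products it references are already known to be defined. The subtle verification is that $(y^{-1}x, h) \in \o$ in the $(\Rightarrow)$ direction of (2): here $h$ is known only to lie in $V$, not in $W$, so one cannot invoke the previous lemma directly on the pair $(y^{-1}x, h)$. Re-expressing $h = x^{-1}z$ with $z \in W$ reduces the question to the defineness of a $4$-fold product of elements of $W$, which the $\u_n$ lemma delivers.
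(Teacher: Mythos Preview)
Your proof is correct and follows essentially the same approach as the paper's: both verify the equivalence-relation axioms directly and handle part (2) by reducing every questionable product to a $3$- or $4$-fold product of elements of $W$, so that the $\u_n$ lemma applies. Your write-up is in fact slightly more explicit than the paper's about why the pair $(y^{-1}x,h)$ lies in $\o$ (the paper simply asserts ``$(x^{-1}y)h\in V$'' without isolating this step), but the underlying argument is the same.
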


\begin{proof}
For (1), first note that the reflexivity of $E_H$ is trivial.  Now suppose $E_H(x, y)$ holds, i.e. that $x^{-1}y\in H$.  Since $(x^{-1}y)^{-1}\in V$, we have that $(x^{-1}y)^{-1}\in H$.  It can easily be verified that $(x^{-1}y)^{-1}=y^{-1}x$, so $E_H(y,x)$ holds.  Finally, suppose $E_H(x, y)$ and $E_H(y, z)$ hold.  It follows from $(x^{-1}y)(y^{-1}z)\in V$ that $(x^{-1}y)(y^{-1}z)\in H$.  However, since $x,y,z\in \u_{4}$, we know that $(x^{-1}y)(y^{-1}z)=x^{-1}z$, so we have transitivity and (1) is proven.

\

\noindent For (2), first suppose that $E_H(x, y)$ holds.  Let $w\in (yH)\cap W$.  Let $h\in H$ be such that $w=yh$.  Then $h=y^{-1}w\in W^2$.  Since $(x^{-1}y)h\in V$, we have $(x^{-1}y)h\in H$.  But $(x^{-1}y)h=x^{-1}w$, so $w\in (xH)\cap W$.  By symmetry, $(xH)\cap W = (yH)\cap W$.  For the converse, suppose $(xH)\cap W = (yH)\cap W$.  Since $y\in (yH)\cap W$, we have $y\in (xH)\cap W$.  Let $h\in H$ be such that $y=xh$.  Then since $xh\in W$, we know that $x^{-1}(xh)$ is defined.  We thus have, by (A), $x^{-1}y=h\in H$ and so $E_H(x, y)$ holds.   
\end{proof}

\

\noindent Let $\pi_{H,W}:W \rightarrow W/E_H$ be the canonical map.  Give $W/E_H$ the quotient topology.  Then $\pi_{H,W}$ becomes an open continuous map.  

\

\noindent Let $\iota_{H,W}:W/E_H\rightarrow W/E_H$ be defined by $\iota_{H,W}(E_H(x))=E_H(x^{-1})$.  Let $\o_{H,W}:=(\pi_{H,W} \times \pi_{H,W})((W\times W)\cap p^{-1}(W))$.  It is easy to check that one can define a map $p_{H,W}:\o_{H,W}\rightarrow W/E_H$ by $p_{H,W}(E_H(x),E_H(y))=E_H(xy)$, where $x$ and $y$ are representatives of their respective local cosets chosen so that $xy\in W$.  The following lemma is easy to verify.      

\

\begin{lemma}
With the notations as above, $$(G/H)_W:=(W/E_H,E_H(1),\iota_{H,W},p_{H,W})$$ is a local group and $\pi_{H,W}:G|W \rightarrow (G/H)_W$ is a morphism.      
\end{lemma}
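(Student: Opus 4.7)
The plan is to verify the axioms of Definition 2.1 for $(G/H)_W$ by reducing each check to a statement about representatives in $G$, exploiting the fact that the projection $\pi_{H,W}$ is open and continuous and satisfies
$$\pi_{H,W}\circ \iota = \iota_{H,W}\circ \pi_{H,W},\qquad \pi_{H,W}\circ p = p_{H,W}\circ (\pi_{H,W}\times \pi_{H,W})$$
on their respective domains. The morphism statement for $\pi_{H,W}$ will then be immediate from these two identities together with $\pi_{H,W}(1)=E_H(1)$.

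First I would show that $W/E_H$ is Hausdorff. Since $W\subseteq \u_2$ and $W^2\subseteq V$, the map $(x,y)\mapsto x^{-1}y$ is well-defined and continuous from $W\times W$ into $V$, and the graph of $E_H$ is the preimage of $H$ under this map. Because $H$ is closed in $V$, this graph is closed in $W\times W$, and combined with $\pi_{H,W}$ being open, this yields that $W/E_H$ is Hausdorff. Next I would verify that $\iota_{H,W}$ is well-defined: if $E_H(x)=E_H(x')$ and we set $h:=x^{-1}x'\in H$, then working inside $\u_6$ one computes $(x^{-1})^{-1}(x')^{-1}=xh^{-1}x^{-1}\in W^3\subseteq V$, and the normalizing property of $V$ forces this into $H$. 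Continuity of $\iota_{H,W}$ then follows from the quotient topology.

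The main step is well-definedness of $p_{H,W}$. Given $E_H(x)=E_H(x')$, $E_H(y)=E_H(y')$ with $xy, x'y'\in W$, set $h_1:=x^{-1}x'\in H$ and $h_2:=y^{-1}y'\in H$. Since all the relevant quantities are products of at most six elements of $W$, generalized associativity (supplied by Lemma 2.4 via $W\subseteq \u_6$) gives
$$(xy)^{-1}(x'y')\ =\ y^{-1}h_1 y'\ =\ (y^{-1}h_1 y)\,h_2.$$
The whole expression lies in $W^2\subseteq V$. Normality of $H$ yields $y^{-1}h_1 y\in H$, and then the sublocal group closure condition (3) of Definition 2.6 places $(y^{-1}h_1 y)h_2$ into $H$. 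Thus $E_H(xy)=E_H(x'y')$ and $p_{H,W}$ is well-defined. Openness of $\o_{H,W}$ is immediate from $\pi_{H,W}\times \pi_{H,W}$ being open, and continuity of $p_{H,W}$ follows from the displayed identity at the start and the universal property of the quotient topology.

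The remaining axioms---the identity laws $p_{H,W}(E_H(1),\xi)=p_{H,W}(\xi,E_H(1))=\xi$, the inversion relations, and local associativity for $p_{H,W}$---all follow by choosing representatives whose relevant products land in $W$ and invoking the corresponding axioms in $G$. The main obstacle is the calculation for $p_{H,W}$: it is precisely to make that calculation go through that one imposes $W\subseteq \u_6$ together with $W^6\subseteq V$, the first guaranteeing the needed generalized associativity, the second ensuring that all intermediate products land inside $V$ where the normalizing and sublocal group closure properties of $H$ apply.
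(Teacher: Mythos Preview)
Your proposal is correct, and in fact supplies the details that the paper omits entirely: the paper simply declares the lemma ``easy to verify'' and gives no argument. Your identification of the two key technical points---Hausdorffness of the quotient (via closedness of the graph of $E_H$) and well-definedness of $p_{H,W}$ (via the six-fold product computation and the normalizing property)---is exactly what the routine verification comes down to, and your explanation of why the constraints $W\subseteq\u_6$ and $W^6\subseteq V$ are needed is on target.

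One small imprecision: in the check that $\iota_{H,W}$ is well-defined you write $xh^{-1}x^{-1}\in W^3$, but $h=x^{-1}x'\in W^2$, so a priori this is a product of four $W$-factors; after simplification it collapses to $x(x')^{-1}\in W^2$, so the conclusion $\subseteq V$ is fine either way. Also, the local associativity axiom for $p_{H,W}$, which you pass over quickly, does require juggling several sets of representatives and is the one place where the bookkeeping can get close to the six-factor bound; it is still routine, but it is the step most deserving of the word ``tedious'' rather than ``easy.''
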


\

\noindent In the construction of quotients, we have made some choices.  We would expect that, locally, we have the same local group.  The following lemma expresses this.

\

\begin{lemma}
Suppose that $H'$ is also a normal sublocal group of $G$ with associated normalizing neighborhood $V'$ such that $H$ is equivalent to $H'$.  Let $W'$ be a symmetric open neighborhood of $1$ in $G$ used to construct $(G/H')_{W'}$.  Then $(G/H)_W$ and $(G/H')_{W'}$ are locally isomorphic.
\end{lemma}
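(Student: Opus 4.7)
The strategy is to produce a common symmetric open neighborhood of $1$ in $G$ on which the equivalence relations $E_H$ and $E_{H'}$ coincide, and then descend the identity map on that neighborhood to a local isomorphism between the two quotients.

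Let $U$ be an open neighborhood of $1$ in $G$ witnessing the equivalence of $H$ and $H'$, so $H \cap U = H' \cap U$. By continuity of $p$ and $\iota$, choose a symmetric open neighborhood $W''$ of $1$ satisfying $W'' \subseteq W \cap W' \cap \u_6$, $(W'')^6 \subseteq V \cap V'$, and $(W'')^2 \subseteq U$. For any $x, y \in W''$ the product $x^{-1}y$ is then defined (by the lemma on the $\u_n$) and lies in $U$; hence $x^{-1}y \in H$ if and only if $x^{-1}y \in H'$. Therefore $E_H$ and $E_{H'}$ restrict to the same equivalence relation on $W''$.

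Set $X := \pi_{H,W}(W'')$ and $X' := \pi_{H',W'}(W'')$; these are open neighborhoods of the respective identity cosets, since the canonical projections are open. Define $f : X \to X'$ by $f(E_H(x)) = E_{H'}(x)$ for $x \in W''$. The agreement of the two equivalence relations on $W''$ makes $f$ a well-defined bijection, whose inverse $g$ is given by the analogous formula. Continuity of $f$ and $g$ is immediate from the universal property of the quotient topology applied to the open continuous maps $\pi_{H,W}|W''$ and $\pi_{H',W'}|W''$, using that $f \circ (\pi_{H,W}|W'') = \pi_{H',W'}|W''$ and symmetrically.

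It remains to verify that, after shrinking $X$ and $X'$ further if necessary so that the domains of the respective quotient product maps match up under $f\times f$, both $f$ and $g$ are morphisms of local groups. This reduces to the identities $f(E_H(x^{-1})) = E_{H'}(x^{-1})$ and, when $x,y,xy\in W''$, $f(p_{H,W}(E_H(x),E_H(y))) = E_{H'}(xy) = p_{H',W'}(f(E_H(x)),f(E_H(y)))$; both are immediate from the construction of the quotient operations in the previous lemma. The main obstacle is really just the careful bookkeeping in the choice of $W''$: one must make sure that every inverse and product used in a compatibility check is defined in $G$, that it falls into $U$ so the agreement $H\cap U=H'\cap U$ can be applied, and that it falls into $V\cap V'$ so the quotient constructions apply on both sides. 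The conditions $W''\subseteq\u_6$ and $(W'')^6\subseteq V\cap V'$ are arranged precisely to supply that buffer and to let us treat all the relevant $n$-fold products via generalized associativity.
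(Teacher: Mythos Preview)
Your proposal is correct and follows essentially the same approach as the paper: choose a small symmetric neighborhood of $1$ contained in $W\cap W'$ on which the two equivalence relations $E_H$ and $E_{H'}$ coincide (the paper takes $U_2:=W\cap W'\cap U_1$ with $U_1^2\subseteq U$), and descend the identity to the map $E_H(x)\mapsto E_{H'}(x)$. The paper merely states that ``one can verify'' this map induces the desired local isomorphism, whereas you spell out the well-definedness, continuity, and morphism checks and impose the extra cushion $(W'')^6\subseteq V\cap V'$ to make the associativity and normality arguments go through; this is more detail than the paper supplies but the underlying construction is identical.
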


\begin{proof}
Let $U$ be an open neighborhood of $1$ in $G$ such that $H\cap U=H'\cap U$.  Choose open $U_1$ containing $1$ such that $U_1^2 \subseteq U$.  Let $U_2:=W\cap W'\cap U_1$.  Consider the map $\psi:\pi_{H,W}(U_2)\rightarrow W'/E_{H'}$ given by $\psi(E_H(x))=E_{H'}(x)$ for $x\in U_2$.  One can verify that $\psi:(G/H)_W|\pi_{H,W}(U_2)\rightarrow (G/H')_{W'}$ induces the desired local isomorphism.
\end{proof}

\

\noindent From now on, we write $G/H$ to denote $(G/H)_W$, where $W$ is any open neighborhood of $1$ as in Lemma \ref{L:localcoset}.  We will also write $\pi:G\rightarrow G/H$ to denote the local morphism induced by $\pi_{H,W}:G|W\rightarrow (G/H)_W$.  Since all of the various quotients defined in this way are locally isomorphic, fixing one's attention on one particular local coset space is no loss of generality for our purposes.

\

\

\noindent \textbf{Further Properties of Local Groups}

\

\noindent In this subsection, we first show that we can place two further assumptions on our local groups which lead to no loss of generality in the solution of the Local H5.  Under these new assumptions, we derive several properties of the notion ``$a_1\cdots a_n$ is defined." 

\

\noindent Let $U:=\t \cap \iota^{-1}(\t)$, an open neighborhood of $1$ in $G$.  Note that if $g\in U$, then $\iota(g)\in \t$ and $\iota(\iota(g))=g\in \t$, implying that $\t_U=U$.

\

\noindent \textbf{From this point on, we assume that $\t=G$ for every local group $G$ in this paper.}  By the preceding remarks, every local group has a restriction satisfying this condition and so this is no loss of generality for our purpose.  Note that, with these assumptions, if $(x,y)\in \o$ and $xy=1$, then $x=y^{-1}$ and $y=x^{-1}$.  In particular, $(x^{-1})^{-1}=1$ for all $x\in G$, and $G$ is symmetric. 

\

\

\noindent The following lemma is obvious for topological groups, but requires some care in the local group setting.

\

\begin{lemma}\label{L:homog}(Homogeneity)
\begin{enumerate}
\item For any $g\in G$, there are open neighborhoods $V$ and $W$ of $1$ and $g$ respectively such that $\{g\}\times V\subseteq \o$, $gV\subseteq W$, $\{g^{-1}\}\times W\subseteq \o$, $g^{-1}W\subseteq V$, and the maps $$v\mapsto gv:V\rightarrow W \text{ and }w\mapsto g^{-1}w:W\rightarrow V$$ are each others inverses (and hence homeomorphisms).
\item$G$ is locally compact if and only if there is a compact neighborhood of $1$.
\end{enumerate}
\end{lemma}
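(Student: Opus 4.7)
For (1), my plan is to invoke associativity (axiom (A)) on the triples $(g^{-1},g,v)$ and $(g,g^{-1},w)$, after first arranging that all relevant pairs lie in $\o$. Since $\t=G$, both $g^{-1}$ and $(g^{-1})^{-1}=g$ exist and we have $(g,g^{-1}),(g^{-1},g)\in\o$ by axiom~(2). Using the openness of $\o$, I would first pick an open neighborhood $V_0$ of $1$ with $\{g\}\times V_0\subseteq\o$ and an open neighborhood $W_0$ of $g$ with $\{g^{-1}\}\times W_0\subseteq\o$. Continuity of $p$ then makes the maps $\alpha\colon V_0\to G$, $v\mapsto gv$, and $\beta\colon W_0\to G$, $w\mapsto g^{-1}w$, continuous. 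I would set
\[
V:=\alpha^{-1}(W_0),\qquad W:=\beta^{-1}(V_0);
\]
these are open and contain $1$ and $g$ respectively, since $\alpha(1)=g\in W_0$ and $\beta(g)=1\in V_0$.

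The heart of the argument is the identity $\beta\circ(\alpha|V)=\id_V$. For $v\in V$, the four pairs $(g^{-1},g)$, $(g,v)$, $(p(g^{-1},g),v)=(1,v)$ and $(g^{-1},p(g,v))=(g^{-1},gv)$ all lie in $\o$ by construction, so (A) applies and yields $g^{-1}(gv)=(g^{-1}g)\cdot v=v$. A symmetric application of (A) to $(g,g^{-1},w)$ gives $\alpha\circ(\beta|W)=\id_W$. These two identities immediately force $\alpha(V)\subseteq W$ and $\beta(W)\subseteq V$, and hence $\alpha|V$ and $\beta|W$ are mutually inverse continuous bijections between $V$ and $W$, i.e.\ homeomorphisms. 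The required conditions $\{g\}\times V\subseteq\o$, $gV\subseteq W$, $\{g^{-1}\}\times W\subseteq\o$, $g^{-1}W\subseteq V$ then hold, completing (1).

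For (2), the implication $(\Rightarrow)$ is immediate. For $(\Leftarrow)$, given a compact neighborhood $K$ of $1$ and any $g\in G$, I apply (1) to obtain open $V\ni 1$ and $W\ni g$ with $\alpha|V\colon V\to W$ a homeomorphism. Since $G$ is Hausdorff and locally compact at $1$, the standard fact that such a point has a neighborhood basis of compact sets yields a compact neighborhood $K'\subseteq V$ of $1$. Then $gK'=\alpha(K')$ is compact as the continuous image of a compact set, and it is a neighborhood of $g$ because $\alpha$ sends the (nonempty) open interior of $K'$ to an open subset of $W$ contained in $gK'$.

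The main obstacle is purely bookkeeping in (1): one must track exactly which pairs must be placed inside $\o$ before (A) can be legitimately invoked, and the argument only works with the sets $V_0,W_0,V,W$ nested in the right order so that all four of the relevant pairs lie in $\o$ simultaneously. In a topological group this would be automatic; here it is the reason the lemma is stated separately rather than being taken for granted.
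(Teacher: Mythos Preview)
Your proof is correct and follows essentially the same approach as the paper: both arguments use the openness of $\o$ to carve out domains for the left-translation maps $L_g$ and $L_{g^{-1}}$, and then apply axiom (A) to the triples $(g^{-1},g,v)$ and $(g,g^{-1},w)$ to verify they are mutual inverses. The only cosmetic difference is that the paper defines $W$ directly as $L_g(V)$ and then observes $W=L_{g^{-1}}^{-1}(V)$ to get openness, whereas you define $V$ and $W$ symmetrically as preimages and then deduce $\alpha(V)=W$; both routes are equivalent.
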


\begin{proof}
Clearly (1) implies (2).  For any $g\in G$, define $$\o_g:=\{h\in G \ | \ (g,h)\in \o\}.$$  Then $\o_g$ is an open subset of $G$ and $L_g:\o_g\rightarrow G$, defined by $L_g(h)=gh$, is continuous.  Let $V:=(L_g)^{-1}(\o_{g^{-1}})$.  Then $V$ is open and $1\in V$.  Let $W= L_g(V)\subseteq \o_{g^{-1}}.$  Then $W$ is open since $W=L_{g^{-1}}^{-1}(V)$.  From our choices we can check that $L_g|V$ and $L_{g^{-1}}|W$ are inverses of each other.  
\end{proof}

\

\

\begin{cor}
Let $U= \u_3$.  Then for any $g,h\in U$ such that $(g,h)\in \o_U$, one has $(h^{-1},g^{-1})\in \o_U$ and $(gh)^{-1}=h^{-1}g^{-1}$.
\end{cor}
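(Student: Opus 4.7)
The plan is to apply the preceding lemma on the sets $\u_n$ to the triple $(h^{-1}, g^{-1}, gh)$ in $\u_3^{\times 3}$ and then unwind what ``defined'' buys us. First I would check membership: since $U = \u_3$ is symmetric, $h^{-1}, g^{-1} \in U$, and by hypothesis $gh \in U$, so $(h^{-1}, g^{-1}, gh) \in \u_3^{\times 3}$. By the lemma, $h^{-1}\cdot g^{-1}\cdot(gh)$ is defined. Unwinding the inductive definition of ``represents'' at both splitting indices $i=1$ and $i=2$, this packages up the membership facts
$$(h^{-1}, g^{-1}) \in \o, \qquad (g^{-1}, gh) \in \o, \qquad (h^{-1}g^{-1},\, gh) \in \o,$$
together with the equality $(h^{-1}g^{-1})(gh) = h^{-1}(g^{-1}(gh))$.

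Next I would compute the right-hand side to be $1$. Since the triple $(g^{-1}, g, h)$ also lies in $\u_3^{\times 3}$, the lemma makes $g^{-1}\cdot g\cdot h$ defined, which via axiom (A) forces $g^{-1}(gh) = (g^{-1}g)h = 1\cdot h = h$. Therefore $h^{-1}(g^{-1}(gh)) = h^{-1}\cdot h = 1$, and hence $(h^{-1}g^{-1})(gh) = 1$.

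To finish, the standing assumption $\t = G$ makes available the uniqueness-of-inverse remark after Definition~2.1: from $(h^{-1}g^{-1},\, gh) \in \o$ and $(h^{-1}g^{-1})(gh) = 1$ we conclude $(gh)^{-1} = h^{-1}g^{-1}$. Since $gh \in U$ and $U$ is symmetric, $h^{-1}g^{-1} = (gh)^{-1} \in U$; combined with $(h^{-1}, g^{-1}) \in \o$ established above, this is precisely $(h^{-1}, g^{-1}) \in \o_U$.

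The only real obstacle is bookkeeping — one must verify that each intermediate product lands in $\o$ before invoking associativity — and this is exactly what the lemma on $\u_n$ is designed to handle, by collapsing a host of membership conditions and equalities into the single statement that the three-fold product of elements of $\u_3$ is ``defined''. That is also precisely why the corollary is stated with $U = \u_3$ rather than some larger neighborhood.
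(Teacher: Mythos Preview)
Your proof is correct and follows essentially the same route as the paper's: both arguments note that $h^{-1},g^{-1},gh\in\u_3$, invoke the lemma on $\u_n$ to make the triple product $h^{-1}\cdot g^{-1}\cdot(gh)$ defined, compute it to be $1$ via associativity, and then read off $(gh)^{-1}=h^{-1}g^{-1}$ and $(h^{-1},g^{-1})\in\o_U$. Your version is simply more explicit about the bookkeeping (e.g.\ spelling out why $g^{-1}(gh)=h$ via the auxiliary triple $(g^{-1},g,h)\in\u_3^{\times 3}$), which the paper compresses into a single ``by (A)''.
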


\begin{proof}
Suppose $g,h\in U$ with $(g,h)\in \o_U$.  Then we have $h^{-1},g^{-1},gh\in U$.  Hence $h^{-1}\cdot g^{-1}\cdot (gh)$ is defined and $$h^{-1}\cdot g^{-1}\cdot (gh)=h^{-1}\cdot (g^{-1}\cdot(gh))=h^{-1}h=1$$ by (A).  But $h^{-1}\cdot g^{-1}\cdot (gh)=(h^{-1}g^{-1})\cdot (gh)$, so $h^{-1}g^{-1}=(gh)^{-1}$.  Also, since $h^{-1}g^{-1}=(gh)^{-1}\in U$, we have $(h^{-1},g^{-1})\in \o_U$.    
\end{proof}

\

\noindent \textbf{From now on, we make the following further assumption on our local group} $G$:  
$$\text{if } (g,h)\in \o \text{, then }(h^{-1},g^{-1})\in \o \text{ and }(gh)^{-1}=h^{-1}g^{-1}.$$ By the preceding corollary, every local group has a restriction satisfying this assumption and so this is no loss of generality for our purpose.

\

\begin{lemma}\label{L:EZFacts}Let $a,a_1,\ldots,a_n\in G$.  Then 

\begin{enumerate}

\item If $a_1\cdots a_n$ is defined and $1\leq i\leq j \leq n$, then $a_i\cdots a_j$ is defined.  In particular, if $a^n$ is defined and $m\leq n$, then $a^m$ is defined.  
\item If $a^m$ is defined and $i,j\in \{0,\ldots,m\}$ are such that $i+j=m$, then $(a^i,a^j)\in \o$ and $a^i \cdot a^j = a^m$.
\item If  $a^n$ is defined and, for all $i,j\in\{1,\ldots,n\}$ with $i+j=n+1$, one has $(a^i,a^j)\in \o$, then $a^{n+1}$ is defined.  More generally, if $a_1\cdots a_n$ is defined, $a_i \cdots a_{n+1}$  is defined for all $i\in\{2,\ldots,n\}$ and $$(a_1\cdots a_i,a_{i+1}\cdots a_{n+1})\in \o \text{ for all }i\in\{1,\ldots ,n\},$$ then $a_1 \cdots a_{n+1}$ is defined. 
\item If $a^n$ is defined, then $(a^{-1})^n$ is defined and $(a^{-1})^n=(a^n)^{-1}$.  (In this case, we denote $(a^{-1})^n$ by $a^{-n}$.)  More generally, if $a_1\cdots a_n$ is defined, then $a_n^{-1}\cdots a_1^{-1}$ is defined and $(a_1\cdots a_n)^{-1}=a_n^{-1}\cdots a_1^{-1}$. 
\item If $k,l\in \z$, $l\not=0$, and $a^{k\cdot l}$ is defined, then $a^k$ is defined, $(a^k)^l$ is defined and $(a^k)^l=a^{k\cdot l}$.   
\end{enumerate}
\end{lemma}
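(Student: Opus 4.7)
The plan is to prove the five parts in order, using each to support the next, with axiom (A) and the standing assumption on inverses as the fundamental tools.

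Parts (1) and (2) are essentially unpackings of the definition of ``represents.'' For (1), the clause of the definition for $(a_1,\ldots,a_n)\rightarrow b$ requires that every split point $i$ witness $a_1\cdots a_i$ and $a_{i+1}\cdots a_n$ as defined; hence prefixes and suffixes of a defined product are defined, and an arbitrary interval $a_i\cdots a_j$ is obtained as a suffix of the prefix $a_1\cdots a_j$. Part (2) is the split-at-$i$ clause of the definition read for $a^m$.

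The core work is in (3); I would prove the general form, since the special case with $a_i=a$ follows. Given the hypotheses, for each $i\in\{1,\ldots,n\}$ the quantity $b_i:=(a_1\cdots a_i)\cdot(a_{i+1}\cdots a_{n+1})$ is defined (all pieces are defined by (1) and the hypotheses, and the relevant pair lies in $\o$). It suffices to show $b_i=b_{i+1}$ for all $i\in\{1,\ldots,n-1\}$, for then there is a common value $b$ and each split clause of the definition of $(a_1,\ldots,a_{n+1})\rightarrow b$ is satisfied. Setting $x=a_1\cdots a_i$, $y=a_{i+1}$, $z=a_{i+2}\cdots a_{n+1}$, one gets $b_i=x\cdot(yz)$ and $b_{i+1}=(xy)\cdot z$ after splitting $a_{i+1}\cdots a_{n+1}$ and $a_1\cdots a_{i+1}$ using (1). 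The four $\o$-memberships required to apply (A) come from the split hypotheses (for $(xy,z)$ and $(x,yz)$) and from (1) applied to $a_1\cdots a_n$ and to the defined suffix $a_{i+1}\cdots a_{n+1}$ (for $(x,y)$ and $(y,z)$). Then (A) delivers $b_i=b_{i+1}$.

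For (4), I would induct on $n$; the base $n=2$ is exactly the standing assumption that $(g,h)\in\o$ implies $(h^{-1},g^{-1})\in\o$ and $(gh)^{-1}=h^{-1}g^{-1}$. For the step, if $(a_1,\ldots,a_{n+1})\rightarrow b$, then each split pair $(a_1\cdots a_i,a_{i+1}\cdots a_{n+1})\in\o$ multiplies to $b$; the inductive hypothesis inverts the prefix and the suffix, and the standing assumption applied to this pair then yields $(a_{n+1}^{-1}\cdots a_{i+1}^{-1},a_i^{-1}\cdots a_1^{-1})\in\o$ with product $b^{-1}$. Running $i$ through $\{1,\ldots,n\}$ produces all splits of the reversed-inverted tuple, each with the same value $b^{-1}$; hence $a_{n+1}^{-1}\cdots a_1^{-1}$ is defined and equals $b^{-1}$. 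Part (5) reduces to the case $k,l>0$ by (4) (which lets one move inverses between base and exponent) and then proceeds by induction on $l$: the case $l=1$ is trivial, and for the step $l\to l+1$ I would apply the general form of (3) with $a_j:=a^k$ for $1\le j\le l+1$. The defined prefix $(a^k)^l$ equals $a^{kl}$ by the inductive hypothesis, each required defined suffix $(a^k)^{l+2-i}$ (for $i\ge 2$) is defined by the inductive hypothesis as well (these have length $\le l$), and each pair $((a^k)^i,(a^k)^{l+1-i})=(a^{ki},a^{k(l+1-i)})$ lies in $\o$ by (2) applied to $a^{k(l+1)}$ at split point $ki$. So (3) gives $(a^k)^{l+1}$ defined, and the split at position $1$ shows its value is $a^k\cdot a^{kl}=a^{k(l+1)}$.

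I expect (3) to be the main obstacle, since (A) must be applied delicately to force all splits of an $(n+1)$-fold product to agree, and every subsequent part leans on this fact. The sign analysis in (5) is administratively fussy but reduces routinely to the positive case once (4) is in hand.
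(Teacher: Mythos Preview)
Your proposal is correct and follows essentially the same route as the paper: (1) and (2) are read off the definition, (3) is established by showing via (A) that all splits $b_i$ agree, (4) is by induction on $n$ with base case the standing inverse assumption, and (5) reduces to positive exponents via (4) and then inducts on $l$ using (2) and (3). Your write-up of (3) is in fact more explicit than the paper's terse ``repeated uses of (A),'' and in (4) you verify the definition of ``represents'' directly for the reversed-inverted tuple rather than invoking (3) as the paper does, but these are cosmetic differences rather than a change of approach.
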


\begin{proof}
The proofs of (1) and (2) are immediate from the definitions.  (3) follows from repeated uses of (A).  We prove the first assertion of (4) by induction on $n$.  The cases $n=1$ and $n=2$ are immediate from our assumptions on local groups.  For the induction step, suppose $a^{n+1}$ is defined and $i,j\in \{1,\ldots,n\}$ with $i+j=n+1$. Then since $(a^j,a^i)\in \o$, by induction we have $((a^{-1})^i,(a^{-1})^j)\in \o$.  Hence, by (3) of the lemma, we know that $(a^{-1})^{n+1}$ is defined.  Also, $$(a^{-1})^{n+1}= (a^{-1})^n\cdot a^{-1}=(a^n)^{-1}\cdot a^{-1}=(a\cdot a^n)^{-1}=(a^{n+1})^{-1}.$$  The second assertion of (4) is proved in the same manner.  

\

\noindent We now prove (5).  It is easy to see that it is enough to check the assertion for $k,l\in \n$, $l\not= 0$.  Since $k\leq k\cdot l$, we have $a^k$ is defined by (1).  We prove the rest by induction on $l$.  This is clear for $l=1$.  Suppose the assertion is true for all $i\leq l$ and suppose $a^{k\cdot (l+1)}$ is defined  To see that $(a^k)^{l+1}$ is defined, we must check that $((a^k)^i,(a^k)^j)\in \o$ for $i,j\in \{1,\ldots,l\}$ such that $i+j=l+1$.  This is the case by (2), since for such $i,j$, we have $k\cdot i + k\cdot j=k\cdot(l+1)$ and by induction $(a^k)^i=a^{k\cdot i}$ and $(a^k)^j=a^{k\cdot j}$.  Now that we know that $(a^k)^{l+1}$ is defined, we must have, by induction, that $$(a^k)^{l+1}=(a^k)^l\cdot a^k=a^{k\cdot l}\cdot a^k=a^{k\cdot l+k}.$$
\end{proof}

\

\noindent By (4) of the previous lemma, we can now speak of $a^k$ being defined for any $k\in \z$, where $a^k$ being defined for $k<0$ means that $a^{-k}$ is defined. 

\

\begin{cor}\label{C:EZCorollary}
Suppose $i,j\in \z$ and $i\cdot j <0$.  If $a^i$ and $a^j$ are defined and $(a^i,a^j)\in \o$, then $a^{i+j}$ is defined and $a^i \cdot a^j=a^{i+j}$.
\end{cor}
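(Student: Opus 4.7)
My plan is to split into cases based on the signs of $i,j$ and of $i+j$. By hypothesis, $i$ and $j$ have opposite signs; assume first that $i>0$ and $j<0$, and set $k:=-j>0$, so that $a^k$ is defined and $a^j=(a^k)^{-1}$ by Lemma~\ref{L:EZFacts}(4). The subcase $i+j=0$ is immediate from the local-group inverse axiom: $a^i\cdot a^j=a^k\cdot(a^k)^{-1}=1=a^0$.

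For $i+j>0$, let $l:=i+j$, so that $l+k=i$ and $l,k\in\{0,\ldots,i\}$. Lemma~\ref{L:EZFacts}(2) then gives $(a^l,a^k)\in\o$ and $a^l\cdot a^k=a^i$, whence $a^i\cdot a^j=(a^l\cdot a^k)\cdot(a^k)^{-1}$. I apply the associativity axiom~(A) to rewrite this as $a^l\cdot(a^k\cdot(a^k)^{-1})=a^l\cdot 1=a^l=a^{i+j}$. Applying~(A) requires verifying that the four pairs $(a^l,a^k)$, $(a^k,(a^k)^{-1})$, $(a^l\cdot a^k,(a^k)^{-1})=(a^i,a^j)$, and $(a^l,a^k\cdot(a^k)^{-1})=(a^l,1)$ all lie in $\o$; these come respectively from Lemma~\ref{L:EZFacts}(2), the local-group inverse axiom, the hypothesis, and $G\times\{1\}\subseteq\o$.

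For $i+j<0$, set $l:=-(i+j)>0$, so that $l+i=k$ and $l,i\in\{0,\ldots,k\}$. Lemma~\ref{L:EZFacts}(2) yields $(a^l,a^i)\in\o$ and $a^l\cdot a^i=a^k$. The standing assumption $(gh)^{-1}=h^{-1}g^{-1}$, together with the companion membership $(h^{-1},g^{-1})\in\o$, then gives $a^j=(a^k)^{-1}=(a^i)^{-1}\cdot(a^l)^{-1}$ with $((a^i)^{-1},(a^l)^{-1})\in\o$. Thus $a^i\cdot a^j=a^i\cdot((a^i)^{-1}\cdot(a^l)^{-1})$, which by~(A) (whose four pair-conditions are verified analogously, using $\{1\}\times G\subseteq\o$ for $(1,(a^l)^{-1})$) collapses to $(a^i\cdot(a^i)^{-1})\cdot(a^l)^{-1}=(a^l)^{-1}=a^{-l}=a^{i+j}$. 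The remaining case $i<0$, $j>0$ is handled by a fully parallel argument in which Lemma~\ref{L:EZFacts}(2) is used to decompose $a^j$ (or $a^{-i}$) rather than $a^i$ (or $a^{-j}$).

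The only real difficulty is bookkeeping: each invocation of~(A) requires four separate $\o$-membership checks, and the crucial step is to pick the decomposition via Lemma~\ref{L:EZFacts}(2) so that a telescoping factor of the form $a^m\cdot(a^m)^{-1}=1$ appears in the middle of the product. Once that decomposition is identified, the computation is mechanical.
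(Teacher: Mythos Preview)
Your proof is correct and follows essentially the same route as the paper's: decompose the larger of $a^i$, $a^{-j}$ via Lemma~\ref{L:EZFacts}(2) so that a factor cancels against its inverse, then invoke (A). The paper's argument is terser---it treats only the case $i>0$, $j<0$, $i>|j|$, writes $a^i=a^{i+j}\cdot a^{-j}$ and concludes $a^i\cdot a^j=a^{i+j}$ ``by (A)'' without spelling out the four $\o$-membership checks---but the underlying idea is identical to your $i+j>0$ subcase.
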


\begin{proof}
\noindent The fact that $a^{i+j}$ is defined is clear from the preceding lemma.  Next note that the result is trivial if  $i=-j$.  We only prove the case that $i>0$ and $j<0$ and $i>|j|$, the other cases being similar.  By the preceding lemma, we have $a^i=a^{i+j}\cdot a^{-j}$, whence $a^i \cdot a^j=a^{i+j}$ by (A).
\end{proof}

\

\bigskip \noindent \textbf{The Nonstandard Setting}

\

\noindent We assume familiarity with this setting;
see \cite{D} and \cite{He} for details. Here we just fix notations and terminology.
To each relevant ``basic'' set $S$ corresponds functorially
a set $S^*\supseteq S$,
the {\em nonstandard extension\/} of $S$. In particular, $\n, \r, G$
extend
to $\n^*, \r^*, G^*$,
respectively. Also, any (relevant) relation $R$ and function
$F$ on these basic sets extends functorially to a relation $R^*$ and
function
$F^*$ on the corresponding nonstandard extensions of these basic sets.
For example, the linear ordering $<$ on $\n$ extends to a
linear ordering $<^*$ on $\n^*$, and
the local group operation $p: \o \to G$ of $G$ extends to
an operation $p^*: \o^* \to G^*$. For the sake of
readability
we only use a star in denoting the nonstandard extension of a basic set,
but drop the star when indicating the nonstandard extension
of a relation or function on these basic sets. For example,
when $x,y\in \r^*$ we write $x+y$ and $x<y$ rather than $x+^*y$ and $x<^*
y$.

\

\noindent Given an ambient hausdorff space $S$ and $s\in S$, the {\em monad\/}
of $s$, notation: $\boldsymbol{\mu}(s)$, is by definition the intersection
of all $U^*\subseteq S^*$ with $U$ a neighborhood of $s$ in $S$;
the elements of $\boldsymbol{\mu}(s)$ are the points
of the nonstandard space $S^*$ that are {\em infinitely close\/} to $s$.
The points of $S^*$ that are infinitely close to some $s\in S$ are called
{\em nearstandard}, and $S_{\ns}^*$ is the set of nearstandard
points
of $S^*$:
$$    S_{\ns}^* = \bigcup_{s\in S} \boldsymbol{\mu}(s).$$
Since $S$ is hausdorff, $\muu(s)\cap \muu(s')=\emptyset$ for distinct $s,s'\in S$.  Thus, for $s^*\in S^*_{\ns}$, we can define the \textit{standard part} of $s^*$, notation: $\st(s^*)$, to be the unique $s\in S$ such that $s^*\in \muu(s)$.  For $s_1,s_2 \in S^*_{\ns}$, we put $s_1\sim s_2$ iff $\st(s_1)=\st(s_2)$.

\  

\noindent We let $\boldsymbol{\mu}:=\boldsymbol{\mu}(1)$ be the monad of $1$ in $G^*$; note that $\muu$ is a group with product map $(x,y)\mapsto xy:=p(x,y)$.  

\

\noindent From now on, we let $\nu,\sigma,\tau,\eta,N$ range over $\n^*$ and $i$ and $j$ range over $\z^*$, but \textbf{$m$ and $n$ will always denote elements of $\n$}.  We use the Landau notation as follows:  $i=\lilo(\nu)$ means $|i|<\frac{\nu}{n}$ for all $n>0$ and $i=\bigO(\nu)$ means $|i|<n\nu$ for some $n>0$.  

\

\noindent In our nonstandard arguments, we will assume as much saturation as needed.  The following lemma uses routine nonstandard methods and is left for the reader to verify.

\

\begin{lemma}
$G$ is $\Ns$ if and only if there are no internal subgroups of $\boldsymbol{\mu}$ other than $\{1\}$. 
\end{lemma}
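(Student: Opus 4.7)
The plan is to prove the biconditional by combining transfer in one direction with saturation in the other.

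For the forward direction, suppose $G$ is $\Ns$, witnessed by an open neighborhood $\mathcal{U}$ of $1$ containing no nontrivial subgroup of $G$. This standard assertion transfers to the statement that $\mathcal{U}^*$ contains no nontrivial internal subgroup of $G^*$. Since $\muu\subseteq \mathcal{U}^*$, every internal subgroup of $\muu$ is automatically contained in $\mathcal{U}^*$ and hence trivial.

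For the reverse direction I would argue contrapositively. Suppose $G$ is not $\Ns$; then for every open neighborhood $U$ of $1$ in $G$ there is a nontrivial subgroup $H_U$ of $G$ with $H_U\subseteq U$. The goal is to produce a nontrivial internal subgroup of $\muu$ via saturation. Consider the partial type $\Sigma(X)$ in an internal-set variable $X\subseteq G^*$ whose formulas express that $X$ is an internal subgroup of $G^*$ (that is, $1\in X$, $X\times X\subseteq \o^*$, $\iota^*(X)\subseteq X$, and $p^*(X\times X)\subseteq X$), together with the formulas $X\subseteq U^*$ (one for each standard open neighborhood $U$ of $1$ in $G$) and the formula $\exists x\in X: x\neq 1$. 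Each clause is internal. The type is finitely satisfiable: given standard neighborhoods $U_1,\dots,U_k$, the intersection $U=U_1\cap\cdots\cap U_k$ is again a neighborhood of $1$, and by hypothesis there is a nontrivial subgroup $H\subseteq U$ of $G$; its nonstandard extension $H^*$ is an internal nontrivial subgroup of $G^*$ with $H^*\subseteq U_i^*$ for each $i$, so $H^*$ realizes the selected finite subtype. By sufficient saturation, some internal $X$ realizes all of $\Sigma$, and this $X$ is a nontrivial internal subgroup contained in $\bigcap_{U}U^*=\muu$, contradicting the hypothesis.

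The only real point requiring care is verifying that each clause of $\Sigma$ is genuinely an internal formula about the variable $X$, so that the saturation principle applies as stated; otherwise the argument is a direct application of standard nonstandard-analysis techniques, and, as the paper itself remarks, no deeper obstacle is expected.
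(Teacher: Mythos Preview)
Your argument is correct and is precisely the ``routine nonstandard methods'' the paper alludes to; the paper itself omits the proof entirely, so there is nothing to compare against beyond noting that transfer for the forward direction and saturation (finite satisfiability via intersecting finitely many neighborhoods) for the reverse are exactly what is intended.
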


\

\noindent We also consider the internal versions of the notions and results that appear earlier in this section.  In particular, if $(a_1,\ldots,a_\nu)$ is an internal sequence in $G^*$, then it makes sense to say that $(a_1,\ldots,a_\nu)$  (internally) represents $b$, where $b\in G^*$.  We say that $a_1\cdots a_\nu$ is defined if there exists $b\in G^*$ such that $(a_1,\ldots,a_\nu)$ represents $b$.  Likewise, we have the notion ``$a^i$ is defined" for $a\in G^*, i\in \z^*$.

\

\noindent The proof of the following lemma is trivial.

\

\begin{lemma}

\

\begin{enumerate}
\item Suppose $a,b\in G$ and $a'\in \boldsymbol{\mu}(a)$ and $b'\in \boldsymbol{\mu}(b)$.
If $(a,b)\in \o$, then $(a',b')\in \o^*$, $a'\cdot b' \in G^*_{\ns}$, and $\st(a'\cdot b')=a\cdot b$.
\item For any $a\in G^*_{\ns}$ and $b\in \boldsymbol{\mu}$, $(a,b),(b,a)\in \o^*$, $a\cdot b, b\cdot a\in G^*_{\ns}$,  and $\st(a\cdot b)=\st(b\cdot a)=\st(a)$.
\item For any $a,b\in G^*_{\ns}$, if $(a,b^{-1})\in \o^*$ and $ab^{-1}\in \boldsymbol{\mu}$, then $a\sim b$.
\item For any $a\in G$, $a'\in \boldsymbol{\mu}(a)$, and any $n$, if $a^n$ is defined, then $(a')^n$ is defined and $(a')^n\in \boldsymbol{\mu}(a^n)$. 
\end{enumerate}
\end{lemma}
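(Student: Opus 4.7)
The plan is to treat the four parts in order, since each successive part either specializes or uses the previous ones together with transfer and continuity.

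For (1), I would invoke continuity of the product map $p$ at the standard point $(a,b)\in\o$: given any neighborhood $W$ of $ab$ in $G$, there exist open neighborhoods $U\ni a$ and $V\ni b$ with $U\times V\subseteq\o$ and $p(U\times V)\subseteq W$. Transferring, $U^{*}\times V^{*}\subseteq\o^{*}$ and $p^{*}(U^{*}\times V^{*})\subseteq W^{*}$. Since $a'\in\muu(a)\subseteq U^{*}$ and $b'\in\muu(b)\subseteq V^{*}$ for every such $U,V$, this forces $(a',b')\in\o^{*}$ and $a'b'\in W^{*}$ for every neighborhood $W$ of $ab$, which is exactly $a'b'\in\muu(ab)$. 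Part (2) is then just (1) applied with $a$ replaced by $\st(a)\in G$ and $b$ replaced by $1$, using $(\st(a),1),(1,\st(a))\in\o$ and $p(\st(a),1)=p(1,\st(a))=\st(a)$, together with $\muu(1)=\muu$.

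For (3), set $c=\st(a)$ and $d=\st(b)$; I aim to show $c=d$ by rewriting $a$ as $(ab^{-1})\cdot b$ via associativity. The input to (A) requires four definedness conditions: $(a,b^{-1})\in\o^{*}$ is given; $(b^{-1},b)\in\o^{*}$ and $(a,1)\in\o^{*}$ are automatic from the running assumptions $\t=G$ and $G\times\{1\}\subseteq\o$; and $(ab^{-1},b)\in\o^{*}$ follows from part (2) applied internally, since $ab^{-1}\in\muu$ and $b\in G^{*}_{\ns}$. Then (A) yields $(ab^{-1})\cdot b=a\cdot(b^{-1}\cdot b)=a\cdot 1=a$. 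Applying (2) to the left-hand side gives $\st(a)=\st(b)$, hence $a\sim b$.

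For (4), I would proceed by induction on $n$. The cases $n=0,1$ are immediate. For the inductive step, assume $a^{n+1}$ is defined. By Lemma~\ref{L:EZFacts}(1), $a^{n}$ is defined, so by induction $(a')^{n}$ is defined with $(a')^{n}\in\muu(a^{n})$; and $(a^{n},a)\in\o$ with $a^{n}\cdot a=a^{n+1}$, so by (1), $((a')^{n},a')\in\o^{*}$ and $(a')^{n}\cdot a'\in\muu(a^{n+1})$. To conclude that $(a')^{n+1}$ is itself defined (and therefore equals this product), I invoke Lemma~\ref{L:EZFacts}(3): it suffices to check $((a')^{i},(a')^{j})\in\o^{*}$ for each $i+j=n+1$ with $1\le i\le n$. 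By Lemma~\ref{L:EZFacts}(2), $(a^{i},a^{j})\in\o$; the inductive hypothesis gives $(a')^{i}\in\muu(a^{i})$ and $(a')^{j}\in\muu(a^{j})$; and then (1) supplies the desired membership in $\o^{*}$. The mildest obstacle is here — marshaling Lemma~\ref{L:EZFacts} to upgrade pointwise nearness of the partial products into the full definedness of $(a')^{n+1}$ — but once these pieces are lined up the conclusion is immediate.
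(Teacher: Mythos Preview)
Your proposal is correct; the paper itself does not give a proof, remarking only that the lemma is trivial, and your argument carries out exactly the routine verification one would expect---continuity of $p$ plus transfer for (1), specialization to $\st(a)$ and $1$ for (2), an application of (A) together with (2) for (3), and an induction via Lemma~\ref{L:EZFacts} for (4).
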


\
  
\noindent A standard consequence of the last item of the preceding lemma is that the partial function $p_n:G\rightharpoonup G$, defined by $p_n(a)=a^n$ if $a^n$ is defined, has an open domain and is continuous.  Note that $\u_n \subseteq \dom(p_n)$.

\

\begin{lemma}\label{L:Overflow}
Suppose $U$ is a neighborhood of $1$ in $G$ and $a\in \boldsymbol{\mu}$.  Then there is a $\nu> \n$ such that $a^\sigma$ is defined and $a^\sigma \in U^*$ for all $\sigma \in \{1,\ldots,\nu\}$.  
\end{lemma}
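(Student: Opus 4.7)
The plan is a textbook overspill argument. First I would show that for every standard $n$, the power $a^n$ is defined and lies in $U^*$. Since $\u_n$ is an open neighborhood of $1$ and $a\in\muu=\muu(1)$, we have $a\in \u_n^*$, so by the internal version of Lemma 1.1 (which gives $\u_n\subseteq \dom(p_n)$), $a^n$ is defined in $G^*$. Applying item (4) of the preceding lemma with the constant sequence and the fact that $1^n=1$ is defined, we get $a^n\in\muu(1^n)=\muu$. Since $U$ is a neighborhood of $1$ in $G$, we have $\muu\subseteq U^*$, hence $a^n\in U^*$.

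Next, I would define the internal set
\[
S:=\{\nu\in\n^* : a^\sigma \text{ is defined and } a^\sigma\in U^* \text{ for all } \sigma\in\n^* \text{ with } 1\le \sigma\le \nu\}.
\]
This set is internal because the predicate ``$a^\sigma$ is defined'' has an internal meaning for $\sigma\in\n^*$ (via the internal notion of representation, as noted just before the lemma), and membership in $U^*$ is an internal condition. By the first paragraph, $\n\subseteq S$: indeed for $\nu=n$ and any $1\le \sigma\le n$, $a^\sigma$ is defined and in $U^*$.

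Finally, I would invoke the overspill principle: an internal subset of $\n^*$ that contains all of $\n$ cannot equal $\n$, so $S$ contains some $\nu>\n$. This $\nu$ is exactly what the lemma demands, since for all $\sigma\in\{1,\ldots,\nu\}$, $a^\sigma$ is defined and $a^\sigma\in U^*$.

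There is no real obstacle; the only point requiring a bit of care is the internality of $S$, which is immediate once one recalls that the existential statement ``there exists $b\in G^*$ with $(a,\ldots,a)\to b$'' in the internal sense, indexed by an internal $\sigma$, is itself internal. Everything else is a direct application of continuity of $p_n$ at $1$ (giving $a^n\in\muu$) together with overspill.
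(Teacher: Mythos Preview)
Your proof is correct and follows essentially the same overspill argument as the paper. The only minor difference is that the paper justifies $a^n\in\muu$ for standard $n$ by citing that $\muu$ is a subgroup of $G^*$, whereas you invoke $a\in\u_n^*$ and item (4) of the preceding lemma; both routes are valid and lead to the same internal set and the same overspill conclusion.
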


\begin{proof}
Let $X=\{\sigma \in \n^* \ | \ a^\sigma \text{ is defined and } a^\sigma \in U^* \}$.  Then $X$ is an internal subset of $\n^*$ which contains $\n$ since $\boldsymbol{\mu}$ is a subgroup of $G^*$.  Hence, by overflow, there is a $\nu> \n$ such that $\{0,1,\ldots,\nu\}\subseteq X$.  
\end{proof} 

\

\section{Connectedness and Purity}

\noindent In this section, we describe the different ways powers of infinitesimals can grow and its impact on the existence of connected subgroups of $G$.  We also show how to build local 1-parameter subgroups from ``pure" infinitesimals.  

\

\noindent \textbf{In the rest of this paper, we assume that $G$ is locally compact}.  In particular, $G$ is regular as a topological space.  Also, $a$ and $b$ will denote elements of $G^*$.
 
\

\noindent For $\nu > \n$, we set:
$$G(\nu):=\{a\in \boldsymbol{\mu} \ | \ a^i \text{ is defined and }a^i\in \boldsymbol{\mu} \text{ for all }i=\lilo(\nu)\},$$
$$G^{\lilo}(\nu):=\{a\in \boldsymbol{\mu} \ | \ a^i \text{ is defined and }a^i\in \boldsymbol{\mu} \text{ for all }i=\bigO(\nu)\}.$$
\noindent Note that $G^{\lilo}(\nu)\subseteq G(\nu)$ and both sets are symmetric. 

\

\begin{lemma}\label{L:lilo}
Let $a\in \muu$ and $\nu>\n$.  Then the following are equivalent:
\begin{enumerate}
\item $a^i$ is defined and $a^i\in \muu$ for all $i\in \{1,\ldots,\nu\}$;
\item $a\in G^{\lilo}(\nu)$;
\item there is $\tau \in \{1,\ldots,\nu\}$ such that $\nu =\bigO(\tau)$ and $a^i$ is defined and $a^i\in \muu$ for all $i\in \{1,\ldots,\tau\}$.
\end{enumerate}
\end{lemma}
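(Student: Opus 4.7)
The implications $(2) \Rightarrow (1)$ and $(1) \Rightarrow (3)$ are immediate: for the first, every $i \in \{1, \ldots, \nu\}$ satisfies $|i| \leq \nu$, so $i = \bigO(\nu)$; for the second, take $\tau = \nu$, which gives $\nu = \bigO(\tau)$ trivially. All the substance is in $(3) \Rightarrow (2)$.

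For $(3) \Rightarrow (2)$, fix a standard $n_0$ with $\nu \leq n_0 \tau$, so that $i = \bigO(\nu)$ iff $i = \bigO(\tau)$. By Lemma \ref{L:EZFacts}(4), whenever $a^k$ is defined for $k > 0$, $a^{-k} = (a^k)^{-1}$ is defined and lies in $\muu$ iff $a^k$ does (since $\muu$ is a group). So it suffices to prove by induction on standard $n \geq 1$ the claim
\[ (*_n): \quad a^i \text{ is defined and } a^i \in \muu \text{ for every } i \in \{1, \ldots, n\tau\}. \]
The base case $n = 1$ is hypothesis (3).

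For the inductive step, assume $(*_n)$; I need to show $a^{n\tau + j}$ is defined and in $\muu$ for every $j \in \{1, \ldots, \tau\}$. Since $\tau$ is nonstandard, this requires an internal induction on $j$, complicated by the fact that $\muu$ is external. I fix an arbitrary open neighborhood $W \subseteq \u_2$ of $1$ in $G$ and consider the internal set
\[ I_W := \{j \in \n^* \ | \ 0 \leq j \leq \tau \text{ and } a^{n\tau+j'} \text{ is defined and lies in } W^* \text{ for all } j' \leq j\}. \]
Clearly $0 \in I_W$ by $(*_n)$. For the inductive step, suppose $j \in I_W$ with $j < \tau$. I first invoke Lemma \ref{L:EZFacts}(3) (transferred) to see that $a^{n\tau+j+1}$ is defined: each required pair $(a^i, a^l)$ with $i + l = n\tau + j + 1$ lies in $\o^*$, because each factor is of the form $a^k$ with $k \leq n\tau$ (hence in $\muu \subseteq \u_2^*$ by $(*_n)$), $a^k$ with $k \leq \tau$ (in $\muu$ by hypothesis (3)), or $a^{n\tau+k}$ with $k \leq j$ (in $W^* \subseteq \u_2^*$ by $j \in I_W$). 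Splitting the now-defined $(n\tau + j + 1)$-fold product at position $n\tau$ then yields the external equality $a^{n\tau + j + 1} = a^{n\tau} \cdot a^{j+1}$; both factors lie in $\muu$, so the product lies in $\muu \subseteq W^*$, giving $j + 1 \in I_W$. Bounded internal induction then puts $\tau \in I_W$, and since $W$ was an arbitrary neighborhood of $1$ in $\u_2$, intersecting recovers $a^{n\tau+j} \in \muu$ for all $j \leq \tau$, closing the induction.

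The main obstacle is the tension between the internal induction on $j \in \{0, \ldots, \tau\}$ and the external definition of $\muu$. The device of running the induction inside a fixed standard neighborhood $W^*$ (then intersecting over all such $W$) finesses this: the splitting identity $a^{n\tau+j+1} = a^{n\tau} \cdot a^{j+1}$, forced by the recursive definition of ``represents'' once the LHS is defined, lets us re-import the external hypothesis $a^{n\tau}, a^{j+1} \in \muu$ at each step to control the output, bypassing the fact that $W^*$ is not closed under the product.
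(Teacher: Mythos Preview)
Your proof is correct, but more intricate than necessary. The paper establishes $(1)\Rightarrow(2)$ directly and then derives $(3)\Rightarrow(2)$ from it via the inclusion $G^{\lilo}(\tau)\subseteq G^{\lilo}(\nu)$. For $(1)\Rightarrow(2)$ the paper first observes that for any $k=\bigO(\nu)$, \emph{if} $a^k$ is defined then the splitting $k=n\nu+\eta$ with standard $n$ and $\eta<\nu$ gives $a^k=(a^\nu)^n\cdot a^\eta\in\muu\cdot\muu\subseteq\muu$; with this in hand, a single internal induction on ``$a^i$ is defined'' runs up to any fixed $\sigma=\bigO(\nu)$, since the pairs $(a^k,a^l)$ needed in the step already lie in $\muu\times\muu\subseteq\o^*$.

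Your organization replaces this single internal induction by an external induction on $n$ layered over an internal induction on $j\leq\tau$, and introduces the device of working inside a fixed standard neighborhood $W^*$ to keep the induction set internal. That device is sound but unneeded here: you could carry the internal induction on the predicate ``$a^{n\tau+j'}$ is defined for all $j'\leq j$'' alone, since once $a^{n\tau+j'}$ is defined the external splitting $a^{n\tau+j'}=a^{n\tau}\cdot a^{j'}$ already forces it into $\muu\subseteq\u_2^*$, supplying the $\o^*$-membership needed at the next step. What your $W$-argument buys is an explicitly internal witness for $\u_2^*$-membership at each step; what the paper's argument buys is brevity, decoupling the ``$\in\muu$'' conclusion from the inductive mechanism entirely.
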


\begin{proof}
(1) $\Rightarrow$ (2):  Fix $\sigma$ such that $\sigma=\bigO(\nu)$.  We claim that if $a^\sigma$ is defined, then $a^\sigma\in \muu$.  To see this, write $\sigma =n\nu + \eta$ for some $n$ and $\eta < \nu$.  By Lemma \ref{L:EZFacts}, $$a^\sigma =(a^\nu)^n\cdot a^\eta \in \muu \cdot \muu \subseteq \muu.$$  We now prove by internal induction  that $a^i$ is defined for all $i\in \{1,\ldots,\sigma\}$.  The assertion is true for $i=1$ and inductively suppose that $a^i$ is defined and $i+1\leq \sigma$.  By Lemma \ref{L:EZFacts}, in order to show that $a^{i+1}$ is defined, it suffices to show that $(a^k,a^l)\in \o^*$ for all $k,l\in \{1,\ldots,i\}$ with $k+l=i+1$.  However, $(a^k,a^l)\in \muu \times \muu \subseteq \o^*$, finishing the induction.  

\

\noindent Clearly (2)$\Rightarrow$ (1) and (3), so it remains to prove that (3) $\Rightarrow (2)$.  So assume that there is $\tau \in \{1,\ldots,\nu\}$ as in (3).  By the proof of (1)$\Rightarrow (2)$, we see that $a\in G^{\lilo}(\tau)$.  Since $G^{\lilo}(\tau)\subseteq G^{\lilo}(\nu)$, we are finished.   
\end{proof}

\

\begin{df}
We say that $a\in \boldsymbol{\mu}$ is \textbf{degenerate} if, for all $i$, $a^i$ is defined and $a^i\in \boldsymbol{\mu}$.  
\end{df}

\

\noindent It is easy to see that $G$ is $\Ns$ if and only if $\muu$ has no nondegenerate elements other than $1$.

\

\begin{lemma}
Let $a_1,\ldots,a_\nu$ be an internal sequence of elements of $G^*$ with $\nu > 0$ such that for all $i\in \{1,\ldots,\nu\}$ we have $a_i\in \boldsymbol{\mu}$, $a_1\cdots a_i$ is defined and $a_1\cdots a_i\in G^*_{\ns}$.  Then the set $$S:=\{\st(a_1\cdots a_i):1\leq i \leq \nu\}\subseteq G$$ is compact and connected (and contains 1).
\end{lemma}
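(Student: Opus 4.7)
The plan is first to show $S$ is compact by a saturation argument, and then to show $S$ is connected by a ``discrete intermediate value'' argument that leverages the fact that consecutive partial products have the same standard part. This latter observation is the key input: writing $T := \{a_1 \cdots a_i : 1 \leq i \leq \nu\}$, an internal subset of $G^*_{\ns}$ with $S = \st(T)$, we have $a_1 \cdots a_{i+1} = (a_1 \cdots a_i) \cdot a_{i+1}$ with $a_{i+1} \in \muu$, so by part (2) of the preceding lemma, $\st(a_1 \cdots a_{i+1}) = \st(a_1 \cdots a_i)$ for each $i < \nu$. Also $\st(a_1) = 1$, so $1 \in S$.

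For compactness, let $\{U_\alpha\}_{\alpha \in I}$ be an open cover of $S$ in $G$. By regularity of $G$ (a consequence of locally compact Hausdorff), for each $s \in S$ I would choose an open precompact neighborhood $V_s$ of $s$ and an index $\alpha(s)$ with $\overline{V_s} \subseteq U_{\alpha(s)}$. Every $t \in T$ is infinitely close to some $s \in S$ and so lies in $V_s^*$; thus $T \subseteq \bigcup_{s \in S} V_s^*$. Since $T$ is internal, saturation yields finitely many $s_1, \ldots, s_k \in S$ with $T \subseteq V_{s_1}^* \cup \cdots \cup V_{s_k}^*$. Taking standard parts then gives $S \subseteq \overline{V_{s_1}} \cup \cdots \cup \overline{V_{s_k}} \subseteq U_{\alpha(s_1)} \cup \cdots \cup U_{\alpha(s_k)}$, producing the desired finite subcover.

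For connectedness, I would suppose toward a contradiction that $S = A \sqcup B$ with $A$, $B$ nonempty and relatively (hence absolutely) closed in the compact set $S$. Both $A$ and $B$ are then compact, so by regularity there exist open $U', V' \subseteq G$ with $A \subseteq U'$, $B \subseteq V'$, and $\overline{U'} \cap \overline{V'} = \emptyset$. Since $B \neq \emptyset$, some partial product $a_1 \cdots a_{i_0}$ has standard part in $B$, hence lies in $(V')^*$ and not in $(U')^*$. Let $i^* := \min\{i : 1 \leq i \leq \nu,\ a_1 \cdots a_i \notin (U')^*\}$, which exists by internal well-ordering of $\n^*$; and $i^* \geq 2$ since $a_1 \in \muu \subseteq (U')^*$. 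Put $s := \st(a_1 \cdots a_{i^*-1}) = \st(a_1 \cdots a_{i^*})$, using the key observation above. If $s \in A \subseteq U'$, then $\muu(s) \subseteq (U')^*$, contradicting $a_1 \cdots a_{i^*} \notin (U')^*$. If $s \in B$, then $s \notin \overline{U'}$, yet $a_1 \cdots a_{i^*-1} \in (U')^*$ forces $s \in \overline{U'}$, again a contradiction.

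The main obstacle is the compactness step: one has to pass from an internal subset of nearstandard points to a compact set in $G$, and a naive argument that takes standard parts directly would only control closures of the $U_\alpha$ rather than the $U_\alpha$ themselves. The fix is precisely to pre-shrink each $V_s$ using regularity so that $\overline{V_s}$ sits inside a chosen $U_{\alpha(s)}$. Once compactness is in hand, connectedness becomes a clean ``least index of defection'' argument carried out internally in $\n^*$, with the nontrivial content supplied by the identity $\st(a_1 \cdots a_{i^*-1}) = \st(a_1 \cdots a_{i^*})$.
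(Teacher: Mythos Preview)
Your proof is correct. The connectedness argument is essentially identical to the paper's: both pick the minimal index at which a partial product leaves the ``$1$-side'' of a separation and exploit $\st(a_1\cdots a_{i-1})=\st(a_1\cdots a_i)$ to derive a contradiction. For compactness the routes differ. The paper argues by saturation that there is a single compact $K\subseteq G$ with $a_1\cdots a_i\in K^*$ for all $i\le \nu$ (otherwise saturation would produce a single $i$ with $a_1\cdots a_i\notin K^*$ for every compact $K$, contradicting local compactness and nearstandardness), and then invokes the well-known fact that the standard part of an internal nearstandard set is closed, so $S\subseteq K$ is a closed subset of a compact set. You instead verify the open-cover definition directly via a regularity-plus-saturation argument. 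The paper's version is shorter because it outsources the topological work to a cited fact; yours is self-contained and avoids that appeal. One small omission: in your connectedness step you should remark that, since $1\in S$, you may assume $1\in A$ before concluding $a_1\in\boldsymbol{\mu}\subseteq (U')^*$.
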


\

\noindent The proof is as in the global case, but we include it for completeness.
 
\begin{proof}
Suppose for every compact $K\subseteq G$, there is $i\in \{1,\ldots, \nu\}$ such that $a_1\cdots a_i \notin K^*$.  Then, by saturation, there is $i\in \{1,\ldots, \nu\}$ such that $a_1\cdots a_i \notin K^*$ for all compact $K\subseteq G$, contradicting local compactness.  Hence there is compact $K\subseteq G$ with $a_1\cdots a_i\in K^*$ for all $i\in \{1,\ldots, \nu\}$, so $S\subseteq K$.  It is well-known that $S$ must be closed, being the standard part of an internal nearstandard set.  Hence $S$ is  compact.

\

\noindent Suppose $S$ is not connected.  Then we have disjoint open subsets $U$ and $V$ of $G$ such that $S\subseteq U\cup V$ and $S\cap U \not= \emptyset$, $S\cap V \not= \emptyset$.  Assume $1\in U$, so $a_1 \in U^*$.  Choose $i\in \{1,\ldots, \nu\}$ minimal such that $a_1\cdots a_i\in V^*$.  Then $i\geq 2$ and $a_1\cdots a_{i-1}\in U^*$.  Now $a:=\st(a_1\cdots a_{i-1})=\st(a_1\cdots a_i)\in S$.  If $a\in U$, then $a_1\cdots a_{i-1}\in U^*$ and if $a\in V$, then $a_1\cdots a_i \in V^*$; either option yields a contradiction.
\end{proof}

\

\noindent Until further notice, $U$ denotes a compact symmetric neighborhood of $1$ such that $U\subseteq \u_2$.

\

\begin{df}\label{D:ordelement}
Let $a\in G^*$.  If, for all $i$, $a^i$ is defined and $a^i\in U^*$, define ord$_U(a)=\infty$.  Else, define ord$_U(a)=\nu$ if, for all $i$ with $|i|\leq \nu$, $a^i$ is defined, $a^i \in U^*$, and $\nu$ is the largest element of $\n^*$ for which this happens.
\end{df}

\

\noindent It is easy to see that ord$_U(a)=0$ iff $a\notin U^*$.  By Lemma \ref{L:Overflow}, we can see that ord$_U(a)>\n$ if $a\in \boldsymbol{\mu}$.  Note that if $a\in \boldsymbol{\mu}$ and ord$_U(a)=\nu \in \n^*$, then $a^{\nu + 1}$ is defined.  To see this, it is enough to check that $(a^i,a^j)\in \o^*$ for all $i, j \in \{1,\ldots,\nu\}$ such that $i+j=\nu +1$.  However, this is true since $(a^i,a^j)\in U^* \times U^* \subseteq \u_2^*\times \u_2^* \subseteq \o^*$.  Hence, if ord$_U(a)=\nu$, then this is not because $a^{\nu+1}$ is undefined but rather for the more important reason that $a^{\nu+1}$ is defined but not in $U^*$.  We have thus captured the correct adaptation of the global notion to our local setting.

\

\begin{lemma}\label{L:connsubgroup}
Suppose $a\in \boldsymbol{\mu}$ and $a^i \notin \boldsymbol{\mu}$ for some $i=\lilo(\ord_U(a))$.  Then $U$ contains a nontrivial connected subgroup of $G$. 
\end{lemma}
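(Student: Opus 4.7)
Set $\nu := \ord_U(a)$; since $a \in \muu$, we have $\nu > \n$. The plan is to build a nontrivial compact connected subgroup of $G$ inside $U$ by taking standard parts of powers $a^i$ for $|i|$ small relative to $\nu$, then intersect over all ``scales of smallness'' so that the resulting set becomes closed under multiplication.

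For each $n\ge 1$, set
$$H_n := \{\st(a^i) \ | \ i \in \z^*,\ n|i| \leq \nu\} \subseteq U,$$
where the standard part exists because $|i|\le\nu$ forces $a^i$ to be defined and to lie in the compact set $U^*$. Apply the preceding lemma to the internal constant sequences $(a,\ldots,a)$ and $(a^{-1},\ldots,a^{-1})$ of length $\lfloor \nu/n\rfloor$ to see that $\{\st(a^i):1\le i\le \nu/n\}$ and $\{\st(a^{-i}):1\le i\le \nu/n\}$ are each compact and connected; both contain $\st(a)=1=\st(a^{-1})$, so their union $H_n$ is compact and connected. The family $(H_n)_{n\ge 1}$ is decreasing, hence $H := \bigcap_n H_n$ is a compact connected subset of $U$. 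Moreover $H$ is nontrivial: picking $i = \lilo(\nu)$ with $a^i \notin \muu$ as in the hypothesis, the element $\st(a^i)\neq 1$ satisfies $n|i|\le \nu$ for every $n$, and hence lies in every $H_n$.

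It remains to verify that $H$ is a subgroup. Clearly $1\in H$, and $H$ is symmetric since the relation $n|i|\le \nu$ is symmetric in $i\mapsto -i$; moreover $H\subseteq U\subseteq \u_2$ gives $H\times H\subseteq \o$ for free, and $H\subseteq \t = G$. For closure under the product, fix $s,t\in H$ and $n\ge 1$. Using $s,t\in H_{2n}$, write $s=\st(a^i)$, $t=\st(a^j)$ with $2n|i|, 2n|j|\le \nu$; then $|i+j|\le \nu/n$, so $a^{i+j}$ is defined and lies in $U^*$. Lemma~\ref{L:EZFacts}(2) (when $ij\ge 0$) together with Corollary~\ref{C:EZCorollary} (when $ij<0$) yield $a^i\cdot a^j = a^{i+j}$, and taking standard parts gives $s\cdot t = \st(a^{i+j}) \in H_n$. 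Since $n$ was arbitrary, $s\cdot t\in H$. The main care is precisely in this product step: one has to refine from scale $n$ to scale $2n$ before multiplying, so that the sum of exponents stays safely inside $\nu/n$, and one needs the local-group versions of the ``laws of exponents'' to ensure the relevant products all live in $\u_2$ and that $a^i\cdot a^j$ really equals $a^{i+j}$.
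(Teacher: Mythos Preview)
Your proof is correct and arrives at the same subgroup as the paper, but the bookkeeping is organized a little differently. The paper works directly with
\[
G_U(a):=\{\st(a^i)\ :\ |i|=\lilo(\nu)\},
\]
observes that it is connected as a \emph{union} of the connected sets $\{\st(a^j):1\le j\le |i|\}$ (each containing $1$, coming from the preceding lemma), and verifies closure under products in one line: if $|i|,|j|=\lilo(\nu)$ then $|i+j|=\lilo(\nu)$, so $\st(a^i)\cdot\st(a^j)=\st(a^{i+j})\in G_U(a)$. You instead realize the same set as the nested \emph{intersection} $H=\bigcap_n H_n$ of compact connected sets, and consequently need the ``pass from $n$ to $2n$'' refinement to get closure under products. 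Both arguments are valid; the paper's is slightly more direct because working with the $\lilo(\nu)$ scale makes the subgroup property immediate, whereas your intersection formulation trades that for the standard fact that a decreasing intersection of compact connected sets is connected.
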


\begin{proof}
By the previous lemma, the set $$G_U(a):=\{\st(a^i) \ | \ |i|=\lilo(\ord_U(a))\}$$ is a union of connected subsets of $U$, each containing $1$, hence is connected.   It is also a subgroup of $G$ by Lemma \ref{L:EZFacts} and Corollary \ref{C:EZCorollary}.
\end{proof}

\

\begin{df}
We say that $a\in \boldsymbol{\mu}$ is \textbf{$U$-pure}\footnote{We have decided to replace the use of ``regular" in \cite{H} with ``pure" as ``regular" already has a different meaning for topological spaces.} if it is nondegenerate and $a\in G(\tau)$, where $\tau:=\ord_U(a)$.  We say that $a\in \boldsymbol{\mu}$ is \textbf{pure} if it is $V$-pure for some compact symmetric neighborhood $V$ of $1$ such that $V\subseteq \u_2$.
\end{df}

\

\noindent The last lemma now reads that if $U$ contains no non-trivial connected subgroup of $G$, then every $a\in \boldsymbol{\mu}$ which is nondegenerate is $U$-pure.

\

\begin{lemma}\label{L:pure}
Let $a\in \boldsymbol{\mu}$.  Then $a$ is pure iff there is $\nu > \n$ such that $a^\nu$ is defined, $a^\nu \notin \boldsymbol{\mu}$ and $a\in G(\nu)$.
\end{lemma}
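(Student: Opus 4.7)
The plan is to handle the two directions separately, the backward direction requiring a bit of care in choosing the right compact symmetric neighborhood. A key preliminary observation is that for $\tau \in \n^*\setminus \n$ and any $n>0$ the inequalities $(\tau+1)/(2n) < \tau/n$ and $\tau/n < (\tau+1)/n$ both hold, so $i = \lilo(\tau)$ iff $i = \lilo(\tau+1)$; more generally, if $\tau_1 \le \tau_2$ then every $i = \lilo(\tau_1)$ satisfies $i = \lilo(\tau_2)$, which shows $G(\tau_2)\subseteq G(\tau_1)$. This monotonicity is what drives both implications.

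For the forward direction, suppose $a$ is $V$-pure for some compact symmetric $V \subseteq \u_2$ and set $\tau := \ord_V(a)$. Nondegeneracy forces $\tau \in \n^*$ (not $\infty$), and Lemma \ref{L:Overflow} applied to $V$ gives $\tau > \n$. I would take $\nu := \tau + 1$. By the remark following Definition \ref{D:ordelement}, $a^{\tau+1}$ is defined; by maximality of $\tau$ we have $a^{\tau+1} \notin V^*$, and since $\muu \subseteq V^*$ this yields $a^\nu \notin \muu$. The preliminary observation above gives $G(\tau) = G(\tau+1)$, so $a \in G(\tau) = G(\nu)$, as required.

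For the backward direction, suppose $\nu > \n$ witnesses the right-hand side. The main task is to exhibit a compact symmetric $V \subseteq \u_2$ for which $a^\nu \notin V^*$; once this is done, nondegeneracy of $a$ is immediate (witnessed by $a^\nu$), and if $\tau := \ord_V(a)$, then $a^\nu \notin V^*$ forces $\tau < \nu$, while Lemma \ref{L:Overflow} (applied to $V$) forces $\tau > \n$. By the monotonicity observation, $G(\nu) \subseteq G(\tau)$, so $a \in G(\tau)$, and hence $a$ is $V$-pure.

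The main obstacle is thus producing $V$ with $a^\nu \notin V^*$. I would split on whether $a^\nu$ is nearstandard: if not, then $a^\nu \notin K^*$ for every compact $K\subseteq G$, so any compact symmetric neighborhood $V\subseteq \u_2$ (which exists since $G$ is locally compact and hence regular) works. If $a^\nu \in G^*_{\ns}$ with $b := \st(a^\nu)$, then $b \neq 1$ because $a^\nu \notin \muu$; using Hausdorffness separate $1$ and $b$ by disjoint open sets, shrink the one around $1$ to lie in $\u_2$, and then use regularity plus local compactness to find a compact symmetric $V \subseteq \u_2$ contained in that open set, which automatically satisfies $a^\nu \notin V^*$.
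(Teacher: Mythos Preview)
Your proof is correct and follows essentially the same approach as the paper. The only cosmetic differences are that in the forward direction you take $\nu=\tau+1$ (using $a^{\tau+1}\notin V^*$ directly) whereas the paper takes $\nu=\tau$ (arguing $a^\tau\notin\muu$ since otherwise $a^{\tau+1}\in\muu\subseteq V^*$), and in the backward direction you spell out in detail how to find $V$ with $a^\nu\notin V^*$, while the paper simply asserts its existence from $a^\nu\notin\muu$; your case split on nearstandardness is correct but unnecessary, since $a^\nu\notin\muu$ already means $a^\nu\notin U^*$ for some neighborhood $U$ of $1$, and any compact symmetric $V\subseteq U\cap\u_2$ then works.
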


\begin{proof}
First suppose that $a$ is $V$-pure.  Let $\nu = \ord_V(a)$.  Then certainly $a^\nu$ is defined.  Now since $a^{\nu + 1}$ is defined, $a^\nu \notin \boldsymbol{\mu}$, else $a^{\nu+1}=a^\nu \cdot a \in \boldsymbol{\mu}$, contradicting  $a^{\nu+1}\notin V^*$.  However, $a^i\in \boldsymbol{\mu}$ for $i=\lilo(\nu)$ by definition of $V$-purity.  Conversely, suppose one has $\nu>\n$ such that $a^\nu$ is defined, $a^\nu \notin \boldsymbol{\mu}$ and $a\in G(\nu)$.  Since $a^\nu \notin \boldsymbol{\mu}$, there is a compact symmetric neighborhood $V$ of $1$ such that $V\subseteq \u_2$ and such that $a^\nu \notin V^*$.  Let $\tau:=\ord_V(a)$.  Then $\tau < \nu$, implying that $a\in G(\tau)$ and thus $a$ is $V$-pure.         
\end{proof}

\

\noindent Let $Q$ range over internal subsets of $G^*$ such that $Q\subseteq \boldsymbol{\mu}$, $1\in Q$, and $Q$ is symmetric.  If $\nu$ is such that for every internal sequence $a_1,\ldots,a_\nu$ from $Q$, $a_1\cdots a_\nu$ is defined, then we define $Q^\nu$ to be the internal subset of $G^*$ consisting of all $a_1\cdots a_\nu$ for $a_1,\ldots,a_\nu$ an internal sequence from $Q$.  In this situation, we say that $Q^\nu$ is defined.  

\

\begin{df}
If for all $\nu$, $Q^\nu$ is defined and $Q^\nu \subseteq \boldsymbol{\mu}$, then we say that $Q$ is \textbf{degenerate}.   If $Q^\nu$ is defined and $Q^\nu\subseteq U^*$ for all $\nu$, define ord$_U(Q)=\infty$.  Else, define ord$_U(Q)=\nu$ if $Q^\nu$ is defined, $Q^\nu \subseteq U^*$, and $\nu$ is the largest element of $\n^*$ for which this happens.
\end{df}

\

\noindent Using an overflow argument similar to that in Lemma \ref{L:Overflow}, we have that ord$_U(Q)>\n$.  One can also check that if $\ord_U(Q)\in \n^*$, then $Q^{\ord_U(Q)+1}$ is defined.

\

\begin{lemma}\label{L:singsubgroup}
Suppose $Q^\nu \nsubseteq \boldsymbol{\mu}$ for some $\nu=\lilo(\ord_U(Q))$.  Then $U$ contains a non-trivial connected subgroup of $G$.
\end{lemma}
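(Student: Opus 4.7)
The plan is to mimic the proof of Lemma \ref{L:connsubgroup}, working with the set-version quantities $Q^\nu$ in place of single powers $a^\nu$. Concretely, I would define
\[
G_U(Q) := \{\st(b) \mid b \in Q^\nu \text{ for some } \nu = \lilo(\ord_U(Q))\}
\]
and show that $G_U(Q)$ is a nontrivial connected subgroup of $G$ contained in $U$; this gives the conclusion.

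First note that every $\nu = \lilo(\ord_U(Q))$ satisfies $\nu \leq \ord_U(Q)$, so $Q^\nu$ is defined; padding a length-$\nu$ internal sequence from $Q$ with copies of $1 \in Q$ and applying Lemma \ref{L:EZFacts}(1) gives $Q^\nu \subseteq Q^{\ord_U(Q)} \subseteq U^*$. Since $U$ is compact, every such $b$ is nearstandard with $\st(b) \in U$, so $G_U(Q) \subseteq U$. Trivially $1 \in G_U(Q)$, and nontriviality is exactly the hypothesis: a witness $\nu$ and $b \in Q^\nu$ with $b \notin \muu$ yield $\st(b) \neq 1$.

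For closure under the group operations, suppose $\st(b_1), \st(b_2) \in G_U(Q)$ with $b_i \in Q^{\nu_i}$ and $\nu_i = \lilo(\ord_U(Q))$. Then $\nu_1 + \nu_2 = \lilo(\ord_U(Q))$ as well, and concatenating the representing sequences for $b_1$ and $b_2$ produces, via the definition of ``represents'' together with Lemma \ref{L:EZFacts}(3), an element $b_1 b_2 \in Q^{\nu_1 + \nu_2}$. Hence $\st(b_1)\st(b_2) = \st(b_1 b_2) \in G_U(Q)$. For inverses, the symmetry of $Q$ combined with Lemma \ref{L:EZFacts}(4) gives $b_1^{-1} \in Q^{\nu_1}$, so $\st(b_1)^{-1} \in G_U(Q)$.

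For connectedness, given $b = a_1 \cdots a_\nu \in Q^\nu$ with $\nu = \lilo(\ord_U(Q))$, every partial product $a_1 \cdots a_i$ (for $1 \leq i \leq \nu$) lies in $Q^i \subseteq U^*$ and is thus nearstandard. Applying the compact-connectedness lemma from earlier in the section (the one preceding Definition \ref{D:ordelement}) to this internal sequence yields that $\{\st(a_1 \cdots a_i) : 1 \leq i \leq \nu\} \subseteq G_U(Q)$ is a connected subset containing both $1$ and $\st(b)$. Thus $G_U(Q)$ is a union of connected sets all passing through $1$, hence connected. The only real subtlety is the bookkeeping with $\lilo$ and $\ord_U(Q)$: one must verify closure of the collection $\{\nu : \nu = \lilo(\ord_U(Q))\}$ under addition and that $Q^i \subseteq U^*$ for every $i \leq \ord_U(Q)$, both of which are routine once one uses $1 \in Q$.
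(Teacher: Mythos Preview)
Your proposal is correct and follows essentially the same approach as the paper: both define $G_U(Q)$ in the identical way, verify it is a nontrivial subgroup of $G$ contained in $U$ via concatenation of representing sequences (using that $\nu_1+\nu_2=\lilo(\ord_U(Q))$) and Lemma~\ref{L:EZFacts}, and obtain connectedness by applying the earlier compact-connectedness lemma to partial products. Your write-up is in fact slightly more detailed than the paper's (which simply says ``as in the proof of Lemma~\ref{L:connsubgroup}'' for the connectedness step).
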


\begin{proof}
As in the proof of Lemma \ref{L:connsubgroup}, the set $$G_U(Q)=\{\st(a) \ | \ a\in Q^\nu \text{ for some }\nu=\lilo(\ord_U(Q))\}$$ is a union of connected subsets of $U$, each containing $1$, and thus itself is a connected subset of $U$.  To see that $G_U(Q)$ is a group, suppose that $a\in Q^\nu$ and $b\in Q^\eta$ for $\nu,\eta=\lilo(\ord_U(Q))$.  Then $\nu+\eta=\lilo(\ord_U(Q))$ and so the concatenation of the sequences representing $a$ and $b$ represents an element of $G^*$, which must necessarily be $a\cdot b$.  Hence $a\cdot b\in Q^{\nu+\eta}$ and $\st(a)\cdot \st(b)=\st(a\cdot b)\in G_U(Q)$.  That $G_U(Q)$ is closed under inverses is an easy consequence of Lemma \ref{L:EZFacts}.
\end{proof}
  
\

\begin{df}
We say that $Q$ is \textbf{$U$-pure} if $Q$ is nondegenerate and if $Q^\nu\subseteq \boldsymbol{\mu}$ for all $\nu=\lilo(\ord_U(Q))$.  We say that $Q$ is \textbf{pure} if it is $V$-pure for some compact symmetric neighborhood $V$ of $1$ in $G$ such that $V\subseteq \u_2$.
\end{df}  
  
\

\noindent The previous lemma now reads that if $U$ contains no nontrivial connected subgroups of $G$, then every nondegenerate $Q$ as above is $U$-pure.

\

\begin{lemma}
$Q$ is pure iff there is some $\tau > \n$ such that $Q^\tau$ is defined, $Q^\tau \nsubseteq \boldsymbol{\mu}$ and $Q^\nu\subseteq \boldsymbol{\mu}$ for all $\nu=\lilo(\tau)$.  
\end{lemma}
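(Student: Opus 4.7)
The statement is the set-theoretic analog of Lemma \ref{L:pure}, and my plan is to mimic that proof almost verbatim, using the $\ord_U(Q)$-machinery developed in the paragraphs immediately preceding the statement.

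For the forward direction, assume $Q$ is $V$-pure and set $\tau := \ord_V(Q)$. The overflow remark before the definition gives $\tau > \n$, and the subsequent remark gives that $Q^{\tau+1}$ is defined. The condition $Q^\nu \subseteq \muu$ for all $\nu = \lilo(\tau)$ is exactly the defining property of $V$-purity. To see $Q^\tau \nsubseteq \muu$, I argue by contradiction: if $Q^\tau \subseteq \muu$, then since every element of $Q^{\tau+1}$ factors as a product of an element of $Q^\tau \subseteq \muu$ and an element of $Q \subseteq \muu$, and $\muu$ is a subgroup of $G^*$, we would get $Q^{\tau+1} \subseteq \muu \subseteq V^*$, contradicting $\ord_V(Q) = \tau$.

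For the backward direction, suppose $\tau > \n$ is as in the statement. First, $Q$ is nondegenerate, since otherwise $Q^\tau \subseteq \muu$. To produce a witness neighborhood $V$, I would pick $a \in Q^\tau \setminus \muu$; by definition of the monad there is an open neighborhood $W$ of $1$ with $a \notin W^*$, and by local compactness, regularity, and the routine symmetrization trick one can shrink $W$ to a compact symmetric neighborhood $V$ of $1$ with $V \subseteq \u_2$ and still $a \notin V^*$. Then $Q^\tau \nsubseteq V^*$, so $\ord_V(Q) < \tau$. Consequently any $\nu = \lilo(\ord_V(Q))$ also satisfies $\nu = \lilo(\tau)$, and the hypothesis then gives $Q^\nu \subseteq \muu$. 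Thus $Q$ is $V$-pure, hence pure.

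I do not anticipate a serious obstacle: both directions are close transcriptions of the arguments in Lemma \ref{L:pure}, with ``$a^i$'' replaced by ``elements of $Q^\nu$'' and the containment $Q \subseteq \muu$ playing the role formerly played by $a \in \muu$. The only mild care point is the shrink-and-symmetrize step used to locate a compact symmetric $V$ inside $\u_2$ avoiding the chosen $a$, which is the standard kind of construction used repeatedly in this paper.
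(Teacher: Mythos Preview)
Your proposal is correct and follows essentially the same argument as the paper's proof: both directions set $\tau=\ord_V(Q)$ (forward) and, conversely, choose a compact symmetric $V\subseteq\u_2$ with $Q^\tau\nsubseteq V^*$ so that $\ord_V(Q)<\tau$. Your version is slightly more explicit about nondegeneracy and about locating $V$, but the logic is identical.
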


\begin{proof}
First suppose $Q$ is $V$-pure.  Let $\tau=\ord_V(Q)$.  Then $\tau > \n$, $Q^\tau$ is defined, $Q^\tau \nsubseteq \boldsymbol{\mu}$ (otherwise $Q^{\tau +1}\subseteq \boldsymbol{\mu}\subseteq V^*$, contradicting the definition of $\tau$), and $Q^\nu \subseteq \boldsymbol{\mu}$ for $\nu=\lilo(\tau)$ by definition of being $V$-pure.  Conversely, suppose we have $\tau>\n$ such that $Q^\tau$ is defined, $Q^\tau \nsubseteq \boldsymbol{\mu}$, and $Q^\nu\subseteq \boldsymbol{\mu}$ for $\nu=\lilo(\tau)$.  Since $Q^\tau \nsubseteq \muu$, we can find a compact symmetric neighborhood $V$ of $1$ in $G$ such that $V\subseteq \u_2$ and $Q^\tau \nsubseteq V$.  Let $\eta:=\ord_V(Q)$.  Then $\eta < \tau$, implying that $Q^\nu \subseteq \muu$ for all $\nu =\lilo(\eta)$, that is $Q$ is $V$-pure.
\end{proof}

\

\begin{df}
$G$ \textbf{has no small connected subgroups}, henceforth abbreviated $G$ is $\Nsc$, if there is a neighborhood of $1$ in $G$ that contains no connected subgroup of $G$ other than $\{1\}$.  We say that $G$ is \textbf{pure} if every nondegenerate $Q$ as above is pure.  
\end{df}

\

\begin{cor}
If $G$ is $\Nsc$, then $G$ is pure.
\end{cor}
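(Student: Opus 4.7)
The plan is a one-shot application of the contrapositive of Lemma~\ref{L:singsubgroup}. The content of $\Nsc$ combined with Lemma~\ref{L:singsubgroup} should directly force $V$-purity for a suitably chosen compact symmetric neighborhood $V$.

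First, using $\Nsc$, I fix a neighborhood $W$ of $1$ in $G$ containing no connected subgroup of $G$ other than $\{1\}$. I then want to produce a compact symmetric neighborhood $V$ of $1$ with $V \subseteq W \cap \u_2$, since Lemma~\ref{L:singsubgroup} is phrased for such neighborhoods. Since $G$ is locally compact, I choose a compact neighborhood $K$ of $1$ inside the open set $W \cap \u_2$. Since the inversion map $\iota$ is a continuous involution on $G$ (using our standing assumption that $\t = G$ and that $\iota\circ\iota = \id$), $\iota$ is a homeomorphism of $G$ onto itself, so $V := K \cap \iota(K)$ is compact, symmetric, contained in $W \cap \u_2$, and its interior (which contains $\interior(K)\cap \iota(\interior(K))$) is a neighborhood of $1$. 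By construction $V \subseteq W$, so $V$ contains no connected subgroup of $G$ other than $\{1\}$.

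Now let $Q$ be any internal subset of $G^*$ with $Q \subseteq \muu$, $1 \in Q$, $Q$ symmetric, and $Q$ nondegenerate. I claim $Q$ is $V$-pure. Indeed, suppose for contradiction that there exists $\nu = \lilo(\ord_V(Q))$ with $Q^\nu \nsubseteq \muu$. Then Lemma~\ref{L:singsubgroup} (applied with $U = V$, which satisfies the hypothesis that $U$ is a compact symmetric neighborhood of $1$ inside $\u_2$) yields a nontrivial connected subgroup of $G$ contained in $V$, contradicting the choice of $V$. Hence $Q^\nu \subseteq \muu$ for every $\nu = \lilo(\ord_V(Q))$, which together with nondegeneracy is exactly the definition of $V$-purity. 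Thus $Q$ is pure, and since $Q$ was arbitrary, $G$ is pure.

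There is really no main obstacle: once the preliminary reduction to a compact symmetric $V \subseteq \u_2$ is carried out (using local compactness and the continuity of inversion), the proof is a direct invocation of Lemma~\ref{L:singsubgroup}. The only care needed is in verifying that the compact, symmetric neighborhood $V$ can genuinely be chosen inside $\u_2$ while still avoiding nontrivial connected subgroups; this is immediate because $\Nsc$ gives such a neighborhood $W$, and any smaller neighborhood inherits the property.
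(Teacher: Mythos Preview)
Your proof is correct and follows exactly the paper's intended argument: the paper leaves this corollary unproven, treating it as immediate from the contrapositive rephrasing of Lemma~\ref{L:singsubgroup} stated just before the definition of purity. You have simply spelled out the one detail the paper suppresses, namely the construction of a compact symmetric neighborhood $V\subseteq \u_2$ inside the $\Nsc$ neighborhood, which is routine given local compactness and continuity of inversion.
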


\

\noindent \textbf{Pure infinitesimals and local 1-parameter subgroups}

\

\noindent  Suppose $a\in \boldsymbol{\mu}$ is $U$-pure and $0<\nu=\bigO(\tau)$, where $\tau:=\ord_U(a)$.  Let $$\Sigma_{\nu,a,U}:=\{r\in \r^{>0} \ | \ a^{[r\nu]} \text{ is defined and } a^i\in U^* \text{ if }|i|\leq [r\nu]\}.$$  Since $\nu=\bigO(\tau)$, $\Sigma_{\nu,a,U} \not= \emptyset$.  Let $r_{\nu,a,U}=\sup \Sigma_{\nu,a,U}$, where we let $r_{\nu,a,U}=\infty$ if $\Sigma_{\nu,a,U}=\r^{>0}$.  Suppose $s<r_{\nu,a,U}$.  Then $s\in \Sigma_{\nu,a,U}$ and $a^{[s\nu]}\in U^*$.  Since $a^{[s\nu]+1}$ is defined (see comment after Definition \ref{D:ordelement}; all that was used there was that $U\times U\subseteq \o$), we have that $a^{[s\nu]+1}=a^{[s\nu]}\cdot a\in G^*_{\ns}$.  Hence we have that $a^{[s\nu]}$ is defined and nearstandard for $s\in (-r_{\nu,a,U},0)$ as well since, for such $s$, we have that $[s\nu]=-[(-s)\nu]$ or $-[(-s)\nu]-1$.

\

\begin{lemma}\label{L:regelement}
The map $X:(-r_{\nu,a,U},r_{\nu,a,U})\rightarrow G$ given by $X(s):=\st(a^{[s\nu]})$ is a local 1-ps of $G$.
\end{lemma}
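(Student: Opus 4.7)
The plan is to verify the two substantive axioms for a local 1-parameter subgroup: the homomorphism identity $X(s_1+s_2)=X(s_1)\cdot X(s_2)$ together with $(X(s_1),X(s_2))\in \o$, and continuity of $X$. Membership of $\im(X)$ in $\t$ is automatic under the standing assumption $\t=G$.

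For the homomorphism, I would first observe that $X(s)\in U\subseteq \u_2$ for every $s$ in the domain, so $(X(s_1),X(s_2))\in \u_2\times\u_2\subseteq \o$, and hence $X(s_1)\cdot X(s_2)=\st(a^{[s_1\nu]}\cdot a^{[s_2\nu]})$. I then plan to show $a^{[s_1\nu]}\cdot a^{[s_2\nu]}=a^{[s_1\nu]+[s_2\nu]}$ by splitting on the signs of $[s_1\nu]$ and $[s_2\nu]$: when they agree, pick a real $r$ with $|s_1|+|s_2|<r<r_{\nu,a,U}$ (possible since then $|s_1+s_2|=|s_1|+|s_2|$) and apply Lemma \ref{L:EZFacts}(2) after using Lemma \ref{L:EZFacts}(1) to see $a^{[s_1\nu]+[s_2\nu]}$ is defined; when they differ, apply Corollary \ref{C:EZCorollary}; in both cases the fact that the relevant powers of $a$ lie in $U^*\subseteq \u_2^*$ supplies the required $\o^*$-membership. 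Since $[(s_1+s_2)\nu]-[s_1\nu]-[s_2\nu]\in\{0,1\}$ and multiplication by $a\in\muu$ leaves standard parts unchanged, taking $\st$ gives the identity.

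Continuity is, I expect, the main obstacle. The homomorphism law yields $X(s)=X(s_0)\cdot X(s-s_0)$ for $s$ near $s_0$, reducing continuity at $s_0$ to continuity at $0$. Given an open neighborhood $V$ of $1$, I would invoke regularity of $G$ to pick an open $V'\ni 1$ with $\overline{V'}\subseteq V$, and consider the internal set
$$A=\{\eta\in \n^*:a^i\text{ is defined and }a^i\in V'^*\text{ for all }|i|\leq \eta\}.$$
Purity of $a$, i.e.\ $a\in G(\tau)$ for $\tau=\ord_U(a)$, places every $\lilo(\tau)$ into $A$, so by overspill there is $\eta\in A$ with $\eta\geq \tau/n$ for some standard $n>0$. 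Writing $\nu\leq n_0\tau$ (available since $\nu=\bigO(\tau)$) and choosing a standard real $\delta<1/(2nn_0)$, I would check that $|s|<\delta$ forces $|[s\nu]|\leq \delta\nu+1<\tau/n\leq \eta$, whence $a^{[s\nu]}\in V'^*$ and $X(s)\in\overline{V'}\subseteq V$.

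The subtlety driving this overspill step is that for a fixed standard $s\neq 0$ the exponent $[s\nu]$ is only $\bigO(\tau)$, never $\lilo(\tau)$, so purity by itself does not force $a^{[s\nu]}$ to be infinitesimal; the overspill converts the qualitative condition ``$a^i\in\muu$ for $i=\lilo(\tau)$'' into the quantitative ``$a^i\in V'^*$ for $|i|\leq \eta$'' on a range comparable to $\tau$, providing exactly the headroom in which a standard $\delta$ fits.
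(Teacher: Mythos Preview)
Your proposal is correct and follows essentially the same route as the paper: additivity by a sign case split invoking Lemma~\ref{L:EZFacts} and Corollary~\ref{C:EZCorollary}, followed by the floor-function discrepancy $[(s_1+s_2)\nu]-[s_1\nu]-[s_2\nu]\in\{0,1\}$ being absorbed by $a\in\muu$; and continuity reduced to $0$, then obtained by an overspill/saturation argument from $a^i\in\muu$ for $i$ infinitesimally small relative to the order. The only cosmetic difference is that the paper runs the continuity argument directly in terms of $\nu$ rather than $\tau$: since $\nu=\bigO(\tau)$ one has $\lilo(\nu)\subseteq\lilo(\tau)$, so $i=\lilo(\nu)$ already forces $a^i\in\muu$, and a single saturation step then produces a standard $n$ with $a^i\in V^*$ whenever $|i/\nu|<1/n$, avoiding your conversion via $n_0$.
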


\begin{proof}
Throughout this proof, we let $r:=r_{\nu,a,U}$.  We first show that $X$ satisfies the additivity condition in the definition of a local 1-ps of $G$.  Note that the preceding arguments show that $\im(X)\subseteq \u_2$.  Suppose first that $s_1,s_2,s_1+s_2\in (0,r)$.  Since $[s_1\nu]+[s_2\nu]=[(s_1+s_2)\nu]$ or $[(s_1+s_2)\nu]-1$, we have that $a^{[s_1\nu]+[s_2\nu]}$ is defined and $\st(a^{[(s_1+s_2)\nu]})=\st(a^{[s_1\nu]+[s_2\nu]})$.  Hence $X(s_1+s_2)=X(s_1)\cdot X(s_2)$.  The case that $s_1,s_2,s_1+s_2\in (-r,r)$ with $s_1\cdot s_2 <0$ works similarly, using Corollary \ref{C:EZCorollary}.  Using the fact that $X(-s)=X(s)^{-1}$, we get the additivity property for $s_1,s_2\in (-r,0)$ for which $s_1+s_2\in (-r,0)$.

\

\noindent It remains to show that $X$ is continuous.  By Lemma \ref{L:homog}, it is enough to show that $X$ is continuous at $0$.  Let $V$ be a compact neighborhood of $1$ in $G$.  If $i=\lilo(\nu)$, then $a^i$ is defined and $a^i\in \boldsymbol{\mu} \subseteq V^*$.  By saturation, we have $n$ such that $\frac{1}{n}<r$ and such that if $|\frac{i}{\nu}|<\frac{1}{n}$, then $a^i$ is defined and $a^i \in V^*$.  Hence $X(s)\in V$ for $|s|<\frac{1}{n}$.

\end{proof}

\
 
\noindent We end this section with an extension lemma which will be used later in the paper.  From now on, $I:=[-1,1]\subseteq \r$.

\

\begin{lemma}\label{L:extension}
Suppose $X:I\rightarrow G$ is a continuous function such that for all $r,s\in I$, if $r+s\in I$, then $(X(r),X(s))\in \o$ and $X(r+s)=X(r)\cdot X(s)$.  Further suppose that $X(I)\subseteq \u_4$.  Then there exists $\epsilon \in \r^{>0}$ and a local 1-parameter subgroup $\overline{X}:(-1-\epsilon,1+\epsilon)\rightarrow G$ of $G$ such that $\overline{X}|I=X$.  
\end{lemma}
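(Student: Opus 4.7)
The plan is to apply Lemma \ref{L:regelement} to $a := X(1/\nu)$ for an infinite $\nu \in \n^*$. The additivity hypothesis forces $X(0) = 1$, so by continuity of $X$ at $0$ one gets $a \in \muu$, and an easy internal induction on $i$ (using the hypothesized identity $X(r+s) = X(r)X(s)$ whenever $r,s,r+s \in I$) shows that $a^i$ is defined with $a^i = X(i/\nu)$ for all internal $i$ with $|i| \leq \nu$.

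Next, I would choose a compact symmetric neighborhood $U$ of $1$ with $X(I) \subseteq \interior(U) \subseteq U \subseteq \u_2$; such $U$ exists because $X(I)$ is a compact symmetric subset of the open set $\u_4 \subseteq \u_2$ and $G$ is locally compact. The crux of the argument is to show $\ord_U(a) \geq (1+\epsilon)\nu$ for some standard $\epsilon > 0$, because this forces $r_{\nu,a,U} \geq 1+\epsilon$ in Lemma \ref{L:regelement}. The bound $\ord_U(a) \geq \nu$ is immediate from $a^i = X(i/\nu) \in X(I) \subseteq \interior(U)$ for $|i| \leq \nu$. To push past $\nu$, continuity of the product map at $(X(1),1)$, together with $X(1) \in \interior(U)$, yields a standard $\epsilon > 0$ such that $X(1) \cdot X(s) \in \interior(U)$ for $|s| \leq \epsilon$ (and symmetrically at $X(-1)$). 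A short internal induction then shows that for $0 \leq j \leq \epsilon\nu$, $a^{\nu+j}$ is defined and equals $a^\nu \cdot a^j = X(1) \cdot X(j/\nu) \in U^*$; the definedness comes from Lemma \ref{L:EZFacts}(3), with the required pair conditions supplied by $U \times U \subseteq \u_2 \times \u_2 \subseteq \o$.

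To conclude, observe that if $X \equiv 1$ on $I$ then $\overline{X} \equiv 1$ is a trivial extension, so I may assume some $X(t_0) \ne 1$, which makes $a^{[t_0\nu]} \sim X(t_0) \notin \muu$ and hence $a$ nondegenerate. For any $i = \lilo(\ord_U(a)) = \lilo(\nu)$, $i/\nu$ is infinitesimal, so $a^i = X(i/\nu) \in \muu$ by continuity of $X$ at $0$; this shows $a \in G(\ord_U(a))$, i.e. $a$ is $U$-pure. Lemma \ref{L:regelement} then produces a local $1$-ps $\overline{X}(s) := \st(a^{[s\nu]})$ on some $(-r,r)$ with $r \geq 1+\epsilon$, and for $s \in I$ continuity of $X$ at $s$ gives $\overline{X}(s) = \st(X([s\nu]/\nu)) = X(s)$, so $\overline{X}\,|\,I = X$. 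The delicate point is maintaining the inclusion $a^i \in U^*$ past $i = \nu$; this is precisely where the hypothesis $X(I) \subseteq \u_4$ is used, since it gives the compactness room to choose $U \subseteq \u_2$ with $X(I) \subseteq \interior(U)$, and it is the interior condition that lets continuity of multiplication carry the order bound beyond $\nu$.
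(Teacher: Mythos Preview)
Your approach via Lemma \ref{L:regelement} is genuinely different from the paper's. The paper gives a direct, elementary construction: it fixes any $\epsilon\in(0,\tfrac12)$, defines $\overline{X}(1+\delta):=X(1)\cdot X(\delta)$ and $\overline{X}(-1-\delta):=\overline{X}(1+\delta)^{-1}$ for $0<\delta<\epsilon$, and then verifies the local $1$-ps axioms by a case analysis on the signs and sizes of $r,s,r+s$. The hypothesis $X(I)\subseteq \u_4$ is used there only to guarantee $\overline{X}(r),\overline{X}(s)\in \u_4^2\subseteq \u_2^2$, hence $(\overline{X}(r),\overline{X}(s))\in \o$, and to make the various four- and five-fold products in the case analysis legitimate. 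That argument is entirely standard and somewhat tedious; yours is slicker but leans on the nonstandard machinery already built.

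There is, however, a gap in your purity claim. You assert ``$i=\lilo(\ord_U(a))=\lilo(\nu)$'', but you have only shown $\ord_U(a)\ge (1+\epsilon)\nu$, i.e.\ $\nu=\bigO(\ord_U(a))$; nothing you have done rules out $\ord_U(a)$ lying in a strictly larger archimedean class than $\nu$ (this can happen, for instance, if many powers of $X(1)$ remain in $U$). If, say, $\ord_U(a)\ge \nu^2$, then $\nu$ itself satisfies $\nu=\lilo(\ord_U(a))$, yet $a^\nu=X(1)\notin\muu$, so $a$ would \emph{not} be $U$-pure and Lemma \ref{L:regelement} could not be invoked as stated. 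The fix is easy: what you \emph{do} establish is $a\in G(\nu)$ (for $i=\lilo(\nu)$ one has $i/\nu$ infinitesimal and $a^i=X(i/\nu)\in\muu$), and an inspection of the proof of Lemma \ref{L:regelement} shows that $U$-purity is only used, via $\nu=\bigO(\tau)$, to deduce $a\in G(\nu)$ for the continuity-at-$0$ step. So rather than claiming $U$-purity, simply note that the proof of Lemma \ref{L:regelement} goes through verbatim once $a\in G(\nu)$ and $r_{\nu,a,U}\ge 1+\epsilon$ are in hand, both of which you have.
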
 

\begin{proof}
Fix $\epsilon\in (0,\frac{1}{2})$.  We claim that $\overline{X}:(-1-\epsilon,1+\epsilon)\rightarrow G$ defined by $\overline{X}|I=X$ and, for $0<\delta < \epsilon$, $\overline{X}(1+\delta)=X(1)\cdot X(\delta)$ and $\overline{X}(-1-\delta)=\overline{X}(1+\delta)^{-1}$ satisfies the conclusion of the lemma.  By Lemma \ref{L:homog}, it is enough to show that $(\overline{X}(r),\overline{X}(s))\in \o$ and $\overline{X}(r)\overline{X}(s)=\overline{X}(r+s)$ for all $r,s\in (-1-\epsilon,1+\epsilon)$ for which $r+s\in (-1-\epsilon,1+\epsilon)$.  We first note that $\overline{X}(r),\overline{X}(s)\in \u_4^2$ so that $(\overline{X}(r),\overline{X}(s))\in \o$ is immediate.  The additivity property of $\overline{X}$ splits up into several cases.

\noindent \textbf{Case 1a:}  $r,s,r+s\in [0,1]$.  This case is trivial.

\noindent \textbf{Case 1b:} $r,s\in [0,1+\epsilon)$, $r+s\in (1,1+\epsilon)$.  Then
$$\overline{X}(r+s)=X(1)X(r+s-1)=X(1)X(r-1)X(s)=\overline{X}(r)\overline{X}(s).$$  

\noindent \textbf{Case 2a:}  $r,s,r+s\in [-1,0]$.  This case is trivial.

\noindent \textbf{Case 2b:}  $r,s\in (-1-\epsilon,0]$, $r+s\in (-1-\epsilon,-1)$.  Then
$$\overline{X}(r+s)=\overline{X}(-s-r)^{-1}=(\overline{X}(-s)\overline{X}(-r))^{-1}=\overline{X}(r)\overline{X}(s).$$

\noindent \textbf{Case 3a:}  $r\in [0,1]$, $s\in [-1,0]$.  This case is trivial.

\noindent \textbf{Case 3b:}  $r\in [0,1]$, $s\in (-1-\epsilon,-1)$.  Then
\begin{align}
\overline{X}(r+s)&=\overline{X}(-r-s)^{-1}\notag \\ \notag
			 &=(X(1)X(-r-s-1))^{-1}\\ \notag
			 &=X(r+s+1)X(-1)\\ \notag
			 &=X(r)X(s+1)X(-1)\\ \notag
			 &=X(r)(X(1)X(-s-1))^{-1}\\ \notag
			 &=\overline{X}(r)\overline{X}(s).\notag
\end{align}      

\noindent \textbf{Case 3c:}  $r\in (1,1+\epsilon)$, $s\in [-1,0]$.  Then by Case 3b, we have
$$\overline{X}(r+s)=\overline{X}(-s-r)^{-1}=(\overline{X}(-s)\overline{X}(-r))^{-1}=\overline{X}(r)\overline{X}(s).$$

\noindent \textbf{Case 3d:}  $r\in (1,1+\epsilon)$, $s\in (-1-\epsilon,-1)$.  For this case, we will need the observation that for any $\delta \in (-\epsilon,\epsilon)$, we have $X(1)X(\delta)=X(\delta)X(1)$.  We shall only prove this for $\delta\geq 0$, the proof being similar for $\delta<0$.  The following chain of equalities verifies the observation:

\begin{align}
X(1)X(\delta)&=X(1-\delta)X(\delta)X(\delta)\notag \\ \notag
		    &=X(1-\delta)X(2\delta)\\ \notag
		    &=X(\delta)X(1-2\delta)X(2\delta)\notag \\
		    &=X(\delta)X(1).\notag
\end{align}

Now, we have
\begin{align}
\overline{X}(r+s)&=X(r+s)X(1)X(-1) \notag \\ \notag
	                    &=X(1)X(r+s)X(-1) \\ \notag
	                    &=X(1)X(r-1)X(s+1)X(-1)\\ \notag
	                    &=\overline{X}(r)\overline{X}(s).\notag
\end{align}

\noindent \textbf{Case 4:}  $r\in (-1-\epsilon,0]$, $s\in [0,1+\epsilon)$.  This case is treated exactly like Case 3.
\end{proof}

\

\section{Consequences of $\Ns$} 

\

\noindent In this section we assume $G$ is $\Ns$.  

\

\noindent  \textbf{Special Neighborhoods}

\

\noindent The next lemma is from \cite{K}, and we repeat its proof, with a more detailed
justification of some steps.

\

\begin{lemma}\label{L:specneigh}
There is a neighborhood $V$ of $1$ such that $V \subseteq \u_2$ and for all $x,y\in V$, if $x^2=y^2$, then $x=y$.  
\end{lemma}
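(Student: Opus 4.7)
The plan is a nonstandard contradiction argument. Suppose no such $V$ exists: then for every neighborhood $V$ of $1$ with $V \subseteq \u_2$ there are distinct $x_V, y_V \in V$ with $x_V^2 = y_V^2$, and by saturation we obtain $x, y \in \boldsymbol{\mu}$ with $x \neq y$ and $x^2 = y^2$. Setting $w := x^{-1}y$, we have $w \in \boldsymbol{\mu} \setminus \{1\}$, and working inside $\boldsymbol{\mu} \subseteq \u_n^*$ so that (A) applies freely, the equation $x \cdot x = y \cdot y$ rearranges into the conjugation identity $xwx^{-1} = w^{-1}$. The characterization of NSS given just before the subsection ``The Sets $\u_n$'' forces $w$ to be nondegenerate: otherwise $\{w^i : i \in \z^*\}$ would be a nontrivial internal subgroup of $\boldsymbol{\mu}$.

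Fix a neighborhood $\mathcal{W}$ of $1$ witnessing NSS, and use local compactness to pick a compact symmetric neighborhood $U$ of $1$ with $U \subseteq \mathcal{W} \cap \u_n$ for $n$ large enough to accommodate all the products below. Put $\nu := \ord_U(w)$; by Lemma \ref{L:Overflow}, $\nu > \n$. I claim $\nu$ is finite, i.e., $\nu \in \n^*$. Otherwise every $w^i$ ($i \in \z^*$) is defined and in $U^*$, hence nearstandard, and a direct check using continuity of inversion and Lemma \ref{L:EZFacts} shows that $\{\st(w^i) : i \in \z^*\}$ is a subgroup of $G$ contained in $U \subseteq \mathcal{W}$; it is nontrivial because nondegeneracy of $w$ supplies some $i$ with $w^i \notin \boldsymbol{\mu}$, contradicting NSS. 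Next, $w^\nu \notin \boldsymbol{\mu}$: else $w^{\nu+1} = w^\nu \cdot w \in \boldsymbol{\mu} \subseteq U^*$, contradicting the maximality of $\nu$.

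By internal induction on $i$, starting from $xwx^{-1} = w^{-1}$ and using the internal form of Lemma \ref{L:EZFacts} together with the fact that all intermediate products lie in $\u_n^*$, one gets $xw^i x^{-1} = w^{-i}$ for every $|i| \leq \nu$; in particular $xw^\nu x^{-1} = w^{-\nu}$. Since $x \in \boldsymbol{\mu}$ and $w^\nu$ is nearstandard, the last nonstandard lemma of Section 2 gives $xw^\nu x^{-1} \sim w^\nu$, so $w^{-\nu} \sim w^\nu$. Setting $g := \st(w^\nu) \in U$, continuity of inversion yields $g^{-1} = \st(w^{-\nu}) = g$; together with $g \neq 1$ (since $w^\nu \notin \boldsymbol{\mu}$), we get a nontrivial subgroup $\{1, g\}$ of $G$ contained in $\mathcal{W}$, contradicting NSS. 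The main technical obstacle is just the bookkeeping of local-group products: verifying that the conjugates $xw^i x^{-1}$ are defined and that associativity holds for nonstandard $i$, which is handled by confining the whole argument inside $\u_n^*$ for $n$ sufficiently large.
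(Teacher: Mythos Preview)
Your proof is correct, but it takes a genuinely different route from the paper's. Both arguments hinge on the same algebraic core: writing $a = x^{-1}y$ (the paper) or $w = x^{-1}y$ (you), the equation $x^2 = y^2$ yields the conjugation relation $xax^{-1} = a^{-1}$, and one then manufactures a small subgroup contradicting NSS. The executions diverge from there.

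The paper gives the direct standard argument due to Kuranishi: it chooses in advance a compact $U \subseteq \u_6$ with no small subgroups, then $W$ with $W^5 \subseteq U$, and crucially uses compactness of $U$ to pick $V$ with $gVg^{-1} \subseteq W$ for all $g \in U$. With this in hand, an ordinary induction on $n \in \n$ shows $a^n$ is defined, lies in $U$, and equals $xa^{-n}x^{-1}$; the even/odd splitting $a^{2m} = (xa^{-m}x^{-1})a^m \in xW \cdot W \subseteq U$ is what keeps the powers bounded. Thus $a^{\z} \subseteq U$ is a small subgroup, forcing $a = 1$.

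Your argument replaces that careful neighborhood engineering with the nonstandard observation that when $x \in \boldsymbol{\mu}$, conjugation by $x$ is infinitesimally trivial. Saturation hands you infinitesimal $x,y$; then $xw^{\nu}x^{-1} = w^{-\nu}$ together with $xw^{\nu}x^{-1} \sim w^{\nu}$ forces $g := \st(w^{\nu})$ to satisfy $g = g^{-1} \ne 1$, so $\{1,g\}$ is the forbidden small subgroup. This is arguably more in the spirit of the paper's overall nonstandard methodology, and it sidesteps the compactness trick $gVg^{-1} \subseteq W$ entirely. The trade-off is that the paper's proof is constructive (it names $V$ explicitly) and uses no nonstandard machinery for this particular lemma, while yours gives pure existence and leans on saturation, overflow, and the monad calculus. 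Your handling of the bookkeeping (definedness of $xw^i x^{-1}$ for $|i| \le \nu$ via $\u_n^*$, the case $\ord_U(w) = \infty$, and $w^{\nu} \notin \boldsymbol{\mu}$) is sound.
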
 

\begin{proof}
Choose a compact symmetric neighborhood $U$ of $1$ in $G$ with $U\subseteq \u_6$ such that $U$ contains no subgroups of $G$ other than $\{1\}$.  Choose a symmetric open neighborhood $W$ of $1$ in $G$ such that $W^5\subseteq U$.  Since $U$ is compact, we can find a symmetric open neighborhood $V$ of $1$ in $G$ such that $V^2\subseteq W$ and $gVg^{-1}\subseteq W$ for all $g\in U$.  We claim that this $V$ fulfills the desired condition.  

\

\noindent Suppose $x,y\in V$ are such that $x^2=y^2$.  Let $a:=x^{-1}y\in V^2\subseteq W \subseteq U$.  Since $V\subseteq \u_5$, we have $$axa=(x^{-1}y)x(x^{-1}y)=x^{-1}y^2=x^{-1}x^2=x.$$

\

\noindent Claim:  For all $n$, $a^n$ is defined, $a^n\in U$ and $a^n=xa^{-n}x^{-1}$.

\

\noindent We prove the claim by induction.  The case $n=1$ follows from the fact that $axa=x$.  For the inductive step, suppose the assertion holds for all $m\in \{1,\ldots,n\}$.  In order to show that $a^{n+1}$ is defined, it suffices to show that $(a^i,a^j)\in \o$ for all $i,j\in \{1,\ldots,n\}$ such that $i+j=n+1$.  However, by the inductive hypothesis, $(a^i,a^j)\in U \times U \subseteq \o$, proving that $a^{n+1}$ is defined.  Since $U\subseteq \u_6$, we have $$a^{n+1}=a^n\cdot a=(xa^{-n}x^{-1})(xa^{-1}x^{-1})=xa^{-n-1}x^{-1}.$$  It remains to show that $a^{n+1}\in U$.  First suppose that $n+1$ is even, say $n+1=2m$.  Then 
$$a^{n+1}=a^m\cdot a^m=xa^{-m}x^{-1}a^m\in xW\subseteq W^2\subseteq U.$$  Now if $n+1$ is odd, then $$a^{n+1}=a^n \cdot a\in W^2\cdot W^2 \subseteq U.$$  

\

\noindent This finishes the proof of the claim.  The claim yields a subgroup $a^\z$ of $G$ with $a^\z\subseteq U$.  Thus $a=1$ and so $x=y$.

\end{proof}

\

\begin{df}
A \textbf{special neighborhood of $G$} is a compact symmetric neighborhood $\u$ of $1$ in $G$ such that $\u\subseteq \u_2$, $\u$ contains no nontrivial subgroup of $G$, and for all $x,y\in \u$, if $x^2=y^2$, then $x=y$.
\end{df}

\

\noindent By the last lemma, we know that $G$ has a special neighborhood.  In the rest of this section, we fix a special neighborhood $\u$ of $G$.  Then every $a\in \boldsymbol{\mu} \setminus \{1\}$ is $\u$-pure.  For $a\in G^*$, we set ord$(a):=\ord_\mathcal{U}(a).$  

\

\begin{lemma}
Let $a\in G^*$.  Then $\ord(a)$ is infinite iff $a\in \boldsymbol{\mu}$.
\end{lemma}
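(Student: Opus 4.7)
The plan is to prove the two implications separately, with the core of the argument drawing on the defining property of the special neighborhood $\u$---that it contains no nontrivial subgroup of $G$.

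For the direction $a\in\muu\Rightarrow\ord(a)$ infinite, I would apply Lemma \ref{L:Overflow} with $U=\u$ to obtain some $\nu>\n$ such that $a^\sigma$ is defined and belongs to $\u^*$ for all $\sigma\in\{1,\ldots,\nu\}$. Since $\u$ is symmetric and $a^{-\sigma}=(a^\sigma)^{-1}$ by Lemma \ref{L:EZFacts}(4), the same conclusion extends to $-\nu\leq i\leq-1$, whence $\ord(a)\geq\nu>\n$.

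For the converse, suppose $\ord(a)$ is infinite. Then $a=a^{1}\in\u^*$, and compactness of $\u$ forces $a$ to be nearstandard; set $c:=\st(a)\in\u$. The strategy is to build the cyclic subgroup generated by $c$ inside $\u$ and then invoke the no-nontrivial-subgroup property of $\u$ to deduce $c=1$, i.e., $a\in\muu$. By induction on $n\in\n$, I would verify that $c^n$ is defined, lies in $\u$, and satisfies $\st(a^n)=c^n$: the inductive step applies Lemma \ref{L:EZFacts}(3) to see that $c^{n+1}$ is defined (the required compatibility $(c^i,c^j)\in\o$ for $i+j=n+1$ is free since $\u\times\u\subseteq\u_2\times\u_2\subseteq\o$); then Lemma \ref{L:EZFacts}(2) identifies $c^{n+1}$ with $c^n\cdot c$, continuity of the product map at the standard point $(c^n,c)$ gives $\st(a^{n+1})=c^n\cdot c=c^{n+1}$, and closedness of $\u$ combined with $a^{n+1}\in\u^*$ forces $c^{n+1}\in\u$. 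Negative exponents are handled symmetrically via Lemma \ref{L:EZFacts}(4).

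Once this is in place, $H:=\{c^n:n\in\z\}\subseteq\u$ is a subgroup of $G$: it contains $1$, is symmetric, $H\times H\subseteq\u_2\times\u_2\subseteq\o$, and closure under products follows from Lemma \ref{L:EZFacts}(2) for same-sign exponents and Corollary \ref{C:EZCorollary} for opposite-sign exponents. By the special-neighborhood property $H=\{1\}$, hence $c=1$ and $a\in\muu$. The only delicate point---and the part I expect to require the most care---is arranging matters so that each $c^n$ is genuinely defined in the strict all-parenthesizations-agree sense rather than merely computable as some iterated product; this is precisely what Lemma \ref{L:EZFacts}(3) is tailored to supply inductively.
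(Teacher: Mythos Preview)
Your proposal is correct and follows essentially the same approach as the paper: the forward direction is handled via Lemma~\ref{L:Overflow} (exactly as the paper remarked after Definition~\ref{D:ordelement}), and the converse proceeds by setting $c:=\st(a)$, showing inductively that $c^k$ is defined and lies in $\u$ for all $k\in\z$, and then invoking the no-nontrivial-subgroup property of the special neighborhood $\u$. Your write-up simply fills in details the paper leaves as ``easy to prove by induction,'' including the careful use of Lemma~\ref{L:EZFacts}(3) to ensure each $c^{n+1}$ is defined in the strict sense.
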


\begin{proof}
We remarked earlier that if $a\in \boldsymbol{\mu}$, then ord$(a)$ is infinite.  Now suppose ord$(a)$ is infinite.  Then, for all $k\in \z$, $a^k$ is defined and $a^k\in \u^*$.  It is easy to prove by induction that $\st(a)^k$ is defined and $\st(a)^k \in \u$ for all $k\in \z$.  This implies that $\st(a)=1$ since $\u$ is a special neighborhood.
\end{proof}

\

\begin{lemma}\label{L:sigparam}
Suppose $\sigma > \n$ and $a\in G(\sigma)$.  Then:
\begin{enumerate}
\item if $a\not= 1$, then $a$ is $\u$-pure and $\sigma=\bigO(\ord(a))$;
\item if $i=\lilo(\sigma)$, then $a^i$ is defined and $a^i \in \boldsymbol{\mu}$;
\item Let $\Sigma_a:=\Sigma_{\sigma,a,\u}$ and $r_a:=r_{\sigma,a,\u}$.  Let $X_a:(-r_a,r_a)\rightarrow G$ be defined by $X(s):=\st(a^{[s\sigma]})$.  Then $X_a$ is a local $1$-ps of $G$.
\end{enumerate}
\end{lemma}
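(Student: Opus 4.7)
The plan is to dispatch parts (2), (1), and (3) in that order; essentially all the content lies in the contradiction step of (1), and (3) will follow at once from Lemma \ref{L:regelement}.

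Part (2) is literally the unpacking of the definition of $G(\sigma)$: membership of $a$ in $G(\sigma)$ asserts exactly that $a^i$ is defined and lies in $\boldsymbol{\mu}$ whenever $i = \lilo(\sigma)$, so there is nothing to check beyond quoting the definition.

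For part (1), the $\u$-purity is immediate from the standing observation made just after fixing $\u$: since $G$ is $\Ns$ and $\u$ is a special neighborhood, every element of $\boldsymbol{\mu} \setminus \{1\}$ is $\u$-pure. For the asymptotic estimate, set $\tau := \ord(a)$ and argue by contradiction. Suppose $\sigma$ is not $\bigO(\tau)$; then $\tau = \lilo(\sigma)$. Since $\tau > \n$ we have $\tau \geq 1$, so $\tau + 1 \leq 2\tau$, and doubling preserves $\lilo(\sigma)$, so $\tau + 1 = \lilo(\sigma)$ as well. Feeding this exponent into (2), which is already proved, yields that $a^{\tau+1}$ is defined and $a^{\tau+1} \in \boldsymbol{\mu} \subseteq \u^*$. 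But this contradicts the maximality clause in the definition of $\ord_\u(a) = \tau$, which (as remarked after Definition \ref{D:ordelement}, using that $a^{\tau+1}$ is always defined) forces $a^{\tau+1} \notin \u^*$.

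For part (3), the case $a = 1$ is trivial: then $a^{[s\sigma]} = 1$ for every $s$, so $\Sigma_a = \r^{>0}$, $r_a = \infty$, and $X_a$ is the constant $1$ map, which is a local $1$-ps. If $a \neq 1$, then part (1) supplies exactly the hypotheses of Lemma \ref{L:regelement}: $a$ is $\u$-pure and $\sigma > 0$ satisfies $\sigma = \bigO(\ord_\u(a))$. Invoking that lemma with $U = \u$ and $\nu = \sigma$ produces the local $1$-ps $X_a$ of the stated form. The only step that demands any care is the Landau bookkeeping in the contradiction argument for (1); everything else is routine unpacking of definitions together with a single application of Lemma \ref{L:regelement}.
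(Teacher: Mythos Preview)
Your proof is correct and follows essentially the same route as the paper. The only cosmetic differences are that you correctly observe (2) is the literal content of the definition of $G(\sigma)$ (the paper instead re-derives it from (1) via $\u$-purity, which is redundant), and in the contradiction for (1) you use $a^{\tau+1}\notin\u^*$ where the paper uses $a^{\tau}\notin\boldsymbol{\mu}$; these are equivalent observations about $\ord_\u(a)=\tau$.
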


\begin{proof}
It has already been remarked why $a$ is $\u$-pure.  If $\ord(a)=\lilo(\sigma)$, then $a^{\ord(a)}\in \boldsymbol{\mu}$, a contradiction.  Hence, $\sigma =\bigO(\ord(a))$.  By (1), if $i=\lilo(\sigma)$, then $i=\lilo(\ord(a))$, whence $a^i$ is defined and $a^i\in \boldsymbol{\mu}$ since $a$ is $\u$-pure.  This proves (2).  (3) follows immediately from Lemma \ref{L:regelement}.
\end{proof}

\

\begin{cor}
Suppose $G$ is not discrete.  Then $L(G)\not= \{\O\}$.
\end{cor}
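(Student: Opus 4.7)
The plan is to extract a nontrivial element of $\muu$ from the non-discreteness hypothesis and upgrade it, via Lemma~\ref{L:sigparam}, to a local $1$-parameter subgroup whose germ is not $\O$.

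First I would observe that since $G$ is not discrete, $\{1\}$ is not open; by the standard correspondence between open neighborhoods and the monad, this is equivalent to $\muu \neq \{1\}$, so I may pick some $a \in \muu \setminus \{1\}$. Because $\mathcal{U}$ is a special neighborhood, the remark immediately following its definition tells us that $a$ is $\mathcal{U}$-pure. Set $\tau := \ord(a)$; then $\tau > \n$ and $a \in G(\tau)$.

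Next, I would apply Lemma~\ref{L:sigparam}(3) with $\sigma = \tau$ to obtain a local $1$-ps $X_a \colon (-r_a, r_a) \to G$ given by $X_a(s) = \st(a^{[s\tau]})$. Since $a^i$ is defined and lies in $\mathcal{U}^*$ for every $|i| \leq \tau$, one has $1 \in \Sigma_a$, so $r_a \geq 1$; in particular, $\tfrac{1}{2}$ lies in the domain of $X_a$.

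To conclude $[X_a] \neq \O$, I would suppose for contradiction that $X_a \equiv 1$ on some open neighborhood of $0$. Using the identity $X_a(k t/n) = X_a(t/n)^k$ for $1 \leq k \leq n$ (valid because $(-r_a,r_a)$ is an interval around $0$ and $X_a$ is a local $1$-ps), this forces $X_a \equiv 1$ on all of $(-r_a,r_a)$, so in particular $a^{[\tau/2]} \in \muu$. Squaring gives $a^{2[\tau/2]} \in \muu$, and since $2[\tau/2] \in \{\tau - 1, \tau\}$, this forces $a^\tau \in \muu$ (directly, or after one more multiplication by $a$). The main obstacle, and the only nontrivial point, is ruling this out: as noted after Definition~\ref{D:ordelement}, $a^{\tau+1}$ is defined and lies outside $\mathcal{U}^*$, so if $a^\tau$ were infinitesimal we would get $a^{\tau+1} = a^\tau \cdot a \in \muu \subseteq \mathcal{U}^*$, a contradiction. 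Hence $[X_a] \neq \O$, and $L(G) \neq \{\O\}$.
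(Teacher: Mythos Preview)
Your proof is correct and follows essentially the same approach as the paper: pick $a\in\muu\setminus\{1\}$, set $\sigma=\ord(a)$, form $X_a$, and derive a contradiction from $a^\sigma\in\muu$ via $a^{\sigma+1}\notin\u^*$. The only cosmetic difference is that you first extend $X_a\equiv 1$ from a neighborhood of $0$ to all of $(-r_a,r_a)$ and then evaluate at $t=\tfrac12$, whereas the paper directly shows $X_a(\tfrac{1}{n})\ne 1$ for each $n$ with $\tfrac1n\in\Sigma_a$; both routes reduce to the same contradiction.
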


\begin{proof}
Take $a\in \boldsymbol{\mu}\setminus \{1\}$.  Set $\sigma:=\ord(a)$.  Since $a\in \boldsymbol{\mu}$, we must have $\sigma > \n$.  Since $a\in G(\sigma)$, we have $[X_a]\in L(G)$, where $X_a$ is as defined in Lemma \ref{L:sigparam}.  We need to show that $[X_a]\not= \O$.  It suffices to show that for every $n$ such that $\frac{1}{n}\in \Sigma_a$, $a^{[\frac{1}{n}\sigma]}\notin \boldsymbol{\mu}$.  Let $t:=[\frac{1}{n}\sigma]$.  Then $t=\frac{1}{n}\sigma-\epsilon$, with $\epsilon \in [0,1)^*$.  Towards a contradiction, suppose that $a^t\in \boldsymbol{\mu}$.  Then since $nt\leq \sigma$, $a^{nt}$ is defined and $a^{nt}=(a^t)^n\in \boldsymbol{\mu}$ by Lemma \ref{L:EZFacts}.  Also $n\epsilon \in \n^*$ and $n\epsilon < n$, whence $a^{n\epsilon}\in\boldsymbol{\mu}$.  But then $a^\sigma=a^{nt+n\epsilon}=a^{nt}\cdot a^{n\epsilon}\in \boldsymbol{\mu}$, a contradiction.
\end{proof}

\

\begin{lemma}\label{L:Gsigma properties}Let $a\in G(\sigma)\setminus \{1\}$.  Then:
\begin{enumerate}
\item $a^{-1}\in G(\sigma)$ and $[X_{a^{-1}}]=(-1)\cdot [X_a]$;
\item $b\in \boldsymbol{\mu} \Rightarrow bab^{-1}\in G(\sigma)$ and $[X_{bab^{-1}}]=[X_a]$;
\item $[X_a]=\O \Leftrightarrow a\in G^{\lilo}(\sigma)$;
\item $L(G)=\{ [X_a] \ | \ a\in G(\sigma)\}$.
\end{enumerate}
\end{lemma}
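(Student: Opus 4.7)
The plan is to prove the four parts in turn, each leveraging Lemma~\ref{L:sigparam} together with the algebraic facts from Lemma~\ref{L:EZFacts} and the standard behavior of $\muu$ under multiplication by infinitesimals.

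For (1), Lemma~\ref{L:EZFacts}(4) gives $(a^{-1})^i = a^{-i}$, and since $i = \lilo(\sigma)$ iff $-i = \lilo(\sigma)$, the defining condition of $G(\sigma)$ transfers to yield $a^{-1} \in G(\sigma)$. For the germ identity, compute $X_{a^{-1}}(s) = \st(a^{-[s\sigma]})$ and $((-1)\cdot X_a)(s) = X_a(-s) = \st(a^{[-s\sigma]})$; these agree because $[-s\sigma]$ and $-[s\sigma]$ differ by at most $1$ while $a\in\muu$, so the two powers differ only by a factor in $\muu$.

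For (2), the central step is the identity $(bab^{-1})^i = ba^i b^{-1}$, valid for each $i \in \n^*$ in the range where $a^i$ is defined and lies in $\muu$. I prove this by internal induction: the base case is trivial, and in the inductive step the product $(ba^i b^{-1})(bab^{-1})$ can be unambiguously re-associated to $ba^{i+1} b^{-1}$ because every intermediate factor sits inside $\muu$, and $\muu$ is an honest group, so all hypotheses of local associativity are satisfied. Applying the identity for $i = \lilo(\sigma)$ (the negative $i$ case reducing via part (1) and the same induction applied to $a^{-1}$) yields $(bab^{-1})^i = ba^i b^{-1} \in \muu$, so $bab^{-1} \in G(\sigma)$. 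For the germ identity, the same identity applied to $i = [s\sigma]$ gives $(bab^{-1})^{[s\sigma]} = ba^{[s\sigma]} b^{-1}$; since $a^{[s\sigma]} \in G^*_{\ns}$ and $b,b^{-1} \in \muu$, the lemma on multiplying a nearstandard point by infinitesimals gives $\st(ba^{[s\sigma]} b^{-1}) = \st(a^{[s\sigma]})$, hence $X_{bab^{-1}}(s) = X_a(s)$.

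For (3), Lemma~\ref{L:lilo} does the heavy lifting. If $[X_a] = \O$, pick standard $\epsilon > 0$ with $X_a \equiv 1$ on $(-\epsilon,\epsilon)$; then $a^i \in \muu$ for all $|i| \leq \tau := [\epsilon\sigma]$. Since $\tau \leq \sigma$ and $\sigma = \bigO(\tau)$, clause (3) of Lemma~\ref{L:lilo} yields $a \in G^{\lilo}(\sigma)$. Conversely, if $a \in G^{\lilo}(\sigma)$, then $a^{[s\sigma]} \in \muu$ for every $s \in (-r_a, r_a)$ (since $[s\sigma] = \bigO(\sigma)$), forcing $X_a \equiv 1$. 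For (4), the inclusion $\supseteq$ is Lemma~\ref{L:sigparam}(3). For $\subseteq$, given a local 1-ps $X : (-r, r) \to G$, set $a := X(1/\sigma)$, which lies in $\muu$ by continuity at $0$; for $i = \lilo(\sigma)$ one has $a^i = X(i/\sigma) \in \muu$ by the 1-ps power identity transferred to $G^*$ together with continuity at $0$, so $a \in G(\sigma)$. Finally, $X_a(s) = \st(X([s\sigma]/\sigma)) = X(s)$ because $[s\sigma]/\sigma \sim s$ and $X$ is continuous, giving $[X_a] = [X]$.

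The main obstacle is part (2): establishing $(bab^{-1})^i = ba^i b^{-1}$ in the local-group setting requires confirming that each intermediate binary product in the inductive step is defined before local associativity may be invoked. The fact that $\muu$ is a global group makes the verification go through, but one must assemble the argument carefully rather than appeal to a naive manipulation; everything else reduces quickly to the machinery already in hand.
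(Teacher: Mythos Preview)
Your approach mirrors the paper's, and parts (1), (3), and (4) are handled correctly. There is, however, a genuine gap in part (2). You establish the identity $(bab^{-1})^i = ba^i b^{-1}$ by internal induction only ``in the range where $a^i$ is defined and lies in $\muu$,'' leaning on the fact that $\muu$ is an honest group to push through associativity. But for the germ identity you then apply this at $i = [s\sigma]$ for standard $s>0$ in the domain of $X_a$, where $a^{[s\sigma]}$ is merely nearstandard---as you yourself write, $a^{[s\sigma]} \in G^*_{\ns}$---and typically not in $\muu$. Your induction, as stated, does not reach these exponents.

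The repair, which is what the paper does, is to carry the induction all the way up to $\tau := \ord(a)$, where one only knows $a^\eta \in \u^*$. The inductive step still goes through: to see that $(bab^{-1})^{i+1}$ is defined, one must check $((bab^{-1})^k,(bab^{-1})^l)\in \o^*$ for $k+l=i+1$; by induction these are $(ba^k b^{-1}, ba^l b^{-1})$, each nearstandard with standard part in $\u\subseteq \u_2$, so the pair lies in $\o^*$. The re-association $(ba^i b^{-1})(bab^{-1}) = ba^{i+1}b^{-1}$ then holds because multiplying a nearstandard element by infinitesimals on either side keeps everything nearstandard and supplies the needed instances of (A). Since $\sigma = \bigO(\tau)$ by Lemma~\ref{L:sigparam}(1), every $[s\sigma]$ with $s\in \Sigma_a$ is now in range, and $X_{bab^{-1}}(s)=\st(ba^{[s\sigma]}b^{-1})=\st(a^{[s\sigma]})=X_a(s)$ follows.
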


\begin{proof}
(1) is straightforward.  As to (2), let $b\in \boldsymbol{\mu}$ and let $\tau:=\ord(a)$.  We know that $\sigma = \bigO(\tau)$.  One can show by internal induction on $\eta$ that $(bab^{-1})^\eta$ is defined and equal to $ba^\eta b^{-1}$ for all $\eta \leq \tau$.  Thus, for $i=\lilo(\sigma)$, we have $(bab^{-1})^i \in \boldsymbol{\mu}$ and hence $bab^{-1}\in G(\sigma)$.  Again, for $r\in \Sigma_a$, we have that $(bab^{-1})^{[r\sigma]}$ is defined and equals $ba^{[r\sigma]}b^{-1}$, whence $[X_a]=[X_{bab^{-1}}]$.  

\

\noindent The proof of (3) is immediate from the definitions and Lemma \ref{L:lilo}.  Now suppose $\x\in L(G)$ and $X\in \x$.  Suppose $\dom(X)=(-r,r)$ and consider the nonstandard extension $X:(-r,r)^*\rightarrow G^*$ of $X$.  Let $a=X(\frac{1}{\sigma})\in \boldsymbol{\mu}$.  We claim that $\x=[X_a]$.  To see this, let $s<\min(r,r_a)$ and let $\epsilon$ be an infinitesimal element of $\r^*$ such that $s=\frac{[s\sigma]}{\sigma}+\epsilon$.  Then:

\begin{align}
X(s)&=\st(X(\frac{[s\sigma]}{\sigma}))\notag \\
       &=\st(X(\frac{1}{\sigma})^{[s\sigma]})\notag \\
       &=\st(a^{[s\sigma]})\notag \\
       &=X_a(s) \notag \\ \notag
\end{align}
Hence we have shown that $X$ and $X_a$ agree on $(-r,r)\cap (-r_a,r_a)$.  This proves (4).

\end{proof}

\noindent \textbf{The Local Exponential Map}

\

\noindent Consider the following sets:

$$\k:=\{\x \in L(G) \ | I\subseteq \dom(\x) \text{ and }\x(I)\subseteq \u\}$$ and 

$$K:=\{\x(1) \ | \ \x\in \k\}.$$   

\

\begin{lemma}
For every $\x \in L(G)$, there is $s \in (0,1)$ such that $s \cdot \x \in \k$. 
\end{lemma}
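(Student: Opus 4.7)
The plan is a direct continuity argument using the fact that $\u$ is a neighborhood of $1$. Fix any representative $X \in \x$, say $X : (-r,r) \to G$ with $r \in (0,\infty]$. Since $X(0) = 1$ lies in the interior of $\u$ (as $\u$ is a compact neighborhood of $1$) and $X$ is continuous at $0$, there exists $\delta > 0$ with $\delta < \min(r, 1)$ such that $X([-\delta, \delta]) \subseteq \u$.

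Now choose any $s \in (0, \delta)$, so in particular $s \in (0,1)$ and $s < r$. By definition of scalar multiplication, $sX : (-r/s, r/s) \to G$ is a local $1$-ps of $G$ and $s \cdot \x = [sX]$. Since $s < r$ we have $r/s > 1$, so $I = [-1,1] \subseteq (-r/s, r/s) \subseteq \dom(sX) \subseteq \dom(s \cdot \x)$. Moreover,
\[
(sX)(I) = X([-s, s]) \subseteq X([-\delta, \delta]) \subseteq \u.
\]
Hence $s \cdot \x \in \k$, as required.

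There is essentially no obstacle here; the argument only relies on continuity of $X$ at $0$ together with the fact that $1$ is an interior point of the special neighborhood $\u$. The slight subtlety worth flagging is that one must verify the claim at the level of a chosen representative $sX$ of the germ $s \cdot \x$, which is immediate once $s < r$ is arranged.
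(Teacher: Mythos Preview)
Your proof is correct and takes essentially the same approach as the paper: pick a representative $X$, use continuity at $0$ to find a small interval mapping into $\u$, and scale accordingly. The paper splits the scaling into two steps (first $s_1$ to get $I$ into the domain, then $s_2$ to get the image into $\u$), whereas you handle both at once via $\delta < \min(r,1)$, but this is a cosmetic difference.
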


\begin{proof}
Let $X\in \x$ and suppose $X:(-r,r)\rightarrow G$.  If $r\leq 1$, pick any $0<s_1 <r$ and then $s_1 \cdot X:(-\frac{r}{s_1},\frac{r}{s_1})\rightarrow G$ will have $I$ contained in its domain.  If $r>1$, let $s_1=1$.  By continuity of $s_1 \cdot X$, there is $0<s _2 < 1$ such that $(s_2 \cdot (s_1 \cdot X))(I)\subseteq \u$.  So for $s:=s_1\cdot s_2$, $s \cdot \x \in \k$.
\end{proof}

\

\begin{lemma}
The map $\x \rightarrow \x(1):\k \rightarrow K$ is bijective.
\end{lemma}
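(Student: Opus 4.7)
The plan is to show surjectivity for free and then pin down injectivity via the defining square-root uniqueness property of the special neighborhood $\u$. Surjectivity is immediate: $K$ was defined as $\{\x(1):\x\in\k\}$, so every element of $K$ has a preimage in $\k$ by construction.

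For injectivity, suppose $\x_1,\x_2\in\k$ with $\x_1(1)=\x_2(1)$. I would pick representatives $X_i:(-r_i,r_i)\to G$ of $\x_i$ with $I\subseteq\dom(X_i)$ and $X_i(I)\subseteq\u$, and aim to show that $X_1|I=X_2|I$, which suffices since this forces $[X_1]=[X_2]$ as germs. The first key step is to prove by induction on $n\ge 0$ that $X_1(1/2^n)=X_2(1/2^n)$. The base case $n=0$ is the hypothesis. For the inductive step, the additivity property of a local 1-ps gives $X_i(1/2^{n+1})^2=X_i(1/2^n)$ (note that $1/2^{n+1}+1/2^{n+1}=1/2^n\in\dom(X_i)$, so the product is defined), so the inductive hypothesis yields $X_1(1/2^{n+1})^2=X_2(1/2^{n+1})^2$; since both values lie in $\u$ and $\u$ is a special neighborhood, $X_1(1/2^{n+1})=X_2(1/2^{n+1})$.

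Next, for any nonnegative dyadic rational $k/2^n\in I$, the remark after Definition~2.12 (iterating the additivity of a local 1-ps) gives $X_i(k/2^n)=X_i(1/2^n)^k$, so the previous step implies $X_1(k/2^n)=X_2(k/2^n)$. For negative dyadics in $I$ apply the identity $X_i(-s)=X_i(s)^{-1}$ (which follows from $X_i(s)\cdot X_i(-s)=X_i(0)=1$). Since the dyadic rationals are dense in $I$ and each $X_i$ is continuous on $I$, we conclude $X_1|I=X_2|I$, hence $\x_1=\x_2$ in $L(G)$.

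The only step that requires care is verifying that the squares involved are actually defined in the local setting and that the relevant values all lie in $\u$, but this is built into the hypothesis $\x_i\in\k$ together with the additivity clause in the definition of a local 1-ps. The essential input is Lemma~4.1 (square-root uniqueness in a special neighborhood), which is precisely what makes the recursive determination of $X_i$ at dyadic rationals from the single value $X_i(1)$ unambiguous.
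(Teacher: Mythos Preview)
Your argument is correct and is essentially the same as the paper's own proof: surjectivity is by definition of $K$, and injectivity is obtained by using the square-root uniqueness in the special neighborhood $\u$ to propagate $X_1(1)=X_2(1)$ down to $X_1(1/2^n)=X_2(1/2^n)$, then to all dyadic rationals in $I$, and finally to all of $I$ by continuity. The paper's write-up is terser but follows exactly this route.
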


\begin{proof}
Let $\x_1,\x_2$ be in $\k$ with representatives $X_1,X_2$ containing $I$ in their domains.  Suppose $X_1(1)=X_2(1)$.  Then $(X_1(\frac{1}{2}))^2=(X_2(\frac{1}{2}))^2$, implying that $X_1(\frac{1}{2})=X_2(\frac{1}{2})$ since $\u$ is a special neighborhood.  Inductively, we have $X_1(\frac{1}{2^n})=X_2(\frac{1}{2^n})$ for all $n$, and thus $X_1(\frac{k}{2^n})=X_2(\frac{k}{2^n})$ for all $k\in \z$ such that $|\frac{k}{2^n}|\leq 1$.  By density, we have $[X_1]=[X_2]$.
\end{proof}

\

\noindent From now on, for $\x \in \k$, we let $E(\x)=\x(1)$.  So the last lemma now reads $E:\k \rightarrow K$ is a bijection.  We will have much more to say about this \textit{local exponential map} later and it will be a crucial tool in our solution of the Local H5.

\

\noindent \textbf{A Countable Neighborhood Basis of the Identity}

\

\noindent For $Q$ as earlier, we put $\ord(Q):=\ord_\u(Q)$.  Extend this to the standard setting as follows:  for symmetric $P\subseteq G$ with $1\in P$, let $\ord(P)$ be the largest $n$ such that $P^n$ is defined and $P^n \subseteq \u$ if there is such an $n$ and set $\ord(P):=\infty$ if $P^n$ is defined and $P^n\subseteq \u$ for all $n$.

\

\noindent We set $V_n:=\{x\in G \ |\  \ord (x) \geq n\}$.  Notice that for all $n$, $$p_1^{-1}(\u)\cap \ldots \cap p_n^{-1}(\u)\subseteq V_n \subseteq \u_n.$$

\

\begin{lemma}
$(V_n:n\geq 1)$ is a decreasing sequence of compact symmetric neighborhoods of $1$ in $G$, $\ord(V_n)\rightarrow \infty$ as $n\rightarrow \infty$, and $\{V_n:n\geq 1\}$ is a countable neighborhood basis of $1$ in $G$.
\end{lemma}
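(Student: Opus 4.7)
The plan is to establish the four claims in the order: (a) decreasing and symmetric, (b) each $V_n$ is a neighborhood of $1$, (c) each $V_n$ is compact, (d) $\{V_n\}$ is a basis and $\ord(V_n)\to\infty$.

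Claims (a) and (b) are immediate from the definitions and the inclusion displayed just before the lemma. Decreasing is tautological; symmetry follows because $\mathcal{U}$ is symmetric, so $x^i\in\mathcal{U}^*$ iff $(x^i)^{-1}=(x^{-1})^i\in\mathcal{U}^*$, giving $\ord(x)=\ord(x^{-1})$. For (b), note that $p_k(1)=1$ lies in the interior of $\mathcal{U}$, and $p_k$ is continuous on its open domain, so $p_k^{-1}(\interior\mathcal{U})$ is an open neighborhood of $1$; intersecting for $k=1,\ldots,n$ and using the displayed containment puts an open neighborhood of $1$ inside $V_n$.

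For (c), since $V_n\subseteq\mathcal{U}$ and $\mathcal{U}$ is compact, it suffices to show $V_n$ is closed in $G$. Suppose $x_\alpha\to x$ with $x_\alpha\in V_n$. Because $\mathcal{U}$ is closed, $x\in\mathcal{U}$. I would prove by induction on $k\leq n$ that $x^k$ is defined and $x^k\in\mathcal{U}$: given the result for all $j\leq k$, the containment $\mathcal{U}\times\mathcal{U}\subseteq\mathcal{U}_2\times\mathcal{U}_2\subseteq\Omega$ combined with Lemma \ref{L:EZFacts}(3) gives that $x^{k+1}$ is defined, and then continuity of $p_{k+1}$ on its open domain (Step (b) shows $x$ is in this domain) together with $x_\alpha^{k+1}\in\mathcal{U}$ and closedness of $\mathcal{U}$ yields $x^{k+1}\in\mathcal{U}$. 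Negative powers are handled by symmetry. This is the step I expect to require the most care, since continuity/definedness of iterated products must be bootstrapped using only that $\mathcal{U}\subseteq\mathcal{U}_2$.

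For (d), I first argue $\bigcap_n V_n=\{1\}$: any $x$ in this intersection has $\ord(x)=\infty$, so by the lemma following the definition of a special neighborhood (or directly: $x^\z$ is a subgroup of $G$ contained in $\mathcal{U}$), we get $x=1$ since $\mathcal{U}$ contains no nontrivial subgroup. Given any neighborhood $W$ of $1$, the sets $V_n\setminus W$ form a decreasing sequence of compact subsets of $V_1$ with empty intersection, so some $V_n\subseteq W$; this gives the basis property. Finally, for $\ord(V_n)\to\infty$, fix $m$; since $\varphi_m\colon\mathcal{U}_m^{\times m}\to G$ is continuous with $\varphi_m(1,\ldots,1)=1\in\interior\mathcal{U}$, there is an open neighborhood $W$ of $1$ with $W\subseteq\mathcal{U}_m$ and $W^{\times m}\subseteq\varphi_m^{-1}(\mathcal{U})$. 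By the basis property, $V_n\subseteq W$ for some $n$, whence $V_n^m$ is defined and contained in $\mathcal{U}$, i.e.\ $\ord(V_n)\geq m$.
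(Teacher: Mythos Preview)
Your proof is correct, and it is both more detailed and methodologically different from the paper's argument.  The paper treats parts (a)--(c) as essentially obvious from the displayed inclusion $p_1^{-1}(\u)\cap\cdots\cap p_n^{-1}(\u)\subseteq V_n\subseteq\u_n$ and does not spell out the closedness argument that you carefully bootstrap via Lemma~\ref{L:EZFacts}(3) and continuity of $p_{k+1}$.  (One small wording issue: it is not ``Step (b)'' that shows $x\in\dom(p_{k+1})$ but rather your own induction step; the logic is nonetheless sound.)

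The real divergence is in part (d).  The paper gives a single nonstandard argument: for any $\sigma>\n$ the internal set $V_\sigma=\{g\in G^*:\ord(g)\ge\sigma\}$ lies in $\boldsymbol{\mu}$, so any $m$-fold product of its elements is defined and lies in $U^*$ for every neighborhood $U$ of $1$; transferring down yields that for every $U$ and every $m$, one has $V_n^m\subseteq U$ for all large $n$, which simultaneously gives the basis property (take $m=1$) and $\ord(V_n)\to\infty$ (take $U=\u$).  You instead argue standardly: first $\bigcap_n V_n=\{1\}$ via the no-small-subgroups property of $\u$, then the basis property by a compactness/finite-intersection argument, and finally $\ord(V_n)\to\infty$ from continuity of $\varphi_m$ combined with the basis property.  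Your route avoids nonstandard methods entirely and makes the dependence on the special-neighborhood hypothesis explicit; the paper's route is shorter and illustrates how the nonstandard setting packages the two conclusions into one overflow-type step.
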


\begin{proof}
For $\sigma > \n$, consider the internal set $$V_\sigma:=\{g\in G^* \ | \ord(g)\geq \sigma\}.$$  Note that $V_\sigma \subseteq \boldsymbol{\mu}$, so given any neighborhood $U$ of $1$ in $G$, we have $x_1\cdots x_m$ is defined and in $U^*$ for all $m$ and $x_1,\ldots,x_m\in V_\sigma$.  It follows that for any neighborhood  $U$ of $1$ in $G$ and any $m$ we have $(V_n)^m$ is defined and contained in $U$ for all sufficiently large $n$.
\end{proof}

\

\section{Local Gleason-Yamabe Lemmas}

\noindent The trickiest and most technical part of the original solution of Hilbert's fifth problem concerned several ingenious lemmas proved by Gleason and Yamabe.  These lemmas really drive much of the proof and obtaining local versions of them is a crucial part of our current project.  Fortunately, it is not too difficult to obtain local versions of these lemmas, although carrying out some of the details can become laborious.

\

\noindent Throughout this section, we work with the following assumptions.  We suppose $\u \subseteq \w$ are compact symmetric neighborhoods of $1$ such that $\w \subseteq \u_M$ and $\u^P \subseteq \w$ for fixed positive integers $P\leq M$ large enough for all of the arguments below to make sense.  (A careful analysis of the arguments below could give precise values for $M$ and $P$, but this endeavor became quite tedious.)  We let $Q\subseteq \u$ be symmetric such that $1\in Q$, $N:=\ord_\u(Q)\not=\infty$, and $Q^{N+1}$ is defined.  We first define the map $\Delta=\Delta_Q:G\rightarrow [0,1]$ by 
\begin{itemize}
\item $\Delta(1)=0$;
\item $\Delta(x)=\frac{i}{N+1}$ if $x\in Q^i \setminus Q^{i-1}$, $1\leq i \leq N$;
\item $\Delta(x)=1$ if $x\notin Q^N$.
\end{itemize}

\noindent Then for all $x\in G$, 

\begin{itemize}
\item $\Delta(x)=1$ if $x\notin \u$;
\item $|\Delta(ax)-\Delta(x)|\leq \frac{1}{N}$ for all $a\in Q$ such that $(a,x)\in \o$.
\end{itemize}

\

\noindent To smooth out $\Delta$, fix a continuous function $\tau:G\rightarrow [0,1]$ such that 
$$\tau(1)=1, \qquad \tau(x)=0 \text{ for all }x\in G\setminus \u.$$

\

\noindent It is important later that $\tau$ depends only on $\u$, not on $Q$. 

\

\noindent Next define $\theta=\theta_Q:G\rightarrow [0,1]$ by setting

\[
	\theta(x)=
	\begin{cases}
	\sup_{y\in \u}(1-\Delta(y))\tau(y^{-1}x)	&\text{if $x\in \w$}\\
	0 	&\text{if $x\in G\setminus \w$}
	\end{cases}
\]

\

\begin{lemma} The following properties hold for the above defined functions.
\begin{enumerate}
\item $\theta$ is continuous and $\theta(x)=0$ if $x\notin \u^2$;
\item $0\leq \tau \leq \theta \leq 1$;
\item $|\theta(ax)-\theta(x)|\leq \frac{1}{N}$ for $a\in Q$ if $(a,x)\in \o$.
\end{enumerate}
\end{lemma}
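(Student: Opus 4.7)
The plan is to address the three parts in the order (2), (3), (1), since (2) is essentially immediate, (3) carries the key algebraic content, and (1) requires the most care.

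For (2), the bound $\theta \le 1$ follows from $0 \le 1-\Delta \le 1$ and $\tau \le 1$. The bound $\tau \le \theta$ is obtained by plugging $y = 1$ into the defining supremum: since $\Delta(1) = 0$, we get $(1-\Delta(1))\tau(1^{-1}x) = \tau(x)$ for $x \in \w$, while $\tau \equiv 0$ on $G \setminus \u \supseteq G \setminus \w$.

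For (3), fix $a \in Q$ and $x$ with $(a,x) \in \o$; I will show $\theta(ax) \le \theta(x) + \tfrac{1}{N}$, the reverse inequality following by applying the same argument to $a^{-1} \in Q$ and $ax$. For each $y \in \u$ set $y' := a^{-1}y \in \u^2$ (using $Q \subseteq \u$ and $\u \times \u \subseteq \o$). Using the standing identity $(ay')^{-1} = y'^{-1}a^{-1}$ together with generalized associativity (legal since $\w \subseteq \u_M$ with $M$ large), we have $y^{-1}(ax) = (ay')^{-1}(ax) = y'^{-1}x$. The defining property of $\Delta$ yields $|\Delta(ay') - \Delta(y')| \le \tfrac{1}{N}$, hence
\[
(1-\Delta(y))\,\tau(y^{-1}(ax)) \;=\; (1-\Delta(ay'))\,\tau(y'^{-1}x) \;\le\; (1-\Delta(y'))\,\tau(y'^{-1}x) + \tfrac{1}{N}.
\]
As $y$ ranges over $\u$, the point $y' = a^{-1}y$ ranges over $a^{-1}\u \subseteq \u^2$. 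If $y' \in \u$, the right-hand side is bounded by $\theta(x) + \tfrac{1}{N}$. If $y' \in \u^2 \setminus \u$, then $y' \notin Q^N$ (because $Q^N \subseteq \u$ by the definition $N = \ord_\u(Q)$), so $\Delta(y') = 1$ and the first summand vanishes, giving the same bound. Taking the supremum over $y \in \u$ finishes the argument.

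For (1), the vanishing claim is immediate: if $x \in \w \setminus \u^2$ and some $y \in \u$ had $\tau(y^{-1}x) > 0$, then $y^{-1}x \in \u$ and hence $x = y(y^{-1}x) \in \u^2$, a contradiction; combined with the definition on $G \setminus \w$, this gives $\theta = 0$ off $\u^2$. For continuity, observe that $F\colon \u \times \w \to [0,1]$ defined by $F(y,x) := \tau(y^{-1}x)$ is well-defined (all products $y^{-1}x$ exist since $\w \subseteq \u_M$) and continuous on the compact set $\u \times \w$, hence uniformly continuous. Given $x_0 \in \w$ and $\epsilon > 0$, pick a neighborhood $U$ of $x_0$ in $\w$ with $|F(y,x) - F(y,x_0)| < \epsilon$ for every $y \in \u$ and $x \in U$; since $(1-\Delta(y)) \in [0,1]$, passing to the supremum gives $|\theta(x) - \theta(x_0)| \le \epsilon$ on $U$. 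Provided $P$ is large enough that $\u^2 \subseteq \interior(\w)$ (arranged via $\u^P \subseteq \w$), the function $\theta$ vanishes on a neighborhood of $\partial\w$, so the two-piece definition glues to a globally continuous function. The main technical obstacle is the bookkeeping of products in (3), rigorously confirming that each rewriting such as $y^{-1}(ax) = y'^{-1}x$ uses only defined products and actually holds; the slack conditions $\u^P \subseteq \w \subseteq \u_M$ are precisely what license these moves.
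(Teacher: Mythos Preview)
Your argument is essentially correct and, for parts (2) and (3), follows the same substitution $y'=a^{-1}y$ that the paper uses. Two remarks are worth making.

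First, in (3) you silently assume that both $\theta(ax)$ and $\theta(x)$ are given by the supremum formula and that all the auxiliary products (such as $y'^{-1}x$, and $(a^{-1},ax)\in\o$ for the reverse inequality) are defined. The paper handles this by an explicit case split: if $x\notin\u^{3}$ one checks directly that $\theta(ax)=\theta(x)=0$, and only in the remaining case $x\in\u^{3}$ (hence $ax\in\u^{4}\subseteq\w$) does one carry out the substitution. Your computation is valid once one observes that any term with $\tau(y'^{-1}x)\neq 0$ forces $x\in\u^{3}$, so the bound $(1-\Delta(y'))\tau(y'^{-1}x)\le\theta(x)$ is legitimate; but this reduction should be stated, since from $(a,x)\in\o$ alone one cannot locate $x$ well enough to justify the products. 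The paper's organization (case split first, then compute) is a bit cleaner here; your ``one inequality plus symmetry with $a^{-1}$'' is a harmless reorganization of the paper's observation that two suprema coincide.

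Second, for (1) you give a standard argument via uniform continuity of $(y,x)\mapsto\tau(y^{-1}x)$ on the compact set $\u\times\w$, whereas the paper uses the nonstandard criterion directly (taking $a\in\muu$ and showing $\theta(ax)-\theta(x)$ is infinitesimal). Both are fine; your version is more elementary and avoids any appeal to the nonstandard framework, while the paper's fits the ambient methodology. Your gluing step (vanishing on a neighborhood of $\partial\w$ because $\u^{2}\subseteq\interior(\w)$) matches the paper's implicit use of the same inclusion.
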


\begin{proof}
It is clear that $\theta(x)=0$ if $x\notin \u^2$.  To see that $\theta$ is continuous, let $x\in G$ and $a\in \boldsymbol{\mu}$.  We must show that $\theta(ax)-\theta(x)$ is infinitesimal.  First note that if $x\notin \w$, then $ax\notin \w^*$; in this case, $\theta(ax)=\theta(x)=0$.  So assume $x\in \w$.  If $ax\notin \w$, then $x\notin \interior(\w)$.  Since $\u^2 \subseteq \interior(\w)$, we conclude that $x\notin \u^2$ and so $\theta(x)=\theta(ax)=0$.  Now suppose $ax\in \w^*$, so we can use the first clause in the definition of $\theta$ for $\theta(x)$ and $\theta(ax)$.  In this case, $\theta(ax)-\theta(x)$ is infinitesimal by the continuity of $\tau$.  $(2)$ is also clear from the definition.  

\

\noindent We now prove (3).  Let $a\in Q$ and suppose $(a,x)\in \o$.  If $x\notin \w$, then $ax\notin \u^2$, else $x\in \u^3 \subseteq \w$, a contradiction.  Hence $\theta(ax)=\theta(x)=0$ in this case.  Now if $x\in \w \setminus \u^3$, then $ax\notin \u^2$, and once again $\theta(ax)=\theta(x)=0$.  Hence we can suppose $x\in \u^3$ so that $ax\in \u^4\subseteq \w$ and so we can use the first clause in the definition of $\theta$ for $\theta(x)$ and $\theta(ax)$.  Let $y\in \u$.  Then $(a^{-1},y)\in \o$ and 
$$|(1-\Delta(a^{-1}y))-(1-\Delta(y))|\leq \frac{1}{N}.$$  Since $y^{-1}ax$ is defined, we have $y^{-1}ax=(a^{-1}y)^{-1}x$, so 
$$|(1-\Delta(y))\tau(y^{-1}ax)-(1-\Delta(a^{-1}y)\tau((a^{-1}y)^{-1}x))|\leq \frac{1}{N}.$$  (3) now follows from noting that the sets $\{(1-\Delta(y))\tau(y^{-1}x) \ | \ y \in \u\}$ and $\{(1-\Delta(a^{-1}y))\tau((a^{-1}y)^{-1}x) \ | \ y \in \u, a^{-1}y\in \u\}$ have the same supremum.
\end{proof}

\

\noindent Let $C:=\{f:G\rightarrow \r \ | \ f \text{ is continuous and }\supp(f)\subseteq \w^2\}.$  Then $C$ is a real vector space and we equip it with the norm given by 
$$\|f\|:=\sup \{|f(x)| \ | \ x\in G\}.$$

\noindent If $f\in C$ is such that $\supp(f)\subseteq \w$ and $a\in \w$, then we can define the new function $a\cdot f$ by the rule

\[
	(a\cdot f)(x)=
	\begin{cases}
	f(a^{-1}x)		&\text{if $x\in \w^2$}\\
	0			&\text{otherwise}.
	\end{cases}
\]

\noindent It is easy to see that $a\cdot f\in C$ and that if $f,g\in C$ both have supports contained in $\w$, then $a\cdot (f+g)=a\cdot f+a\cdot g$.  We will often drop the $\cdot$ and just write $af$ instead of $a\cdot f$.  This should not be confused with the function $rf$ given by $(rf)(x)=rf(x)$ for $r\in \r$, which is defined for any $f:G\rightarrow \r$.  

\

\begin{lemma}\label{L:triangle}
Suppose $a,b\in \w$ and $f\in C$ is such that $\supp(f)\subseteq \w$ and $\supp(bf)\subseteq \w$.  Then:
\begin{enumerate}
\item[(i)] $\|af\|=\|f\|$;
\item[(ii)] $a(bf)=(ab)f$ and $$\|(ab)f -f\|\leq \|af -f\|+\|bf-f\|.$$
\end{enumerate}
\end{lemma}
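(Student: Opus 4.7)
The plan is to verify (i) directly from the definition, and then derive (ii) by combining (i) with a pointwise identification of $(ab)f$ and $a(bf)$.

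For (i), every value $(af)(x)$ is either $0$ or $f(a^{-1}x)$, giving $\|af\| \leq \|f\|$. For the reverse, fix $y \in \w$; then $ay$ is defined and lies in $\w^2$, and associativity inside $\u_M$ yields $a^{-1}(ay) = y$, so $(af)(ay) = f(y)$; taking the supremum over $y \in \w$ (which contains $\supp(f)$) finishes this direction. Next I would verify the pointwise identity $(ab)f = a(bf)$. Both sides vanish off $\w^2$. For $x \in \w^2$ with $a^{-1}x \in \w^2$, each side evaluates to $f(b^{-1}a^{-1}x)$, using $(ab)^{-1} = b^{-1}a^{-1}$ (our running assumption on $G$) and the associativity $b^{-1}(a^{-1}x) = (b^{-1}a^{-1})x$, valid because all factors lie in $\u_M$ with $M$ large.

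The subtle subcase is $x \in \w^2$ with $a^{-1}x \notin \w^2$: then $(a(bf))(x) = 0$ by the second clause of the definition, and I claim $((ab)f)(x) = f(b^{-1}a^{-1}x)$ is also $0$. Indeed, if it were nonzero, then $b^{-1}a^{-1}x \in \supp(f) \subseteq \w$, and associativity would give $a^{-1}x = b(b^{-1}a^{-1}x) \in \w^2$, contradicting the subcase hypothesis. Granted the identity, I write
\[
(ab)f - f \;=\; a(bf) - af + af - f \;=\; a(bf - f) + (af - f),
\]
using linearity of $c \cdot (\cdot)$ on functions with support in $\w$; by hypothesis $\supp(bf) \cup \supp(f) \subseteq \w$, so $\supp(bf - f) \subseteq \w$ and $a(bf - f)$ is defined and equals $a(bf) - af$. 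The triangle inequality in $C$, followed by (i) applied to $bf - f$, yields $\|(ab)f - f\| \leq \|a(bf - f)\| + \|af - f\| = \|bf - f\| + \|af - f\|$.

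The main obstacle will be the pointwise identity in the problematic subcase: the truncation at $\w^2$ in the definition of $c \cdot f$ threatens to create a spurious discrepancy between $a(bf)$ and $(ab)f$, and showing it does not requires exactly the support hypothesis on $f$ together with the local-group identity $(ab)^{-1} = b^{-1}a^{-1}$. Everything else is essentially bookkeeping within the set of inequalities the norm $\|\cdot\|$ naturally satisfies.
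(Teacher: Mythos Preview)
Your proof is correct and follows essentially the same approach as the paper: the paper computes $\|af\|$ via the same change of variable $y=a^{-1}x$, handles the pointwise identity $a(bf)=(ab)f$ by the same case split on whether $a^{-1}x\in\w^2$ (using the contrapositive of your contradiction argument), and derives the triangle estimate via the identical decomposition $(ab)f-f=a(bf-f)+(af-f)$ together with part (i). If anything, you are slightly more explicit than the paper about why the ``subtle subcase'' works and about the linearity step $a(bf)-af=a(bf-f)$.
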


\begin{proof}
To see (i), note that 
\begin{align}
\|af\|&=\sup \{|f(a^{-1}x)| \ | \ x\in \w^2\}\notag \\
       &=\sup \{|f(a^{-1}x)| \ | \ x\in \w^2, \ a^{-1}x\in \w\}\notag \\
       &=\sup \{|f(y)| \ | \ y\in \w\}\notag \\
       &=\|f\| \notag \\ \notag
\end{align}

\noindent We now prove (ii).  Suppose $x\notin \w^2$.  In this case, $((ab)f)(x)=0$ and since $a^{-1}x\notin \w$ and $\supp(bf)\subseteq \w$, we also have $(a(bf))(x)=0$.  Now suppose $x\in \w^2$.  Then $((ab)f)(x)=f((ab)^{-1}x)=f((b^{-1}a^{-1})x)$ and $(a(bf))(x)=(bf)(a^{-1}x)$.  If $a^{-1}x\notin \w^2$, then $b^{-1}a^{-1}x\notin \w$;  in this case, we have $((ab)f)(x)=0$ and $(a(bf)(x)=0$.  If $a^{-1}x\in \w^2$, then $(a(bf))(x)=(bf)(a^{-1}x)=f(b^{-1}(a^{-1}x))=((ab)f)(x)$.  We now finish the proof of (ii) as in the global case, that is

\begin{align}
\|(ab)f-f\|&=\|a(bf)-af+af-f\| \notag \\
              &\leq \|a(bf)-af\| + \|af-f\| \notag \\
              &\leq \|a(bf-f)\|+\|af-f\| \notag \\
              &=\|bf-f\|+\|af-f\| \notag \\ \notag
\end{align}
\end{proof}

\

\noindent Since $\supp(\theta)\subseteq \u^2\subset \w$, we can consider the function $a\theta\in C$ for any $a\in \w$.  We will often need the following equicontinuity result.
\begin{enumerate}
\item[(4)]For each $\epsilon \in \r^{>0}$, there is a symmetric neighborhood $V_{\epsilon}$ of $1$ in $G$, independent of $Q$, such that $V_{\epsilon}\subseteq \u$ and $\|a\theta -\theta \|\leq \epsilon$ for all $a\in V_{\epsilon}$.
\end{enumerate}

\noindent To see this, let $\epsilon \in \r^{>0}$ and note by the uniform continuity of $\tau$, we have a neighborhood $U$ of $1$ in $G$ such that $|\tau(g)-\tau(h)|<\epsilon$ for all $g,h\in \u$ such that $gh^{-1}\in U$.  Now take a symmetric neighborhood $V_{\epsilon}$ of $1$ in $G$ such that $V_{\epsilon}\subseteq \u$ and $y^{-1}ay\in U$ for all $(a,y)\in \u \times V_{\epsilon}$.  The claim is that this choice for $V_{\epsilon}$ works.  Fix $a\in V_\epsilon$.  We must compute $|(a\theta)(x)-\theta(x)|$ for various $x$.  Let us first take care of some of the trivial computations.  If $x\notin \w^2$, then $(a\theta)(x)=\theta(x)=0$.  Next suppose that $x\in \w^2 \setminus \w$, so $\theta(x)=0$.  If $a^{-1}x\notin \w$, then $(a\theta)(x)=0$.  If $a^{-1}x\in \w$, then $a^{-1}x\notin \u^2$, else $x\in \u^3\subseteq \w$, a contradiction.  So in this case also, $(a\theta)(x)=0$.  Now if $x\in \w \setminus \u^3$, then $a^{-1}x\notin \u^2$, so once again $(a\theta)(x)=\theta(x)=0$.  Thus the only real case of interest is when $x\in \u^3$ and hence $a^{-1}x\in \u^4\subseteq \w$.  In this case, we have $|\tau(y^{-1}a^{-1}x)-\tau(y^{-1}x)|<\epsilon$ for any $y\in \u$.  We now have

\begin{align}
|(a\theta)(x)-\theta(x)|&=\big|\sup_{y\in \u} (1-\Delta(y))\tau(y^{-1}a^{-1}x)-\sup_{y\in \u}(1-\Delta(y))\tau(y^{-1}x)\big| \notag \\
				&\leq \sup_{y\in \u}(1-\Delta(y))|\tau(y^{-1}a^{-1}x)-\tau(y^{-1}x)| \notag \\
				&\leq\epsilon. \notag \\ \notag
\end{align}

\noindent This finishes the proof of (4).

\

\noindent In ~\cite{MZ}, Montgomery and Zippin construct a linear functional $$f\mapsto \int f:C \rightarrow \r$$ with the following properties:

\begin{enumerate}
\item[(i)] For any $f\in C$, one has $|\int f|\leq \int |f|$;
\item[(ii)] (Left invariance) Suppose $f\in C$ satisfies $\supp(f)\subseteq \w$.  Let $a\in \w$.  Then $\int af=\int f$.
\end{enumerate}

\noindent The above defined functional will be referred to as the \textbf{local Haar integral} on $G$.  As is the case for topological groups, the local Haar integral is unique up to multiplication by a positive real constant.   By the Riesz Representation Theorem, for each local Haar integral as above, we get a Borel measure $\mu$ on a certain $\sigma$-algebra of subsets of $\w^2$.  We will refer to the measure corresponding to a local Haar integral as a \textbf{local Haar measure} on $G$.  If a given integral induces the measure $\mu$, then we will denote the integral of a function $f$ by $\int f d \mu$, or if we wish to specify the variable, by $\int f(x)d\mu(x)$.  By the left invariance of the integral, the measure $\mu$ is left-invariant in the sense that for any measurable subset $V\subseteq \w$ and any $a\in \w$, we have $\mu(aV)=\mu(V)$.

\

\noindent With these remarks behind us, we can now proceed with our discussion.  Fix a continuous function $\tau_1:G\rightarrow [0,1]$ such that 
$$\tau_1(x)=1 \text{ for all }x\in \u^2, \qquad \tau_1(x)=0 \text{ for all }x\in G\setminus \u^3.$$

\noindent It is important later that $\tau_1$ depends only on $\u$, not on $Q$.

\

\noindent Note that $0\leq \theta \leq \tau_1$.  Take the unique local Haar measure $\mu$ such that   $\int \tau_1(x)d\mu(x)=1$.  Then in particular we have 

\

\begin{enumerate}
\item[(5)] $0\leq \int \theta(x)d\mu(x) \leq 1.$
\end{enumerate}

\

\noindent By (4) above, we have an open neighborhood $V\subseteq \u$ of $1$ in $G$, independent of $Q$, such that $\theta^2(x)\geq \frac{1}{2}$ on $V$.  We now introduce the function $$\phi=\phi_Q:G\rightarrow \r$$ defined by 

\[
	\phi(x)=
	\begin{cases}
	\int \theta(xu)\theta(u)d\mu(u)	&\text{if $x\in \w$}\\
	0 	&\text{if $x\in G\setminus \w$}
	\end{cases}
\]

\noindent It should be clear that $\phi$ is continuous.  The following are other useful properties of $\phi$:

\begin{enumerate}
\item[(6)]$\supp(\phi)\subseteq \u^4\subseteq \w$;
\item[(7)] $\phi(1)\geq \frac{\mu(V)}{2}$;
\item[(8)] if $a\in Q$, then $\|a\phi - \phi\| \leq \frac{1}{N}$;
\item[(9)] $\|a\phi - \phi\|\leq \|a\theta -\theta\|$ for all $a\in \w$.
\end{enumerate}

\noindent In verifying properties (8) and (9), one needs to perform the case distinctions that we have done earlier.  As before, though, there is only one case where the first clause of the definition applies for both $a\phi$ and $\phi$ and in this case the results easily follow from the properties of the local Haar integral and the earlier derived properties (1)-(5).  We use that $\u^6\subseteq \w$ in showing that there is only one genuine calculation to perform.

\

\noindent We should also point out that, by (6), $\supp(a\phi-\phi)\subseteq \u^5\subseteq \w$ for all $a\in \w$, and so for all $b\in\w$, we can consider the function $b(a\phi-\phi)$.  We are finally in the position to prove the local versions of the Gleason-Yamabe Lemmas.

\begin{lemma}\label{L:gleason1}
Let $c,\epsilon \in \r^{>0}$.  Then there is a neighborhood $U=U_{c,\epsilon}\subseteq \u$ of $1$ in $G$, independent of $Q$, such that for all $a\in Q$, $b\in U$, and $m\leq cN$,
$$\|b\cdot m(a\phi-\phi)-m(a\phi-\phi)\|\leq \epsilon, \qquad \|m(a\phi-\phi)\|\leq c.$$
\end{lemma}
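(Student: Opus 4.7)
The bound $\|m(a\phi-\phi)\|\le c$ is immediate: by property $(8)$, $\|a\phi-\phi\|\le 1/N$, and $m\le cN$ gives $\|m(a\phi-\phi)\|=m\|a\phi-\phi\|\le c$. The substantive statement is the displacement bound, which after factoring out the scalar reduces to controlling
$$
m\,\|b(a\phi-\phi)-(a\phi-\phi)\|\le\epsilon.
$$

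My first move is the algebraic identity
$$
b(a\phi-\phi)-(a\phi-\phi)\;=\;a\bigl[(a^{-1}ba)\phi-\phi\bigr]-(b\phi-\phi),
$$
obtained from $(ba)\phi=a\cdot(a^{-1}ba)\phi$ together with the additivity of left translation; these manipulations are legal in the local-group setting because the chain $\u^P\subseteq\w\subseteq\u_M$ keeps all relevant conjugations and products defined. Combining Lemma \ref{L:triangle}(i) (left invariance of $\|\cdot\|$) with property $(9)$ then bounds the inner norm by $\|(a^{-1}ba)\theta-\theta\|+\|b\theta-\theta\|$. For each summand, property $(4)$ supplies a $Q$-independent neighborhood $V_\eta$ of $1$ with $\|c\theta-\theta\|\le\eta$ on $V_\eta$, and compactness of $\u$ together with continuity of conjugation yield a $Q$-independent neighborhood $U_0$ of $1$ with $a^{-1}ba\in V_\eta$ for every $a\in\u$ and $b\in U_0$.

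The principal obstacle is that naively one would have to take $\eta\le\epsilon/(2cN)$ to absorb the factor $m\le cN$, which would make the resulting neighborhood depend on $N$ (hence on $Q$). To pry out the needed extra factor of $1/N$, I would refine the estimate by re-decomposing
$$
b(a\phi-\phi)-(a\phi-\phi)\;=\;\bigl[a(b\phi-\phi)-(b\phi-\phi)\bigr]+(ba\phi-ab\phi),
$$
and using the convolution representation $a\phi-\phi=T(a\theta-\theta)$, where $Tf(x)=\int f(xu)\theta(u)\,d\mu(u)$ commutes with left translation, together with $\|a\theta-\theta\|\le 1/N$ to tease the $1/N$ out of the symmetric bracket. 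The commutator $ba\phi-ab\phi$ simplifies, via $a^{-1}b^{-1}=[a^{-1},b^{-1}]\cdot b^{-1}a^{-1}$, to an expression whose sup norm equals $\|c^{-1}\phi-\phi\|\le\|c^{-1}\theta-\theta\|$ for $c=[a^{-1},b^{-1}]$; since $c\to 1$ as $b\to 1$ uniformly in $a\in\u$, property $(4)$ applies here too. Assembling the pieces should yield a bound of the form $\|b(a\phi-\phi)-(a\phi-\phi)\|\le K(b)/N$ with $K(b)\to 0$ as $b\to 1$ uniformly in $a\in Q$ and in $Q$, so that multiplying by $m\le cN$ gives at most $\epsilon$ on a suitable neighborhood $U\subseteq V_{\epsilon/(2c)}\cap U_0$ depending only on $c,\epsilon$ and the fixed ambient data $\u,\w,\tau,\tau_1$. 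The subsidiary technical obstacle throughout is checking that each conjugation, commutator, and composition of left translates is defined within $\w$, which is exactly what the assumptions $\u^P\subseteq\w\subseteq\u_M$ at the start of the section guarantee.
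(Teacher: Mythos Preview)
Your diagnosis of the obstacle is exactly right: one must extract a factor $1/N$ from $\|b(a\phi-\phi)-(a\phi-\phi)\|$ in a $Q$-independent way, and your first algebraic identity alone does not do this. The difficulty is that your proposed fix via the second decomposition
\[
b(a\phi-\phi)-(a\phi-\phi)=[a(b\phi-\phi)-(b\phi-\phi)]+(ba\phi-ab\phi)
\]
does not close. For the commutator piece you correctly obtain $\|ba\phi-ab\phi\|=\|c\phi-\phi\|\le\|c\theta-\theta\|$ with $c=[a^{-1},b^{-1}]$, and property~(4) makes this small as $b\to 1$ uniformly in $a\in\u$. But this bound carries \emph{no} $1/N$: property~(3) gives $|\theta(ax)-\theta(x)|\le 1/N$ only for $a\in Q$, and the commutator $c$ (or the conjugates $b^{-1}ab$ that appear when you unwind it) need not lie in $Q$. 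So after multiplying by $m\le cN$ this term contributes $cN\cdot K(b)$, which cannot be made $\le\epsilon$ independently of $N$. The same issue recurs inside the first bracket once you try to swap $a$ and $b$ at the $\theta$-level: $a(b\theta-\theta)-(b\theta-\theta)$ and $b(a\theta-\theta)-(a\theta-\theta)$ differ by $ab\theta-ba\theta$, again a commutator with no $1/N$.

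The paper avoids commutators entirely by working \emph{inside} the integral rather than at the norm level. With $y:=b^{-1}x$, one writes both $(a\phi-\phi)(x)$ and $(a\phi-\phi)(y)$ as integrals against $\theta(u)\,d\mu(u)$, then in the integral for $(a\phi-\phi)(x)$ performs the left-invariant substitution $u\mapsto x^{-1}yu$. Subtracting yields the single product
\[
[b(a\phi-\phi)-(a\phi-\phi)](x)=\int (a\theta-\theta)(yu)\,\bigl[\theta(u)-\theta((y^{-1}x)^{-1}u)\bigr]\,d\mu(u),
\]
where $y^{-1}x=x^{-1}bx$. Now the two factors separate the roles of $a$ and $b$ cleanly: the first is bounded by $1/N$ via property~(3) (since $a\in Q$), and the second is bounded, via property~(4), by any prescribed $\eta>0$ once $b$ lies in a neighborhood on which all conjugates $x^{-1}bx$ with $x\in\u^6$ land in $V_\eta$. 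Integrating over the support ($\subseteq\u^3$) gives $\|b(a\phi-\phi)-(a\phi-\phi)\|\le \eta\,\mu(\u^3)/N$, and choosing $\eta=\epsilon/(c\,\mu(\u^3))$ finishes the argument. The point is that the change of variables produces conjugates of $b$, which \emph{can} be controlled uniformly, rather than conjugates of $a$, which cannot.
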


\begin{proof}
Let $a\in Q$, $b\in \u$.  Note that $\supp(b\cdot m(a\phi-\phi)-m(a\phi-\phi))\subseteq \u^6$.  So let $x\in \u^6$ and set $y:=b^{-1}x$.  Then
\begin{align*}  (a\phi - \phi)(x)\ &=\ \int [\theta(a^{-1}xu)-\theta(xu)]\theta(u)\ d\mu(u)\\ 
b(a\phi-\phi)(x)=(a\phi - \phi)(y)\ &=\ \int [\theta(a^{-1}yu)-\theta(yu)]\theta(u)\ d\mu(u).
\end{align*}

\noindent Assuming $P$ is large enough so that $y^{-1}x\in \w$, we can use left-invariance of the integral to replace $u$ by $x^{-1}yu$ in the function of $u$ integrated in the first identity.  So,
$$(a\phi-\phi)(x)=\int [\theta(a^{-1}yu)-\theta(yu)]\theta(x^{-1}yu)d\mu(u).$$

\noindent Taking differences gives 
$$[b\cdot(a\phi - \phi)-(a\phi-\phi)](x)=\int [(a\theta-\theta)(yu)][(\theta-y^{-1}x\theta)(u)]d\mu(u).$$

\noindent By (4), we can take the neighborhood $U_{c,\epsilon}\subseteq \u$ of $1$ in $G$ so small that for all $b\in U_{c,\epsilon}$ and $x\in\u^6$ we have $y^{-1}x\in \u$ and 
$$\|\theta-y^{-1}x\theta\|<\frac{\epsilon}{c\mu(\u^3)}.$$

\noindent It is straightforward to see that this choice of $U_{c,\epsilon}$ works.
\end{proof}

\

\begin{lemma}\label{L:gleason2}
With $c,\epsilon \in \r^{>0}$, let $U=U_{c,\epsilon}$ be as in the previous lemma and let $a\in Q$ and $m,n$ be such that $m\leq cN$, $n>0$, $a^n$ is defined, and $a^i\in U$ for $i\in\{0,\ldots,n\}$.  Then
$$\|(\frac{m}{n})(a^n\phi - \phi)-m(a\phi-\phi)\|\leq \epsilon.$$
\end{lemma}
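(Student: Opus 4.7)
The plan is to reduce the statement to Lemma~\ref{L:gleason1} via a telescoping identity. The key observation is that $a^n\phi-\phi$ can be written as a telescoping sum whose terms are translates of $a\phi-\phi$ by the powers $a^0, a^1, \ldots, a^{n-1}$, all of which lie in $U$ by hypothesis.

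Specifically, I would first verify the identity
$$a^n\phi-\phi \;=\; \sum_{i=0}^{n-1}\bigl(a^{i+1}\phi - a^i\phi\bigr) \;=\; \sum_{i=0}^{n-1} a^i(a\phi-\phi),$$
where the second equality uses Lemma~\ref{L:triangle}(ii), namely $a^i(a\phi)=a^{i+1}\phi$ and the linearity of the action $a^i\cdot(\,\cdot\,)$ (which applies because all supports involved are contained in $\u^5\subseteq\w$ by property (6), and because $a^i\in U\subseteq\u\subseteq\w$). Writing $n(a\phi-\phi)=\sum_{i=0}^{n-1}(a\phi-\phi)$ trivially and subtracting, we obtain
$$(a^n\phi-\phi) - n(a\phi-\phi) \;=\; \sum_{i=0}^{n-1}\bigl[a^i(a\phi-\phi) - (a\phi-\phi)\bigr].$$
Multiplying by $\tfrac{m}{n}$ gives
$$\tfrac{m}{n}(a^n\phi-\phi) - m(a\phi-\phi) \;=\; \tfrac{1}{n}\sum_{i=0}^{n-1}\bigl[a^i\cdot m(a\phi-\phi) - m(a\phi-\phi)\bigr],$$
again using that the action of $a^i$ commutes with scalar multiplication.

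Now I would invoke Lemma~\ref{L:gleason1} with the pair $(c,\epsilon)$ chosen as in the hypothesis: since $a\in Q$, $m\le cN$, and each $a^i\in U=U_{c,\epsilon}$ for $i\in\{0,\ldots,n-1\}$, that lemma yields
$$\bigl\|a^i\cdot m(a\phi-\phi)-m(a\phi-\phi)\bigr\|\;\le\;\epsilon$$
for every $i$. Applying the triangle inequality (via Lemma~\ref{L:triangle}(i) to see that the $\|\cdot\|$-norm behaves well under the action) to the above sum, the desired bound $\|\tfrac{m}{n}(a^n\phi-\phi)-m(a\phi-\phi)\|\le\epsilon$ follows immediately, since $n$ terms each of norm at most $\epsilon$ are averaged.

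The only real obstacle is bookkeeping: I must check that every translate $a^i\phi$, every difference $a^i(a\phi)-a^i\phi$, and each intermediate function in the telescoping has support inside $\w$, so that the formula $a^i(a\phi)=a^{i+1}\phi$ and the norm identity of Lemma~\ref{L:triangle}(i) are legitimate. This is why the standing assumption $\u^P\subseteq\w$ with $P$ large is invoked: since $a^i\in U\subseteq\u$ and $\supp(\phi)\subseteq\u^4$, the supports of all translates appearing in the telescope remain inside some $\u^k\subseteq\w$, and the manipulations are justified. Apart from this verification, the argument is a direct mimicry of the standard Gleason-Yamabe computation.
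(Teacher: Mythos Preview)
Your proposal is correct and follows essentially the same route as the paper: both use the telescoping identity $m(a^n\phi-\phi)=\sum_{i=0}^{n-1}a^i\cdot m(a\phi-\phi)$, subtract $nm(a\phi-\phi)$, apply Lemma~\ref{L:gleason1} termwise with $b=a^i\in U$, and then average. Your additional remarks about checking supports are exactly the ``routine'' verification the paper suppresses.
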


\begin{proof}
It is routine to check that $m(a^n\phi - \phi)=\sum_{i=0}^{n-1} a^im(a\phi-\phi)$.  Hence we have 
$$m(a^n\phi -\phi)-nm(a\phi-\phi)=\sum_{i=0}^{n-1}(a^im(a\phi-\phi)-m(a\phi - \phi)).$$ 

\noindent By the previous lemma, we have for $i=0,\ldots,n-1$,
$$\|a^im(a\phi-\phi)-m(a\phi-\phi)\|\leq \epsilon,$$ which gives the desired result by summing and dividing by $n$.
\end{proof}

\

\noindent Now suppose that $Q$ is a symmetric internal subset of $\boldsymbol{\mu}$ with $1\in Q$ such that $N:=\ord_\u(Q)\in \n^*$.  Note that $N>\n$.  Then everything we have done so far in this section transfers to the nonstandard setting and yields internally continuous functions $\theta:G^*\rightarrow [0,1]^*$ and $\phi:G^*\rightarrow \r^*$ satisfying the internal versions of (1)-(9) and Lemmas \ref{L:gleason1} and \ref{L:gleason2}.  We now have the following lemma.

\begin{lemma}\label{L:gleason3}
Suppose $a\in Q$, $\nu=\bigO(N)$, $a^\nu$ is defined and $a^\sigma \in \boldsymbol{\mu}$ for all $\sigma \leq \nu$.  Then $\|\nu(a\phi-\phi)\|$ is infinitesimal.
\end{lemma}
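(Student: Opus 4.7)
My plan is to apply the internal transfer of Lemma \ref{L:gleason2} with $m=n=\nu$, using the hypothesis that every $a^\sigma$ for $\sigma\leq\nu$ is infinitesimal, and then to control the residual term $\|a^\nu\phi-\phi\|$ via the $Q$-independent equicontinuity property (4) combined with property (9).

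In detail, I fix an arbitrary standard $\epsilon>0$ and will show $\|\nu(a\phi-\phi)\|\leq\epsilon$; since $\epsilon$ is arbitrary, this yields infinitesimality. Because $\nu=\bigO(N)$, I pick a standard positive integer $c$ with $\nu\leq cN$. Invoking Lemma \ref{L:gleason1} (in its standard form) with the parameters $c$ and $\epsilon/2$ produces a standard symmetric neighborhood $U=U_{c,\epsilon/2}\subseteq\u$ of $1$ in $G$ that is independent of $Q$. By the hypothesis on $a$, we have $a^\sigma\in\boldsymbol{\mu}\subseteq U^*$ for every $\sigma\in\{0,1,\ldots,\nu\}$, so the internal version of Lemma \ref{L:gleason2} applies with $m=n=\nu$ and gives
$$\|(a^\nu\phi-\phi)-\nu(a\phi-\phi)\|\leq \frac{\epsilon}{2}.$$
Separately, since $a^\nu\in\boldsymbol{\mu}$, $a^\nu$ lies in every standard neighborhood of $1$, in particular in the $Q$-independent neighborhood $V_{\epsilon/2}$ furnished by property (4); hence $\|a^\nu\theta-\theta\|\leq\epsilon/2$, and property (9) gives $\|a^\nu\phi-\phi\|\leq\epsilon/2$. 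A triangle inequality then completes the argument:
$$\|\nu(a\phi-\phi)\|\leq\|a^\nu\phi-\phi\|+\|(a^\nu\phi-\phi)-\nu(a\phi-\phi)\|\leq\epsilon.$$

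The main thing to be careful about is the $Q$-independence of the neighborhoods $U_{c,\epsilon}$ in Lemma \ref{L:gleason1} and $V_\epsilon$ in property (4); this is precisely what lets me fix standard neighborhoods whose $*$-extensions accommodate all of $\boldsymbol{\mu}$ at once, and it is why those earlier results were carefully formulated with $Q$-independent data. A secondary point is that $n=\nu$ is nonstandard, so the appeal to Lemma \ref{L:gleason2} must be through its internal transfer rather than the standard statement directly; but this is exactly the setting flagged in the paragraph immediately preceding this lemma, so no extra work is required.
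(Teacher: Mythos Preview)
Your proof is correct and follows essentially the same approach as the paper's own proof: apply the transfer of Lemma~\ref{L:gleason2} with $m=n=\nu$ (using $\nu=\bigO(N)$ to choose a standard $c$ with $\nu\le cN$, and the hypothesis $a^\sigma\in\boldsymbol{\mu}$ to land all powers in $U^*$), then bound $\|a^\nu\phi-\phi\|$ via properties (9) and (4), and conclude by the triangle inequality. The only cosmetic difference is that you work with $\epsilon/2$ to hit $\epsilon$ exactly, whereas the paper uses $\epsilon$ throughout and ends with $2\epsilon$.
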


\begin{proof}
Let $\epsilon \in \r^{>0}$.  Choose $c\in \r^{>0}$ such that $\nu \leq cN$.  Then by Lemma \ref{L:gleason2}, with $m=n=\nu$, we have $$\|(a^\nu\phi -\phi)-\nu(a\phi-\phi)\|\leq \epsilon.$$

\noindent Also $\|a^\nu \phi-\phi\|\leq \|a^\nu \theta -\theta\|<\epsilon$ by (9) and (4), so $\|\nu(a\phi-\phi)\|\leq 2\epsilon.$
\end{proof}

\

\section{Consequences of the Gleason-Yamabe Lemmas} 

\noindent  In this section, we bear the fruits of our labors from the previous section.  We derive some consequences of the Gleason-Yamabe lemmas which allow us to put a group structure on $L(G)$.  From this group structure on $L(G)$, we obtain the important corollary that in an $\Ns$ local group, the image of the local exponential map is a neighborhood of $1$, which later will allow us to conclude that $\Ns$ local groups are locally euclidean.

\

\noindent \textbf{Group structure on $L(G)$}

\

\begin{lemma}\label{L:prodmonad}
Suppose $\nu\in \n^*\setminus \n$ and $a_1,\ldots,a_\nu$ is a hyperfinite sequence such that $a_i\in G^{\lilo}(\nu)$ for all $i\in \{1,\ldots,\nu\}$.  Let $Q:=\{1,a_1,\ldots,a_\nu,a_1^{-1},\ldots,a_\nu^{-1}\}$.  Then $Q^\nu$ is defined and $Q^\nu \subseteq \boldsymbol{\mu}$.
\end{lemma}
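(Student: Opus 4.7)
The plan is a contradiction argument built on the Gleason-Yamabe apparatus of Section 5. Fix a special neighborhood $\u$ of $G$ (with auxiliary $\w$) chosen small enough that $\u^4$ contains no nontrivial subgroup of $G$ (using $\Ns$), and form the functions $\theta=\theta_Q$ and $\phi=\phi_Q$. Since $Q\subseteq\muu$ and $\muu$ is a subgroup of $G^*$, each $Q^m$ for standard $m$ is defined and contained in $\muu\subseteq\u^*$, so $N:=\ord_\u(Q)>\n$. Let $\sigma$ be the least element of $\n^*$ for which the internal statement ``$Q^\sigma$ is defined and $Q^\sigma\subseteq\u^*$'' fails (if no such $\sigma$ exists, take $\sigma=\nu+1$ and skip to the next paragraph with $\sigma$ replaced by $\nu$). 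By the above $\sigma>\n$, and since $Q^{\sigma-1}\subseteq\u^*\subseteq\u_2$ guarantees the requisite pairwise products, $Q^\sigma$ is in fact defined, so the failure is $Q^\sigma\not\subseteq\u^*$. If $\sigma>\nu$, then $Q^\nu\subseteq\u^*$ already, and the telescoping argument below (applied with $\nu$ in place of $\sigma$) upgrades this to $Q^\nu\subseteq\muu$. So we may assume $\sigma\leq\nu$ and pick $b=b_1\cdots b_\sigma\in Q^\sigma\setminus\u^*$.

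The telescoping estimate is
$$b\phi-\phi \;=\; \sum_{i=1}^{\sigma} (b_1\cdots b_{i-1})\cdot(b_i\phi-\phi),$$
so by Lemma \ref{L:triangle}(i), $\|b\phi-\phi\|\leq\sum_{i=1}^{\sigma}\|b_i\phi-\phi\|$. Each $b_i$ is $1$, $a_j$, or $a_j^{-1}$ for some $j$, and $a_j\in G^{\lilo}(\nu)$ combined with Lemma \ref{L:lilo} gives $b_i^\tau$ defined and in $\muu$ for every $\tau\leq\nu$. Since $\sigma\leq N+1\leq 2N$ (using that $N$ is infinite), $\sigma=\bigO(N)$; combined with $\sigma\leq\nu$, Lemma \ref{L:gleason3} applies to each $b_i$ with $\sigma$ in place of $\nu$. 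By saturation there is a single infinitesimal $\epsilon$ with $\|\sigma(b_i\phi-\phi)\|<\epsilon$ uniformly in $b_i\in Q$, whence $\|b_i\phi-\phi\|<\epsilon/\sigma$ and the sum of the $\sigma$ terms is bounded by $\epsilon$, so $\|b\phi-\phi\|$ is infinitesimal.

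To extract the contradiction, evaluate at $x=1$: $|\phi(b^{-1})-\phi(1)|\approx 0$, and by property (7) $\phi(1)\geq\mu(V)/2$ is appreciable, so $\phi(b^{-1})$ is appreciable, forcing $b^{-1}\in\supp(\phi)\subseteq\u^{4*}$ and $b$ nearstandard. Set $b_0:=\st(b)$; since $b\notin\u^*$, $b_0\neq 1$. Taking standard parts of $\phi(b^{-1}x)\approx\phi(x)$ at each standard $x$ yields, by continuity of $\phi$, the exact identity $\phi(b_0^{-1}x)=\phi(x)$ on $G$. Evaluating iteratively at $x=b_0^{k-1}$ places $\{b_0^k:k\in\z\}$ inside $\supp(\phi)\subseteq\u^4$, and this is a nontrivial subgroup of $G$, contradicting our choice of $\u$. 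Hence $b\in\muu\subseteq\u^*$, contradicting $b\notin\u^*$, and the lemma follows.

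The main obstacle is the bootstrap $\sigma=\bigO(N)$ required to invoke Lemma \ref{L:gleason3}; the unqualified control $\nu=\bigO(N)$ is not obvious a priori, but the minimality choice of $\sigma$ supplies $\sigma-1\leq N$ automatically, which is exactly what is needed. A secondary technical point is verifying that the iterates $b_0^k$ are all defined and remain in $\u^4$; this is handled by a standard overflow argument using the invariance $\phi(b_0^{-1}x)=\phi(x)$ on the standard world, together with the property $\supp(\phi)\subseteq\u^4$ and the fact that products in $\u^4$ of bounded length stay well-defined by the choice of the constants $P,M$ in the Gleason-Yamabe setup.
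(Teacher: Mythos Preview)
Your argument has a genuine gap: you invoke $\Ns$ (to obtain a special neighborhood and to derive a contradiction from a nontrivial cyclic subgroup inside $\u^4$), but this lemma must be proved \emph{without} assuming $\Ns$. Indeed, it is applied in Lemma~\ref{L:neighandgroup} in Section~9, which is part of the proof that locally euclidean local groups are $\Ns$; assuming $\Ns$ there would be circular. The paper's proof explicitly avoids both $\Ns$ and purity (see the remark following the lemma): after reducing to $\ord_U(Q)=\nu$, it splits into the case $Q^i\subseteq\muu$ for all $i=\lilo(\nu)$ (handled essentially as you do, via the Gleason--Yamabe $\phi$ and the telescoping estimate) and the case where some $Q^i\nsubseteq\muu$ with $i=\lilo(\nu)$, which produces a nontrivial compact connected subgroup $G_U(Q)$. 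In that second case the paper invokes the Peter--Weyl theorem to find a proper closed normal subgroup $K\subseteq G_U(Q)$ and a smaller neighborhood $V$ so that, with $\mu:=\ord_V(Q)$, the argument reduces back to the first case at scale $\mu$. Your proposal contains no substitute for this Peter--Weyl step.

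Even granting $\Ns$, your extraction of the contradiction is more delicate than necessary and not fully justified in the local setting. You fix $\u$ in advance and then argue that $\phi(b_0^{-1}x)=\phi(x)$ forces all powers $b_0^k$ to lie in $\u^4$; but $\phi=\phi_Q$ is an internal function attached to the internal $Q$, so passing to a standard identity on $G$ is not straightforward, and you must separately verify at each step that $b_0^{k+1}$ is defined in the local group before you can iterate. The paper sidesteps this entirely by choosing the Gleason--Yamabe neighborhood $\u$ \emph{after} locating $b$, small enough that $\st(b)\notin\u^4$; then $\phi(b^{-1})=0$ immediately (since $\supp(\phi)\subseteq\u^4$), and the contradiction with $\phi(1)\ge\mu(V)/2$ appreciable follows in one line from the telescoping bound, with no iteration needed.
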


\begin{proof}
We first show that if $Q^\nu$ is defined, then $Q^\nu \subseteq \boldsymbol{\mu}$.  Suppose, towards a contradiction, that $Q^\nu \nsubseteq \muu$.  Take a compact symmetric neighborhood $U$ of $1$ in $G$ such that $Q^{\nu+1}\nsubseteq U^*$, so $\ord_U(Q)\leq \nu$.  Fix $\w$ as in the setting of the Gleason-Yamabe lemmas and choose $U$ small enough so that $U\subseteq \w$ and $U^P\subseteq \w$.  By decreasing $\nu$ if necessary, and $Q$ accordingly, we arrange that $\ord_U(Q)=\nu$.

Consider first the special case that $Q^i\subseteq \muu$ for all $i=\lilo(\nu)$.  Take $b\in Q^\nu$ such that $\st(b)\not=1$.  Choose $\u$ as in the setting of the Gleason-Yamabe lemmas so that $\u\subseteq U$ and $\st(b)\notin \u^4$.  Let $\eta:=\ord_{\u}(Q)$ and note that $\nu = \bigO(\eta)$, else we would have that $Q^{\eta}\subseteq \boldsymbol{\mu}$, a contradiction.  By the Transfer Principle, the previous section yields the internally continuous function $\phi=\phi_Q\colon G^*\rightarrow \r^*$ satisfying the internal versions of all of the properties and lemmas from that section.  In particular, $\phi(b^{-1})=0$ and $\phi(1)$ is not infinitesimal.  Hence $\|b\phi - \phi\|$ is not infinitesimal.  Take a hyperfinite sequence $b_1,\ldots,b_\nu$ from $Q$ such that $b=b_1\cdots b_\nu$.  Then by the Transfer Principle and Lemma \ref{L:triangle}, we have $$\|b\phi -\phi\|\leq \sum_{i=1}^\nu \|b_i\phi -\phi\|$$ 
\noindent and the right hand side of the inequality is infinitesimal by Lemma \ref{L:gleason3}, yielding a contradiction.

Now assume that $Q^i\nsubseteq \muu$ for some $i=\lilo(\nu)$.  Then the compact subgroup $G_U(Q)$ of $G$ as defined in Lemma \ref{L:singsubgroup} is nontrivial (and contained in $U$).  By a standard consequence of the Peter-Weyl Theorem (see \cite{MZ}, page 100), we can find a proper closed normal subgroup $K$ of $G_U(Q)$ and a compact symmetric neighborhood $V\subseteq U$ of $1$ in $G$ such that $K\subseteq \interior(V)$ and every subgroup of $G_U(Q)$ contained in $V$ is contained in $K$.  Put $\mu:=\ord_V(Q)$, so $\mu\in \n^*\setminus \n$ and $\mu\leq \nu$.  Since the compact subgroup $G_V(Q)$ is contained in $V$, we must have $G_V(Q)\subseteq K$.  Take $b\in Q^\mu$ such that $\st(b)\notin \interior(V)$.  Then $\st(b)\notin K$, so we can take a compact symmetric neighborhood $\u$ of $1$ as in the setting of the Gleason-Yamabe lemmas such that $K\subseteq \interior(\u)$, $\u\subseteq V$, and $\st(b)\notin \u^4$.  Put $\eta:=\ord_{\u}(Q)$.  If $\eta=\lilo(\mu)$, then $\st(Q^\eta)\subseteq G_V(Q)\subseteq K$, contradicting the fact that $\st(Q^\eta)\nsubseteq \interior(\u)$.  Hence $\mu=\bigO(\eta)$.  The rest of the proof now proceeds as in the special case considered earlier, with $\nu$ replaced by $\mu$, and $b_1,\ldots,b_\nu$ by an internal sequence $b_1,\ldots,b_\mu$ in $Q$ such that $b=b_1\cdots b_\mu$.

We now prove that $Q^\nu$ is defined.  To do this, we first fix an internal set $E\subseteq \muu$.

\noindent \textbf{Claim:}  Let $c_1,\ldots,c_\eta$ be an internal sequence such that $c_i^j$ is defined and $c_i^j\in E$ for all $i,j\in \{1,\ldots,\eta\}$.  Let  $R=\{1,c_1,\ldots,c_\eta,c_1^{-1},\ldots,c_{\eta}^{-1}\}$.  Then $R^{\eta}$ is defined.

We prove this claim by internal induction on $\eta$.  The claim is clearly true for $\eta=1$ and we now suppose inductively that it holds for a given $\eta$.  Suppose $c_1,\ldots,c_{\eta+1}$ is an internal sequence such that $c_i^j$ is defined and $c_i^j \in E$ for all $i,j\in \{1,\ldots,\eta+1\}$.  Let $R=\{1,c_1,\ldots,c_{\eta+1},c_1^{-1},\ldots,c_{\eta+1}^{-1}\}$.  Fix $d_1,\ldots,d_{\eta+1}\in R$.  By the induction hypothesis and the first part of the proof, $d_1\cdots d_\eta$ is defined and infinitesimal; similarly, $d_1\cdots d_i$ and $d_i\cdots d_{\eta+1}$ are defined and infinitesimal for all $i\in \{2,\ldots,\eta+1\}$.  By Lemma \ref{L:EZFacts}, $d_1\cdots d_{\eta+1}$ is defined.  Since $d_1,\ldots,d_{\eta+1}$ from $R$ were arbitrary, we have that $R^{\eta+1}$ is defined.  This finishes the proof of the claim.

Now given an internal sequence $a_1,\ldots,a_\nu$ as in the statement of the lemma, the claim implies that $Q^\nu$ is defined by taking $E$ to be the set of all $a_i^j$ for $i,j\in \{1,\ldots,\nu\}$.
\end{proof}

\

\begin{nrmk}
In the global version of this lemma appearing in \cite{H}, it was required that $G$ was pure.  Van den Dries was able to remove this assumption and in the proof of the previous lemma we have adapted his method to the local setting. 
\end{nrmk}

\

\begin{lemma}\label{L:Gsigmagroup}
Suppose $U$ is a compact symmetric neighborhood of $1$ in $G$ with $U\subseteq \u_2$. Let $\nu > \n$ be such that for all $i\in\{1,\ldots,\nu\}$, $a^i$ and $b^i$ are defined and $a^i \in U^*$, $b^i \in \boldsymbol{\mu}$.  Then for all $i\in\{1,\ldots,\nu\}$, we have that $(ab)^i$ is defined and $(ab)^i\sim a^i$. 
\end{lemma}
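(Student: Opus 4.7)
The plan is to write $(ab)^i$ explicitly as $a^i$ times a hyperfinite product of conjugates of $b$, and then to apply Lemma \ref{L:prodmonad} to control that product. Throughout, the key structural ingredients are that $a^{\pm j}\in U^*\subseteq G^*_{\ns}$ (since $U$ is compact), that $U\times U\subseteq \o$ (since $U\subseteq \u_2$), and that $\muu$ absorbs multiplication by nearstandard elements.

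For each $j\in\{0,1,\ldots,\nu-1\}$, set $c_j:=a^{-j}ba^j$ (so $c_0=b$). First check that $c_j$ is defined and infinitesimal: the products $a^{-j}b$ and $ba^j$ are defined and nearstandard, with standard parts $\st(a^{-j})$ and $\st(a^j)$ respectively; since these lie in $U$ and $U\times U\subseteq \o$, openness of $\o$ yields $(a^{-j}b,a^j),\,(a^{-j},ba^j)\in \o^*$, and (A) produces $c_j$, which is infinitely close to $a^{-j}\cdot a^j=1$. A similar internal induction on $k$, telescoping the $a^j a^{-j}=1$ factors via (A), shows that $c_j^k=a^{-j}b^k a^j$ is defined and infinitesimal for all $k\in\{1,\ldots,\nu\}$; hence $c_j\in G^{\lilo}(\nu)$ by Lemma \ref{L:lilo}.

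Now Lemma \ref{L:prodmonad} applied to the internal sequence $(c_0,c_1,\ldots,c_{\nu-1})$, with $Q:=\{1,c_0,c_0^{-1},\ldots,c_{\nu-1},c_{\nu-1}^{-1}\}$, yields that $Q^\nu$ is defined and contained in $\muu$; in particular, for each $i\leq\nu$ the product $\gamma_i:=c_{i-1}c_{i-2}\cdots c_0\in Q^i\subseteq\muu$ is defined. I then prove by internal induction on $i\in\{0,1,\ldots,\nu\}$ that $(ab)^i$ is defined and equals $a^i\gamma_i$. For the inductive step ($i<\nu$), since $(ab)^i=a^i\gamma_i\sim a^i$ and $ab\sim a$ have standard parts in $U$ and $U\times U\subseteq \o$, we get $((ab)^i,ab)\in \o^*$, so $(ab)^{i+1}$ is defined. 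Using (A) together with the shift identity $a^{-1}c_k a=c_{k+1}$ (itself an instance of associativity), we rewrite $(a^i\gamma_i)(ab)=a^{i+1}(a^{-1}\gamma_i a)b=a^{i+1}\cdot c_i c_{i-1}\cdots c_1\cdot b=a^{i+1}\gamma_{i+1}$, completing the induction. Since $a^i\in U^*$ is nearstandard and $\gamma_i\in\muu$, we conclude $(ab)^i\sim a^i$.

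The principal obstacle is the bookkeeping needed to justify each associativity rewrite: one must check that the appropriate pairs lie in $\o^*$ and that all four pairings required by (A) are satisfied. This is precisely why the hypothesis $U\subseteq\u_2$ is used --- it ensures $U\times U\subseteq \o$, so nearstandard elements with standard parts in $U$ can always be freely multiplied. The only genuinely nonstandard input is Lemma \ref{L:prodmonad}, which guarantees that the hyperfinite product $\gamma_i$ of $i$ infinitesimal conjugates remains infinitesimal; without it, there would be no control over long products of infinitesimals in a non-abelian local setting.
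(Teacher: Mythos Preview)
Your proof is correct and follows essentially the same strategy as the paper's: write $(ab)^i$ as $a^i$ times a hyperfinite product of conjugates of $b$, show each conjugate lies in $G^{\lilo}(\nu)$, and invoke Lemma~\ref{L:prodmonad} to keep that product infinitesimal. The paper uses the mirror-image decomposition $b_i:=a^iba^{-i}$ and $(ab)^i=(b_1\cdots b_i)a^i$, whereas you use $c_j:=a^{-j}ba^j$ and $(ab)^i=a^i(c_{i-1}\cdots c_0)$; the two are formally equivalent.

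One small point: from $((ab)^i,ab)\in\o^*$ alone you only obtain that the product $(ab)^i\cdot(ab)$ exists, not that $(ab)^{i+1}$ is defined in the sense of this paper (which requires \emph{every} bracketing to exist). You need Lemma~\ref{L:EZFacts}(3), checking $((ab)^k,(ab)^l)\in\o^*$ for all $k+l=i+1$; but your induction hypothesis already gives $(ab)^k\sim a^k\in U^*$ for each $k\le i$, so this follows by the same $U\times U\subseteq\o$ argument. The paper makes exactly this check explicitly.
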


\begin{proof}
Suppose $i\in\{1,\ldots,\nu\}$.  Then $(a^i,b)\in \o^*$ and $a^ib\sim a^i$.  Since $(\st(a^i),\st(a^{-i}))\in \o$, we have $(a^ib,a^{-i})\in \o^*$.  Similarly, $(a^i,ba^{-i})\in \o^*$.  Thus we can define the element $b_i:=a^iba^{-i}$.  For similar reasons, for any $i,j\in\{1,\ldots,\nu\}$, we can define the element $a^ib^ja^{-i}$.  Note that $b_i\in \boldsymbol{\mu}$ for all $i\in\{1,\ldots,\nu\}$.  

\

\noindent Claim 1:  For all $i,j\in\{1,\ldots,\nu\}$, $b_i^j$ is defined and $b_i^j=a^ib^ja^{-i}$.

\

\noindent We prove Claim 1 by internal induction on $j$.  The case $j=1$ is obvious.  Suppose the assertion is true for all $j'\in \{1,\ldots,j\}$ and further suppose $j+1\leq \nu$.  We first show that $b_i^{j+1}$ is defined.  Let $k,l\in\{1,\ldots,j\}$ be such that $k+l=j+1$.  By the induction hypothesis, we know that $b_i^k$ is defined and $b_i^k=a^ib^ka^{-i}$.  Since $b^k\in \boldsymbol{\mu}$, we have $b_i^k \in \boldsymbol{\mu}$.  Similarly, we have $b_i^l$ is defined and $b_i^l\in \boldsymbol{\mu}$; thus $(b_i^k,b_i^l)\in \o^*$.  So by Lemma \ref{L:EZFacts}, we have that $b_i^{j+1}$ is defined.  But now, using that $b_i^j \in \boldsymbol{\mu}$, we have 

\begin{align}
b_i^{j+1}&=b_i^j \cdot b_i \notag \\
	      &=(a^ib^ja^{-i})(a^iba^{-i}) \notag \\
	      &=((a^ib^ja^{-i})\cdot (a^i))\cdot (ba^{-i}) \notag \\
	      &=(a^ib^j)\cdot (ba^{-i}) \notag \\
	      &=((a^ib^j)\cdot b)a^{-i}\notag \\
	      &=a^ib^{j+1}a^{-i}.\notag \\ \notag
\end{align}

\noindent This proves Claim 1.  Note that we have also shown that $b_i^j\in \boldsymbol{\mu}$ for all $i,j\in\{1,\ldots,\nu\}$.  Then by Lemma \ref{L:prodmonad}, $b_1\cdots b_i$ is defined and $b_1\cdots b_i\in \boldsymbol{\mu}$ for all $i\in\{1,\ldots,\nu\}$.

\

\noindent The following claim finishes the proof of the lemma.

\

\noindent Claim 2:  For all $i\in\{1,\ldots,\nu\}$, $(ab)^i$ is defined and $(ab)^i=(b_1\cdots b_i)a^i$.

\

\noindent We prove Claim 2 by internal induction on $i$.  The assertion is clearly true for $i=1$ and now suppose it holds for all $j\in \n^*$ with $j\leq i$.  Suppose $i+1\leq \nu$.  To prove that $(ab)^{i+1}$ is defined, it is enough to check that $((ab)^k,(ab)^l)\in \o^*$ for all $k,l\in\{1,\ldots,i\}$ with $k+l=i+1$.  By the induction hypothesis, $(ab)^k=(b_1\cdots b_k)a^k\sim a^k$ and $(ab)^l=(b_1\cdots b_l)a^l\sim a^l$ and the desired result now follows from the fact that $(a^k,a^l)\in U^* \times U^*$.  Next note that $a^{i+1}b=(a^{i+1}\cdot b)\cdot(a^{-i-1}a^{i+1})=(a^{i+1}ba^{-i-1})a^{i+1}=b_{i+1}a^{i+1}$.  Hence, by the induction hypothesis, we have
\begin{align}
(ab)^{i+1}&=((ab)^i)\cdot(ab)\notag \\
                 &=((b_1\cdots b_i)a^i)\cdot(ab)\notag \\
                 &=(b_1\cdots b_i)\cdot (a^i(ab))\notag \\
                 &=(b_1\cdots b_i)(a^{i+1}b)\notag \\
                 &=(b_1\cdots b_i)(b_{i+1}a^{i+1})\notag \\
                 &=((b_1\cdots b_i)b_{i+1})a^{i+1}\notag \\
                 &=(b_1\cdots b_{i+1})a^{i+1}.\notag \\ \notag
\end{align}

\end{proof}

\

\begin{lemma}\label{L:commutator}
Let $\nu > \n$ and $a\in G(\nu)$ be such that $a^\nu$ is defined and $a^i\in G^*_{\ns}$ for all $i\in \{1,\ldots,\nu\}$.  Suppose also that $b\in \boldsymbol{\mu}$ is such that $b^\nu$ is defined and $b^i\in G^*_{\ns}$ and $a^i \sim b^i$ for all $i\in\{1,\ldots,\nu\}$.  Then $a^{-1}b\in G^{\lilo}(\nu)$.
\end{lemma}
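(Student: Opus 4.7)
Write $c := a^{-1}b$; since $a,b\in\muu$ and $\muu$ is a subgroup of $G^*$, we have $c\in\muu$. By Lemma~\ref{L:lilo} it suffices to establish the internal statement that $c^i$ is defined and $c^i\in\muu$ for every $i\in\{1,\ldots,\nu\}$.

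The first step is to mirror the computation in Claim~2 of the proof of Lemma~\ref{L:Gsigmagroup}. Since $b=ac$, an internal induction on $i$ should yield the identity
$$b^i \;=\; \tilde c_1\,\tilde c_2\cdots\tilde c_i\,\cdot\,a^i,\qquad \tilde c_j:=a^j\,c\,a^{-j},$$
for all $i\in\{1,\ldots,\nu\}$. Each $\tilde c_j$ is defined and lies in $\muu$ because $a^j\in G^*_{\ns}$ and $c\in\muu$, so the required domain and associativity conditions to iterate the identity hold. Rearranging and using the hypothesis $a^i\sim b^i$, we deduce
$$\tilde c_1\,\tilde c_2\cdots\tilde c_i \;=\; b^i a^{-i}\;\in\;\muu \qquad\text{for every } i\in\{1,\ldots,\nu\}.$$

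The second step passes from this product-of-conjugates being infinitesimal to $c^i$ itself being infinitesimal. Writing $\tilde c_j = c\cdot g_j$ with $g_j := c^{-1}a^j c a^{-j}\in\muu$, one obtains a formal rearrangement $\tilde c_1\cdots\tilde c_i = c^i\cdot E_i$, where $E_i$ is a word in conjugates of the $g_j$'s by powers of $c$; once $E_i\in\muu$ is known, the previous step gives $c^i\sim(\tilde c_1\cdots\tilde c_i)\cdot E_i^{-1}\in\muu$, as desired.

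The main obstacle is controlling $E_i$: conjugation by $c^k$ preserves $\muu$ only when $c^k$ is nearstandard, which is essentially what we are trying to prove. The route I would take is to feed the corrections $g_j$ into Lemma~\ref{L:prodmonad}, after verifying that they themselves belong to $G^{\lilo}(\nu)$ via the same conjugation argument applied to $a$ (whose powers $a^j$ stay inside a fixed compact set for $j\leq\nu$, by saturation together with the nearstandardness hypothesis). This places the argument back under the Gleason-Yamabe umbrella of Section~5: it is precisely the hypothesis that $a^\nu$ (not merely each $a^n$ for standard $n$) is nearstandard that keeps the conjugators in a fixed compact region and prevents the accumulated commutator errors from drifting out of $\muu$ as $i$ ranges up to $\nu$.
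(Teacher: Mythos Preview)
Your proposal has a genuine circularity that your suggested workaround does not escape. You correctly identify the obstacle: the error term $E_i$ in the rearrangement $\tilde c_1\cdots\tilde c_i = c^i E_i$ is a word in conjugates $c^{-k}g_jc^{k}$, and controlling these requires $c^k$ to be nearstandard for $k$ up to $\nu$---which is precisely the conclusion. Your proposed remedy, showing $g_j=[c^{-1},a^j]\in G^{\lilo}(\nu)$ and then invoking Lemma~\ref{L:prodmonad}, does not close the loop for two reasons. First, the fact that commutators of elements of $G(\nu)$ with infinitesimals land in $G^{\lilo}(\nu)$ is exactly part~(2) of the theorem immediately following Lemma~\ref{L:commutator}, and its proof \emph{uses} Lemma~\ref{L:commutator}; so you cannot appeal to it here. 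Second, even granting $g_j\in G^{\lilo}(\nu)$, Lemma~\ref{L:prodmonad} controls products of elements of $G^{\lilo}(\nu)$, not products of their conjugates by potentially large $c^k$; you would still need $c^{-k}g_jc^k\in G^{\lilo}(\nu)$, and normality of $G^{\lilo}(\nu)$ in $\muu$ only helps once $c^k\in\muu$ is known.

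The paper's argument avoids this circularity entirely by working analytically rather than combinatorially. After some reductions (which do use Lemma~\ref{L:Gsigmagroup} to get $a^{-1}b\in G(\nu)$ and to handle the case $a\in G^{\lilo}(\nu)$), it argues by contradiction: assume $(a^{-1}b)^j\notin\muu$ for some $j\le\nu$, build the Gleason--Yamabe function $\phi=\phi_Q$ for $Q=\{1,a^{\pm1},b^{\pm1}\}$, and observe that $\epsilon:=\phi(1)$ is noninfinitesimal while $\phi((a^{-1}b)^j)=0$. The triangle inequality of Lemma~\ref{L:triangle} gives $\epsilon\le\|j(a\phi-\phi)-j(b\phi-\phi)\|$. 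The crucial step is then Lemma~\ref{L:gleason2}, which lets one replace $j(a\phi-\phi)$ and $j(b\phi-\phi)$ by $(j/k)(a^k\phi-\phi)$ and $(j/k)(b^k\phi-\phi)$ up to small error, for $k$ chosen so that $a^k\sim b^k$ with $j=\bigO(k)$; the difference of these is then infinitesimal, yielding the contradiction. The point is that the Gleason--Yamabe estimate compares $a$ and $b$ directly through their common nearstandard power $a^k\sim b^k$, bypassing any need to track iterated conjugates of $c$.
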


\begin{proof}
We first make a series of reductions.  Note that by Lemma \ref{L:Gsigmagroup}, we have $a^{-1}b\in G(\nu)$.  Let $V$ be a compact symmetric neighborhood of $1$ in $G$ such that $V\subseteq \u_2$.  Then since $\nu=\bigO(\ord_V(a^{-1}b))$, by replacing $\nu$ with $\ord_V(a^{-1}b)$ if $\nu > \ord_V(a^{-1}b)$, we may as well assume that $(a^{-1}b)^i$ is defined and $(a^{-1}b)^i\in G^*_{\ns}$ for all $i\in \{1,\ldots,\nu\}$.  Let $Q=\{1,a,a^{-1},b,b^{-1}\}$.  Suppose that $Q^\nu$ is not defined.  Let $\eta\in \n^*\setminus \n$ be the largest element of $\n^*$ such that $Q^\eta$ is defined.  If $\eta=\lilo(\nu)$, then by Lemma \ref{L:prodmonad}, $Q^i\subseteq \muu$ for all $i\in \{1,\ldots,\eta\}$.  But then, by Lemma \ref{L:EZFacts}, $Q^{\eta+1}$ is defined, a contradiction.  Thus $\nu=O(\eta)$.  So, by replacing $\nu$ by $\eta$ if necessary, we can assume $Q^\nu$ is defined.  If $a\in G^{\lilo}(\nu)$, then by Lemma \ref{L:Gsigmagroup}, $(a^{-1}b)^i$ is defined and $(a^{-1}b)^i\sim a^{-i}\in \muu$ for all $i\in \{1,\ldots,\nu\}$, yielding the desired conclusion.  So we may as well assume that $a\notin G^{\lilo}(\nu)$ and by replacing $\nu$ by a smaller element of its archimedean class, we may as well assume that $a^\nu \notin \muu$.  

\

\noindent Now suppose, towards a contradiction, that there is $j\in\{1,\ldots,\nu\}$ such that $(a^{-1}b)^j \notin \boldsymbol{\mu}$.  Note we must have $\nu =\bigO(j)$.  Choose compact symmetric neighborhoods $\u$ and $\w$ of $1$ in $G$ as in the previous section so that $$a^\nu \in \w^* \setminus \u^*\text{ and }(a^{-1}b)^j\in \w^* \setminus (\u^*)^4.$$  Let $\sigma = \ord_\u(Q)$.  Then $\nu = \bigO(\sigma)$ by Lemma \ref{L:prodmonad}.  Let $\phi=\phi_Q:G^* \rightarrow \r^*$ be the internally continuous function constructed in the previous section.  Then we know that $\phi((a^{-1}b)^j)=0$ and $\epsilon :=\phi(1)>0$ and is not infinitesimal.  Also, note

\begin{align}
\epsilon &\leq \|(b^{-1}a)^j\phi - \phi \| \notag \\
               &\leq j\|(b^{-1}a)\phi - \phi \| \notag \\
               &=j\|a\phi - b\phi \| \notag \\
               &=\|j(a\phi-\phi)-j(b\phi -\phi)\|. \notag \\ \notag
\end{align}           

\noindent We will obtain a contradiction by showing that $\|j(a\phi - \phi) - j(b\phi -\phi)\|<\epsilon.$  By Lemma \ref{L:gleason2}, there is a compact symmetric neighborhood $U\subseteq \u$ of $1$ in $G$ such that for all $k>0$ in $\n^*$, if $a^i \in U^*$ and $b^i \in U^*$ for all $i\in\{1,\ldots,k\}$, then 
$$\|(\frac{j}{k})(a^k\phi - \phi) - j(a\phi - \phi)\|< \frac{\epsilon}{3},$$
$$\|(\frac{j}{k})(b^k\phi - \phi) - j(b\phi - \phi)\|< \frac{\epsilon}{3}.$$

\noindent Let $k=\min\{\ord_U(a),\ord_U(b)\}$.  Then the above equalities hold for this $k$.  Note that $\nu=\bigO(k)$ and hence $j=O(k)$.  Since $k< \nu$, we have $a^k \sim b^k$ and so $$\|(\frac{j}{k})(a^k\phi-\phi)-(\frac{j}{k})(b^k\phi-\phi)\|=\frac{j}{k}\|a^k\phi -b^k\phi\|\sim 0.$$ 
\noindent Combining this fact with the previous inequalities yields $$\|j(a\phi - \phi) - j(b\phi -\phi)\|<\epsilon,$$ giving the desired contradiction.
\end{proof}

\

\begin{thm}
Suppose $\sigma > \n$.  Then
\begin{enumerate}
\item $G(\sigma)$ and $G^{\lilo}(\sigma)$ are normal subgroups of $\boldsymbol{\mu}$;
\item if $a\in G(\sigma)$ and $b\in \boldsymbol{\mu}$, then $aba^{-1}b^{-1}\in G^{\lilo}(\sigma)$;
\item $G(\sigma)/G^{\lilo}(\sigma)$ is abelian.
\end{enumerate}
\end{thm}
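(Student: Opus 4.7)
For (1), I first handle $G(\sigma)$. Closure under inversion is immediate from Lemma \ref{L:EZFacts}(4), since $(a^{-1})^i = (a^i)^{-1}$ lies in $\muu$ whenever $a^i$ does. For closure under products, take $a,b\in G(\sigma)$ and any $i$ with $i=\lilo(\sigma)$; form the internal alternating sequence $a,b,a,b,\ldots,a,b$ of length $2i$. Since $2i=\lilo(\sigma)$, every $k\in\{1,\ldots,2i\}$ satisfies $k=\lilo(\sigma)$, so $a^k,b^k\in\muu$, and Lemma \ref{L:lilo} places both $a$ and $b$ in $G^{\lilo}(2i)$. Lemma \ref{L:prodmonad} then gives that $Q^{2i}$ is defined and contained in $\muu$, where $Q=\{1,a,a^{-1},b,b^{-1}\}$; in particular $(ab)^i$ is defined and infinitesimal, so $ab\in G(\sigma)$. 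Normality of $G(\sigma)$ in $\muu$ is Lemma \ref{L:Gsigma properties}(2). The corresponding statements for $G^{\lilo}(\sigma)$ follow by the same strategy, replacing $i=\lilo(\sigma)$ by $i\in\{1,\ldots,\sigma\}$ throughout and using the inclusion $G^{\lilo}(\sigma)\subseteq G^{\lilo}(2i)$ for such $i$ (since $2i=\bigO(\sigma)$); normality is confirmed by the routine internal induction showing $(bab^{-1})^i=ba^ib^{-1}\in\muu$ for $i\leq\sigma$ and $b\in\muu$.

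For (2), I may assume $a\neq 1$. Using overflow on the internal set
\[
S=\{\nu\in\n^* : a^i \text{ defined and } a^i\in U^* \text{ for all } i\in\{1,\ldots,\nu\}\},
\]
where $U$ is a fixed compact symmetric neighborhood of $1$ with $U\subseteq\u_2$ and which contains every $\nu=\lilo(\sigma)$ (as $a\in G(\sigma)\subseteq\muu$), I obtain $\nu\in\n^*\setminus\n$ with $\sigma=\bigO(\nu)$; by truncating to $\sigma$ if $\nu>\sigma$, I arrange that $\nu$ lies in the same archimedean class as $\sigma$, so $G(\nu)=G(\sigma)$ and $G^{\lilo}(\nu)=G^{\lilo}(\sigma)$. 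An internal induction on $i\in\{1,\ldots,\nu\}$ --- carried out in a power of $\u_2$ large enough that repeated applications of (A) are legal --- shows that $(bab^{-1})^i$ is defined and equals $ba^ib^{-1}$, hence is nearstandard with $(bab^{-1})^i\sim a^i$ by continuity of the product. These are exactly the hypotheses of Lemma \ref{L:commutator} applied to the pair $(a,bab^{-1})$ and $\nu$, yielding
\[
a^{-1}\cdot bab^{-1}\in G^{\lilo}(\nu)=G^{\lilo}(\sigma).
\]
Finally, the identity
\[
aba^{-1}b^{-1}=a\cdot(a^{-1}bab^{-1})^{-1}\cdot a^{-1},
\]
which is a direct calculation in the group $\muu$, combined with the normality of $G^{\lilo}(\sigma)$ in $\muu$ from (1), gives $aba^{-1}b^{-1}\in G^{\lilo}(\sigma)$.

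For (3), apply (2) to any $a,b\in G(\sigma)\subseteq\muu$: the commutator $aba^{-1}b^{-1}$ lies in $G^{\lilo}(\sigma)$, so the images of $ab$ and $ba$ coincide in $G(\sigma)/G^{\lilo}(\sigma)$, whence the quotient is abelian.

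The principal obstacle is part (2): fixing $\nu$ in the correct archimedean class, verifying the internal induction $(bab^{-1})^i=ba^ib^{-1}$ with its repeated invocations of local associativity in the appropriate $\u_k$, and assembling these into the hypotheses of Lemma \ref{L:commutator}. Once that lemma is applied, the group-theoretic identity together with normality from (1) make the rest of the argument routine.
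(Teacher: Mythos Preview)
Your proof is correct and follows essentially the same route as the paper's: closure and normality of $G(\sigma)$ and $G^{\lilo}(\sigma)$ via Lemmas~\ref{L:prodmonad}/\ref{L:Gsigmagroup} together with the conjugation induction $(bab^{-1})^i=ba^ib^{-1}$, and part~(2) via Lemma~\ref{L:commutator} applied after choosing $\nu$ (the paper's $\tau$) in the archimedean class of $\sigma$ with $a^i\in U^*$ for $i\le\nu$. The only difference is cosmetic: the paper applies Lemma~\ref{L:commutator} to the pair $(a^{-1},\,ba^{-1}b^{-1})$, so that the conclusion $(a^{-1})^{-1}\cdot ba^{-1}b^{-1}=aba^{-1}b^{-1}\in G^{\lilo}(\tau)$ drops out directly, whereas you apply it to $(a,\,bab^{-1})$ and then invoke the conjugation identity and normality of $G^{\lilo}(\sigma)$ to finish.
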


\begin{proof}
We have already remarked that $G(\sigma)$ and $G^{\lilo}(\sigma)$ are symmetric and that $G(\sigma)$ is closed under multiplication.  That $G(\sigma)$ is a normal subgroup of $\boldsymbol{\mu}$ is part of the content of Lemma \ref{L:Gsigma properties}, (2).  Lemma \ref{L:Gsigmagroup} implies that $G^{\lilo}(\sigma)$ is a group.  Now suppose $a\in G^{\lilo}(\sigma)$ and $b\in \boldsymbol{\mu}$.  We need $bab^{-1}\in G^{\lilo}(\sigma)$.  One can show by internal induction on $\eta \leq \sigma$ that $(bab^{-1})^\eta$ is defined and equal to $ba^\eta b^{-1}$.  In particular, this shows that $bab^{-1}\in G^{\lilo}(\sigma)$.  This finishes the proof of (1).

\

\noindent Let $a\in G(\sigma)$ and $b\in \boldsymbol{\mu}$.  We need to show that $aba^{-1}b^{-1}\in G^{\lilo}(\sigma)$.  Note that $ba^{-1}b^{-1} \in G(\sigma)$ by (1).  Let $U\subseteq \u_2$ be a compact symmetric neighborhood of $1$.  Choose $\tau \in \{1,\ldots,\sigma\}$ such that $\sigma=\bigO(\tau)$, $a^\tau$ and $(ba^{-1}b^{-1})^\tau$ are defined, and $a^i\in U^*$ for $i\in \{1,\ldots,\tau\}$.  One can prove by internal induction that for $i\in\{1,\ldots,\tau\}$, $(ba^{-1}b^{-1})^i=ba^{-i}b^{-1}$ and hence $a^{-i}\sim (ba^{-1}b^{-1})^i$.  By Lemma \ref{L:commutator}, $aba^{-1}b^{-1}\in G^{\lilo}(\tau)\subseteq G^{\lilo}(\sigma)$.  This finishes (2) and (2) obviously implies (3).
\end{proof}

\

\noindent Observe that if $\x\in L(G)$ and $\sigma>\n$, then $\x(\frac{1}{\sigma})\in G(\sigma)$.  To see this, suppose that $i=\lilo(\sigma)$.  Then $(\x(\frac{1}{\sigma}))^i=\x(\frac{i}{\sigma})\in \muu$.

\

\begin{thm}\label{T:addition}
The map $S:L(G)\rightarrow G(\sigma)/G^{\lilo}(\sigma)$ defined by $$S(\x)=\x(\frac{1}{\sigma})G^{\lilo}(\sigma)$$ is a bijection.  
\end{thm}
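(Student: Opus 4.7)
The plan is to verify well-definedness of $S$ and then its surjectivity and injectivity, with both directions proceeding by translating ``$\equiv$ modulo $G^{\lilo}(\sigma)$'' into the condition ``$a^i \sim b^i$ for all $i\in\{1,\ldots,\nu\}$'' for a $\nu$ of the same archimedean class as $\sigma$, invoking Lemmas~\ref{L:commutator} and~\ref{L:Gsigmagroup} respectively. Well-definedness of the value $\x(1/\sigma)$ is immediate: every $X\in\x$ has $\dom(X)\supseteq (-r,r)$ for some standard $r>0$, so $X^*(1/\sigma)$ makes sense, and two representatives agree on a common standard interval $(-r,r)$ and hence on $(-r,r)^*$ by transfer. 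The observation preceding the theorem places $\x(1/\sigma)$ in $G(\sigma)$.

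For surjectivity, given $a\in G(\sigma)$ with $a\neq 1$, Lemma~\ref{L:sigparam}(3) produces a local 1-ps $X_a:(-r_a,r_a)\to G$ satisfying $X_a(s)=\st(a^{[s\sigma]})$. Set $\x:=[X_a]$ and $b:=X_a^*(1/\sigma)$; I will show $a^{-1}b\in G^{\lilo}(\sigma)$ using Lemma~\ref{L:commutator}. Pick standard $r\in(0,r_a)$ and let $\nu:=[r\sigma]$, so $\sigma=\bigO(\nu)$, and therefore $G(\nu)=G(\sigma)$ and $G^{\lilo}(\nu)=G^{\lilo}(\sigma)$. Transferred additivity of $X_a$ gives $b^i = X_a^*(i/\sigma)$ for each internal $i\in\{1,\ldots,\nu\}$; since $i/\sigma<r<r_a$ is nearstandard, $b^i$ is nearstandard with $\st(b^i)=X_a(\st(i/\sigma))$. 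For the same range of $i$, the remarks preceding Lemma~\ref{L:regelement} supply $a^i$ defined and nearstandard, and because $|i-[\st(i/\sigma)\sigma]|=\lilo(\sigma)$ and $a\in G(\sigma)$, we get $\st(a^i)=\st(a^{[\st(i/\sigma)\sigma]})=X_a(\st(i/\sigma))$. Hence $a^i\sim b^i$ for all $i\in\{1,\ldots,\nu\}$, and Lemma~\ref{L:commutator} yields $a^{-1}b\in G^{\lilo}(\nu)=G^{\lilo}(\sigma)$. (The case $a=1$ is handled by $\x=\O$.)

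For injectivity, suppose $S(\x_1)=S(\x_2)$, let $a_j:=\x_j(1/\sigma)$, and set $c:=a_1^{-1}a_2\in G^{\lilo}(\sigma)$, so $a_2=a_1c$ in $G^*$. Pick $X_j\in\x_j$ with domain $(-r_j,r_j)$, and, using continuity of the $X_j$ at $0$, choose a compact symmetric neighborhood $U\subseteq\u_2$ of $1$ and $r\in(0,\min(r_1,r_2))$ such that $X_j((-r,r))\subseteq\interior(U)$ for $j=1,2$. Set $\nu:=[r\sigma]$. The hypotheses of Lemma~\ref{L:Gsigmagroup} hold for $a:=a_1$, $b:=c$, $U$, and $\nu$: indeed $a_1^i=X_1^*(i/\sigma)$ is nearstandard with standard part in $\interior(U)$ and so lies in $U^*$, while $c^i\in\muu$ because $c\in G^{\lilo}(\sigma)$ and $i=\bigO(\sigma)$. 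The conclusion gives $a_2^i=(a_1c)^i\sim a_1^i$ for $i\in\{1,\ldots,\nu\}$, and therefore, for every standard $s\in(-r,r)$,
\[
X_2(s)=\st(a_2^{[s\sigma]})=\st(a_1^{[s\sigma]})=X_1(s),
\]
so $\x_1=\x_2$.

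The main obstacle lies in the surjectivity step: aligning the ``continuous'' description $X_a(s)=\st(a^{[s\sigma]})$ of the local 1-ps with the ``sampled'' internal sequence $b^i=X_a^*(i/\sigma)$ uniformly across the entire hyperfinite range $\{1,\ldots,[r\sigma]\}$ is exactly what permits Lemma~\ref{L:commutator} to bridge $a$ and $b$ modulo $G^{\lilo}(\sigma)$.
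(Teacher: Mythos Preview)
Your argument is correct and mirrors the paper's: surjectivity via Lemma~\ref{L:commutator} applied to $a$ and $b=X_a^*(1/\sigma)$, injectivity via Lemma~\ref{L:Gsigmagroup}, in each case by verifying $a^i\sim b^i$ across a hyperfinite range in the archimedean class of $\sigma$. The only quibble is that Theorem~\ref{T:addition} sits before the $\Ns$ hypothesis is imposed in Section~6, so the construction of $X_a$ should be cited from Lemma~\ref{L:regelement} (which requires purity of $a$, hence the case split $a\notin G^{\lilo}(\sigma)$ versus $a\in G^{\lilo}(\sigma)$ rather than $a\neq 1$ versus $a=1$) instead of the $\Ns$-specific Lemma~\ref{L:sigparam}; the paper does exactly this.
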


\begin{proof}
Suppose $\x,\y\in L(G)$ and $S(\x)=S(\y)$.  Letting  $a:=\x(\frac{1}{\sigma})$ and $b:=\y(\frac{1}{\sigma})$, we have that $a^{-1}b\in G^{\lilo}(\sigma)$.    Let $U$ be a compact symmetric neighborhood of $1$ in $G$ with $U\subseteq \u_2$.  Let $\tau:=\min\{\ord_U(a),\sigma\}$.  By Lemma \ref{L:Gsigmagroup}, we have $(a\cdot (a^{-1}b))^i$ is defined, nearstandard and infinitely close to $a^i$ for all $i\in\{1,\ldots,\tau\}$, i.e. $a^i \sim b^i$ for all $i\in\{1,\ldots,\tau\}$.  Then  for all $i\in\{1,\ldots,\tau\}$ such that $\frac{i}{\sigma}\in \dom(\x)\cap \dom(\y)$, we have $\x(\frac{i}{\sigma})=\y(\frac{i}{\sigma})$.  Since $\sigma=\bigO(\tau)$, this implies that $\x=\y$.  Hence $S$ is 1-1.

\

\noindent Now suppose $b\in G(\sigma) \setminus G^{\lilo}(\sigma)$.  Consider the local 1-ps $X_b$ of $G$ defined by $X_b(t)=\st(b^{[t\sigma]})$ on its domain; see Lemma \ref{L:regelement}.  Let $b_1=X_b(\frac{1}{\sigma})\in \boldsymbol{\mu}$.  Choose $\tau\in\{1,\ldots,\sigma\}$ such that $\sigma = \bigO(\tau)$ and $\frac{\tau}{\sigma}\in \dom(X_b)$.  Note then that $b_1^\tau$ is defined.  For any $i\in\{1,\ldots,\tau\}$, $b^i$ is defined and $b^i \sim b_1^i$.  By Lemma \ref{L:commutator}, $b^{-1}b_1\in G^{\lilo}(\tau)\subseteq G^{\lilo}(\sigma)$ and hence $bG^{\lilo}(\sigma)=b_1G^{\lilo}(\sigma).$    Hence $S$ is onto.
\end{proof}

\

\noindent We use Theorem \ref{T:addition} to equip $L(G)$ with an abelian group operation $+_\sigma$.  More explicitly, for $\x,\y \in L(G)$, we define $\x+_\sigma \y:=S^{-1}(S(\x)\cdot S(\y))$.  By the proof of Theorem \ref{T:addition}, we see that if $t\in \dom(\x+_\sigma \y)$ and $\nu$ is such that $\frac{\nu}{\sigma}\sim t$, then 
$$(\x+_\sigma \y)(t)\sim [\x(\frac{1}{\sigma})\y(\frac{1}{\sigma})]^\nu.$$   

\

\noindent  In ~\cite{H}, a proof is given, in the global setting, that this group operation is independent of $\sigma$.  A local version of that proof is probably possible, but this independence also follows from the Local H5, which we prove without using that  $+_\sigma$ is independent of $\sigma$.  In light of this independence of $\sigma$, we write $\x+\y$ instead of $\x+_\sigma \y$.

\

\noindent \textbf{Existence of Square Roots}  

\

\noindent In the rest of this section, we assume that $G$ is $\Ns$ and that $\u$ is a special neighborhood of $1$ in $G$.  We now lead up to the proof of the crucial fact that the image of the local exponential map is a neighborhood of $1$ in $G$.  In particular, we will see that there is an open neighborhood of $1$ in $G$ which is \textit{ruled by local 1-parameter subgroups} in the sense that every element in this open neighborhood lies on some local 1-ps of $G$.  In the rest of this section, we do not follow \cite{H}, but rather \cite{vdD}, which contains the following series of lemmas in the global setting.

\

\begin{lemma}
Suppose $\sigma > \n$ and $a,b\in G(\sigma)$.  Then $[X_a]+[X_b]=[X_{ab}]$.
\end{lemma}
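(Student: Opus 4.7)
The plan is to reduce $[X_a]+[X_b]=[X_{ab}]$ to the defining property of the group operation $+$ on $L(G)$. First, since $G(\sigma)$ is a subgroup of $\muu$ (by the preceding theorem), we have $ab\in G(\sigma)$, so $[X_{ab}]$ is meaningful. Because $S:L(G)\to G(\sigma)/G^{\lilo}(\sigma)$ was built to satisfy $S([\x]+[\y])=S([\x])\cdot S([\y])$ and is a bijection, it suffices to verify
$$S([X_a])\cdot S([X_b])=S([X_{ab}]).$$

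The central observation I would establish is that for every $c\in G(\sigma)$,
$$S([X_c])\;=\;c\cdot G^{\lilo}(\sigma),$$
so that the map $c\mapsto [X_c]$, after passing to cosets, is just the inverse of $S$. For $c\in G^{\lilo}(\sigma)$ this is immediate from Lemma \ref{L:Gsigma properties}(3), which gives $[X_c]=\O$; then $S([X_c])$ is the identity coset $G^{\lilo}(\sigma)$, which equals $cG^{\lilo}(\sigma)$. For $c\in G(\sigma)\setminus G^{\lilo}(\sigma)$, the claim is exactly what the surjectivity half of Theorem \ref{T:addition} already proves: setting $c_1:=X_c(1/\sigma)$ and choosing $\tau\in\{1,\ldots,\sigma\}$ with $\sigma=\bigO(\tau)$ and $\tau/\sigma\in\dom(X_c)$, one has $c_1^\tau$ defined and $c^i\sim c_1^i$ for every $i\in\{1,\ldots,\tau\}$, so Lemma \ref{L:commutator} yields $c^{-1}c_1\in G^{\lilo}(\tau)\subseteq G^{\lilo}(\sigma)$, and therefore $cG^{\lilo}(\sigma)=c_1G^{\lilo}(\sigma)=S([X_c])$.

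Applying the identity $S([X_c])=cG^{\lilo}(\sigma)$ to $c=a$, $b$, and $ab$, and using the group structure on $G(\sigma)/G^{\lilo}(\sigma)$, the calculation
$$S([X_a])\cdot S([X_b])\;=\;(aG^{\lilo}(\sigma))(bG^{\lilo}(\sigma))\;=\;ab\,G^{\lilo}(\sigma)\;=\;S([X_{ab}])$$
combined with the injectivity of $S$ delivers the desired equality $[X_a]+[X_b]=[X_{ab}]$. The main obstacle---recovering the coset $cG^{\lilo}(\sigma)$ from the 1-parameter subgroup $X_c$---has already been surmounted in the surjectivity argument for $S$ via the commutator control of Lemma \ref{L:commutator}, so no genuinely new nonstandard work is required beyond carefully recording that citation.
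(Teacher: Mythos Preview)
Your proof is correct and follows essentially the same approach as the paper: both apply $S$ to each side, use $S([X_c])=cG^{\lilo}(\sigma)$ for $c\in G(\sigma)$, and conclude by injectivity of $S$. The paper's version is terser, writing $S([X_a])=aG^{\lilo}(\sigma)$ without comment, whereas you carefully justify this identity by invoking the surjectivity argument of Theorem~\ref{T:addition} and Lemma~\ref{L:commutator}; this added explicitness is helpful but not a different route.
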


\begin{proof}
From Theorem \ref{T:addition}, we have 
\begin{align}
S([X_a]+[X_b])&=S([X_a])\cdot S([X_b]) \notag \\
                          &=(aG^{\lilo}(\sigma))(bG^{\lilo}(\sigma)) \notag \\
                          &=(ab)G^{\lilo}(\sigma) \notag \\
                          &=S([X_{ab}]). \notag \\ \notag
\end{align}
\noindent The result now follows since $S$ is injective.
\end{proof}

 \
  
 \noindent The global version of the next lemma is in Singer \cite{S}, but the proof there is unclear. 
 \begin{lemma}\label{L:Singer}
 Suppose $\sigma > \n$ and $a\in G(\sigma)$.  Then $a=bc^2$ with $b\in G^{\lilo}(\sigma)$ and $c\in G(\sigma)$.  
 \end{lemma}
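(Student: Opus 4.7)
The plan is to use the bijection $S \colon L(G) \to G(\sigma)/G^{\lilo}(\sigma)$ from Theorem \ref{T:addition}, combined with the scalar multiplication by $\frac{1}{2}$ on $L(G)$. The key observation is that if $\x \in L(G)$ corresponds under $S$ to the coset $aG^{\lilo}(\sigma)$, then the element $c := (\frac{1}{2}\cdot\x)(\frac{1}{\sigma})$ squares to $\x(\frac{1}{\sigma})$, which already lies in $aG^{\lilo}(\sigma)$, giving the desired factorization $a = bc^2$ with $b \in G^{\lilo}(\sigma)$.

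First I would set $\x := [X_a] \in L(G)$, where $X_a$ is the local 1-ps built from $a$ in Lemma \ref{L:sigparam}(3); the surjectivity argument in the proof of Theorem \ref{T:addition} gives $S(\x) = aG^{\lilo}(\sigma)$. Next, form $\y := \frac{1}{2}\cdot\x \in L(G)$. If $X \colon (-r,r) \to G$ is a representative of $\x$, then a representative of $\y$ is $Y \colon (-2r, 2r) \to G$ with $Y(t) = X(t/2)$. Define $c := Y^*(1/\sigma) \in G^*$. By the observation just before Theorem \ref{T:addition}, $c \in G(\sigma)$, and in particular $c \in \muu$.

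The crucial identity is $c^2 = \x(\frac{1}{\sigma})$. This follows by transferring the 1-ps additivity of $Y$: since $2r > 0$ is standard and $\sigma$ is infinite, we have $2/\sigma \in (-2r,2r)^*$, so by transfer
$$c^2 \;=\; Y^*(1/\sigma) \cdot Y^*(1/\sigma) \;=\; Y^*(2/\sigma) \;=\; X^*(1/\sigma) \;=\; \x(\tfrac{1}{\sigma}).$$
Therefore $c^2 G^{\lilo}(\sigma) = S(\x) = aG^{\lilo}(\sigma)$. Since $G^{\lilo}(\sigma)$ is a normal subgroup of $\muu$ (proved just before Theorem \ref{T:addition}), this coset equation gives $ac^{-2} \in G^{\lilo}(\sigma)$. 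Setting $b := ac^{-2}$ and using local associativity (A) within $\muu$, where all pairs involved belong to $\muu \times \muu \subseteq \o^*$, yields
$$bc^2 \;=\; (ac^{-2})c^2 \;=\; a(c^{-2}c^2) \;=\; a\cdot 1 \;=\; a.$$

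Rather than a genuine obstacle, the main care is nonstandard bookkeeping: checking that the additivity $Y^*(r_1)\cdot Y^*(r_2) = Y^*(r_1+r_2)$ transfers to the infinitesimal inputs $r_1=r_2=1/\sigma$, that $c$ indeed lies in $G(\sigma)$, and that $c^{-2}c^2 = 1$ together with the other products needed for (A) are all defined because everything sits in $\muu$. The degenerate case $a \in G^{\lilo}(\sigma)$ is trivial with $c := 1$ and $b := a$. With these routine verifications, the lemma drops out of the group structure on $L(G)$ established in this section.
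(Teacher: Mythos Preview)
Your proof is correct and follows essentially the same approach as the paper: both define $c := X_a(\tfrac{1}{2\sigma})$ (you write this as $(\tfrac{1}{2}\x)(\tfrac{1}{\sigma})$, which is the same element), set $b := ac^{-2}$, and use the group structure on $G(\sigma)/G^{\lilo}(\sigma)$ to conclude $b \in G^{\lilo}(\sigma)$. The only cosmetic difference is that the paper phrases the last step via the addition formula $[X_{ac^{-2}}]=[X_a]+[X_{c^{-2}}]=\O$ together with Lemma~\ref{L:Gsigma properties}(3), while you phrase it directly as a coset equation through the bijection $S$.
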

 
 \begin{proof}
Let $c:=X_a(\frac{1}{2\sigma})\in \boldsymbol{\mu}$.  Then $c^{-2}=X_a(-\frac{1}{\sigma})=X_{a^{-1}}(\frac{1}{\sigma})$, which implies that $[X_{c^{-2}}]=[X_{a^{-1}}]$.  From this, we conclude that $$[X_{ac^{-2}}]=[X_{a}]+[X_{c^{-2}}]=[X_1]=\O.$$  Let $b:=ac^{-2}$.  Then since $[X_b]=\O$, we have $b\in G^{\lilo}(\sigma)$.         
 \end{proof}
 
 \
 
 \begin{lemma}\label{L:q}
 There exists $q\in \mathbb{Q}^{>0}$ such that for all $a,b,c\in \boldsymbol{\mu}$, if $a=bc^2$ and $\ord(b)\geq \ord(a)$, then $\ord(c)\geq q\cdot \ord(a)$.
 \end{lemma}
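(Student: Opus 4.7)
The plan is to argue by contradiction. Assume that no such $q$ works. By $\aleph_1$-saturation applied to the countable family of internal conditions arising from a countable neighborhood basis of $1$ (which resolves the external set $\muu$ into countably many internal constraints) together with the integer parameter bounding $\ord(c)/\ord(a)$, we produce a single triple $a, b, c \in \muu$ with $a = bc^2$, $\alpha := \ord(a)$, $\ord(b) \geq \alpha$, and $\ord(c)/\alpha$ infinitesimal. The case $c = 1$ is trivial, so we assume $c \neq 1$.

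The key computation is to show that $(c^2)^k$ is defined and $\sim a^k$ for every $k = \lilo(\alpha)$. We rewrite $c^2 = b^{-1}a$ and note that $b$ being $\u$-pure with $\beta := \ord(b) \geq \alpha$ yields $b^{-i} \in \muu$ for all $i = \lilo(\beta) \supseteq \lilo(\alpha)$. Combined with $a^i \in \u^*$ for $i \leq \alpha$, Lemma~\ref{L:Gsigmagroup} (extended via the identity $(b^{-1}a)^i = b^{-1}(ab^{-1})^{i-1}a$ together with $b^{-1} \sim 1$) gives $(c^2)^i = (b^{-1}a)^i \sim a^i$. Since $a$ is $\u$-pure, $a^i \in \muu$ for $i = \lilo(\alpha)$, whence $(c^2)^i \in \muu$ for such $i$. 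A second application of $\aleph_1$-saturation produces $K \in \n^*$ with $K/\alpha$ not infinitesimal such that $(c^2)^k \in \muu$ for all $k \leq K$.

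The final step is to transfer this bound from $c^2$ to $c$. Working with the internal induction hypothesis that $c^j$ is defined (in the strong sense) and lies in $\u^*$ for all $j \leq n$, Lemma~\ref{L:EZFacts}(3) gives $c^{n+1}$ defined strongly, since each pair $(c^i, c^{n+1-i})$ lies in $\u^* \subseteq \u_2^*$ and therefore in $\o^*$. For the quantitative control $c^{n+1} \in \u^*$, we use Lemma~\ref{L:EZFacts}(5): when $n+1 = 2k$ is even and $k \leq K$, we have $c^{2k} = (c^2)^k \in \muu \subseteq \u^*$; and for odd powers, we proceed one step further by multiplying by $c \in \muu$, again landing in $\muu$. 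This yields $\ord(c) \geq 2K - \bigO(1)$, so $\ord(c)/\alpha$ is not infinitesimal, contradicting our arrangement.

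The main obstacle is this final transfer: strong-sense definedness of $(c^2)^k$ does not automatically give strong-sense definedness of $c^{2k}$, and the argument works precisely because the stronger conclusion $(c^2)^k \in \muu$ (not merely in $\u^*$) ensures that all intermediate powers $c^j$ arising during the induction lie in $\muu$ as well, placing every pair-product in $\muu \times \muu \subseteq \o^*$ and allowing Lemma~\ref{L:EZFacts}(3) to keep closing the induction at each step.
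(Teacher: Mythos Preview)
Your overall strategy---saturate to a single triple, show $c^2 \in G(\alpha)$, and transfer the bound to $c$---is reasonable, but the step labelled ``a second application of $\aleph_1$-saturation'' fails as written. You want $K$ with $K/\alpha$ non-infinitesimal and $(c^2)^k \in \muu$ for all $k \le K$. The latter unwinds to a countable conjunction of internal constraints $K \in S_m := \{K : (c^2)^j \in V_m^* \text{ for all } j \le K\}$, while ``$K/\alpha$ non-infinitesimal'' is an existential ($\exists n\ K > \alpha/n$), not another constraint to intersect; saturation does not combine an intersection with a union. And from $c^2 \in G(\alpha)$ alone you cannot expect such a $K$: for a $\u$-pure element $d$ with $\ord(d)$ in the archimedean class of $\alpha$, purity only guarantees $d^i \in \muu$ for $i = \lilo(\alpha)$, so the set $\{K : d^k \in \muu \text{ for all } k \le K\}$ may well coincide with $\{K : K = \lilo(\alpha)\}$.

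The repair is that no such $K$ is needed. Once you have $c^2 \in G(\alpha)$ (most cleanly: $a, b \in G(\alpha)$ by $\u$-purity together with $\ord(b) \ge \ord(a) = \alpha$, and $G(\alpha)$ is already known to be a group), set $\gamma := \ord(c)$ and recall that $c^{\gamma+1}$ is defined but $\notin \u^*$. Since $\gamma = \lilo(\alpha)$, the integer $m := \lfloor(\gamma+1)/2\rfloor$ is $\lilo(\alpha)$; Lemma~\ref{L:EZFacts}(5) applied to the defined power $c^{2m}$ (which is $c^\gamma$ or $c^{\gamma+1}$) gives $c^{2m} = (c^2)^m \in \muu$, and then $c^{\gamma+1} \in \muu \subseteq \u^*$ (directly, or after one multiplication by $c \in \muu$), the desired contradiction. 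No induction or overflow is needed beyond this. For comparison, the paper's own proof is shorter still and takes a different route: it works at level $\sigma := \ord(c)$, observes $a, b \in G^{\lilo}(\sigma)$ because $n\sigma < \alpha \le \ord(b)$ for every $n$, and then uses the additivity lemma $[X_{xy}] = [X_x] + [X_y]$ (proved immediately before) to compute $\O = [X_a] = [X_b] + [X_{c^2}] = 2[X_c]$, forcing $[X_c] = \O$, which contradicts $c \notin G^{\lilo}(\sigma)$.
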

 
 \begin{proof}
 Suppose no such $q$ exists.  Then for all $n\geq 1$, there are $a_n,b_n,c_n\in \boldsymbol{\mu}$ such that $a_n=b_nc_n^2$ with ord$(b_n)\geq \ord(a_n)$ and $n \ord(c_n) < \ord(a_n)$.  By saturation, we have $a,b,c\in \boldsymbol{\mu}$ such that $a=bc^2$, ord$(b)\geq \ord(a)$ and $n \ord(c)<\ord(a)$ for all $n$.  Let $\sigma:=\ord(c)>\n$.  Then $$\O=[X_a]=[X_{bc^2}]=[X_b]+[X_{c^2}]=\O+2[X_c],$$ implying that $[X_c]=\O$, an obvious contradiction.  
 \end{proof}
 
 \
 
 \noindent Fix $q$ as in the above lemma.  For $a,b\in \boldsymbol{\mu}$, we let $[a,b]:=aba^{-1}b^{-1}$, the commutator of $a$ and $b$.  We will frequently use the identity $ab=[a,b]ba$.
 
 \
 
 \begin{lemma}
 Let $a\in \boldsymbol{\mu}$.  Then for all $\nu \geq 1$, there are $b_\nu,c_\nu \in G^*$ such that $b_{\nu}\cdot c_\nu \cdot c_\nu$ is defined, $a=b_\nu c_\nu^2$, $\ord(b_\nu) \geq \nu \ord(a)$ and $\ord(c_\nu)\geq q \ord(a)$. 
 \end{lemma}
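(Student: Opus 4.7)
The plan is to apply Lemma \ref{L:Singer} exactly once and use the same pair $(b,c)$ as $(b_\nu,c_\nu)$ for every $\nu$, exploiting the fact that $G^{\lilo}(\sigma)$-membership is strictly stronger than $G(\sigma)$-membership.

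First, I dispose of the trivial case $a=1$ by taking $b_\nu=c_\nu=1$. Otherwise $a\in\muu\setminus\{1\}$ is $\u$-pure by the remark after Lemma \ref{L:specneigh}, so setting $\sigma:=\ord(a)>\n$ we have $a\in G(\sigma)$. Lemma \ref{L:Singer} then supplies $b\in G^{\lilo}(\sigma)$ and $c\in G(\sigma)$ with $a=bc^2$, and I set $b_\nu:=b$ and $c_\nu:=c$. For any standard $\nu\geq 1$ we have $\nu\sigma=\bigO(\sigma)$, so $b^i\in\muu\subseteq\u^*$ for every $|i|\leq\nu\sigma$, which gives $\ord(b_\nu)\geq\nu\ord(a)$; the special case $\nu=1$ then feeds Lemma \ref{L:q} (the hypothesis $\ord(b)\geq\ord(a)$ being met) to yield $\ord(c_\nu)\geq q\ord(a)$. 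Finally, since $b,c\in\muu$ and $\muu$ is a subgroup of $G^*$, the pairs $(b,c)$, $(c,c)$, $(bc,c)$ all lie in $\o^*$, and associativity (A) gives $b\cdot c\cdot c=b\cdot c^2=a$, so the three-fold product is indeed defined.

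The one subtlety is that this direct argument requires $\nu$ to be standard: membership $b\in G^{\lilo}(\sigma)$ only yields $\ord(b)\geq n\sigma$ for every standard $n$, not for nonstandard $\nu\in\n^*$. If the statement is intended for nonstandard $\nu$ too (in line with the paper's convention that $\nu$ ranges over $\n^*$), one would need an internal induction on $\nu$: at stage $\nu+1$, apply Lemma \ref{L:Singer} to $b_\nu$ to obtain $b_\nu=b'd^2$ with $\ord(b')$ arbitrarily large and $\ord(d)\geq q\ord(b_\nu)$, then use the commutator identity $d^2c_\nu^2=d[d,c_\nu]d^{-1}\cdot(dc_\nu)^2$ together with the theorem that $[G(\sigma),\muu]\subseteq G^{\lilo}(\sigma)$ to repackage $a=b_{\nu+1}c_{\nu+1}^2$ with $c_{\nu+1}:=dc_\nu$ and $b_{\nu+1}:=b'\cdot d[d,c_\nu]d^{-1}$. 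I expect the main technical obstacle in this inductive route to be ensuring uniform control of the commutator correction across the induction and verifying that the bound $\ord(b_{\nu+1})\geq(\nu+1)\ord(a)$ propagates through the nonstandard stages.
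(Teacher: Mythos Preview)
Your direct argument correctly handles standard $\nu$, but as you suspect, the lemma must hold for infinite $\nu$: the very next result (Lemma~\ref{L:sqrt}) fixes $\nu > \n$ and invokes the present lemma with that nonstandard $\nu$. So the one-shot application of Lemma~\ref{L:Singer} is insufficient as the main proof, and the internal induction you sketch is required, not optional.

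That sketch is essentially the paper's argument. The paper first notes that Lemmas~\ref{L:Singer} and~\ref{L:q} give the conclusion for some infinite $\nu_0$ (hence for all $\nu \leq \nu_0$), and then runs the induction $\nu \to \nu+1$ exactly as you outline: apply Lemma~\ref{L:Singer} to $b_\nu$, set $c_{\nu+1}=cc_\nu$, and absorb the commutator correction into $b_{\nu+1}$. The paper uses the identity $c^2c_\nu^2 = [c,[c,c_\nu]]\,[c,c_\nu]\,(cc_\nu)^2$ rather than your conjugated form $d^2c_\nu^2 = d[d,c_\nu]d^{-1}(dc_\nu)^2$; both are correct and yield the same estimate, since conjugation by an infinitesimal preserves $G^{\lilo}(\sigma)$. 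The ``technical obstacle'' you anticipate is not an obstacle: the commutator theorem $[G(\sigma),\muu] \subseteq G^{\lilo}(\sigma)$ that you cite gives each correction factor order at least an infinite multiple of $\ord(c) \geq q\nu\ord(a)$, and since $G(\sigma)$ is a group (same theorem, part~(1)), the product $b_{\nu+1}$ lies in $G(\tau\nu\ord(a))$ for some infinite $\tau$, hence has order at least $(\nu+1)\ord(a)$. No uniformity across stages is needed, because the infinite multiplier at each single step swallows the increment from $\nu$ to $\nu+1$; the inductive statement itself is internal, so internal induction carries it through all of $\n^*$.
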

 
 \begin{proof}
Note that the assertion is trivial if $a=1$ and so we suppose $a\not=1$.  Next note that if the assertion is true for a given $\nu$, then we have $b_\nu, c_\nu \in\boldsymbol{\mu}$ as they will have infinite order.  By Lemmas \ref{L:Singer} and \ref{L:q}, the assertion is true for some $\nu>\n$.  It is clearly then true for every $\nu'\in\{1,\ldots,\nu\}$.  Hence, if we can prove the assertion for $\nu + 1$, then we will be done by internal induction.  If $b_\nu=1$, then the assumed factorization witnesses the truth of the assertion for all $\nu' $.  We thus assume $b_\nu\not=1$.  By Lemma \ref{L:Singer}, there are $b,c\in \boldsymbol{\mu}$ and $\tau_1>\n$ such that $b_\nu=bc^2$ with $\ord(b)\geq \tau_1 \ord(b_\nu)$.  Note that then $\ord(b)\geq \tau_1 \nu \ord(a)$.  By Lemma \ref{L:q}, we also know that $\ord(c)\geq q \ord(b_\nu)\geq q \nu \ord(a)$.  Since $\boldsymbol{\mu}$ is a group, we have 
\begin{align}
a=bc^2c_\nu^2&=bccc_\nu c_\nu \notag \\
                           &=bc[c,c_\nu]c_\nu cc_\nu \notag \\
                           &=b[c,[c,c_\nu]][c,c_\nu]cc_\nu c c_\nu \notag \\
                           &=b[c,[c,c_\nu]][c,c_\nu](cc_\nu)^2 \notag \\
                           &=b_{\nu+1}(c_{\nu+1})^2, \notag \\ \notag
\end{align}
   
\noindent where $b_{\nu+1}:=b[c,[c,c_\nu]][c,c_\nu]$ and $c_{\nu+1}:=cc_\nu$.  Now note that there is $\tau_2 >\n$ such that 
$$\ord([c,c_\nu])\geq \tau_2 \ord(c)\geq \tau_2 q\nu \ord(a).$$  Also there is $\tau_3> \n$ such that $$\ord([c,[c,c_\nu]])\geq \tau_3 \tau_2 \ord(c) \geq \tau_3 \tau_2 q\nu \ord(a).$$  In combination with the facts that $\ord(b)\geq \tau_1 \nu \ord(a)$ and $G(\tau \ord(a))$ is a group, where $\tau:=\min(\tau_1,\tau_2)$, we have that $\ord(b_{\nu+1})\geq (\nu+1)\ord(a)$.  Finally, Lemma \ref{L:q} yields that $\ord(c_{\nu+1})\geq q\ord(a)$ and so we are finished.       
 \end{proof}
 
 \
 
 \begin{lemma}\label{L:sqrt}
 For every $a\in \boldsymbol{\mu}$, there is $b\in \boldsymbol{\mu}$ with $a=b^2$.
 \end{lemma}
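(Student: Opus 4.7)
If $a=1$, take $b=1$; otherwise set $\sigma := \ord(a) > \n$. The plan is a Newton-style iteration: apply the preceding lemma repeatedly to peel off successively deeper correction terms, and then combine the resulting approximate square roots into an exact one using the commutator estimates that flow from the fact that $G(\tau)/G^{\lilo}(\tau)$ is abelian.

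Concretely, I would define internally $a_0 := a$ and, at stage $k$, apply the preceding lemma (with $\nu = 2$) to obtain $a_k = a_{k+1}d_k^2$ with $\ord(a_{k+1})\geq 2\ord(a_k)$ and $\ord(d_k)\geq q\ord(a_k)$. Telescoping gives, for every $k\in \n^*$,
\[ a \;=\; a_k\cdot d_{k-1}^2 d_{k-2}^2 \cdots d_0^2,\qquad \ord(a_k)\geq 2^k\sigma.\]
Setting $D_k := d_{k-1}d_{k-2}\cdots d_0 \in \muu$, repeated use of the identity $xy = [x,y]yx$ inside $\muu$ yields
\[ D_k^2 \;=\; d_{k-1}^2 d_{k-2}^2\cdots d_0^2 \cdot E_k,\]
where $E_k$ is a hyperfinite product of iterated commutators of the $d_i$'s and their conjugates. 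Since $G(\tau)/G^{\lilo}(\tau)$ is abelian (by the theorem earlier in this section), each commutator $[d_i,d_j]$ lies in $G^{\lilo}(\min\{\ord(d_i),\ord(d_j)\})$, and an internal induction on $k$ bounds $\ord(E_k)$ below by a function growing with $k$.

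Combining, $a\cdot D_k^{-2} = a_k E_k^{-1}$ becomes arbitrarily deep in $\muu$ as $k$ grows. To close the argument I would apply the preceding lemma once more to the residual $a_k E_k^{-1}$, obtaining a small correction $\epsilon$ with $\epsilon^2$ approximating the residual; replacing $D_k$ by $D_k\epsilon$ and continuing the iteration internally up to an infinite $k$ large enough that the remaining error is forced into $\bigcap_{\tau>\n} G^{\lilo}(\tau)$, the $\Ns$ hypothesis together with the special-neighborhood property of $\u$ forces this remaining error to be $1$, giving $a = b^2$ with $b := D_k\epsilon \in \muu$. The main obstacle is the commutator bookkeeping: one must show by careful internal induction that the nested commutators making up $E_k$ truly have orders growing with $k$ rather than accumulating adversely through repeated conjugations, and that the residual-correction step can be iterated to an infinite stage producing an exact, not merely approximate, square root. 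This is precisely where the abelian-modulo-$G^{\lilo}$ structure and the special-neighborhood property are essential.
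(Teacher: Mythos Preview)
Your argument has a genuine gap at the final step. Running the iteration up to some fixed infinite $k$ gives you a residual $r := a_k E_k^{-1}$ with $\ord(r)$ bounded below by something like $2^k\sigma$, but this is still a \emph{particular} hyperinteger, not $\infty$. Consequently $r$ lies in $G^{\lilo}(\tau)$ only for those $\tau$ with $\tau = \bigO(2^k\sigma)$, not for all $\tau>\n$; the intersection $\bigcap_{\tau>\n} G^{\lilo}(\tau)$ is external and no internal iteration can land you inside it. Under $\Ns$ the only degenerate infinitesimal is $1$, but an element with merely very large (yet hyperfinite) order is not degenerate, so there is no mechanism forcing $r=1$. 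In short, your scheme produces ever-better approximate square roots but never an exact one, because $\muu$ carries no completeness that would let you pass to a limit. The commutator bookkeeping you flag as the ``main obstacle'' is actually the lesser problem; even granting perfect control of $E_k$, the convergence issue remains.

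The paper's proof avoids this by changing universe: the preceding lemma already gives, for any fixed infinite $\nu$, a decomposition $a=b_\nu c_\nu^2$ with $\ord(b_\nu)\ge\nu\ord(a)$. One then applies \emph{underspill} to the internal set of $\eta$ for which every $x\in V_\eta$ admits such a decomposition, obtaining a standard $n$ with this property. For standard $x\in V_n$ the factor $y$ in $x=yz^2$ has infinite order, hence $y\in\muu$, and taking standard parts gives $x=\st(z)^2$ exactly. Thus every standard element of $V_n$ has an honest square root in $\u$; transfer then yields square roots for all of $\muu$. The passage through the standard world is what supplies the missing ``limit'': the standard-part map turns an infinitesimally small error into an exact zero.
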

 
 \begin{proof}
 Fix $\nu> \n$.  Then by the previous lemma, the internal set of all $\eta\in \n$ such that for each $x\in V_\eta$ there are $y,z\in \u^*$ such that $y \cdot z \cdot z$ is defined, $x=yz^2$, $\ord(y)\geq \nu \ord(x)$ and $\ord(z)\geq q\ord(x)$ includes all infinite $\eta$.  By underspill, there is some $n>0$ in this set.  Now let $x\in V_n$.  Then there are $y,z\in \u^*$ such that $y\cdot z \cdot z$ is defined, $x=yz^2$ and $\ord(y)\geq \nu \ord(x)$.  Since $\ord(y)$ is infinite, $y\in \boldsymbol{\mu}$.  Hence $x=\st(z)^2$, i.e. every element of $V_n$ has a square root in $\u$.  By transfer, this implies that every $a\in \boldsymbol{\mu}$ has a square root $b\in \u^*$ and $b\in \boldsymbol{\mu}$, else the group $\{1,\st(b)\}$ is contained in $\u$, a contradiction.           
 \end{proof}
 
 \
 
 \begin{lemma}\label{L:ordinf}
 Suppose $a\in \boldsymbol{\mu}$, $b\in G^*$ and $\nu>0$ are such that $\ord(b)\geq \nu$ and $a=b^\nu$.  Then for all $i\in\{1,\ldots,\nu\}$, we have $b^i\in \boldsymbol{\mu}$.
 \end{lemma}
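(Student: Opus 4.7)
The plan is to reduce the lemma to showing that the local $1$-parameter subgroup $X_b$ associated to $b$ is trivial, and then to exploit that $\u$ is a special neighborhood to force this triviality.

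First I would show $b\in\muu$. If $\nu$ is nonstandard then $\ord(b)\geq\nu$ is infinite, so $b\in\muu$ by the earlier characterization of $\muu$ as $\{g\in G^*:\ord(g)=\infty\}$. If $\nu$ is standard, set $s:=\st(b)\in\u$; since $s^\nu=\st(b^\nu)=\st(a)=1$ and all powers $s,s^2,\ldots,s^{\nu-1}$ lie in $\u\subseteq\u_2$ (so pairs are in $\Omega$, with associativity supplied by Lemma \ref{L:EZFacts}), the cyclic set $\{1,s,\ldots,s^{\nu-1}\}$ is a subgroup of $G$ contained in $\u$. The special-neighborhood property then forces $s=1$, so $b\in\muu$; for standard $\nu$ the lemma follows at once because $\muu$ is a group closed under $\nu$-fold products.

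Assume now that $\nu$ is nonstandard and $b\neq 1$, and put $\sigma:=\ord(b)\geq\nu$. Since $G$ is $\Ns$ and $\u$ is special, $b$ is $\u$-pure and hence $b\in G(\sigma)$. Lemma \ref{L:sigparam} produces the local $1$-ps $X_b\colon(-1,1)\to G$, $X_b(s)=\st(b^{[s\sigma]})$ (note $r_{\sigma,b,\u}=1$ since $\ord(b)=\sigma$). By Lemma \ref{L:Gsigma properties}(3) it suffices to show $[X_b]=\O$: for then $b\in G^\lilo(\sigma)$, so Lemma \ref{L:lilo} gives $b^i\in\muu$ for all $i\in\{1,\ldots,\sigma\}$, and we are done. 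I will show $[X_b]=\O$ by producing a standard $t\in(0,1)$ with $X_b(t)=1$; the dyadic argument then takes over, namely $X_b(t/2)^2=X_b(t)=1$ together with $X_b(t/2)\in\u$ forces $X_b(t/2)=1$ by the square-root uniqueness in $\u$; iterating gives $X_b(t/2^n)=1$ for all $n$, and additivity and continuity propagate this to $X_b\equiv 1$ on $(-1,1)$.

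To produce such a $t$, set $t_0:=\st(\nu/\sigma)\in[0,1]$. If $t_0=0$ then $\nu=\lilo(\sigma)$ and $b\in G(\sigma)$ already yields $b^i\in\muu$ for $i\leq\nu$, so no further work is needed. If $t_0\in(0,1)$, take $t=t_0$: the estimate $|[t_0\sigma]-\nu|\leq\sigma|t_0-\nu/\sigma|+1=\lilo(\sigma)$ together with $\u$-purity of $b$ gives $b^{[t_0\sigma]-\nu}\in\muu$, and writing $b^{[t_0\sigma]}=b^\nu\cdot b^{[t_0\sigma]-\nu}$ (via Lemma \ref{L:EZFacts} and Corollary \ref{C:EZCorollary} according to the sign of the exponent) exhibits $b^{[t_0\sigma]}\in\muu$, so $X_b(t_0)=1$. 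The main obstacle is the boundary case $t_0=1$, where $\nu/\sigma$ is infinitely close to the right endpoint of the domain of $X_b$ and the natural evaluation at $\nu/\sigma$ is unavailable; here I take $t=1/2$ and argue instead that $\sigma-\nu=\lilo(\sigma)$ makes $b^{\sigma-\nu}$ (and hence $b^\sigma=b^\nu\cdot b^{\sigma-\nu}$) infinitesimal, so $(b^{[\sigma/2]})^2=b^{2[\sigma/2]}$ differs from $b^\sigma$ by at most one extra factor of $b\in\muu$ and is therefore infinitesimal. The special-neighborhood square-root uniqueness then yields $X_b(1/2)=\st(b^{[\sigma/2]})=1$, as required.
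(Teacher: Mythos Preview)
Your argument is correct and takes a genuinely different route from the paper. The paper's proof is elementary: it shows by induction on standard $n$ that $b^{n\nu}$ is defined (using that any $b^j$ with $j\le n\nu$ factors as $(b^\nu)^{n'}b^{j'}\sim b^{j'}\in\u^*$, so all needed pairs lie in $\u_2^*\times\u_2^*\subseteq\o^*$); then if some $b^i\notin\muu$ with $i<\nu$, the element $x:=\st(b^i)\in\u\setminus\{1\}$ has $x^n$ defined and in $\u$ for every $n$, giving a nontrivial subgroup $x^{\z}\subseteq\u$, contrary to $\u$ being special. Your proof instead invokes the local $1$-ps $X_b$ and the equivalence $[X_b]=\O\iff b\in G^{\lilo}(\sigma)$. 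It is worth noticing that your cases $t_0>0$ are in fact vacuous: since $\sigma=\ord(b)$ forces $b^\sigma\notin\muu$ (otherwise $b^{\sigma+1}=b^\sigma\cdot b\in\muu\subseteq\u^*$), one always has $b\notin G^{\lilo}(\sigma)$, hence $[X_b]\neq\O$. So your derivations of $[X_b]=\O$ under $t_0\in(0,1)$ and $t_0=1$ are really reductio arguments showing those cases cannot occur; the whole lemma collapses to your observation that $t_0=0$, i.e.\ $\nu=\lilo(\sigma)$, together with $b\in G(\sigma)$. The paper's approach buys a self-contained argument using only the special-neighborhood axiom; yours exploits the already-developed $L(G)$ machinery. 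One small gap in your finite-$\nu$ case: closure of $\{1,s,\dots,s^{\nu-1}\}$ under products when $i+j>\nu$ is not immediate from Lemma~\ref{L:EZFacts}; write $s^i=s^{i+j-\nu}\cdot s^{\nu-j}$ and apply axiom~(A) with the factor $s^{\nu-j}\cdot s^j=1$ to get $s^i\cdot s^j=s^{i+j-\nu}$.
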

 
 \begin{proof} 
We begin the proof with a claim.

\

\noindent Claim:  For all $n$, $b^{n\nu}$ is defined.

\

\noindent We prove this claim by induction on $n$, the case $n=1$ being true by assumption.  Assume inductively that $b^{n\nu}$ is defined.  We claim that $$b^j\in \u_2^* \text{ for all }j\in \{1,\ldots,n\nu\}.\quad (\dagger)$$  To see this, write $j=n'\nu +j'$, where $n'\in \n$ and $j'\in \{1,\ldots,\nu-1\}$.  Then $$b^j=(b^\nu)^{n'}\cdot b^{j'}\sim b^{j'}\in \u^*.$$  In order to show that $b^{n\nu + \nu}$ is defined, we prove by internal induction on $i$ that $b^{n\nu+i}$ is defined and $b^{n\nu+i}\in \u_2^*$ for all $i\in \{0,\ldots,\nu\}$.  The case $i=0$ is true since $b^{n\nu}=a^n\in\muu$.  Suppose, inductively, that it holds for a given $i$ and that $i+1\leq \nu$.  Let $k,l\in \{1,\ldots,n\nu+i\}$ be such that $k+l=n\nu +i+1$.  Then by $(\dagger)$ and the inductive assumption, $(b^k,b^i)\in \u_2^*\times \u_2^*\subseteq \o^*$.  Hence $b^{n\nu+i+1}$ is defined.  But then $b^{i+1}\in \u^*$ and $b^{n\nu+i+1}\sim b^{i+1}$.  Hence $b^{n\nu+i+1}\in \u_2^*$.  This proves the claim.

\

\noindent Now suppose, towards a contradiction, that there is $i\in\{1,\ldots,\nu-1\}$ such that $b^i\notin \boldsymbol{\mu}$. Let $x:=\st(b^i)\in \u\setminus \{1\}$.  From the claim, we see that if $x^n$ is defined for a given $n$, then $x^n \in \u$.  From this, one can show by induction that for all $n$, $x^n$ is defined and $x^n\in \u$.  Hence $x^\z \subseteq \u$, a contradiction.    
 \end{proof}
 
\
 
 \begin{lemma}
 Given $a\in \boldsymbol{\mu}$ and $\nu$, there is $b\in \boldsymbol{\mu}$ such that $\ord(b)\geq 2^\nu$ and $a=b^{2^\nu}$.
 \end{lemma}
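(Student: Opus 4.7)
The plan is to prove the lemma by internal induction on $\nu\in\n^*$ using the internal predicate
\[
P_a(\nu)\colon \exists b\in\u^* \text{ with } b^{2^\nu}=a \text{ and } \ord(b)\geq 2^\nu.
\]
The base case $P_a(0)$ holds trivially with $b=a$. For the inductive step, given a witness $c$ for $P_a(\nu)$, I would first verify $c\in\muu$: the identity $c^{2^\nu}=a\in\muu$ forces $\st(c)^{2^\nu}=1$, and since $\u$ is a special neighborhood containing no nontrivial subgroups, this forces $\st(c)=1$. Lemma \ref{L:sqrt} then supplies $b\in\muu$ with $b^2=c$, so $b^{2^{\nu+1}}=(b^2)^{2^\nu}=c^{2^\nu}=a$ follows immediately.

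The substantive work is verifying $\ord(b)\geq 2^{\nu+1}$. The key move is to first apply Lemma \ref{L:ordinf} to $c$ with parameter $2^\nu$: the hypotheses $c^{2^\nu}=a\in\muu$ and $\ord(c)\geq 2^\nu$ give $c^i\in\muu$ for all $i\in\{1,\ldots,2^\nu\}$, whence $c^j\in\muu$ for every $|j|\leq 2^\nu$. Then a bounded induction on $k\in\{0,1,\ldots,2^{\nu+1}\}$ shows $b^k$ is defined and lies in $\muu\subseteq\u^*$: the base cases $b^0=1$, $b^1=b$, $b^2=c$ are clear, and for the step Lemma \ref{L:EZFacts}(3) ensures $b^{k+1}$ is defined (all pairs $(b^i,b^{k+1-i})$ lie in $\muu\times\muu\subseteq\o^*$), while the even/odd split yields membership in $\muu$: $b^{2j}=c^j\in\muu$ for $|j|\leq 2^\nu$, and $b^{2j+1}=c^j\cdot b\in\muu\cdot\muu=\muu$.

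The main obstacle is controlling $\ord(b)$ through the square-root operation when $\nu$ is infinite. Naively iterating Lemma \ref{L:sqrt} preserves membership in $\muu$ but does not obviously preserve the quantitative bound $\ord\geq 2^\nu$, since being in $\muu$ only yields infinite order, not the specific growth rate $2^\nu$. Lemma \ref{L:ordinf} bridges this gap precisely: it converts the inductive order hypothesis into the statement that all intermediate powers $c^j$ are themselves infinitesimal, which is exactly what drives the verification of the order bound at the next stage. With this ingredient in hand, internal induction on the internal predicate $P_a$ yields $P_a(\nu)$ for every $\nu\in\n^*$, completing the proof.
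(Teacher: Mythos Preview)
Your proof is correct and takes essentially the same approach as the paper: internal induction on $\nu$ using Lemma~\ref{L:sqrt} for the square root and Lemma~\ref{L:ordinf} together with an even/odd split to verify both definability of the iterated powers and the order bound (the paper carries this out with your $b$ and $c$ interchanged, and packages your bounded induction as a separate claim that $(c^2)^i=c^{2i}$). One small caveat: your preliminary argument for $c\in\muu$ via ``$\st(c)^{2^\nu}=1$'' is not well-formed when $\nu>\n$, but this step is redundant anyway since your invocation of Lemma~\ref{L:ordinf} already yields $c=c^1\in\muu$.
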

 
 \begin{proof}
 We prove this by internal induction on $\nu$, noting that the assertion is internal since $b\in \boldsymbol{\mu}$ can be replaced by $b\in G^*$ by the previous lemma.  The assertion is true for $\nu=1$ by Lemma \ref{L:sqrt}.  Now suppose the assertion is true for $\nu$, i.e. that $a=b^{2^\nu}$ with $\ord(b)\geq 2^\nu$.  Since $b\in \boldsymbol{\mu}$, there is $c\in \boldsymbol{\mu}$ with $b=c^2$ by Lemma \ref{L:sqrt}.  
 
\

\noindent Claim:  For all $i\in\{1,\ldots,2^\nu\}$, $c^{2i}$ is defined and $(c^2)^i=c^{2i}$.

\

\noindent We prove the claim by internal induction.  It is obviously true for $i=1$ and suppose it is true for a given $i\leq 2^\nu$.  Suppose $i+1\leq 2^\nu$.  We first show $c^{2i+1}$ is defined.  By induction, we know that $c^{2i}$ is defined, and so by Lemma \ref{L:EZFacts}, it is enough to show $(c^k,c^l)\in \o^*$ for $k,l\in\{1,\ldots,2i\}$ with $k+l=2i+1$.  If $k$ is even, then $$c^k=(c^2)^{\frac{k}{2}}=b^{\frac{k}{2}}\in \boldsymbol{\mu}$$ by Lemma \ref{L:ordinf} since $\frac{k}{2}\leq i\leq 2^{\nu}$.  Similarly, if $k$ is odd, then $$c^k=c\cdot (c^2)^{\frac{k-1}{2}}=cb^{\frac{k-1}{2}}\in \boldsymbol{\mu}.$$  Thus, for $k,l\in\{1,\ldots,2i\}$, one has $(c^k,c^l)\in \boldsymbol{\mu} \times \boldsymbol{\mu} \subseteq \o^*$.  Now one can repeat the same argument using that $c^{2i+1}$ is defined to conclude that $c^{2i+2}$ is defined.  Now that we know that $c^{2i+2}$ is defined, we have, by induction, that $(c^2)^{i+1}=(c^2)^i\cdot c^2=c^{2i}\cdot c^2=c^{2i+2}$.  
 
 \
 
 \noindent By the claim, we have that $a=(c^2)^{2^\nu}=c^{2^{\nu+1}}$.  It remains to show that ord$(c)\geq 2^{\nu+1}$.  Let $i\in\{1,\ldots,2^{\nu+1}\}$.  Then if $i$ is even, we have $c^i=b^{\frac{i}{2}}\in \boldsymbol{\mu}$  by Lemma \ref{L:ordinf} since $\frac{i}{2}\leq 2^\nu \leq \ord(b)$.  If $i$ is odd, then $c^i=c\cdot c^{\frac{i-1}{2}}\in \boldsymbol{\mu}$ for the same reason.  Thus $c^i\in \boldsymbol{\mu}$ for all $i\in\{1,\ldots,2^{\nu+1}\}$ and so ord$(c)\geq 2^{\nu+1}$.
 \end{proof}
  
 \
 
 \begin{thm}\label{T:kneigh}
 $K$ is a neighborhood of $1$.
 \end{thm}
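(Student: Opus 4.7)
By standard nonstandard topology, $K$ is a neighborhood of $1$ in $G$ if and only if $\muu \subseteq K^*$, and by transfer $K^* = \{X(1) \mid X \in \k^*\}$. So I would fix an arbitrary $a \in \muu$ and produce an internal $X \in \k^*$ with $X(1) = a$.

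Fix an infinite $\nu$ and, by the preceding lemma, choose $b \in \muu$ with $b^{2^\nu} = a$ and $\ord(b) \geq 2^\nu$. Setting $b_k := b^{2^{\nu - k}}$ for $0 \leq k \leq \nu$ gives an internal sequence with $b_0 = a$, $b_{k+1}^2 = b_k$, and (from a routine internal use of Lemma \ref{L:EZFacts}) $\ord(b_k) \geq 2^k$; morally, $b_k$ plays the role of $a^{1/2^k}$. On the hyperfinite dyadic grid $D := \{j/2^\nu : j \in \z^*,\ |j| \leq 2^\nu\} \subseteq I^*$ one then defines internally $X(j/2^\nu) := b^j$. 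The $\ord$-bound gives $X(D) \subseteq \u^*$, additivity of $X$ on $D$ follows from $b^{j_1+j_2} = b^{j_1} b^{j_2}$, and $X(1) = b^{2^\nu} = a$ by construction.

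The crucial task is to upgrade this hyperfinite-grid function to an internally $*$-continuous additive map $X : I^* \to G^*$. The nested grid functions $X_k(j/2^k) := b_k^j$ are mutually consistent (via $b_{k+1}^2 = b_k$), and the uniform order bounds $\ord(b_k) \geq 2^k$ provide enough regularity---in the same spirit as the construction of the local 1-ps $X_b$ in Lemma \ref{L:regelement}---to interpolate the $b^j$'s by an internally $*$-continuous additive function with image in $\u^* \subseteq \u_4^*$. A saturation argument, exploiting the hyperfine resolution afforded by the $b_k$ for $k$ infinite, then yields the desired internal extension.

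Finally, applying the transfer of Lemma \ref{L:extension} to this $X$ produces an internal $*$-local 1-ps $\overline{X} : (-1-\epsilon, 1+\epsilon)^* \to G^*$ extending $X$. Its germ $[\overline{X}]$ lies in $\k^*$ and satisfies $\overline{X}(1) = a$, so $a \in K^*$. The main obstacle throughout is the internal $*$-continuous extension in the third paragraph: it is here that the iterated square roots, together with the tightness of the $\ord_\u$ estimates, are essential, supplying enough regularity from the grid data to support a bona fide internal local 1-ps.
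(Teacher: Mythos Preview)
Your strategy---show $\muu \subseteq K^*$ by producing, for each $a\in\muu$, an \emph{internal} local 1-ps $X\in\k^*$ with $X(1)=a$---has a genuine gap at exactly the step you flag as ``crucial.'' You have an internal grid function $j/2^\nu \mapsto b^j$ on $D$, but you never actually carry out the extension to an internally $*$-continuous additive map on $I^*$. The reference to Lemma~\ref{L:regelement} is misleading: that lemma builds a \emph{standard} local 1-ps via $t\mapsto \st(b^{[t\sigma]})$, and the standard-part map is external, so it cannot yield an internal object. The order bounds $\ord(b_k)\ge 2^k$ tell you only that the grid values land in $\u^*$; they give no internal modulus of continuity, so there is nothing for saturation to grab onto. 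Any smoothing you perform will either fail to be internal or fail to hit $a$ exactly at $t=1$. As written, this is a sketch of a hoped-for argument, not a proof.

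The paper avoids this obstacle by interposing an underspill step. Fixing $\nu>\n$, the condition ``every $x\in V_\eta$ has some $y\in\u^*$ with $\ord(y)\ge 2^\nu$ and $x=y^{2^\nu}$'' is internal in $\eta$ and (by the preceding lemma) holds for all infinite $\eta$; underspill then gives a \emph{standard} $n$ with this property. Now for each standard $x\in V_n$ one has a (nonstandard) $a\in\muu$ with $\ord(a)\ge 2^\nu$ and $a^{2^\nu}=x$, and the \emph{standard} local 1-ps $X_a(t)=\st(a^{[t\cdot 2^\nu]})$ from Lemma~\ref{L:sigparam} lies in $\k$ with $X_a(1)=x$. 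Thus $V_n\subseteq K$. The point is that passing to a standard neighborhood first lets you use the external standard-part construction to build genuine (standard) local 1-parameter subgroups, sidestepping entirely the problem of manufacturing an internal continuous interpolant.
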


\begin{proof}
Fix $\nu> \n$.  Then the internal set of all $\eta$ such that for each $x\in V_\eta$ there is $y\in \u^*$ such that $\ord(y)\geq 2^\nu$ and $x=y^{2^\nu}$ contains all infinite $\eta$.  So by underspill, there is some $n>0$ in this set.  Now given $x\in V_n$, there is $a\in \u^*$ such that $\ord(a)\geq 2^\nu$ and $x=a^{2^\nu}$.  Let $\sigma:=2^\nu$.  Then for $t\in I$, $X_a(t)=\st(a^{[t\sigma]})\in \u$ and $X_a(1)=x$.  Thus $[X_a]\in \k$ and $x=[X_a](1)\in K$.  We have shown $V_n\subseteq K$ and this completes the proof. 
\end{proof}
 
\ 
  
\section{The Space $L(G)$}

\noindent Throughout this section, we assume that our (locally compact) $G$ is $\Ns$.  We fix a special neighborhood $\u$ of $1$ in $G$.  We will equip $L(G)$ with a topology in such a way that it becomes a locally compact finite dimensional real topological vector space, where the operations of scalar multiplication and vector addition are as previously defined.  The section culminates with the important fact that locally compact $\Ns$ local groups are locally euclidean.

\

\noindent \textbf{The Compact-Open Topology}

\

\noindent We first recall that if $X,Y$ are hausdorff spaces, we can turn $\C(X,Y)$, the set of continuous maps from $X$ to $Y$, into a topological space by equipping it with the compact-open topology.  This topology has as a subbasis the sets $B(C,U):=\{f\in \C(X,Y) \ | \ f(C)\subseteq U\}$, where $C$ ranges over all compact subsets of $X$ and $U$ ranges over all open subsets of $Y$.  In the solution of the H5, one equipped the space $\C(\r,G)$ with the compact-open topology and then gave the space of 1-parameter subgroups the corresponding subspace topology.

\

\noindent  Analogously, we will define a topology on $L(G)$ by declaring the subbasic open set to be the sets $B_{C,U}:=\{ \x \in L(G) \ | \ C\subseteq \dom(\x) \text{ and }\x(C)\subseteq U\}$, where $C$ ranges over all compact subsets of $(-2,2)$ and $U$ ranges over all open subsets of $G$.  We next describe the monad structure of $L(G)$ with respect to this topology. 

\

\begin{lemma}\label{L:monadLG}
Let $\x\in L(G)$ and $\y \in L(G)^*$.  Then $\y \in \muu(\x)$ if and only if $\dom(\x)\cap (-2,2)\subseteq \dom(\y)$ and for every $t\in \dom(\x)\cap (-2,2)$ and every $t'\in \muu(t)$, $\y(t')\in \muu(\x(t))$.
\end{lemma}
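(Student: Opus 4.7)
The plan is to use the standard nonstandard characterization of monads in the compact-open topology, with the twist that elements of $L(G)$ carry varying (possibly finite) domains. Recall that
$$\muu(\x)\;=\;\bigcap_{\x\in B_{C,U}} B^{*}_{C,U},$$
intersected over all subbasic neighborhoods of $\x$. So $\y\in\muu(\x)$ is equivalent to: for every compact $C\subseteq(-2,2)$ with $C\subseteq\dom(\x)$ and every open $U\subseteq G$ with $\x(C)\subseteq U$, one has $C^{*}\subseteq\dom(\y)$ and $\y(C^{*})\subseteq U^{*}$. The two directions of the lemma then amount to unwinding this criterion.

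For the forward direction, suppose $\y\in\muu(\x)$. Given $t\in\dom(\x)\cap(-2,2)$, pick $\epsilon>0$ with $K:=[t-\epsilon,t+\epsilon]\subseteq\dom(\x)\cap(-2,2)$. Applying the criterion to $B_{K,G}$ (which contains $\x$ trivially) yields $K^{*}\subseteq\dom(\y)$, hence $t\in\dom(\y)$. Now let $t'\in\muu(t)$ and let $U$ be any open neighborhood of $\x(t)$; by continuity of $\x$ at $t$ one may shrink $K$ so that $\x(K)\subseteq U$, and then $\x\in B_{K,U}$ forces $\y(K^{*})\subseteq U^{*}$. Since $\muu(t)\subseteq K^{*}$, this gives $\y(t')\in U^{*}$, and letting $U$ shrink yields $\y(t')\in\muu(\x(t))$.

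For the backward direction, assume both pointwise conditions and fix a subbasic $B_{C,U}\ni\x$. Since $C\subseteq\dom(\x)\cap(-2,2)$ is compact and $\dom(\x)\cap(-2,2)$ is an open interval around $0$, there is $M$ with $C\subseteq[-M,M]\subseteq\dom(\x)\cap(-2,2)$. The hypothesis gives $\pm M\in\dom(\y)$, and by transfer of the definition $\dom(\x)=\bigcup_{X\in\x}\dom(X)$, the set $\dom(\y)$ is an internally open symmetric interval around $0$ in $\r^{*}$; hence it contains $[-M,M]^{*}$, and so $C^{*}\subseteq\dom(\y)$. For any $t^{*}\in C^{*}$, the point $t^{*}$ is nearstandard with $t:=\st(t^{*})\in C\subseteq\dom(\x)\cap(-2,2)$, so the pointwise hypothesis yields $\y(t^{*})\in\muu(\x(t))$; since $\x(t)\in\x(C)\subseteq U$, this monad lies inside $U^{*}$. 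Thus $\y\in B^{*}_{C,U}$, and since $B_{C,U}$ was arbitrary, $\y\in\muu(\x)$.

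The only real obstacle is the first step of the backward direction, where one must upgrade the pointwise inclusion $C\subseteq\dom(\y)$ to the internal inclusion $C^{*}\subseteq\dom(\y)$. The upgrade relies on the structural fact that the domain of any (standard or nonstandard) element of $L(G)$ is a symmetric open interval around $0$, so that containment of the endpoints of a standard closed subinterval automatically propagates to containment of its full $*$-version. Everything else is a routine compact-open monad calculation.
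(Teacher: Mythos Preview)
Your proof is correct and follows essentially the same route as the paper's: both directions unwind the monad description $\muu(\x)=\bigcap_{\x\in B_{C,U}} B_{C,U}^{*}$ via compact neighborhoods of points in $\dom(\x)\cap(-2,2)$. The one difference is that in the backward direction you explicitly argue $C^{*}\subseteq\dom(\y)$ using the interval structure of $\dom(\y)$, whereas the paper simply applies the second hypothesis to $t^{*}\in C^{*}$ directly---reading ``$\y(t')\in\muu(\x(t))$'' as already asserting $t'\in\dom(\y)$, so that $\muu(t)\subseteq\dom(\y)$ for every $t\in\dom(\x)\cap(-2,2)$, which covers all of $C^{*}$ without the interval argument.
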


\begin{proof}
First suppose that $\y \in \muu(\x)$.  Since $\y \in B_{C,G}^*$ for every compact $C\subseteq \dom(\x)\cap (-2,2)$, we have $\dom(\x)\cap (-2,2)\subseteq \dom(\y)$.  Now suppose $t\in \dom(\x)\cap (-2,2)$ and $U$ is an open neighborhood of $\x(t)$ in $G$.  Let $C$ be a compact neighborhood of $t$ with $C\subseteq \dom(\x)\cap (-2,2)$ and such that $\x(C)\subseteq U$.  Since $\y \in B_{C,U}^*$, we have $\y(C^*)\subseteq U^*$ and so in particular $\y(t')\in U^*$.  Thus $\y(t')\in \muu(\x(t))$.  

\

\noindent For the converse, suppose $\dom(\x)\cap (-2,2)\subseteq \dom(\y)$ and for all $t\in \dom(\x)\cap (-2,2)$ and $t'\in \muu(t)$, we have $\y(t')\in \muu(\x(t))$.  Suppose $\x\in B_{C,U}$.  Then for all $t\in C^*$, we have $\y(t)\in \muu(\x(\st(t)))\subseteq U^*$.  This implies that $\y\in B_{C,U}^*$ and thus $\y\in \muu(\x)$.
\end{proof}

\

\noindent In the rest of the paper, we assume the special neighborhood $\u$ of an NSS local group is chosen so small that $\u\subseteq \u_6$.

\

\begin{lemma}\label{L:calkcompact}
$\k$ is a compact neighborhood of $\O$ in $L(G)$.
\end{lemma}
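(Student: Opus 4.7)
The plan is to verify two things: that $\muu(\O) \subseteq \k^*$ (so $\k$ contains a neighborhood of $\O$) and that every $\y \in \k^*$ is nearstandard in $\k$ (which, since $L(G)$ is Hausdorff in the compact-open topology, yields compactness). Both assertions are read off from the monad characterization in Lemma \ref{L:monadLG}.

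For the neighborhood claim, let $\y \in \muu(\O)$. By Lemma \ref{L:monadLG}, $\dom(\O) \cap (-2,2) = (-2,2) \subseteq \dom(\y)$, and for every $t \in (-2,2)$ and every $t' \in \muu(t)$, $\y(t') \in \muu(\O(t)) = \muu$. Specializing to $t = t' \in I$ gives $\y(I) \subseteq \muu \subseteq \u^*$, hence $\y \in \k^*$. Thus $\muu(\O) \subseteq \k^*$, which means $\O$ is an interior point of $\k$.

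For compactness, fix $\y \in \k^*$. By the internal version of Lemma \ref{L:extension} applied with $\epsilon = 1/4$ (using $\u \subseteq \u_6 \subseteq \u_4$), $\y$ admits an internal representative $\overline{Y} \colon (-5/4,5/4)^* \to G^*$, with image in $\u^{*2}$; since $\u^2$ is the continuous image of the compact set $\u \times \u$, it is compact, so every $\overline{Y}(t)$ is nearstandard and $\st \overline{Y}(t)$ is defined. The key technical step is the $S$-continuity of $\overline{Y}$ on its internal domain, and for this it suffices to show $\overline{Y}(\epsilon) \sim 1$ for every infinitesimal $\epsilon$. For each standard $n$, $n\epsilon$ is infinitesimal and lies in $I^*$, so $\overline{Y}(\epsilon)^n = \overline{Y}(n\epsilon) \in \u^*$; hence the internal set of $\sigma \in \n^*$ with $\overline{Y}(\epsilon)^\sigma$ defined and in $\u^*$ contains $\n$, and by overflow contains some $\nu > \n$. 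Thus $\ord_\u(\overline{Y}(\epsilon))$ is infinite, and by the lemma characterizing infinite order in a special neighborhood, $\overline{Y}(\epsilon) \in \muu$. Internal additivity then yields $\overline{Y}(t+\delta) = \overline{Y}(t)\overline{Y}(\delta) \sim \overline{Y}(t)$ for every infinitesimal $\delta$.

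With $S$-continuity in hand, define $X(t) := \st \overline{Y}(t)$ for $t \in (-5/4,5/4)$. Then $X$ is continuous, and taking standard parts of $\overline{Y}(r+s) = \overline{Y}(r)\overline{Y}(s)$ gives $X(r+s) = X(r)X(s)$ whenever $r,s,r+s \in (-5/4,5/4)$, so $X$ is a local 1-ps of $G$. Since $X(I) \subseteq \u$, we have $[X] \in \k$. Finally, for every $t \in \dom([X]) \cap (-2,2) = (-5/4,5/4)$ and every $t' \in \muu(t)$, $S$-continuity gives $\y(t') = \overline{Y}(t') \sim \overline{Y}(t) \sim X(t)$, so Lemma \ref{L:monadLG} yields $\y \in \muu([X])$. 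Hence every element of $\k^*$ is nearstandard in $\k$, which establishes compactness. The principal obstacle is the $S$-continuity step, where the NSS hypothesis is used essentially: in a special neighborhood, infinite $\u$-order is equivalent to membership in $\muu$, and this is exactly what the overflow argument produces.
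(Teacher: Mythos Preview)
Your overall strategy is sound and close to the paper's, and your explicit treatment of $S$-continuity via overflow and the special-neighborhood characterization of infinite order is a genuine improvement: the paper silently assumes that $t\mapsto \st(Y(t))$ is continuous on $I$, which is exactly the $S$-continuity you prove.

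There is, however, a real gap in your application of Lemma~\ref{L:monadLG}. You assert that $\dom([X])\cap(-2,2)=(-5/4,5/4)$, but this is false in general. Since $X(I)\subseteq\u\subseteq\u_2$, the germ $[X]$ extends at least to $(-2,2)$: for $t\in(1,2)$ one may set $\x(t)=X(1)X(t-1)$, and this is a legitimate representative. Consequently, to verify the hypotheses of Lemma~\ref{L:monadLG} you must check both that $(-2,2)\subseteq\dom(\y)$ and that $\y(t')\sim\x(t)$ for all $t\in\dom(\x)\cap(-2,2)$, not merely for $t\in(-5/4,5/4)$. Your internal application of Lemma~\ref{L:extension} with $\epsilon=1/4$ does not supply this.

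The fix is exactly what the paper does: extend the internal representative $Y$ to all of $(-2,2)^*$ by setting $Y(t):=Y(t-1)Y(1)$ for $t\in(1,2)^*$ (and symmetrically for $t\in(-2,-1)^*$), using $\u^*\times\u^*\subseteq\o^*$. Then for $t\in\dom(\x)\cap(1,2)$ and $t'\in\muu(t)$ one gets
\[
\y(t')=\y(t'-1)\,\y(1)\in\muu\bigl(X(t-1)X(1)\bigr)=\muu(\x(t)),
\]
the first inclusion coming from your $S$-continuity argument applied at $t-1\in I$. With this extension in place, both conditions of Lemma~\ref{L:monadLG} hold on the full intersection $\dom(\x)\cap(-2,2)$, and the compactness argument closes.
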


\begin{proof}
To see that $\k$ is a neighborhood of $\O$, choose an open $W\subseteq \u$ with $1\in W$.  Let $\k_W:=\{\x \in L(G) \ | \ I\subseteq \dom(\x) \text{ and }\x(I)\subseteq W\}\subseteq \k$.  Clearly $\O\in \k_W$ and $\k_W$ is an open set in $L(G)$ by the definition of the topology.  

\

\noindent We now prove that $\k$ is compact.  Let $\y \in \k^*$.  It suffices to find $\x \in L(G)$ such that $\y\in \muu(\x)$.  Choose $Y\in \y$ with $I^*\subseteq \dom(Y)$.  Now define $X:I\to G$ by $X(t)=\st(Y(t))$.  By Lemma \ref{L:extension}, we may extend the domain of $X$ to ensure that it is a local 1-ps.  We claim that $\x:=[X]$ is such that $\y\in \muu(\x)$.  We first note that $(-2,2)^* \subseteq \dom(\y)$.  To see this, suppose $t\in (1,2)^*$.  Then $(Y(t-1),Y(1))\in \u^*\times \u^*\subseteq \o^*$, so one can define $Y(t):=Y(t-1)Y(1)$.  One defines $Y(t)$ for $t\in (-2,-1)^*$ in a similar fashion.  An easy check verifies that $Y:(-2,2)^*\to G^*$ is an internal local 1-parameter subgroup.  Furthermore, if $t\in \dom(\x)\cap (1,2)$ and $t'\in \muu(t)$, we have $$\y(t')=\y(t'-1)\y(1)\in \muu(X(t-1)X(1))=\muu(X(t)).$$ A similar argument deals with the case that $t\in \dom(\x)\cap (-2,-1)$, finishing the proof of the lemma. 
\end{proof}

\

\begin{cor}\label{C:Ehomeo} 
$E:\k \rightarrow K$ is a homeomorphism.  
\end{cor}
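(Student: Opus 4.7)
The plan is to combine three facts already in hand: $E$ is a bijection (by the lemma preceding the definition of $E$), $\k$ is compact (Lemma \ref{L:calkcompact}), and $K\subseteq G$ is hausdorff; the only missing ingredient is continuity of $E$. Once continuity is established, the result follows from the standard topological principle that a continuous bijection from a compact space to a hausdorff space is automatically a homeomorphism.

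To verify that $E$ is continuous, I would work in the nonstandard setting and use the monadic characterization of the compact-open topology on $L(G)$ provided by Lemma \ref{L:monadLG}. Fix $\x\in\k$ and let $\y\in\muu(\x)$ with $\y\in\k^*$. Since $\x\in\k$ we have $I=[-1,1]\subseteq\dom(\x)\subseteq(-2,2)$, so in particular $1\in\dom(\x)\cap(-2,2)$. Applying Lemma \ref{L:monadLG} to the point $t=t'=1$ gives $\y(1)\in\muu(\x(1))$, that is, $E(\y)\in\muu(E(\x))$. This is precisely the nonstandard criterion for continuity of $E$ at $\x$.

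With $E$ continuous, $\k$ compact, and $K\subseteq G$ hausdorff (since $G$ is), the continuous bijection $E\colon\k\to K$ is closed: for any closed $C\subseteq\k$, $C$ is compact, $E(C)$ is compact, and hence closed in $K$. Therefore $E^{-1}\colon K\to\k$ is continuous and $E$ is a homeomorphism. There is no real obstacle here; the content of the corollary is that Lemma \ref{L:monadLG} packages the continuity of $E$ into a one-line check, and compactness of $\k$ (the nontrivial input, already proved via the extension Lemma \ref{L:extension}) takes care of the inverse direction.
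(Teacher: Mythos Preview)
Your proof is correct and follows the same approach as the paper: continuity of $E$ via Lemma \ref{L:monadLG}, then the standard compact-to-hausdorff argument using Lemma \ref{L:calkcompact}. The paper's proof is a two-line sketch of exactly this; you have simply spelled out the monadic continuity check at $t=t'=1$ and the closed-map argument in more detail.
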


\begin{proof}
By Lemma \ref{L:monadLG}, it is clear that $E$ is continuous.  The corollary is now immediate from the fact that $E$ is a continuous bijection and $\k$ is compact.
\end{proof}

\

\noindent Suppose $\x \in L(G)$ and $s\in (0,2)$ is such that $[-s,s]\subseteq \dom(\x)$ and $\x([-s,s])\subseteq \u_2$.  Let $U\subseteq \u_2$ be a neighborhood of $1$.  Consider the set $$N_\x(s,U):=\{\y \in L(G) \ | \ \y(t)\in \x(t)U \text{ for all }t\in [-s,s]\}.$$

\

\begin{lemma}
For each neighborhood $U$ of $1$ in $G$ with $U\subseteq \u_2$, $N_\x(s,U)$ is a neighborhood of $\x$ in $L(G)$ and the collection 
$$\{N_\x(s,U) \ | \ U \text{ is a neighborhood of }1 \text{ in }G \text{ and }U\subseteq \u_2\}$$ is a neighborhood base of $\x$ in $L(G)$.
\end{lemma}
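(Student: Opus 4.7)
Both parts rest on the monadic characterization of neighborhoods in Lemma \ref{L:monadLG}, together with continuity of the $n$-th power map $p_n$ noted after Lemma \ref{L:Overflow} and the fact that $\u_2 \times \u_2 \subseteq \o$.

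\emph{Each $N_\x(s,U)$ is a neighborhood of $\x$.} I would show $\muu(\x) \subseteq N_\x(s,U)^*$. Fix $\y \in \muu(\x)$; by transfer of the defining condition of $N_\x(s,U)$, it suffices to verify $\y(\tau) \in \x(\tau) U^*$ for every $\tau \in [-s,s]^*$. Set $t_0 := \st(\tau) \in [-s,s]$; continuity of $\x$ gives $\x(\tau) \sim \x(t_0)$, while Lemma \ref{L:monadLG} gives $\y(\tau) \sim \x(t_0)$. Because $\x(t_0) \in \u_2$ (symmetric) and $\u_2 \times \u_2 \subseteq \o$, the product $\x(\tau)^{-1}\y(\tau)$ is defined and nearstandard with standard part $\x(t_0)^{-1}\x(t_0) = 1$; it therefore lies in $\muu(1) \subseteq U^*$, giving $\y(\tau) \in \x(\tau) U^*$ as required.

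\emph{The sets $N_\x(s,U)$ form a neighborhood base at $\x$.} I would fix a basic open neighborhood $\bigcap_{i=1}^k B_{C_i,V_i}$ of $\x$, put $C := \bigcup_i C_i$, pick $T>0$ with $C \subseteq [-T,T] \subseteq \dom(\x) \cap (-2,2)$, and then pick $n$ so large that $\x([-T/n,T/n]) \subseteq \u_2$; set $s := T/n$. The claim is that for some small enough neighborhood $U \subseteq \u_2$ of $1$, $N_\x(s,U) \subseteq \bigcap_i B_{C_i,V_i}$. If no such $U$ exists, saturation produces an internal $\y$ lying in $N_\x(s,\mathcal{U})^*$ for every standard neighborhood $\mathcal{U}$ of $1$ in $G$ with $\mathcal{U} \subseteq \u_2$, yet outside $(\bigcap_i B_{C_i,V_i})^*$. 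The first condition gives $\y(\tau) \sim \x(\tau)$ on $[-s,s]^*$ by the argument of the preceding paragraph. For any $t_0 \in C_i$, $t_0/n$ lies in $[-s,s]$, so $\y(t_0/n) \sim \x(t_0/n) \in \u_2$; since $\x(t_0/n)^n = \x(t_0)$ is defined, continuity of $p_n$ forces $\y(t_0/n)^n$ to be defined and $\sim \x(t_0) \in V_i$. Multiplicativity of local 1-parameter subgroups extends a representative of $\y$ so that $t_0 \in \dom(\y)$ and $\y(t_0) = \y(t_0/n)^n$, contradicting $\y \notin B_{C_i,V_i}^*$.

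The main technical subtlety is the domain-extension step at the end: $\y$ is only a germ, and one must produce a representative whose domain includes $t_0$. Given an internal representative $Y$ of $\y$ on some $(-r,r)^*$ with $r > s$, the map $t \mapsto Y(t/n)^n$ is well-defined on the internal open set $\{t : Y(t/n) \in \dom(p_n^*)\}$, which contains $[-T,T]^*$ because $Y \sim \x$ on $[-s,s]^*$ and $\x([-s,s]) \subseteq \dom(p_n)$. Verifying that this map is a local 1-ps with germ $\y$---using repeated applications of (A) in the style of Lemma \ref{L:EZFacts}(5)---supplies the required extension of $\dom(\y)$ past $s$ to include $t_0$.
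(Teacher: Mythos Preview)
Your argument is essentially the paper's: both parts rest on the monad description of Lemma~\ref{L:monadLG}, and the key device in Part~2 is the same---pass from $[-s,s]$ to all of $\dom(\x)\cap(-2,2)$ by taking $n$-th powers and invoking continuity of $p_n$. The paper packages Part~2 as the direct implication $\bigcap_U N_\x(s,U)^*\subseteq\muu(\x)$, whereas you run it through saturation and contradiction against a fixed basic open set, but the content is identical.

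One slip to fix: in Part~2 you write ``set $s:=T/n$'', but $s$ is fixed by hypothesis and only $U$ varies in the family $\{N_\x(s,U)\}$. Simply choose $n$ large enough that $T/n\le s$ (then $\x([-T/n,T/n])\subseteq\x([-s,s])\subseteq\u_2$ is automatic); since $N_\x(s,U)\subseteq N_\x(T/n,U)$, your argument for the latter set yields the conclusion for the original~$s$.
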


\begin{proof}
Let $\y \in \muu(\x)$.  Since $[-s,s]\subseteq \dom(\x)\cap (-2,2)\subseteq \dom(\y)$, we have $[-s,s]^*\subseteq \dom(\y)$.  Now fix $t\in [-s,s]^*$ and $U\subseteq \u_2$ a neighborhood of $1$ in $G$.  Then $\y(t),\x(t)\in \muu(\x(\st(t))$, whence $\x(t)^{-1}\y(t)\in \muu$.  Thus $\y(t)\in \x(t)U^*$ and hence $\y \in N_\x(s,U)^*$.  This implies that $\muu(\x)\subseteq N_\x(s,U)^*$, whence $N_\x(s,U)$ is a neighborhood of $\x$.

\

\noindent Now suppose that $\y \in N_\x(s,U)^*$ for every neighborhood $U$ of $1$ in $G$ with $U\subseteq \u_2$.  We must show $\y\in \muu(\x)$.  We first need to check that $\dom(\x)\cap (-2,2)\subseteq \dom(\y)$.  Say $\dom(\x)\cap (-2,2)=(-r,r)$.  Choose $n>0$ such that $\frac{1}{n}(-r,r)\subseteq [-s,s]$.  Since $\y \in N_\x(s,\u_{2n})^*$, we see that the domain of $\y$ contains $(-r,r)$ since the rule $\y(t)=\y(\frac{t}{n})^n$ for $t\in (-r,r)$ defines an internal local 1-parameter subgroup.  Now fix $t\in (-r,r)$ and $t'\in \muu(t)$.  We must show that $\y(t')\in \muu(\x(t))$.  Let $U$ be a neighborhood of $\x(t)$.  We will show that $\y(t')\in U^*$.  Since $\x(\frac{t}{n})\in \dom(p_n)$, there is an open neighborhood $W_1$ of $\x(\frac{t}{n})$ such that $W_1\subseteq \dom(p_n)$ and $p_n(W_1)\subseteq U$.  By local compactness, we can choose a compact neighborhood $W$ of $1$ in $G$ with $W\subseteq \u_2$ such that $\{\x(\frac{t}{n})\}\times W \subseteq \o$ and $\x(\frac{t}{n})W\subseteq W_1$.  Since $\y \in N_\x(s,W)^*$, we know that $\y(\frac{t'}{n})\in \x(\frac{t'}{n})W^*\subseteq W_1^*$.  Indeed, suppose $a\in W^*$.  Then $\x(\frac{t'}{n})a=\x(\frac{t}{n})\cdot (\x(\frac{t-t'}{n})a)$ and $\x(\frac{t-t'}{n})a\sim a$, whence $\st(\x(\frac{t-t'}{n})a)\in W$.  Since $W_1$ is open, we see that $\x(\frac{t'}{n})a\in W_1^*$.  We now see that $\y(\frac{t'}{n})\in \dom(p_n)^*$ and $\y(t')=\y(\frac{t'}{n})^n \in U^*$.
\end{proof}

\

\begin{thm}\label{T:L(G)vs}
$L(G)$ is a locally compact, finite dimensional real topological vector space.
\end{thm}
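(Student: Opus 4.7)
The plan is to verify that $L(G)$ is a Hausdorff topological vector space that is locally compact, and then invoke F.~Riesz's classical theorem: every locally compact Hausdorff real topological vector space is finite dimensional. All the structural work takes place at the level of the nonstandard monads via Lemma \ref{L:monadLG}.

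For Hausdorffness, if $\x\neq \y$ in $L(G)$ then their germs at $0$ differ, so there exists $t\in \dom(\x)\cap \dom(\y)\cap (-2,2)$ with $\x(t)\neq \y(t)$, and Hausdorffness of $G$ supplies disjoint subbasic open neighborhoods $B_{\{t\},U}\ni \x$ and $B_{\{t\},V}\ni \y$. For continuity of scalar multiplication $(s,\x)\mapsto s\cdot\x$ at a standard point, I would take $s'\sim s$ in $\r^*$ and $\x'\in \muu(\x)$ in $L(G)^*$ and use Lemma \ref{L:monadLG} to verify $s'\x'\in \muu(s\x)$. For $t\in \dom(s\x)\cap (-2,2)$ and $t''\in \muu(t)$, the case $|st|<2$ follows immediately from $\x'\in \muu(\x)$ applied at the standard point $st$; for $|st|\geq 2$ I pick $n$ with $|st/n|<2$, apply the previous case at $st/n$, then invoke continuity of $p_n$ at $\x(st/n)$ (using that $\dom(p_n)$ is open) to raise the infinitesimal closeness to the $n$-th power.

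Continuity of addition at $(\x,\y)$ uses the formula $(\x+\y)(t)\sim [\x(\frac{1}{\sigma})\y(\frac{1}{\sigma})]^\nu$ whenever $\nu/\sigma\sim t$, with $\sigma>\n$ fixed. I would compare the hyperfinite powers $[\x(\frac{1}{\sigma})\y(\frac{1}{\sigma})]^{[t\sigma]}$ and $[\x'(\frac{1}{\sigma})\y'(\frac{1}{\sigma})]^{[t'\sigma]}$ for $\x'\in \muu(\x)$ and $\y'\in \muu(\y)$; the case $|t|\geq 2$ is reduced via $p_n$ exactly as in the scalar multiplication argument.

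For local compactness, Lemma \ref{L:calkcompact} provides $\k$ as a compact neighborhood of $\O$, and once continuity of $+$ is established, translation by a fixed element of $L(G)$ is a homeomorphism, yielding a compact neighborhood at every point. Finite-dimensionality then follows from F.~Riesz's theorem. I expect the main obstacle to be the comparison of hyperfinite powers in the continuity of $+$: although $\x'(\frac{1}{\sigma})$ lies in $\muu$, it need not be infinitely close to $\x(\frac{1}{\sigma})$, so the passage to the $[t\sigma]$-th power is delicate. The Gleason-Yamabe estimates of Section 4 and the commutator lemma \ref{L:commutator}, which were already central in constructing the addition on $L(G)$, should supply the required control over the infinitely many small perturbations at the $\sigma$-scale.
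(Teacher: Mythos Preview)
Your overall architecture---show $L(G)$ is a Hausdorff topological vector space, appeal to Lemma~\ref{L:calkcompact} for local compactness, then invoke Riesz---matches the paper. But there are two genuine gaps.

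\textbf{Vector space axioms.} You never address whether $L(G)$ is a vector space in the first place. Associativity of $+$ and the compatibility $1\cdot\x=\x$, $s(s'\x)=(ss')\x$ are easy, but the distributive law $r\cdot(\x+\y)=r\cdot\x+r\cdot\y$ is not: it does not follow formally from the definition $\x+\y:=S^{-1}(S(\x)S(\y))$ because scalar multiplication was defined by reparametrization, not through $S$. The paper handles this in three steps: for $r\in\z$ it unwinds to the abelianness of $G(\sigma)/G^{\lilo}(\sigma)$ (Theorem after Lemma~\ref{L:commutator}); this forces the identity for $r\in\mathbb{Q}$; and only then does continuity of the operations extend it to $r\in\r$. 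Without this, you do not yet have a vector space and cannot invoke Riesz.

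\textbf{Continuity of $+$.} You correctly identify the obstacle: $\x'\in\muu(\x)$ gives no useful relation between $\x'(\frac{1}{\sigma})$ and $\x(\frac{1}{\sigma})$ beyond both being infinitesimal, so a direct comparison of $[\x'(\frac{1}{\sigma})\y'(\frac{1}{\sigma})]^{[t\sigma]}$ with $[\x(\frac{1}{\sigma})\y(\frac{1}{\sigma})]^{[t\sigma]}$ is not available. Gesturing at Lemma~\ref{L:commutator} does not resolve this, because that lemma compares powers of $a$ and $b$ under the hypothesis $a^i\sim b^i$ for all $i\le\nu$, which is exactly what you cannot establish here. The paper avoids the head-on comparison entirely: it proves continuity at $(\O,\O)$ by transferring a first-order statement extracted from Lemma~\ref{L:Gsigmagroup}, then reduces joint continuity elsewhere to the separate continuity of $\x\mapsto\x+\y$ at $\O$ and of $\x\mapsto\x-\y$ at $\y$, each of which is handled by overspill applied to the conclusions of Lemmas~\ref{L:Gsigmagroup} and~\ref{L:commutator} respectively. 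That decomposition is the missing idea in your plan.

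Your treatment of scalar multiplication (splitting into $|st|<2$ and $|st|\ge2$, the latter via $p_n$) is actually more careful than the paper's terse argument, which glosses over the case $|st|\ge 2$.
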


\begin{proof}
If we can prove that $L(G)$ is a topological vector space, then since it is locally compact by Lemma \ref{L:calkcompact}, it is finite-dimensional by a theorem of Riesz.  We first prove that scalar multiplication is continuous.  Let $r \in \r\setminus \{0\}$ and let $f_r:L(G)\rightarrow L(G)$ be defined by $f_r(\x)=r \cdot \x$.  Suppose $r'\in \muu(r)$ and $\x'\in \muu(\x)$.  We must show that $r'\x'\in \muu(r\x)$.  Since $\dom(\x)\cap (-2,2)\subseteq \dom(\x')$, it is easy to see that $\dom(r\x)\cap (-2,2)\subseteq \dom(r'\x')$.  Furthermore, for $t\in \dom(r\x)\cap (-2,2)$ and $t' \in \muu(t)$, one has $$(r'\x')(t')=\x'(r't')\in \muu(\x(rt))=\muu((r\x)(t)),$$ whence $r'\x'\in \muu(r\x)$.  

\

\noindent We next prove that $+$ is continuous.  We first show that $+$ is continuous at $(\O,\O)$.  To do this, it suffices to show that for any subbasic open set $B_{C,U}$ of $L(G)$ containing $\O$ (so that $1\in U$), there is a subbasic open set $B_{C,W}$ of $L(G)$ containing $\O$ such that $B_{C,W}+B_{C,W}\subseteq B_{C,U}$.  Let $U_1$ be a compact neighborhood of $1$ such that $U_1\subseteq U$.  Let $Z \subseteq \boldsymbol{\mu}$ be an internally open set.  Then by Lemma \ref{L:Gsigmagroup}, we know that for every $a,b \in Z$ and for every $\sigma$ such that for all $i\in\{1,\ldots,\sigma\}$, $a^i$ and $b^i$ are defined and $a^i,b^i\in Z$, we have $(ab)^\sigma$ is defined and $(ab)^\sigma \in U_1^*$.  Hence, by transfer, there is an open set $W$ such that for all $a,b \in W$ and all $n$ with $a^i$ and $b^i$ defined and $a^i,b^i \in W$ for all $i\in\{1,\ldots,n\}$, we have $(ab)^n$ defined and $(ab)^n\in U_1$.  We claim that this is the desired $W$.  Suppose $\x,\y \in B_{C,W}$.  Let $t\in C$ and suppose $\nu$ is such that $\frac{\nu}{\sigma}\sim t$.  Then $(\x+\y)(t)\sim [\x(\frac{1}{\sigma})\y(\frac{1}{\sigma})]^\nu$.  Let $a:=\x(\frac{1}{\sigma})$ and $b:=\y(\frac{1}{\sigma})$.  By assumption, for all $i\in\{1,\ldots,\nu\}$, $a^i$ and $b^i$ are defined and $a^i,b^i \in W^*$.  Thus, $(ab)^\nu \in U_1^*$, implying that  $(\x+\y)(t)\in U_1\subseteq U$ and so $\x+\y \in B_{C,U}$.

\

\noindent  In order to finish the proof that $+$ is continuous, it is enough to show that, for any $\y \in L(G)$, the map $\x \mapsto \x+\y:L(G)\rightarrow L(G)$ is continuous at $\O$ and the map $\x \mapsto \x-\y:L(G)\rightarrow L(G)$ is continuous at $\y$.  Let $W\subseteq L(G)$ be a neighborhood of $\y$ in $L(G)$.  Fix $s\in \dom(\y)\cap (-2,2)$ such that $\y([-s,s])\subseteq \u_2$.  Fix a compact neighborhood $U$ of $1$ in $G$ with $U\subseteq \u_2$ so that $N_\y(s,U)\subseteq W$.  Suppose $X\in \muu(\O)\subseteq L(G)^*$.  By Lemma 6.2, for all $i\leq [s\sigma]$ we have $(\x(\frac{1}{\sigma})\y(\frac{1}{\sigma}))^i$ is defined and infinitely close to $\y(\frac{i}{\sigma})$.  In particular, this means that for every $\x \in \muu(\O)$, whenever $i\leq [s\sigma]$, we have $((\x(\frac{1}{\sigma})\y(\frac{1}{\sigma}))^i$ is defined and 
$$(\x(\frac{1}{\sigma})\y(\frac{1}{\sigma}))^i\in Y(\frac{i}{\sigma})U^*.$$  By overspill, this gives a neighborhood $V$ of $\O$ in $L(G)$ such that for all $\x \in V$ we have $((\x(\frac{1}{\sigma})\y(\frac{1}{\sigma}))^i$ is defined and $$(\x(\frac{1}{\sigma})\y(\frac{1}{\sigma}))^i\in Y(\frac{i}{\sigma})U^*$$ whenever $i\leq [s\sigma]$.  It now follows that for $\x \in V$, we have $$t\in \dom(\x+\y) \text{ and }(\x+\y)(t)\in \y(t)U$$ for all $t\in [-s,s]$, whence $\x+\y \in N_\y(s,U)\subseteq W$.  This proves the continuity of $\x \mapsto \x+\y$ at $\O$.

\

\noindent The continuity of $\x\mapsto \x-\y$ at $\y$ is proved in a similar fashion.  Let $W\subseteq L(G)$ be a neighborhood of $\O$.  Fix $s\in \dom(\y)\cap (-2,2)$.  Choose a compact neighborhood $U$ of $1$ in $G$ with $U\subseteq \u_2$ so that $N_\O(s,U)\subseteq W$.  Suppose $\x\in \muu(\y)\subseteq L(G)^*$.  By Lemma 6.3, $(\x(\frac{1}{\sigma})\y(-\frac{1}{\sigma}))^i$ is defined and infinitesimal whenever $i\leq [s\sigma]$.  In particular, 
$$(\x(\frac{1}{\sigma})\y(-\frac{1}{\sigma}))^i\in U^*$$ whenever $i\leq [s\sigma]$.  By overspill, this gives a neighborhood $V$ of $\y$ in $L(G)$ such that for all $\x \in V$, we have $(\x(\frac{1}{\sigma})\y(-\frac{1}{\sigma}))^i$ is defined and $$(\x(\frac{1}{\sigma})\y(-\frac{1}{\sigma}))^i\in U^*$$ whenever $i\leq [s\sigma]$.  It now follows that for $\x \in V$, we have $$t\in \dom(\x-\y) \text{ and }(\x-\y)(t)\in U$$ for all $t\in [-s,s]$, whence $\x-\y \in N_\O(s,U)\subseteq W$.     

\

\noindent We now need to prove that $L(G)$ is a vector space.  There is only one vector space axiom which is not trivial to verify, namely that $r \cdot (\x+\y)=r \cdot \x + r \cdot \y$.  We first consider the case that $r \in \z$.  Let $a:=\x(\frac{1}{\sigma})$ and $b:=\y(\frac{1}{\sigma})$.  So $S(r \cdot \x + r \cdot \y)=(a ^ r b^r) G^{\lilo}(\sigma)$.  Meanwhile, $S(r \cdot (\x+\y))=((ab)^r)G^{\lilo}(\sigma)$.  But $G(\sigma)/G^{\lilo}(\sigma)$ is abelian, so $$S(r \cdot (\x+\y))=((ab)^r)G^{\lilo}(\sigma)=(a ^ r b^r) G^{\lilo}(\sigma)=S(r \cdot \x + r \cdot \y).$$  We have thus proven this axiom for the case $r \in \z$.

\

\noindent Once again suppose $r \in \z$, $r\not=0$.  We now know that $r \cdot (\frac{1}{r}\x + \frac{1}{r}Y)=\x + \y$, i.e. that $\frac{1}{r}\cdot (\x+\y)=\frac{1}{r}\x+\frac{1}{r}\y$.  So for $r \in \mathbb{Q}$, we have $r \cdot (\x+\y)=r \cdot \x + r \cdot \y$.  We finally conclude that the axiom holds for all $r \in \r$ from the fact that the operations of addition and scalar multiplication are continuous.             
\end{proof}

\

\begin{rmk}
Suppose $H$ and $H'$ are locally compact pure local groups.  Then the proof of Theorem \ref{T:L(G)vs} shows that $L(H)$ is a real topological vector space.  The $\Ns$ assumption was only used to obtain local compactness, and hence finite-dimensionality.  Moreover, given any local morphism $f:H\rightarrow H'$, one can verify that the map $L(f):L(H)\rightarrow L(H')$ is $\r$-linear and continuous.
\end{rmk}

\

\begin{cor}
$G$ is locally euclidean.
\end{cor}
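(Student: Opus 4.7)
The plan is to combine the results already established, transporting the finite-dimensional topological vector space structure of $L(G)$ through the local exponential map $E$ to obtain a euclidean chart around $1$ in $G$.

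First I would invoke Theorem \ref{T:L(G)vs} to conclude that $L(G)$, being a finite-dimensional real topological vector space, is homeomorphic to $\r^n$ for some $n$. By Lemma \ref{L:calkcompact}, $\k$ is a compact neighborhood of $\O$ in $L(G)$, so the $L(G)$-interior of $\k$, call it $V$, is a nonempty open set containing $\O$; under the chosen homeomorphism $L(G)\cong \r^n$, the set $V$ corresponds to an open subset of $\r^n$.

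Next I would push $V$ forward through the local exponential map. By Corollary \ref{C:Ehomeo}, $E\colon \k \to K$ is a homeomorphism, so $E|V$ is a homeomorphism from $V$ onto $E(V)$, the latter with its subspace topology from $G$. In particular $E(V)$ is open in $K$ and $1=E(\O)\in E(V)$. The one subtlety is upgrading ``open in $K$'' to ``open in $G$''; for this I would invoke Theorem \ref{T:kneigh}, which says $K$ is a neighborhood of $1$ in $G$, so $1$ lies in the $G$-interior $K^\circ$ of $K$. Then $E(V)\cap K^\circ$ is open in $K^\circ$, hence open in $G$, and contains $1$. Its preimage under $E$ is an open subset of $V$ (hence of $L(G)$, hence of $\r^n$) and is mapped homeomorphically by $E$ onto an open neighborhood of $1$ in $G$. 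This exhibits the required homeomorphism between an open neighborhood of $1$ in $G$ and an open subset of $\r^n$, so $G$ is locally euclidean.

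There is no real obstacle here; this corollary is a short bookkeeping assembly of the previously established facts. All the substantive content has already been absorbed into proving that $L(G)$ is a locally compact, finite-dimensional real topological vector space (Theorem \ref{T:L(G)vs}), that $\k$ is a compact neighborhood of $\O$ there (Lemma \ref{L:calkcompact}), and that $E$ identifies $\k$ with a neighborhood $K$ of $1$ in $G$ (Corollary \ref{C:Ehomeo} together with Theorem \ref{T:kneigh}).
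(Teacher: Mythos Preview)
Your proof is correct and follows essentially the same approach as the paper: both assemble Theorem~\ref{T:L(G)vs}, Lemma~\ref{L:calkcompact}, Corollary~\ref{C:Ehomeo}, and Theorem~\ref{T:kneigh} to push an open neighborhood of $\O$ in $L(G)\cong\r^n$ through $E$ to a $G$-open neighborhood of $1$. The only cosmetic difference is that the paper first shrinks the open set in $L(G)$ so that its $E$-image lands inside a $G$-open subset of $K$, whereas you intersect afterwards with $K^\circ$ and pull back; both maneuvers accomplish the same upgrade from ``open in $K$'' to ``open in $G$.''
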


\begin{proof}
Since $L(G)$ is a finite dimensional topological vector space over $\r$, we have an isomorphism $L(G)\cong \r^n$ of real vector spaces for some $n$, and any such isomorphism must be a homeomorphism.  Hence, by Lemma \ref{L:calkcompact}, $\k$ contains an open neighborhood $U$ of $1$ which is homeomorphic to an open subset of $\r^n$.   Shrink $U$ if necessary so that $E(U)\subseteq W$, where $W$ is an open neighborhood of $1$ in $G$ contained in $K$; such a $W$ exists by Theorem \ref{T:kneigh}.  Then by Corollary \ref{C:Ehomeo}, $E(U)$ is an open neighborhood of $1$ in $G$ homeomorphic to an open subset of $\r^n$.
\end{proof}

\

\section{Local H5 for $\Ns$ local groups}

\

\noindent The goal of this section is to prove the Local H5 for $\Ns$ local groups using the local version of the Adjoint Representation Theorem.  

\

\noindent In this section we assume $G$ is $\Ns$.  Recall that our special neighborhood has been chosen so that $\u\subseteq \u_6$.

\

\noindent \textbf{Local Adjoint Representation Theorem}

\

\noindent Fix $g\in \u_6$.  Let $\x \in L(G)$ and $X\in \x$.  Choose  $r\in \dom(X)\cap \r^{>0}$ such that $X((-r,r))\subseteq \u_6$.  Define the function $gXg^{-1}:(-r,r)\rightarrow G$ by the rule $(gXg^{-1})(t)=gX(t)g^{-1}$.  We claim that $gXg^{-1}$ is a local 1-ps of $G$.  Certainly $gXg^{-1}$ is continuous.  Suppose $s,t,s+t\in (-r,r)$.  Then since $g,g^{-1},X(t),X(s)\in \u_6$, we have 

\begin{align}
(gXg^{-1})(t+s)&=g(X(t)X(s))g^{-1} \notag \\ \notag
                          &=(gX(t)g^{-1})(gX(s)g^{-1})\\ \notag
                          &=(gXg^{-1})(t)\cdot (gXg^{-1})(s)\\ \notag \\ \notag
\end{align}
Thus $gXg^{-1}$ is a local 1-ps of $G$ and $\a_g(\x):=g\x g^{-1}:=[gXg^{-1}]\in L(G)$ is easily seen to be independent of the choice of $X\in \x$ and $r\in \dom(X)$ as chosen above.    

\

\begin{lemma}
Suppose $\sigma > \n$, $a\in G(\sigma)$ and $g\in \u_6$.  Then  
\begin{enumerate}
\item $gag^{-1}\in G(\sigma)$;
\item $\a_g([X_a])=[X_{gag^{-1}}]$.
\end{enumerate}
\end{lemma}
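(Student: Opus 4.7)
The plan is to reduce the lemma to the internal identity
\[
(gag^{-1})^j \;=\; ga^jg^{-1}
\]
valid for all $j$ in a suitable range. Granting this, part (1) is immediate: when $j=\lilo(\sigma)$ we have $a^j\in\muu$ (since $a\in G(\sigma)$), and because $g,g^{-1}\in\u_6\subseteq G^*_{\ns}$ we obtain $\st(ga^jg^{-1})=g\cdot 1\cdot g^{-1}=1$, so $(gag^{-1})^j\in\muu$. Hence $gag^{-1}\in G(\sigma)$.

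To prove the identity, fix any $i=\lilo(\sigma)$ and proceed by internal induction on $j\leq i$. Only the inductive step from $j$ to $j+1$ requires comment. By Lemma \ref{L:EZFacts}(3), it suffices to check $\bigl((gag^{-1})^k,(gag^{-1})^l\bigr)\in\o^*$ for all $k+l=j+1$ with $k,l\leq j$; by the induction hypothesis these factors equal $ga^kg^{-1}$ and $ga^lg^{-1}$, and since $k,l\leq i=\lilo(\sigma)$ forces $a^k,a^l\in\muu$, both factors lie in $\muu$ and their product is defined. To verify the equation $(gag^{-1})^{j+1}=ga^{j+1}g^{-1}$, one rewrites
\[
(ga^jg^{-1})(gag^{-1}) \;=\; \bigl((ga^j)g^{-1}\bigr)\bigl(g(ag^{-1})\bigr)
\]
and applies (A) repeatedly, collapsing the middle via $g^{-1}\cdot g=1$, to reach $(ga^j)(ag^{-1})=ga^{j+1}g^{-1}$. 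The intermediate factors $ga^j\in\muu(g)$, $ag^{-1}\in\muu(g^{-1})$, $(ga^j)a$, etc., all lie in $\u_k$ for small standard $k$ since $g\in\u_6$ and the relevant powers of $a$ are infinitesimal, so every application of (A) is legal.

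For part (2), apply the identity with $j=[s\sigma]$ for $s$ in a standard neighborhood of $0$ small enough that $s\in\Sigma_a$. Then
\[
X_{gag^{-1}}(s) \;=\; \st\bigl((gag^{-1})^{[s\sigma]}\bigr) \;=\; \st\bigl(ga^{[s\sigma]}g^{-1}\bigr) \;=\; g\cdot \st\bigl(a^{[s\sigma]}\bigr)\cdot g^{-1} \;=\; gX_a(s)g^{-1},
\]
where the third equality uses the continuity of conjugation by $g$ at $1$. Thus $X_{gag^{-1}}$ and $gX_ag^{-1}$ agree on a standard neighborhood of $0$, so they determine the same germ at $0$: $[X_{gag^{-1}}]=[gX_ag^{-1}]=\a_g([X_a])$.

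The main obstacle is the bookkeeping in the inductive step: one must justify each move in the associativity chain
\[
(ga^j)g^{-1}\cdot g\cdot (ag^{-1}) \;\to\; (ga^j)(g^{-1}g)(ag^{-1}) \;\to\; (ga^j)(ag^{-1}) \;\to\; ga^{j+1}g^{-1}
\]
by checking that each intermediate product is defined. The hypothesis $g\in\u_6$ provides precisely the slack required for these verifications, and once they are dispatched both parts of the lemma follow by the computations above.
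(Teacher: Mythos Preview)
Your argument for part (1) is correct and essentially the same as the paper's: internal induction gives the identity $(gag^{-1})^j=ga^jg^{-1}$, and then $a^j\in\muu$ for $j=\lilo(\sigma)$ forces $(gag^{-1})^j\in\muu$.

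There is, however, a genuine gap in part (2). Your internal induction is carried out for $j\leq i$ with $i=\lilo(\sigma)$, and the inductive step explicitly relies on the relevant powers of $a$ being infinitesimal (you write ``since $g\in\u_6$ and the relevant powers of $a$ are infinitesimal''). Consequently the identity $(gag^{-1})^j=ga^jg^{-1}$ is established only for $j=\lilo(\sigma)$. But in part (2) you invoke it at $j=[s\sigma]$ for a \emph{standard} $s>0$, and such $j$ is on the order of $\sigma$, not $\lilo(\sigma)$. So the range of your induction does not cover the values needed to compare $X_{gag^{-1}}(s)$ with $gX_a(s)g^{-1}$ at any positive standard $s$.

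The paper fixes this by running the induction up to $\tau:=\ord(a)$ rather than only up to $\lilo(\sigma)$. For $k,l\leq\tau$ one has $a^k,a^l\in\u^*\subseteq\u_6^*$, and since $g,g^{-1}\in\u_6$ as well, all the factors in the associativity chain live in $\u_6^*$, so every required product is defined without appealing to infinitesimality. Because $\sigma=\bigO(\tau)$, this extended range contains $[s\sigma]$ for small standard $s$, and the computation for part (2) then goes through. Your proof can be repaired by making exactly this adjustment: replace ``$a^k,a^l\in\muu$'' by ``$a^k,a^l\in\u^*\subseteq\u_6^*$'' and run the induction over $\{1,\ldots,\tau\}$.
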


\begin{proof}
Let $\tau:=\ord(a)$.  Note that $\sigma=\bigO(\tau)$.  

\

\noindent Claim:  If $i\in \{1,\ldots,\tau\}$, then $(gag^{-1})^i$ is defined and $(gag^{-1})^i=ga^ig^{-1}$.

\

\noindent We prove this by internal induction on $i$.  This is certainly true for $i=1$ and we now assume that it holds for a given $i$ and that $i+1\leq \tau$.  Since we have assumed that $(gag^{-1})^i$ is defined, to show $(gag^{-1})^{i+1}$ is defined, it is enough to show, by Lemma \ref{L:EZFacts}, that $((gag^{-1})^k,(gag^{-1})^l)\in \o^*$ for all $k,l\in \{1,\ldots,i\}$ such that $k+l=i+1$.  However, by induction, $(gag^{-1})^k=ga^kg^{-1}$ and $(gag^{-1})^l=ga^lg^{-1}$, and since $g,a^k,a^l\in \u_6^*$, we have $(ga^kg^{-1},ga^lg^{-1})\in \o^*$.  We thus know that $(gag^{-1})^{i+1}$ is defined.  Now $(gag^{-1})^{i+1}=(gag^{-1})^i(gag^{-1})=(ga^ig^{-1})(gag^{-1})=ga^{i+1}g^{-1}$ since $g,a^i,a,g^{-1}\in \u_6^*$.

\

\noindent By the claim, if $i=\lilo(\sigma)$, then $(gag^{-1})^i$ is defined and $(gag^{-1})^i=ga^ig^{-1}\in \muu$ since $a\in G(\sigma)$.  This proves (1).

\

\noindent  We now must prove (2).  Fix $r\in \r^{>0}$ such that $[r\sigma]\leq \tau$, $X_a((-r,r))\subseteq \u_6$ and $gX_a((-r,r))g^{-1}\subseteq \u$.  Then for $t\in (-r,r)$, we have 
\begin{align}
(\a_g([X_a])(t))&=gX_a(t)g^{-1} \notag \\ \notag
                          &=g(\st(a^{[t\sigma]}))g^{-1}\\ \notag
                          &=\st(ga^{[t\sigma]}g^{-1})\\ \notag
                          &=\st((gag^{-1})^{[t\sigma]})\\ \notag
                          &=X_{gag^{-1}}(t),\\ \notag \\ \notag
\end{align}

which finishes the proof.
\end{proof}

\

\noindent The proof of the following lemma is routine.

\

\begin{lemma}
For $g\in \u_6$, $\a_g:L(G)\rightarrow L(G)$ is a vector space automorphism with inverse $\a_{g^{-1}}$.
\end{lemma}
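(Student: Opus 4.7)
The plan is to check three things: that $\a_g$ respects scalar multiplication, that it respects addition, and that composition with $\a_{g^{-1}}$ in either order gives the identity; bijectivity will then be automatic, so $\a_g$ will indeed be a vector space automorphism. Homogeneity is essentially by inspection: for $X\in \x$ with image in $\u_6$ and $s\neq 0$, both $\a_g(s\cdot \x)$ and $s\cdot \a_g(\x)$ are represented by $t\mapsto gX(st)g^{-1}$, and the $s=0$ case is trivial.

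For additivity I would invoke Theorem \ref{T:addition}. Fix $\sigma > \n$ and set $a := \x(1/\sigma)$, $b := \y(1/\sigma)$. Picking any representative $X$ of $\x$ with image in $\u_6$, transfer gives $\a_g(\x)(1/\sigma) = gX(1/\sigma)g^{-1} = gag^{-1}$, and similarly for $\y$. Since the six factors $g, a, g^{-1}, g, b, g^{-1}$ all lie in $\u_6^*$, the generalized associativity available there yields $(gag^{-1})(gbg^{-1}) = gabg^{-1}$, so $S(\a_g(\x) + \a_g(\y)) = gabg^{-1}\cdot G^{\lilo}(\sigma)$. On the other hand $(\x+\y)(1/\sigma) = ab\cdot h$ for some $h\in G^{\lilo}(\sigma)$, and again by associativity $S(\a_g(\x+\y)) = gabg^{-1}\cdot ghg^{-1}\cdot G^{\lilo}(\sigma)$. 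Equality of these two cosets thus reduces to showing that conjugation by $g$ preserves $G^{\lilo}(\sigma)$: by an internal induction exactly like the one in the preceding lemma, $(ghg^{-1})^i = gh^ig^{-1}$ for all $i$ within the order of $h$, and for $i = \bigO(\sigma)$ we have $h^i\in \muu$, whence $gh^ig^{-1}\in g\muu g^{-1}\subseteq \muu$. Injectivity of $S$ then delivers $\a_g(\x+\y) = \a_g(\x) + \a_g(\y)$.

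For the inverse relation, I would pick $X\in\x$ with $X((-r,r))\subseteq \u_6$ and, shrinking $r$ by continuity, arrange that $(gXg^{-1})((-r,r))\subseteq \u_6$ as well. For $t\in(-r,r)$, the five-fold product $g^{-1}\cdot g\cdot X(t)\cdot g^{-1}\cdot g$ is unambiguously defined (all factors lie in $\u_5$) and equals both $g^{-1}(gX(t)g^{-1})g$ and $(g^{-1}g)X(t)(g^{-1}g) = X(t)$. Hence $\a_{g^{-1}}\circ \a_g = \id_{L(G)}$, and the opposite composition is symmetric. The only genuinely non-formal step in the whole plan is the verification that conjugation by $g$ preserves $G^{\lilo}(\sigma)$, but this is a direct parallel to the calculation in the preceding lemma, so I do not anticipate any real obstacle.
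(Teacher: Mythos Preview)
The paper gives no proof of this lemma, declaring it routine, so there is nothing to compare against directly; your argument is correct and supplies exactly the kind of verification the author is tacitly deferring to the reader.

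One small streamlining is available for the additivity step. Rather than unwinding $S$ and tracking the correction term $h\in G^{\lilo}(\sigma)$, you can use the preceding lemma's formula $\a_g([X_a])=[X_{gag^{-1}}]$ together with the identity $[X_a]+[X_b]=[X_{ab}]$ proved in Section~6. Writing $\x=[X_a]$ and $\y=[X_b]$ with $a,b\in G(\sigma)$, one gets
\[
\a_g(\x+\y)=\a_g([X_{ab}])=[X_{g(ab)g^{-1}}]=[X_{(gag^{-1})(gbg^{-1})}]=[X_{gag^{-1}}]+[X_{gbg^{-1}}]=\a_g(\x)+\a_g(\y),
\]
where the middle equality is the same six-factor associativity in $\u_6^*$ that you already invoke. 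This avoids the separate check that conjugation by $g$ preserves $G^{\lilo}(\sigma)$, though your direct verification of that fact is also perfectly valid and follows the same internal induction pattern as the preceding lemma. Either way the argument is, as the paper says, routine.
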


\

\noindent Let $\Aut(L(G))$ denote the group of vector space automorphisms of $L(G)$.  Suppose $\dim_\r(L(G))=n$.  Take an $\r$-linear isomorphism $L(G) \cong \r^n$; it induces a group isomorphism
$\Aut(L(G)) \cong \text{GL}_n(\r) \subseteq \r^{n^2}$, and we take the
topology on $\Aut(L(G))$ that makes this group isomorphism a homeomorphism.
(This topology does not depend on the choice of $\r$-linear isomorphism
$L(G) \cong \r^n$.)  For $T\in \Aut(L(G))^*$, we see that $T$ is nearstandard if $T(\x)\in L(G)^*_{\ns}$ for all $\x\in L(G)$.  For $T,T'\in \Aut(L(G))^*_{\ns}$, we see that $T\sim T'$ if and only if $T(\x)\sim T'(\x)$ for all $\x\in L(G)$.

\

\begin{thm}(Local Adjoint Representation Theorem) We have a morphism $\a:G|\u_6\rightarrow \Aut(L(G))$ of local groups given by $\a(g):=\a_g$.
\end{thm}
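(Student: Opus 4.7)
Since the codomain $\Aut(L(G))$ is a global topological group, the morphism conditions on $\Lambda$ and $\o$ are automatic, and I need only verify $\a_1 = \id$, the inversion law $\a_{g^{-1}} = \a_g^{-1}$ for $g \in \u_6$, multiplicativity $\a_{gh} = \a_g \circ \a_h$ for $(g,h) \in \o_{\u_6}$, and continuity of the map $g \mapsto \a_g$. The first three (algebraic) conditions I would verify directly on representatives: given $\x \in L(G)$, pick $X \in \x$ with $X((-r,r)) \subseteq \u_6$ for some $r > 0$. Since any product of at most $6$ factors from $\u_6$ is defined and independent of parenthesization, the pointwise equalities $g^{-1}(g X(t) g^{-1}) g = X(t)$ and $(gh) X(t) (gh)^{-1} = g h X(t) h^{-1} g^{-1} = g(h X(t) h^{-1}) g^{-1}$ follow directly from generalized associativity together with the standing convention $(gh)^{-1} = h^{-1} g^{-1}$. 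Passing to germs yields the first three morphism conditions.

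For continuity, by the just-proved multiplicativity (transferred to $G^*$) and the fact that each $\a_{g_0}$ is already known to be a continuous vector-space automorphism of $L(G)$, it suffices to prove continuity at $1$: for $g \in \muu \cap \u_6^*$ and $\x \in L(G)$, I need $\a_g(\x) \sim \x$ in $L(G)^*$. Choose a standard $X \in \x$ with $X((-r,r)) \subseteq \u_6$; then $Y := g X g^{-1}$ is an internal local $1$-ps on $(-r,r)^*$ representing the germ $\a_g(\x)$. By Lemma \ref{L:monadLG} the task reduces to (a) extending this internal germ over all of $\dom(\x) \cap (-2,2)$, and (b) showing that its values there lie in the monads of the corresponding values of $\x$. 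For (a), given $t \in \dom(\x) \cap (-2,2)$ I pick a standard $n$ with $t/n \in (-r,r)$; then $a := Y(t/n) = g X(t/n) g^{-1}$ lies in $\muu(X(t/n))$, and since $X(t/n)^n = X(t)$ is defined, the standard fact that powers of nearstandard elements remain defined and nearstandard whenever the corresponding standard power is defined gives that $a^n$ is defined with $a^n \in \muu(X(t))$. The internal power identity for local $1$-parameter subgroups now yields $\a_g(\x)(t) = a^n \in \muu(\x(t))$, simultaneously handling (a) and (b); the analogous statement for $t' \sim t$ follows by continuity of $\x$. For continuity at a general $g_0 \in \u_6$, I factor $g = g_0 \cdot (g_0^{-1} g)$ with $g_0^{-1} g \in \muu$; transferred multiplicativity yields $\a_g(\x) = \a_{g_0}(\a_{g_0^{-1} g}(\x))$, and applying the standard continuous automorphism $\a_{g_0}$ to the relation $\a_{g_0^{-1} g}(\x) \sim \x$ coming from continuity at $1$ gives $\a_g(\x) \sim \a_{g_0}(\x)$.

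The main obstacle is the extension step in the continuity-at-$1$ argument: showing that the internal germ $\a_g(\x)$, initially defined only on $(-r,r)^*$, actually extends all the way out to $\dom(\x) \cap (-2,2)$. This is where the nearstandard-powers result does the critical work, since it bridges between the small initial interval on which $gXg^{-1}$ is visibly a local $1$-ps and the much larger domain of the germ, by evaluating $Y(t/n)^n$ for suitable standard $n$. Once this extension is justified, the infinitesimal-closeness of values is immediate from $g \sim 1$, continuity of the local product, and continuity of $\x$, and the remaining pieces of the proof are routine.
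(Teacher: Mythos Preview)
Your algebraic verifications (identity, inversion, multiplicativity) are fine and essentially parallel the paper's, which proves multiplicativity via the formula $\a_g([X_a]) = [X_{gag^{-1}}]$ and the identity $(gh)a(gh)^{-1} = g(hah^{-1})g^{-1}$.

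The gap is exactly where you flag it, in the extension step of the continuity-at-$1$ argument, but your proposed fix via powers does not close it. Knowing that $a^n = Y(t/n)^n$ is defined and lies in $\muu(X(t))$ does \emph{not} by itself show that $t \in \dom(\a_g(\x))$: the internal power identity $\a_g(\x)(t) = \a_g(\x)(t/n)^n$ only applies once $t$ is already known to be in the domain of the germ, and exhibiting a defined power at a single point does not produce a local $1$-ps representative of $\a_g(\x)$ whose domain contains $t$. (Attempting to manufacture one by $s \mapsto Y(s/n)^n$ runs into the problem that this map need not satisfy the additivity condition, since $Y(s_1/n)^n \cdot Y(s_2/n)^n$ has no reason to equal $Y((s_1+s_2)/n)^n$ without commutativity.)

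The paper avoids this detour entirely by observing that when $a \in \muu$ the restriction $X((-r,r)) \subseteq \u_6$ is unnecessary: for \emph{every} $s \in \dom(\x)$ the element $\x(s)$ is standard, so $a\x(s)a^{-1}$ is defined and lies in $\muu(\x(s))$; and whenever $s,t,s+t \in \dom(\x)$ one has $(\x(s),\x(t)) \in \o$, hence $(a\x(s)a^{-1}, a\x(t)a^{-1}) \in \o^*$ with product $a\x(s+t)a^{-1}$ by the usual nearstandard associativity. Thus $s \mapsto a\x(s)a^{-1}$ is already an internal local $1$-ps on all of $\dom(\x)^*$, and since it agrees with your $Y$ near $0$ it represents $\a_a(\x)$, giving $\dom(\x) \subseteq \dom(\a_a(\x))$ directly. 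Replacing your power argument with this observation repairs the proof; the remaining steps (including your reduction of continuity at general $g_0$ to continuity at $1$) are fine.
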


\begin{proof}
Suppose $g,h,gh\in \u_6$ and $[X_a]\in L(G)$.  Then
\begin{align}
\a_{gh}([X_a])&=[X_{(gh)a(gh)^{-1}}] \notag \\ \notag 
                        &=[X_{g(hah^{-1})g^{-1}}] \\ \notag
                        &=\a_g(\a_h([X_a])). \notag \\ \notag
\end{align}
Thus $\a(gh)=\a(g)\circ \a(h)$.  All that remains to prove is that $\a$ is continuous.  To do this, it suffices to prove that $\a$ is continuous at $1$.  Suppose $a\in \boldsymbol{\mu}$ and $\x \in L(G)$.
 
\

\noindent Claim:  $\dom(\x)=\dom(\a_a(\x))\cap \r$.

\

\noindent In order to verify the claim, suppose $(-r,r)\subseteq \dom(\x)$.  We must verify that if $s,t,s+t\in (-r,r)$, then $(a\x(s)a^{-1},a\x(t)a^{-1})\in \o^*$ and $(a\x(s)a^{-1})(a\x(t)a^{-1})=a\x(s+t)a^{-1}$.  From the fact that $(\x(s),\x(t))\in \o$ and $a\in \boldsymbol{\mu}$, we have that $(a\x(s)a^{-1},a\x(t)a^{-1})\in \o^*$ and we have that $(a\x(s)a^{-1})(a\x(t)a^{-1})=a\x(s+t)a^{-1}$ from the usual calculations involving associativity when working with infinitesimals and nearstandard elements.

\  

\noindent Now notice that if $t\in \dom(\x)$, then $(\a_a(\x))(t)=a\x(t)a^{-1}\sim \x(t)$.  So by Lemma \ref{L:monadLG}, $\a_a(\x)\sim \x$.  By the nonstandard characterization of the topology on $\Aut(L(G))$, we have $\a_a\sim \id_{L(G)}$, and hence $\a$ is continuous.  
\end{proof}

\

\noindent \textbf{Some Facts About Local Lie Groups}

\

\noindent In the global setting, the Adjoint Representation Theorem and some elementary Lie group theory imply that locally compact $\Ns$ groups are Lie groups.  In the local group setting, we have to work a little harder.  We need a few ways of obtaining local Lie groups and the results that follow serve this purpose.  

\

\begin{df}
A local group $G$ is \textbf{abelian} if there is a neighborhood $U$ of $1$ in $G$ such that $U\subseteq \u_2$ and $ab=ba$ for all $a,b\in U$.
\end{df}

\

\begin{thm}\label{T:abelian}
Suppose $G$ is abelian.  Then $G$ is locally isomorphic to a Lie group.
\end{thm}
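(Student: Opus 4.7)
The plan is to prove Theorem \ref{T:abelian} by showing that, when $G$ is abelian, the local exponential map $E:\k\to K$ of Section 4 is, on a sufficiently small neighborhood of $\O$, a morphism of local groups from the additive topological vector space $(L(G),+)$ to $G$. Combined with Corollary \ref{C:Ehomeo} (that $E$ is a homeomorphism) and Theorem \ref{T:kneigh} (that $K$ is a neighborhood of $1$), this will yield a local isomorphism between a restriction of $(L(G),+)$ and a restriction of $G$. Since $L(G)\cong\r^n$ as topological vector spaces by Theorem \ref{T:L(G)vs} and $(\r^n,+)$ is a Lie group, the theorem follows.

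The key additive step is the following. By hypothesis, fix a symmetric open neighborhood $U$ of $1$ in $G$ with $U\subseteq\u_6$ and $ab=ba$ for all $a,b\in U$. Let $\mathcal{V}$ be the set of $\x\in\k$ with $(-2,2)\subseteq\dom(\x)$ and $\x((-2,2))\subseteq U$; then $\mathcal{V}$ is an open neighborhood of $\O$ in $L(G)$. For $\x,\y\in\mathcal{V}$, define $Z:(-2,2)\to G$ by $Z(t):=\x(t)\cdot\y(t)$; this is well-defined and continuous because $U\times U\subseteq\o$. For $s,t,s+t\in(-2,2)$, all entries of the intermediate products below lie in $U\subseteq\u_6$, so all such products are defined and local associativity (A) applies, yielding
\[
Z(s+t)=\x(s)\x(t)\y(s)\y(t)=\x(s)\y(s)\x(t)\y(t)=Z(s)Z(t),
\]
where the middle equality uses the commutativity $\x(t)\y(s)=\y(s)\x(t)$ in $U$. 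Hence $Z$ is a local 1-ps of $G$. Using the bijection $S$ of Theorem \ref{T:addition}, for any infinite $\sigma$ we have
\[
S([Z])=Z(1/\sigma)G^{\lilo}(\sigma)=\x(1/\sigma)\y(1/\sigma)G^{\lilo}(\sigma)=S(\x)S(\y)=S(\x+\y),
\]
so $[Z]=\x+\y$. Evaluating at $t=1$ yields $E(\x+\y)=(\x+\y)(1)=Z(1)=\x(1)\y(1)=E(\x)\cdot E(\y)$.

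To conclude, shrink $\mathcal{V}$ using continuity of $+$ (Theorem \ref{T:L(G)vs}) so that $\mathcal{V}+\mathcal{V}\subseteq\k$. Then $E|\mathcal{V}$ is, by the previous paragraph, a morphism of local groups from $(L(G),+)|\mathcal{V}$ into $G$. By Corollary \ref{C:Ehomeo}, $E$ is a homeomorphism from $\k$ onto $K$, and by Theorem \ref{T:kneigh}, $K$ contains an open neighborhood of $1$; after further shrinking $\mathcal{V}$ we may assume $E(\mathcal{V})$ is open in $G$ and $E\colon\mathcal{V}\to E(\mathcal{V})$ is a homeomorphism whose inverse is also a morphism. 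This gives the desired local isomorphism between $(L(G),+)$ and $G$.

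The main obstacle is justifying the chain of equalities that exhibits $Z$ as a local 1-ps: one must ensure all relevant four-fold products are defined and that associativity and commutativity can be combined freely. This is precisely why we arrange $U\subseteq\u_6$ and force the image of $\x,\y$ into $U$. The remainder of the argument is bookkeeping with the definitions of $E$, $S$, and $+$.
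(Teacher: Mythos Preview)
Your proposal is correct and follows the same overall strategy as the paper: use commutativity to show that $E(\x+\y)=E(\x)\cdot E(\y)$ on a neighborhood of $\O$, and conclude that $E$ furnishes a local isomorphism between the Lie group $L(G)\cong\r^n$ and $G$. The only real difference is in how the key identity is obtained. The paper proves by induction the standard statement that $(ab)^n=a^nb^n$ whenever $a,b$ commute and $a^i,b^i\in\u$ for $i\le n$, transfers it, and then computes
\[
(\x+\y)(1)=\st\bigl((\x(\tfrac{1}{\sigma})\y(\tfrac{1}{\sigma}))^\sigma\bigr)=\st\bigl(\x(\tfrac{1}{\sigma})^\sigma\y(\tfrac{1}{\sigma})^\sigma\bigr)=\x(1)\y(1),
\]
whereas you construct the local $1$-ps $Z(t)=\x(t)\y(t)$ directly and identify $[Z]=\x+\y$ via $S$; both arguments are short and yield the same conclusion. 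One minor point: your $\mathcal{V}$ as written is a neighborhood of $\O$ but not literally open (the condition involves the noncompact set $(-2,2)$); replacing $(-2,2)$ by a compact interval such as $[-\tfrac{3}{2},\tfrac{3}{2}]$, or simply noting that only ``neighborhood'' is needed, fixes this without affecting the argument.
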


\begin{proof}
Suppose we have chosen our special neighborhood $\u$ so small that elements of $\u$ commute with each other.  Recall that we are still assuming that $\u \subseteq \u_6$.

\

\noindent Claim:  Suppose $a,b\in \u$ are such that $(ab)^n,a^n$ and $b^n$ are all defined and $a^i,b^i\in \u$ for all $i\in \{1,\ldots,n\}$.  Then $(ab)^n=a^nb^n$.

\

\noindent We prove the claim by induction on $n$.  The case $n=1$ is trivial.  Now suppose $(ab)^{n+1}$ is defined as are $a^{n+1}$ and $b^{n+1}$.  Also suppose $a^i$ and $b^i$ are in $\u$ for all $i\in \{1,\ldots,n+1\}$.  Then $$(ab)^{n+1}=(ab)^n\cdot (ab)=(a^nb^n)\cdot ab=a^nb^nab=a^{n+1}b^{n+1}.$$

\noindent This proves the claim.

\

\noindent Choose a symmetric open neighborhood $\mathcal{V}$ of $\O$ in $L(G)$ such that $\mathcal{V}\subseteq \k$ and $\x+\y\in \k$ for all $\x,\y \in \mathcal{V}$.  By the transfer of the claim, if $\x,\y\in \mathcal{V}$, then 

\begin{align}
(\x+\y)(1)&=\st((\x(\frac{1}{\sigma})\y(\frac{1}{\sigma}))^\sigma) \notag \\ \notag
                &=\st(\x(\frac{1}{\sigma})^\sigma \y(\frac{1}{\sigma})^\sigma) \notag \\
                &=\x(1)\y(1). \notag \\ \notag
\end{align}

\noindent After possibly shrinking $\mathcal{V}$, we can choose a symmetric open neighborhood $V$ of $1$ in $G$ such that $E(\mathcal{V})=V$.  This witnesses that the equivalence class of the local exponential map is a local isomorphism from $L(G)$ to $G$.  Since $L(G)$ is a Lie group, we are done. 
\end{proof}

\

\noindent To prove the next lemma, we need the following well-known theorem.

\

\begin{thm}(von Neumann, ~\cite{MZ}, pg. 82)
If $H$ is a hausdorff topological group which admits an injective continuous homomorphism into $\GL_n(\r)$ for some $n$, then $H$ is a Lie group.
\end{thm}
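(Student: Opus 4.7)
The plan is to pass to the closure of the image inside $\GL_n(\r)$ and apply Cartan's closed subgroup theorem, then transfer the resulting Lie structure back along the injection. Let $\phi\colon H\to\GL_n(\r)$ be the given continuous injection and set $K:=\overline{\phi(H)}$, a closed subgroup of the Lie group $\GL_n(\r)$. The first main task is to prove Cartan's theorem for $K$: define the candidate Lie algebra
\[
\mathfrak{k}:=\{X\in\mathfrak{gl}_n(\r):\exp(tX)\in K\text{ for all }t\in\r\}.
\]
Closure of $\mathfrak{k}$ under scalar multiplication is immediate. Closure under addition and under the Lie bracket will follow from Trotter's product formula $\exp(X+Y)=\lim_k(\exp(X/k)\exp(Y/k))^k$ and the analogous commutator formula, combined with the fact that $K$ is closed in $\GL_n(\r)$.

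The heart of the argument is then to show that $\exp$ maps some neighborhood of $0$ in $\mathfrak{k}$ homeomorphically onto a neighborhood of $I$ in $K$. I would fix a linear complement $W$ of $\mathfrak{k}$ in $\mathfrak{gl}_n(\r)$, so that $(X,Y)\mapsto\exp(X)\exp(Y)$ is a diffeomorphism from a neighborhood of $(0,0)$ in $\mathfrak{k}\oplus W$ onto a neighborhood of $I$ in $\GL_n(\r)$. If the desired filling statement failed, one would obtain a sequence $h_j\to I$ in $K$ with $h_j=\exp(X_j)\exp(Y_j)$ and $Y_j\in W\setminus\{0\}$, so that $\exp(Y_j)=\exp(-X_j)h_j\in K$. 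Extracting a subsequence with $Y_j/\|Y_j\|\to Z\in W$ of norm one and, for each $t\in\r$, putting $n_j:=\lfloor t/\|Y_j\|\rfloor\in\z$, one gets $n_jY_j\to tZ$ and hence $\exp(Y_j)^{n_j}=\exp(n_jY_j)\to\exp(tZ)$. Since $K$ is closed and each $\exp(Y_j)^{n_j}\in K$, this gives $\exp(tZ)\in K$ for all $t\in\r$, so $Z\in\mathfrak{k}$, contradicting $Z\in W\setminus\{0\}$.

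With this filling property in hand, $\exp$ is a coordinate chart near $I$, and left translations furnish an atlas making $K$ into an analytic Lie subgroup of $\GL_n(\r)$. To finish, I would transfer the Lie structure back to $H$: using the continuity and injectivity of $\phi$ together with the standing topological hypotheses on $H$ (in the intended applications of this lemma $H$ will be locally compact, which by an open-mapping/Baire-category argument upgrades $\phi$ to a topological embedding of $H$ onto the subgroup $\phi(H)\subseteq K$), I pull back the analytic chart from a neighborhood of $1$ in $\phi(H)$ along $\phi$ to obtain a chart at $1\in H$, and translate to obtain an atlas. Multiplication and inversion on $H$ become analytic because they are pulled back from the corresponding analytic operations on $K$.

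The principal obstacle I expect is the neighborhood-filling step: the contradiction argument showing that every element of $K$ sufficiently close to $I$ lies in $\exp(\mathfrak{k})$. Verifying that $\mathfrak{k}$ is closed under the Lie-algebra operations is a well-known but slightly fiddly calculation with the Trotter and commutator formulas, and the final transfer from $K$ back to $H$ is essentially formal once one knows that $\phi$ is a topological embedding onto its image.
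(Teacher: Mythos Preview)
The paper does not prove this theorem; it is quoted as a black box from Montgomery--Zippin, so there is no ``paper's own proof'' to compare against. Your sketch of Cartan's closed-subgroup theorem for $K=\overline{\phi(H)}$ is fine, but the final paragraph---the transfer of the Lie structure from $K$ back to $H$---contains a genuine gap.

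You claim that local compactness of $H$ plus an open-mapping/Baire argument forces $\phi$ to be a topological embedding onto $\phi(H)\subseteq K$. This is false. Take $H=\r$ and let $\phi:\r\to\mathbb{T}^2\subseteq\GL_4(\r)$ be an irrational winding $t\mapsto(e^{it},e^{i\alpha t})$. Then $H$ is locally compact, $\phi$ is continuous and injective, and $K=\overline{\phi(\r)}=\mathbb{T}^2$; but $\phi(\r)$ with the subspace topology from $K$ is not locally compact, and $\phi$ is \emph{not} a homeomorphism onto its image (there are sequences $t_n\to\infty$ with $\phi(t_n)\to I$). The Baire/open-mapping theorems you have in mind require the \emph{target} to be locally compact (or at least Baire), and $\phi(H)$ with the subspace topology need not be. So ``pull back the analytic chart from a neighborhood of $1$ in $\phi(H)$'' has no content: $\phi(H)$ need not carry any chart in the $K$-topology.

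The fix is to work with the Lie algebra of $H$ itself rather than that of $K$. Set
\[
\mathfrak{h}:=\{X\in\mathfrak{gl}_n(\r): \exp(tX)\in\phi(H)\text{ for all }t,\ \text{and } t\mapsto\phi^{-1}(\exp(tX))\text{ is continuous on }\r\},
\]
i.e.\ the tangent vectors to one-parameter subgroups of $H$, not of $K$. In the irrational-winding example this is the correct one-dimensional line inside the two-dimensional $\mathfrak{k}$. One then shows (using local compactness of $H$ in an essential way) that for $h$ in a small compact neighborhood of $1$ in $H$ one has $\log\phi(h)\in\mathfrak{h}$, so that $\phi^{-1}\circ\exp:\mathfrak{h}\to H$ furnishes a chart near $1$. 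The filling argument here is similar in spirit to the one you wrote for $K$, but the limits must be taken in the $H$-topology, and compactness in $H$ is what guarantees convergence; this is where the proof genuinely uses the hypothesis and where your version, which only used closedness of $K$ in $\GL_n(\r)$, falls short.
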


\

\begin{lemma}\label{L:embedding}
Suppose $f:G\rightarrow \GL_n(\r)$ is an injective morphism of local groups.  Then $G$ is a local Lie group.
\end{lemma}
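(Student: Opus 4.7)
The approach is to apply the preceding theorem of von Neumann by constructing a topological group $H$ into which a restriction of $G$ embeds as an open neighborhood of the identity.

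First, using local compactness of $G$, I would choose a sufficiently small compact symmetric neighborhood $V$ of $1$ in $G$ on which $f$ restricts to a homeomorphism onto its image (a continuous injection from a compact Hausdorff space to a Hausdorff space is an embedding). I would then set $H := \langle f(V) \rangle$, the abstract subgroup of $\GL_n(\r)$ generated by $f(V)$, and topologize $H$ by declaring $U \subseteq H$ to be open iff for each $a \in U$ there is an open neighborhood $W$ of $1$ in $G|V$ with $a \cdot f(W) \subseteq U$ (product taken in $\GL_n(\r)$).

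The main work is to verify that this topology makes $H$ a Hausdorff topological group. The key non-trivial ingredient is conjugation-invariance of the neighborhood filter at $1$: for each $a \in H$ and each open neighborhood $W$ of $1$ in $G|V$ there must exist an open neighborhood $W'$ of $1$ in $G|V$ with $a f(W') a^{-1} \subseteq f(W)$, so that left- and right-translations induce the same topology. I would prove this by induction on the minimal length of $a$ as a word in $f(V) \cup f(V)^{-1}$. For the base case $a = f(g)$ with $g \in V$, continuity of conjugation in $G$ gives a neighborhood $W'$ of $1$ with $g W' g^{-1}$ defined and contained in $W$, and then the morphism property of $f$ yields $a f(W') a^{-1} = f(g W' g^{-1}) \subseteq f(W)$; the inductive step peels one factor of $a$ at a time, shrinking $W'$ accordingly. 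Continuity of multiplication and inversion at $1$ then follow from the morphism property of $f$ together with continuity of the operations in $G|V$, and extend to all of $H$ by translation; Hausdorffness is inherited from the Hausdorff topology on $V$ via the homeomorphism $f|V$.

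The hardest step is this inductive verification of conjugation-invariance, because at every stage the intermediate conjugates must land inside the image $f(V)$, not merely inside $\GL_n(\r)$, which forces us to shrink $W'$ substantially depending on the word-length of $a$. Granting this, $H$ is a Hausdorff topological group and the inclusion $H \hookrightarrow \GL_n(\r)$ is an injective continuous homomorphism, so von Neumann's theorem delivers that $H$ is a Lie group. Choosing a sufficiently small open neighborhood $V'$ of $1$ in $G$ with $V' \subseteq V$, the map $f|V' : G|V' \to H|f(V')$ is then an isomorphism of local groups, exhibiting $G|V'$ as a restriction of the Lie group $H$. Hence $G|V'$ is globalizable, and by the equivalence of the two forms of Local H5 explained in Section 2, $G|V'$ is a local Lie group; extending this $C^\omega$ structure from $V'$ to $G$ by left translation makes $G$ a local Lie group.
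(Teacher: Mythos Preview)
Your approach is essentially the paper's: generate an abstract subgroup $H$ of $\GL_n(\r)$ from the image of $f$, topologize $H$ using the pushforward of the neighborhood filter of $1$ in $G$, and apply von~Neumann's theorem. Your detailed treatment of conjugation-invariance (by induction on word length) is exactly the content the paper hides behind the phrase ``it is routine to verify.''

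There is, however, an unnecessary complication in your endgame. By generating $H$ from $f(V)$ for a compact $V$ rather than from all of $f(G)$, you are left having to extend the $C^\omega$ structure from a small $V'$ to all of $G$ ``by left translation.'' In a genuine local group this step is not innocuous: the transition map between charts centered at $g$ and $g'$ is $h\mapsto g'^{-1}(gh)$, and for $g,g'$ far from $1$ the product $g'^{-1}g$ need not be defined, nor need the intermediate products stay in $V'$ where analyticity is known. The paper avoids this entirely by taking $H=\langle f(G)\rangle$; then $f(G)$ itself is open in $H$ (for each $g\in G$ choose $W$ with $\{g\}\times W\subseteq\Omega$, so $f(g)\cdot f(W)=f(gW)\subseteq f(G)$), the map $f:G\to f(G)$ is a homeomorphism for the subspace topology inherited from $H$, and the $C^\omega$ structure on $f(G)\subseteq H$ transports directly to all of $G$ via $f^{-1}$, with analyticity of $\iota$ and $p$ following from the morphism property. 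Your appeal to the equivalence of the two forms of the Local~H5 is also a detour: once von~Neumann gives that $H$ is a Lie group, any open subset of $H$ is already a local Lie group, so there is no need to pass through globalizability and the global H5.
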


\begin{proof}
Let $G'=f(G)\subseteq \text{GL}_n(\r)$.  Let $H$ be the subgroup of $\text{GL}_n(\r)$ generated by $G'$.  Let $\mathcal{F}'$ be the filter of neighborhoods of $1$ in $G$ and $\mathcal{F}''$ be the image of $\mathcal{F}'$ under $f$.  Finally, let $\mathcal{F}$ be the filter in $H$ generated by $\mathcal{F}''$.  It is routine to verify that $\mathcal{F}$ satisfies the properties needed to make it a neighborhood filter at $1$ for a topology on $H$ which makes $H$ a topological group.  By the definition of this topology, the inclusion map $H\hookrightarrow \text{GL}_n(\r)$ is continuous.  By von-Neumann's theorem, $H$ is a Lie group.  The result now follows from the fact that $G$ is homeomorphic with $f(G)$ when $f(G)$ is given the induced topology from $H$.
\end{proof}

\

\noindent We use the following theorem of Kuranishi \cite{K} to finish our proof of the $\Ns$ case.  

\begin{thm}
Let $G$ be a locally compact local group and let $H$ be a normal sublocal group of $G$.  Consider the local coset space $(G/H)_W$ as in Lemma \ref{L:localcoset}.  Suppose 
\begin{enumerate}
\item $H$ is an abelian local Lie group;
\item $(G/H)_W$ is a local Lie group;
\item there is a set $M\subseteq W$ containing $1$ and $W'\subseteq W$, an open neighborhood of $1$ in $G$, such that for every $(zH)\cap W\in \pi(W')$, there is exactly one $a\in M$ such that $a\in (zH)\cap W$; moreover, the map $\pi(W')\rightarrow M$ that assigns to each element of $\pi(W')$ the corresponding $a$ in $M$ is continuous.
\end{enumerate}
Then $G|W$ is a local Lie group.
\end{thm}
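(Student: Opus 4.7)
The plan is to use the continuous local section from hypothesis (3) to build a product-coordinate chart on $G|W$ and then verify analyticity of the group operations in these coordinates. First, I would observe that the restriction of the canonical projection $\pi\colon G\to G/H$ to $M\cap W'$ is a continuous bijection onto $\pi(W')$ whose inverse is precisely the continuous map $s\colon \pi(W')\to M$ from (3); transporting the real analytic structure from $\pi(W')\subseteq (G/H)_W$ across this bijection would make $s$ an analytic isomorphism and turn $M\cap W'$ into a real analytic manifold whose dimension equals that of $G/H$.

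Next, after possibly shrinking $W'$, every $g$ in a sufficiently small symmetric open neighborhood $V$ of $1$ in $G$ should admit a unique factorization $g = a\cdot h$ with $a\in M$ and $h\in H$, namely $a = s(\pi(g))$ and $h = a^{-1}g$ (which lies in $H$ because $\pi(h) = \pi(a)^{-1}\pi(g)=1$). The resulting homeomorphism $\psi\colon V\to M_1\times H_1$ onto open subsets of $M$ and $H$ would pull back the product real analytic structure, giving $V$ the structure of a real analytic manifold. It would then remain to verify that inversion and multiplication are real analytic in these coordinates. Using normality of $H$, the product takes the form
\[ (a_1,h_1)\cdot (a_2,h_2)\ =\ a_1 a_2\cdot (a_2^{-1}h_1 a_2)\cdot h_2, \]
and rewriting $a_1 a_2$ as $s(\pi(a_1)\pi(a_2))\cdot c(a_1,a_2)$ would express both $M$- and $H$-coordinates of the product in terms of the analytic group laws of $(G/H)_W$ and $H$, the analytic section $s$, a cocycle $c\colon M_1\times M_1\to H$, and the conjugation action $M_1\times H_1\to H$.

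The hard part will be showing that the last two ingredients are real analytic, since hypothesis (3) only delivers continuity. For the conjugation action, normality of $H$ gives a continuous homomorphism from $M$ (with its transported analytic local group structure inherited from $G/H$) into $\Aut(H)$, which is a Lie group because $H$ is; I would then invoke the classical regularity theorem that a continuous homomorphism between finite-dimensional Lie groups is automatically real analytic to upgrade this to an analytic map. The cocycle $c$ would be defined by an explicit formula involving $s$, $\pi$, and the analytic operations of $(G/H)_W$, hence automatically analytic. A subtle point will be that the conjugation $a_2^{-1}h_1 a_2$ uses $a_2\in M\subseteq G$ rather than its class $\pi(a_2)\in G/H$, so I must carefully distinguish between these and check that they induce the same continuous action on $H$ modulo an inner automorphism by elements of $H$; since $H$ is abelian, inner automorphisms of $H$ are trivial, so the action descends to $M$ as required. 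Assembling these pieces would show that $\iota$ and $p$ are real analytic on $V$, establishing that (a restriction of) $G|W$ is a local Lie group, which is all that is needed by our earlier reduction conventions.
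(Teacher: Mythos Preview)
The paper does not give its own proof of this statement: it is quoted as a theorem of Kuranishi \cite{K} and used as a black box in the proof of the NSS case. So there is no argument in the paper to compare yours against, and your proposal must stand on its own.

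Your overall strategy is the natural one and matches the classical approach to such extension problems: use the continuous section to build product coordinates $V\cong M_1\times H_1$, then write the group law in these coordinates as a twisted product governed by an action $\alpha\colon M_1\to\Aut(H)$ and a cocycle $c\colon M_1\times M_1\to H$. Your treatment of the action is correct: since $H$ is abelian, conjugation by $G$ on $H$ descends to a continuous local homomorphism from the local Lie group $G/H$ into the Lie group $\Aut(H)$, and automatic analyticity of continuous homomorphisms between (local) Lie groups upgrades this to a $C^\omega$ map.

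The gap is in your treatment of the cocycle. You assert that $c$ ``would be defined by an explicit formula involving $s$, $\pi$, and the analytic operations of $(G/H)_W$, hence automatically analytic.'' But the cocycle is
\[
c(a_1,a_2)\;=\;s\bigl(\pi(a_1)\pi(a_2)\bigr)^{-1}\cdot (a_1\cdot_G a_2),
\]
and the right-hand side unavoidably involves the product $a_1\cdot_G a_2$ taken in $G$, which is precisely the operation whose analyticity you are trying to establish. There is no way to express $c$ purely in terms of $s$, $\pi$, and the $(G/H)_W$-operations: the cocycle is exactly what records the failure of $s$ to be a local homomorphism from $G/H$ into $G$, so it encodes genuine $G$-information. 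At this point you only know $c$ is continuous, and your argument is circular.

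Closing this gap is the real content of Kuranishi's theorem. One route is to exploit the cocycle identity together with the analyticity of $\alpha$ to show that $c$ is smooth (hence, after a further argument, analytic); another is to bypass the cocycle entirely and build canonical coordinates on $G$ directly from local one-parameter subgroups lifted from $G/H$ and $H$. Either way, the step you flagged as routine is where the substantive work lies, and you should consult Kuranishi's original paper to see how he handles it.
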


\

\noindent \textbf{Proof of the $\Ns$ Case}

\

\noindent We can now finish the proof that the Local H5 holds for $\Ns$ local groups.  Let $G':=G|\u_6$.  Recall the morphism of local groups $\a:G' \rightarrow \Aut(L(G))$.   Then $H:=\ker(\a)$ is a normal sublocal group of $G'$.  Since $H$ is also a locally compact $\Ns$ local group, we know that $H$ must have an open (in $H$) neighborhood $V$ of $1$ ruled by local 1-parameter subgroups of $H$, which are also local 1-parameter subgroups of $G$.  We claim $gh=hg$ for all $g,h\in V$, implying that $H$ is abelian.  Note that if $\u'\subseteq \u$ is a special neighborhood for $H$ and if $\x\in L(H)$ is such that $I\subseteq \dom(\x)$ and $\x(I)\subseteq \u'$, then $I\subseteq \dom(g\x g^{-1})$.  So for $g,h\in V$, writing $h=\x(1)$ for some $\x\in L(H)$, we have $ghg^{-1}=h$.

\

\noindent   By Theorem \ref{T:abelian}, we have that a restriction of $H$ is a local Lie group.  Let us abuse notation and denote this restriction, which is an equivalent sublocal group of $G'$, by $H$.  Note that if we let $G'':=(G'/H)_W$ be some local coset space, then the adjoint representation induces an injective morphism $G''\rightarrow \Aut(L(G))$.  Thus $G''$ is a local Lie group by Lemma \ref{L:embedding}.  In order to finish the proof, we need to show that condition (3) of Kuranishi's theorem is satisfied.  

\

\noindent Since $G''$ is a local Lie group, we can introduce \textit{canonical coordinates of the second kind}.  More precisely, we can find an open neighborhood $\u'$ of $1$ in $G''$ and a basis $\x_1',\ldots,\x_r'$ of $L(G'')$ such that every element of $\u'$ is of the form $\x_1'(s_1)\cdots \x_r'(s_r)$ for a \textit{unique} tuple $(s_1,\ldots,s_r)\in [-\beta',\beta']^r$.  Without loss of generality, we can suppose the closure of $\u'$ is a special neighborhood of $G''$.  Choose a special neighborhood $\u$ of $G'$ such that $\pi(\u)\subseteq \u'$.  Let $Z\subseteq \u$ be an open neighborhood of $1$ ruled by local 1-parameter subgroups of $G'$.  Fix $s_0\in (0,\beta')$ such that $\x_1'(s_0),\ldots,\x_r'(s_0)\in \pi(Z)$.  Choose $x_i\in Z$ such that $\pi(x_i)=\x_i'(s_0)$.  Let $\x_i\in L(G')$ be such that $\x_i(s_0)=x_i$.  

\

\noindent Let $\beta < s_0$ be so that $\x_i(s)\in \u_{2r}$ if $|s|\leq \beta$ and $\x_1(s_1)\cdots \x_r(s_r)\in W$ if $|s_i|\leq \beta$ for all $i\in \{1,\ldots,r\}$.  A uniqueness of root argument yields that $\pi(\x_i(s))=\x_i'(s)$ for $|s|\leq \beta$.  Set $$M:=\{\x_1(s_1)\cdots \x_r(s_r) \ | \ |s_i|\leq \beta \text{ for all }i=1,\ldots,r\}$$ and let $W'\subseteq W$ be an open neighborhood of $1$ in $G$ contained in the image of the map $$(s_1,\ldots,s_r)\mapsto \x_1(s_1)\cdots \x_r(s_r):[-\beta,\beta]^r\rightarrow G.$$  One can check that the choices of $M$ and $W'$ fulfill condition (3) of Kuranishi's theorem.

\begin{flushright}
 $\Box$
\end{flushright}
\

\section{Locally Euclidean local groups are $\Ns$}  

\

\noindent In order to finish the proof of the Local H5, we must now prove that every locally euclidean local group is $\Ns$.  This proceeds in two stages:  locally euclidean local groups are $\Nsc$ and locally connected $\Nsc$ local groups are $\Ns$.

\

\noindent \textbf{Locally euclidean local groups are $\Nsc$} 

\

\noindent We will need the following facts.

\begin{thm}\label{T:compOPS}(\cite{MZ}, pg. 105)
Every compact connected nontrivial hausdorff topological group has a nontrivial 1-parameter subgroup.
\end{thm}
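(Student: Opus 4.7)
The plan is to reduce to a nontrivial compact connected Lie quotient and then lift. First, I would apply the Peter-Weyl theorem, which in the compact Hausdorff setting implies that the closed normal subgroups $N\trianglelefteq G$ with $G/N$ a Lie group form a neighborhood basis at $1$: any finite direct sum of continuous unitary representations whose joint kernel lies inside a prescribed neighborhood of $1$ produces such an $N$, with $G/N$ embedding in some $U(n)$ and hence being a Lie group by von Neumann's theorem (cited in the preceding subsection). Since $G$ is nontrivial, I may choose $N\subsetneq G$, obtaining a nontrivial compact connected Lie group $G/N$. Any nonzero $X \in \mathrm{Lie}(G/N)$ then exponentiates to a nontrivial continuous 1-parameter subgroup $\xi\colon \r \to G/N$.

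The remaining step, and the main obstacle, is to lift $\xi$ to a nontrivial 1-parameter subgroup of $G$. I would realize $G$ as an inverse limit $G=\varprojlim_\beta G/N_\beta$ over the above neighborhood basis (directed by reverse inclusion), arranging for the chosen $N$ to appear as some $N_\alpha$. For $N_\beta\subseteq N_\gamma$, the canonical projection $G/N_\beta \twoheadrightarrow G/N_\gamma$ is a continuous surjective homomorphism of compact Lie groups and therefore induces a surjection on Lie algebras $\mathfrak{g}_\beta \twoheadrightarrow \mathfrak{g}_\gamma$. The preimages of the chosen $X\in\mathfrak{g}_\alpha$ form a directed inverse system of nonempty affine subspaces of finite-dimensional vector spaces with surjective transition maps; by a Zorn's lemma argument (or equivalently by choosing norm-bounded consistent lifts and invoking the compactness of closed balls in finite-dimensional real vector spaces), this system admits a coherent selection $(X_\beta)$.

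Exponentiating in each Lie quotient yields a compatible family $\xi_\beta\colon \r\to G/N_\beta$ with $\xi_\alpha=\xi$, which by the universal property of the inverse limit assembles into a continuous homomorphism $\xi^\ast\colon \r\to G$. Since the projection of $\xi^\ast$ to $G/N_\alpha$ is the nontrivial $\xi$, the homomorphism $\xi^\ast$ is itself nontrivial, completing the proof. An alternative route more congenial to the nonstandard style of the present paper would be to choose an infinitesimal $a\in\boldsymbol{\mu}\setminus\{1\}$ (which exists because a nontrivial compact connected Hausdorff space cannot be discrete) and exploit purity in a special neighborhood of an appropriate Lie quotient to apply a global analogue of Lemma \ref{L:regelement}; in either approach the difficulty lies not in producing a candidate 1-parameter subgroup inside a single Lie quotient but in making the lift to the full inverse limit coherent.
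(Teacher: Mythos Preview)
The paper does not prove this theorem; it is simply quoted from Montgomery--Zippin as a known input. So there is no ``paper's proof'' to compare against, only your sketch to evaluate on its own merits.

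Your approach---Peter--Weyl to get a nontrivial compact connected Lie quotient $G/N_\alpha$, a nonzero $X\in\mathfrak g_\alpha$, and then a coherent lift through the inverse system of Lie algebras $\mathfrak g_\beta$---is the standard one and is correct. The only step you leave vague is the assertion that the inverse system of nonempty finite-dimensional affine subspaces $A_\beta=\{Y\in\mathfrak g_\beta: Y\mapsto X\}$ with surjective affine transition maps has nonempty inverse limit. This is true, but the phrase ``Zorn's lemma argument'' does not quite capture why: a naive Zorn on partial coherent selections runs into the problem that extending to a new index $\beta_0$ imposes a possibly infinite family of affine constraints, and one must argue that their intersection is nonempty. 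A clean justification is by duality: the map $\varprojlim_\beta \mathfrak g_\beta\to\mathfrak g_\alpha$ is surjective because $\varprojlim_\beta \mathfrak g_\beta=(\varinjlim_\beta \mathfrak g_\beta^*)^*$, the inclusion $\mathfrak g_\alpha^*\hookrightarrow\varinjlim_\beta\mathfrak g_\beta^*$ is injective, and every linear functional on a subspace extends (by choice of complement) to the whole space. Any preimage of $X$ then lies in $\varprojlim_\beta A_\beta$. Your parenthetical about ``norm-bounded consistent lifts and compactness of closed balls'' is less convincing as stated, since there is no canonical family of norms compatible with the transition maps; the duality argument avoids this.

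Your remark that a nonstandard route would face the same lifting obstacle is accurate: an infinitesimal $a\in\boldsymbol\mu\setminus\{1\}$ could well be degenerate when $G$ has small subgroups, so one is again forced to pass to a Lie quotient and confront the coherence problem.
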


\

\begin{lemma}\label{L:OPSfacts}
Let $H$ be a topological group and $X:\r \rightarrow H$ a 1-parameter subgroup.  
\begin{enumerate}
\item If $H_1$ is a closed subgroup of $H$, then either $X(\r)\subseteq H_1$ or there is a neighborhood $D$ of $0$ in $\r$ such that $X(D)\cap H_1=\{1\}$.  
\item If $X$ is nontrivial, then there is a neighborhood $D$ of $0$ in $\r$ on which $X$ is injective.
\end{enumerate}
\end{lemma}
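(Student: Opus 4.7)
My plan is to deduce both parts from the classical classification of closed subgroups of $\r$: any such subgroup is either $\{0\}$, all of $\r$, or $c\z$ for some $c \in \r^{>0}$. The key observation is that for any continuous homomorphism $X\colon \r \to H$ and any closed subgroup $K \leq H$, the preimage $X^{-1}(K)$ is itself a closed subgroup of $\r$, hence falls into one of the three cases above.

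For part (1), I would set $A := X^{-1}(H_1)$, which is closed in $\r$ (since $H_1$ is closed and $X$ is continuous) and is a subgroup of $\r$ (since $X$ is a homomorphism and $H_1$ is a subgroup). If $A = \r$, then the first alternative $X(\r) \subseteq H_1$ holds. Otherwise $A = \{0\}$ or $A = c\z$ with $c > 0$; in the first case any bounded neighborhood $D$ of $0$ satisfies $D \cap A = \{0\}$, and in the second case $D := (-c,c)$ satisfies $D \cap A = \{0\}$. Since $X^{-1}(\{1\}) \subseteq A$, in either case $X(D) \cap H_1 = X(D \cap A) = X(\{0\}) = \{1\}$.

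For part (2), I would apply the same idea to $B := X^{-1}(\{1\})$, which is a closed subgroup of $\r$ since $\{1\}$ is closed in the hausdorff group $H$. The nontriviality assumption on $X$ rules out $B = \r$, so $B = \{0\}$ or $B = c\z$ for some $c > 0$. Now observe that $X(s) = X(t)$ iff $X(s-t) = 1$ iff $s - t \in B$, so injectivity of $X$ on a set $D$ is equivalent to $(D-D) \cap B \subseteq \{0\}$. In the first case $X$ is injective on all of $\r$; in the second case $D := (-c/2, c/2)$ satisfies $D - D \subseteq (-c,c)$, which meets $c\z$ only at $0$.

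There is no real obstacle here; both statements are immediate consequences of the closed-subgroup classification for $\r$, which is a standard elementary fact (usually proved by taking $c := \inf(A \cap \r^{>0})$ and showing either $c = 0$ forces density and hence $A = \r$, or $c > 0$ forces $A = c\z$). The only mild care needed is in part (2), where one must translate the desired injectivity on $D$ into a condition on $D - D$ rather than on $D$ itself.
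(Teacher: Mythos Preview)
Your proposal is correct and takes essentially the same approach as the paper: the paper's proof is a one-line appeal to the classification of closed subgroups of $\r$, and you have simply filled in the details of how that classification yields each part.
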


\begin{proof}
This is immediate from the well-known fact that a closed subgroup of the additive group of $\r$ different from $\{0\}$ and $\r$ is of the form $\z r$ with $r\in \r^{>0}$.
\end{proof}

\

\noindent The following lemma is the local group version of Theorem 5.1 in ~\cite{H}.  

\ 

\begin{lemma}\label{L:neighandgroup}
Suppose $V$ is a neighborhood of $1$ in $G$.  Then $V$ contains a compact subgroup $H$ of $G$ and a neighborhood $W$ of $1$ in $G$ such that every subgroup of $G$ contained in $W$ is contained in $H$.
\end{lemma}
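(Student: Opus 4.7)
The plan is to combine Zorn's Lemma applied to compact subgroups inside a compact neighborhood of $1$ in $V$ with an internal maximality argument in the nonstandard universe, and then use overflow to transfer the conclusion to a standard subgroup.

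First, I would fix a compact symmetric open neighborhood $V_1$ of $1$ with $V_1 \subseteq V$ and $V_1 \subseteq \u_2$, so pairwise products of elements of $V_1$ are defined. Let $\mathcal{K}$ be the family of compact subgroups of $G$ contained in $V_1$, ordered by inclusion. For any chain $(K_\alpha)$ in $\mathcal{K}$, the closure $\overline{\bigcup_\alpha K_\alpha}$ lies in the compact $V_1$ hence is compact, and is a subgroup by continuity of the partial group operations on $V_1 \subseteq \u_2$. Zorn's Lemma produces a maximal element $H \in \mathcal{K}$.

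Second, pass to the nonstandard universe and apply internal Zorn's Lemma (or saturation) to the internal family of internal subgroups of $G^*$ contained in $V_1^*$ and containing $H^*$; this yields a maximal such internal subgroup $H'$. Every element of $H'$ is nearstandard since $V_1$ is compact, so $\tilde H := \st(H')$ is well-defined. A direct verification shows $\tilde H$ is a compact subgroup of $G$ contained in $V_1$ with $H \subseteq \tilde H$. Maximality of $H$ in $\mathcal{K}$ then forces $\tilde H = H$, so $H' \cap G = H$: any standard $k \in H'$ satisfies $k = \st(k) \in \st(H') = H$.

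Third, suppose for contradiction that no neighborhood $W$ of $1$ absorbs its subgroups into $H$. Using the countable neighborhood basis $\{V_n\}$ from Section 4, for each $n$ there is a subgroup $K_n \subseteq V_n$ with $K_n \not\subseteq H$. Take $\nu > \n$: then $K_\nu \subseteq V_\nu^* \subseteq \muu$. I claim the internal subgroup $\langle H^*, K_\nu\rangle$ of $G^*$ is contained in $V_1^*$. By internal induction on word length, every element rewrites as $h \cdot w$ with $h \in H^*$ and $w$ a product of $H^*$-conjugates of elements of $K_\nu$. Since $K_\nu \subseteq \muu$ and the standard part of any $h'^{-1} k h'$ with $h' \in H^*$ nearstandard and $k \in \muu$ is $\st(h')^{-1} \cdot 1 \cdot \st(h') = 1$, each such conjugate lies in $\muu$; so $w \in \muu$ (which is a group) and $hw \in H^* \cdot \muu \subseteq V_1^*$ by openness of $V_1$. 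By maximality of $H'$, $\langle H^*, K_\nu\rangle \subseteq H'$, whence $K_\nu \subseteq H'$. Since the internal statement ``$K_m \subseteq H'$'' holds for every $m > \n$, overflow yields a standard $m_0$ with $K_{m_0} \subseteq H'$; then $K_{m_0} \subseteq H' \cap G = H$, contradicting $K_{m_0} \not\subseteq H$.

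The main obstacle is the rearrangement inside $\langle H^*, K_\nu\rangle$: one must justify that each intermediate product in the pushing-past-conjugation step is defined. This is ensured by the observation that every partial product computed along the way lies in $H^* \cdot \muu \subseteq V_1^* \subseteq \u_2^*$, where the internal partial product map is defined; the inductive bookkeeping to confirm this at each stage of the rearrangement is the one truly delicate part of the argument.
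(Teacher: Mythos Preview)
Your argument has a genuine circularity: the countable neighborhood basis $\{V_n\}$ you invoke from Section~4 is defined using a \emph{special neighborhood}, whose existence (Lemma~\ref{L:specneigh}) requires $G$ to be $\Ns$. But Lemma~\ref{L:neighandgroup} lives in Section~9, where it is used precisely to establish that locally euclidean (or locally connected $\Nsc$) local groups are $\Ns$; in particular it is applied in Theorem~\ref{T:eucNSS} under hypotheses that do not include $\Ns$ or even first countability. So your underspill step, which needs a countable basis indexed by $\n$ to make ``$K_m\subseteq H'$'' an internal predicate in $m$, is not available. Replacing $\{V_n\}$ by an arbitrary countable basis does not help, since a merely locally compact local group need not be first countable.

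There is a secondary issue with the maximality step: from $\langle H^*,K_\nu\rangle\subseteq V_1^*$ you cannot conclude $K_\nu\subseteq H'$, since maximality of $H'$ only gives $\langle H',K_\nu\rangle=H'$ once you know $\langle H',K_\nu\rangle\subseteq V_1^*$. Your conjugation argument does extend to $H'$ (its elements are nearstandard), but then you must show that a \emph{hyperfinite} product of $H'$-conjugates of elements of $K_\nu$ is defined and lies in $\muu$. That is exactly the content of Lemma~\ref{L:prodmonad}, which you never invoke; merely observing that partial products land in $\u_2^*$ does not by itself give that arbitrarily long products are defined in the sense of Definition~2.4.

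The paper's proof bypasses all of this machinery. It takes any internal neighborhood $W\subseteq\muu$ and lets $S$ be the set of all products $a_1\cdots a_\nu$ with each $a_i$ drawn from some internal subgroup $E_i\subseteq W$. Lemma~\ref{L:prodmonad} shows each such product is defined and lies in $\muu$; concatenation and Lemma~\ref{L:EZFacts}(4) make $S$ an internal subgroup, and every internal subgroup contained in $W$ is trivially inside $S$. The internal closure of $S$ is then an internally compact subgroup inside $V^*$, and a single transfer gives the standard conclusion. No Zorn, no maximal $H'$, no countable basis, no underspill.
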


\begin{proof}
Let $W$ be an internal neighborhood of $1$ in $G^*$ such that $W\subseteq \boldsymbol{\mu}$.  Note that if $E_1,\ldots,E_\nu$ is an internal sequence of internal subgroups of $G^*$ contained in $W$ and $a_1,\ldots,a_\nu$ is an internal sequence such that $a_i \in E_i$ for all $i\in \{1,\ldots,\nu\}$, then $a_1\cdots a_\nu$ is defined by Lemma \ref{L:prodmonad}.  We let $S$ be the set of all products $a_1\cdots a_\nu$, where $E_1,\ldots,E_\nu$ is an internal sequence of internal subgroups of $G^*$ contained in $W$ and $a_1,\ldots,a_\nu$ is an internal sequence such that $a_i\in E_i$ for all $i\in \{1,\ldots,\nu\}$.  By Lemma \ref{L:prodmonad}, $S$ is an internal subgroup of $G^*$, $S\subseteq \boldsymbol{\mu}$, and every internal subgroup of $G^*$ contained in $W$ is contained in $S$.  Furthermore, if $H$ is the internal closure of $S$ in $G^*$, then $H$ is an internally compact internal subgroup of $G^*$ containing all of the subgroups of $W$ and $H\subseteq V^*$.  The desired result follows by transfer.
\end{proof}

\

\begin{df}
Following Kaplansky \cite{Ka}, we call a topological space \textbf{feebly finite-dimensional} if, for some $n$, it does not contain a homeomorphic copy of $[0,1]^n$.  
\end{df}

\

\noindent Clearly locally euclidean local groups are feebly finite-dimensional.

\

\begin{lemma}\label{L:eucNSCS}
If $G$ is feebly finite-dimensional, then $G$ is $\Nsc$.
\end{lemma}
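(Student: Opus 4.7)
The plan is to prove the contrapositive: if $G$ is not $\Nsc$, then $G$ contains a homeomorphic copy of the cube $[0,1]^n$ for every $n$, violating feeble finite-dimensionality. Throughout I assume every neighborhood of $1$ in $G$ contains a nontrivial connected subgroup of $G$.

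First I would use Lemma \ref{L:neighandgroup} together with this assumption to extract, for every neighborhood $V$ of $1$, a nontrivial compact connected subgroup of $G$ sitting inside $V$. Indeed, the lemma provides a compact subgroup $H \subseteq V$ and a smaller neighborhood $W$ such that every subgroup of $G$ contained in $W$ is contained in $H$; by the $\Nsc$ failure, $W$ contains some nontrivial connected subgroup $C$, so $C \subseteq H$, and since $C$ is connected and contains $1$ it lies in the identity component $H^0$, forcing $H^0 \neq \{1\}$. Then Theorem \ref{T:compOPS} produces a nontrivial 1-parameter subgroup $X \colon \r \to H^0 \subseteq V$, and Lemma \ref{L:OPSfacts}(2) gives a neighborhood $D$ of $0$ on which $X$ is injective, yielding an embedded arc in $V$.

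Next I would perform an induction on $n$ to build 1-parameter subgroups $X_1,\dots,X_n$ of $G$ with images in successively smaller compact symmetric neighborhoods of $1$ (chosen so that all relevant $n$-fold products are defined) together with an $\epsilon>0$ such that the product map $\Phi_n(t_1,\dots,t_n)=X_1(t_1)\cdots X_n(t_n)$ is injective on $[0,\epsilon]^n$. The inductive step, given $\Phi_n$ injective on $[0,\epsilon]^n$ with compact image $M$, reduces to choosing $X_{n+1}$ from a nontrivial compact connected subgroup of $G$ lying inside a very small neighborhood of $1$ (so that $M\cdot X_{n+1}([0,\epsilon])$ is well-defined and close to $M$). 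I would use Lemma \ref{L:OPSfacts}(1), applied inside the ambient compact subgroup $H$ from Lemma \ref{L:neighandgroup} — an honest topological group where the dichotomy applies without qualification — to ensure that $X_{n+1}(t_{n+1})$ for $t_{n+1}$ in a small punctured neighborhood of $0$ avoids the closed subgroup generated by $X_1(\r),\dots,X_n(\r)$ and the relevant products among them, thereby ruling out new collisions in $\Phi_{n+1}$.

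The main obstacle is the injectivity of $\Phi_n$: because $G$ is only a local group and the $X_i$ need not commute, a potential collision $X_1(t_1)\cdots X_n(t_n)=X_1(s_1)\cdots X_n(s_n)$ cannot be untangled by simple left or right cancellation, so one has to embed the whole calculation into the genuine compact group $H$ supplied by Lemma \ref{L:neighandgroup}, perform the closed-subgroup argument there, and then transfer the conclusion back to $G$. Once an injective $\Phi_n$ is in hand, it is a continuous injection from a compact cube into the Hausdorff space $G$ and hence a topological embedding of $[0,1]^n$, which contradicts feeble finite-dimensionality as soon as $n$ exceeds the witnessing bound.
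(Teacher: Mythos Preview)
Your overall strategy matches the paper's: argue by contraposition, use Lemma~\ref{L:neighandgroup} to find compact subgroups swallowing all small subgroups, extract nontrivial one-parameter subgroups via Theorem~\ref{T:compOPS}, and inductively build embedded cubes of every dimension. The gap is in the inductive step. You want to apply Lemma~\ref{L:OPSfacts}(1) to conclude that $X_{n+1}$, restricted to a punctured neighborhood of $0$, misses the closed subgroup $K_n$ generated by $X_1(\r),\dots,X_n(\r)$. But that lemma only gives a dichotomy: either $X_{n+1}(\r)\subseteq K_n$ or $X_{n+1}$ avoids $K_n$ near $0$. You have not explained why the first alternative is excluded, and with your nesting---each new $X_i$ chosen inside a \emph{smaller} compact subgroup than its predecessors---there is nothing to prevent $X_{n+1}(\r)\subseteq K_n$: the small compact group supplying $X_{n+1}$ could sit entirely inside $K_n$.

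The paper repairs this by strengthening the inductive hypothesis and reversing the roles of ``old'' and ``new''. What is carried through the induction is not merely that a cube exists, but that for every compact symmetric neighborhood $U$ of $1$ there is a compact \emph{subgroup} of $G$ contained in $U$ which contains a copy of $[0,1]^n$. At the inductive step one first applies Lemma~\ref{L:neighandgroup} to $U$, obtaining a compact group $H\subseteq U$ and a smaller neighborhood $V$; one then takes a nontrivial one-parameter subgroup $X$ of $H$ with $X(\r)\not\subseteq V$ (shrinking $V$ if necessary), and only afterwards invokes the inductive hypothesis on $V$ to place the $n$-cube inside a compact subgroup $G(V)\subseteq V\subseteq H$. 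Now $X(\r)\not\subseteq G(V)$ is automatic, so Lemma~\ref{L:OPSfacts}(1) genuinely forces $X([0,1])\cap G(V)=\{1\}$ after rescaling, and the map $(s,t)\mapsto X(s)Y(t)$ into $H$ is injective because a collision gives $X(s-s')=Y(t')Y(t)^{-1}\in G(V)$. The fix, then, is to carry a compact subgroup (not just a cube) through the induction, and to put the \emph{old} cube inside the small set so that the \emph{new} arc demonstrably escapes it.
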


\begin{proof}
Suppose that $G$ is feebly finite-dimensional but, towards a contradiction, that $G$ is not $\Nsc$.  We claim that for every compact symmetric neighborhood $U$ of $1$ in $G$ contained in $\u_2$ and for every $n$, there is a compact subgroup of $G$ contained in $U$ which contains a homeomorphic copy of $[0,1]^n$.  Assume this holds for a given $n$ and let $U$ be given.  Lemma \ref{L:neighandgroup} implies that there is a compact symmetric neighborhood $V$ of $1$ in $G$ containing $1$ and a compact subgroup $H\subseteq U$ that contains every subgroup of $G$ contained in $V$.  Since, by assumption, $V$ contains a nontrivial connected compact subgroup of $G$, we have a nontrivial 1-parameter subgroup $X$ of $H$.  By shrinking $V$ if necessary, we can suppose $X(\r)\nsubseteq V$.  By assumption, we have a compact subgroup $G(V)\subseteq V$ of $G$ and a homeomorphism $Y:[0,1]^n \rightarrow Y([0,1]^n)\subseteq G(V)$.  After replacing $X$ by $rX$ for suitable $r\in (0,1)$, we can assume $X([0,1])\subseteq V$, $X$ is injective on $[0,1]$, and $X([0,1])\cap G(V)=\{1\}$.  Since $H$ is a group, we can define $Z:[0,1]\times [0,1]^n \rightarrow H$ by $Z(s,t)=X(s)Y(t)$, which is clearly continuous.  Suppose $Z(s,t)=Z(s',t')$ with $s\geq s'$.  Then we have $X(s-s')=Y(t')Y(t)^{-1}\in X([0,1])\cap G(V)=\{1\}$.  Since $X$ is injective on $[0,1]$, we have $s=s'$, and since $Y$ is injective, we have $t=t'$.  It follows that $Z$ is injective, and thus a homeomorphism onto its image.  
\end{proof}

\

\noindent \textbf{Locally connected $\Nsc$ local groups are $\Ns$}

\

\noindent The next lemma is the local analogue of Lemma 5.3 and Corollary 5.4 in \cite{H}.  The proof there has a gap, which can be filled by changing the hypotheses as in our lemma.

\ 

\begin{lemma}\label{L:lift}
Suppose $H$ is a normal sublocal group of $G$ which is totally disconnected.  Let $\pi:G\rightarrow G/H$ be the canonical projection.  Then $L(\pi)$ is surjective.  
\end{lemma}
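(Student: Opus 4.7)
The plan is to invoke the nonstandard parametrization of $L$ given by Theorem \ref{T:addition}: for fixed $\sigma > \n$, elements of $L(G)$ correspond to classes $aG^{\lilo}(\sigma)$ with $a \in G(\sigma)$. Given $\bar{\x} \in L(G/H)$ with representative $\bar{X}\colon(-r,r)\to G/H$, I will construct $a \in G(\sigma)\subseteq \muu$ lifting $\bar{X}(1/\sigma)$; Lemma \ref{L:regelement} will then produce a local 1-ps $X_a$ of $G$ whose germ maps to $\bar{\x}$ under $L(\pi)$.

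First I would pick any preimage $a_0 \in G^*$ of $\bar{a} := \bar{X}(1/\sigma)$ in a compact neighborhood where $\pi$ is defined; $a_0$ is then nearstandard, and $h := \st(a_0)$ satisfies $\pi(h) = \bar 1$, so $h \in H$ (locally $\pi^{-1}(\bar 1) = H$). Setting $a := h^{-1}a_0 \in \muu$ still projects to $\bar a$.

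The heart of the argument is showing $a \in G(\sigma)$, and this is where total disconnectedness of $H$ enters. Fix a compact symmetric neighborhood $U \subseteq \u_2$ of $1$ in $G$ and put $\tau := \ord_U(a) > \n$. I first claim $\sigma = \bigO(\tau)$; otherwise $\tau = \lilo(\sigma)$, and by the connectedness lemma of Section 3 the set $\{\st(a^i) : 1 \leq i \leq \tau\} \subseteq G$ is compact, connected, and contains $1$. For each such $i$, $\pi(a^i) = \pi(a)^i = \bar{X}(i/\sigma) \sim \bar 1$, so $\st(a^i) \in H$; total disconnectedness of $H$ forces the set to equal $\{1\}$, whence $a^\tau \in \muu$ and $a^{\tau+1} = a^\tau \cdot a \in \muu \subseteq U^*$ (recall $a^{\tau+1}$ is defined, by the remark after Definition \ref{D:ordelement}), contradicting $\tau = \ord_U(a)$. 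Once $\sigma = \bigO(\tau)$ holds, rerunning the same argument on $\{\st(a^j) : 1 \leq j \leq i\}$ for each fixed $i = \lilo(\sigma)$ (so that $i = \lilo(\tau) \leq \tau$) yields $a^i \in \muu$, so $a \in G(\sigma)$.

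The remaining steps are routine. The case $\bar{\x} = \O$ is handled by $\x = \O$; otherwise $a$ is nondegenerate, hence $U$-pure by the previous paragraph, so Lemma \ref{L:regelement} gives a local 1-ps $X_a(s) = \st(a^{[s\sigma]})$ with positive radius of definition (since $\sigma = \bigO(\tau)$ ensures $\Sigma_{\sigma,a,U}$ is nonempty). Finally $\pi(X_a(s)) = \st(\pi(a^{[s\sigma]})) = \st(\bar{X}([s\sigma]/\sigma)) = \bar{X}(s)$ by continuity of $\bar{X}$, so $L(\pi)([X_a]) = [\bar{X}] = \bar{\x}$. The main obstacle is the connectedness argument linking $\tau$ to $\sigma$; without $H$ totally disconnected, some power $a^i$ with $i = \lilo(\sigma)$ could drift away from $\muu$ along a nontrivial connected path in $H$, breaking the argument that $a \in G(\sigma)$.
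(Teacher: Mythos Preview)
Your argument is correct and uses the same core idea as the paper: lift $\bar X(1/\sigma)$ to $a\in\muu$, and use that the connected set of standard parts $\{\st(a^j):j\le i\}$ lands in $H$ whenever $i/\sigma$ is infinitesimal, so total disconnectedness of $H$ collapses it to $\{1\}$ and gives $a\in G(\sigma)$.  The only real difference is bookkeeping: the paper fixes an auxiliary $\nu$, sets $\sigma:=\ord_U(a)$, and then must rule out $\sigma=\lilo(\nu)$ and rescale by $1/r$ at the end, whereas you fix $\sigma$ from the outset, prove the dichotomy $\sigma=\bigO(\tau)$ with $\tau:=\ord_U(a)$, and thereby avoid the rescaling.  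One small slip: your claim that $a$ is ``$U$-pure'' is not quite what you have established (that would require $a\in G(\tau)$, which you have not shown); but what Lemma~\ref{L:regelement} actually needs---namely $a\in G(\sigma)$ for continuity at $0$, and $\sigma=\bigO(\tau)$ so that $\Sigma_{\sigma,a,U}\ne\emptyset$---you have verified, so the invocation is sound.
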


\begin{proof}
Let $G':=(G/H)_W$, where $W$ is as in Lemma \ref{L:localcoset}, and let $\y \in L(G')$.  We seek $\x \in L(G)$ so that $\pi \circ \x =\y$.  If $\y$ is trivial, this is obvious.  Thus we may assume, without loss of generality, that $1\in \dom(\y)$ and $\y(1)\not=1_{G'}$.  Fix $\nu > \n$ and let $h:=\y(\frac{1}{\nu})\in \boldsymbol{\mu}(1_{G'})$.  Take a compact symmetric neighborhood $V$ of $1_{G'}$ in $G'$ such that $\y(1)\notin V$.  Take a compact symmetric neighborhood $U$ of $1$ in $G$ with $U\subseteq W$ such that $\pi(U)\subseteq V$.  Since $\pi$ is an open map, we have $\boldsymbol{\mu}(1_{G'})\subseteq \pi(\boldsymbol{\mu})$.  Choose $a\in \boldsymbol{\mu}$ with $\pi(a)=h$.  Let $\sigma:=\ord_U(a)$.  If $\nu \leq \sigma$, then $\pi(a^\nu)\in V^*$, contradicting the fact that $\pi(a^\nu)=h^\nu=1_{G'}\notin V$.  Thus we must have $\sigma < \nu$.  If $i=\lilo(\sigma)$, then $\pi(\st(a^i))=\st(h^i)=1_{G'}$, so $G_U(a)=\{\st(a^i) \ | \ i=\lilo(\sigma)\}$ is a connected subgroup of $G$ contained in $H$.  Since $H$ is totally disconnected, we must have $G_U(a)=\{1_G\}$, i.e. $a\in G(\sigma)$.  Since $a\notin G^{\lilo}(\sigma)$, we have $[X_a]\not= \O$.  Suppose $\sigma =\lilo(\nu)$ and let $t\in \dom(\pi \circ [X_a])$.  Then $\pi([X_a](t))=\st(h^{[t \sigma]})=1$, whence $[X_a]\in L(H)$.  Since $H$ is totally disconnected, $L(H)$ is trivial and hence $[X_a]=\O$, a contradiction.  Thus we have $\sigma=(r+\epsilon)\nu$ for some $r\in \r^{>0}$ and infinitesimal $\epsilon \in \r^*$.  Thus $\pi \circ [X_a]=r\y$ and $\x:=\frac{1}{r}[X_a]$ is the desired lift of $\y$. 
\end{proof}

\

\noindent We need one fact from the global setting that we include here for completeness.  It is taken from ~\cite{H}.

\

\begin{lemma}\label{L:hirsch}
Suppose $G$ is a pure topological group such that there are no nontrivial 1-parameter subgroups $X:\r \rightarrow G$.  Then $G$ has a neighborhood base at $1$ of open subgroups of $G$.  In particular, $G$ is totally disconnected.
\end{lemma}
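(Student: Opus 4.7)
The plan is to combine the hypothesis with Lemma \ref{L:neighandgroup} via two nonstandard facts. First we show that every $a\in\muu$ is degenerate, i.e.\ $a^i\in\muu$ for all $i\in\z^*$. Second we use Lemma \ref{L:neighandgroup} together with the transfer principle to conclude that the compact subgroup it furnishes is automatically open in $G$, which immediately yields the desired neighborhood base.

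For the first step, suppose toward a contradiction that $a\in\muu$ is nondegenerate and consider the internal symmetric set $Q:=\{1,a,a^{-1}\}\subseteq\muu$ with $1\in Q$. A direct computation gives $Q^\nu=\{a^i:|i|\le\nu\}$, so $Q$ is nondegenerate (witnessed by some $Q^{|j|}\nsubseteq\muu$) and $\ord_V(Q)=\ord_V(a)$ for every compact symmetric neighborhood $V$ of $1$ in $\u_2$. The hypothesis that $G$ is pure then renders $Q$ pure, which by these equalities is equivalent to the element $a$ being $V$-pure in the sense of the element-level definition. Lemma \ref{L:regelement} now produces a local $1$-ps $X_a(s)=\st(a^{[s\tau]})$ with $\tau:=\ord_V(a)$. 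This $X_a$ is nontrivial: were $a^\tau\in\muu$ we would have $a^{\tau+1}=a^\tau\cdot a\in\muu\subseteq V^*$, contradicting $\ord_V(a)=\tau$; hence $a^\tau\notin\muu$ and $X_a(1)=\st(a^\tau)\ne 1$. Since $G$ is a genuine topological group, $X_a$ extends by iterated multiplication to a nontrivial global $1$-ps $\r\to G$, contradicting the hypothesis. So every $a\in\muu$ must be degenerate.

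For the second step, fix a neighborhood $V$ of $1$ and apply Lemma \ref{L:neighandgroup} to produce a compact subgroup $H\subseteq V$ and a neighborhood $W$ of $1$ such that every subgroup of $G$ contained in $W$ lies in $H$. For any $a\in\muu$, the internal cyclic subgroup $\langle a\rangle=\{a^i:i\in\z^*\}$ is an internal subgroup of $G^*$ which by the first step lies in $\muu$, and hence in $W^*$. Transferring the defining property of the pair $(H,W)$, every internal subgroup of $G^*$ contained in $W^*$ lies in $H^*$; therefore $\langle a\rangle\subseteq H^*$ and in particular $a\in H^*$. Varying $a$ gives $\muu\subseteq H^*$, so by the standard nonstandard criterion $H$ is a neighborhood of $1$ in $G$, and being a subgroup it is open. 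Thus $V$ contains the open subgroup $H$. Since $V$ was arbitrary, the open subgroups form a neighborhood base at $1$, and the ``in particular'' follows because any open subgroup is clopen, so the connected component of $1$ lies in every member of the base and is therefore $\{1\}$. The main obstacle is the bookkeeping in the first step, namely translating the set-level purity of the hypothesis into the element-level purity required to invoke Lemma \ref{L:regelement} and then verifying that the resulting local $1$-ps is truly nontrivial; once this is in place, the extension of $X_a$ to a global $1$-ps and the transfer argument in the second step are essentially mechanical.
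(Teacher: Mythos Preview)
The paper does not actually supply a proof of this lemma; it is stated without proof and attributed to Hirschfeld~\cite{H}. So there is no in-paper argument to compare against. Evaluating your proof on its own merits: the two-step strategy (first show every $a\in\muu$ is degenerate, then use Lemma~\ref{L:neighandgroup} and transfer to conclude $H$ is open) is sound and is the natural nonstandard route.

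One small imprecision: you write $X_a(1)=\st(a^\tau)\ne 1$, but with $\nu=\tau=\ord_V(a)$ one has $r_{\tau,a,V}=1$ exactly (since $[r\tau]>\tau$ for any standard $r>1$), so $1$ is not in the domain of the local $1$-ps furnished by Lemma~\ref{L:regelement}. This is harmless because, as you immediately note, in a genuine topological group the local $1$-ps extends to all of $\r$, and the extended value at $1$ equals $X_a(\tfrac{1}{2})^2=\st(a^{[\tau/2]})^2=\st(a^\tau)\ne 1$; alternatively, nontriviality already follows from $X_a(\tfrac{1}{n})=\st(a^{[\tau/n]})\ne 1$ for each $n\ge 2$, by the same argument as in Corollary~4.5. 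With that adjustment the first step is complete, and the second step is correct as written: $\muu\subseteq H^*$ gives that $H$ is a neighborhood of $1$, hence an open subgroup, and varying $V$ yields the desired base.
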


\

\noindent Since locally euclidean local groups are locally connected, the next theorem, in combination with Lemma \ref{L:eucNSCS}, finishes the proof of the Local H5.

\

\begin{thm}\label{T:eucNSS}
If $G$ is locally connected and $\Nsc$, then $G$ is $\Ns$.
\end{thm}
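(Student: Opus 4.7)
My plan is to argue by contradiction: assume $G$ is locally connected and $\Nsc$ but not $\Ns$, and derive a contradiction by forcing $G$ to acquire a local Lie structure that is incompatible with the existence of nontrivial finite-order elements arbitrarily close to $1$.

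First, using Lemma \ref{L:neighandgroup} applied to a symmetric neighborhood $V$ of $1$ chosen small enough to witness $\Nsc$, I would produce a nontrivial compact subgroup $H \subseteq V$ together with a neighborhood $W \subseteq V$ such that every subgroup of $G$ inside $W$ is contained in $H$ (nontrivial because, $G$ not being $\Ns$, the neighborhood $W$ contains some nontrivial subgroup). The identity component of $H$ is a connected compact subgroup of $G$ lying inside $V$, so it is trivial; thus $H$ is totally disconnected. Using compactness of $H$, continuity of conjugation, and iteration of Lemma \ref{L:neighandgroup} (so that for $y$ sufficiently close to $1$ the conjugate $yHy^{-1}$, which is a subgroup of $G$, lands inside $W$ and hence in $H$), I arrange that $H$ is a normal sublocal group of $G$ with a symmetric normalizing neighborhood.

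Next I would show that $G/H$ is $\Ns$: any subgroup $K$ of $G/H$ in a sufficiently small neighborhood of $1_{G/H}$ pulls back under $\pi$ to a subgroup of $G$ concentrated near $H$, which by the defining property of the pair $(H,W)$ must lie inside $H$, so $K$ is trivial. Since $G/H$ is locally compact and $\Ns$, the results of Section~8 give that $G/H$ is locally isomorphic to a Lie group, and in particular locally Euclidean.

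The decisive step is a local principal-bundle argument. Using compactness of $H$, the projection $\pi\colon G\to G/H$ admits a continuous local section near the identity, producing a homeomorphism $\pi^{-1}(U)\cong U\times H$ for some neighborhood $U$ of $1_{G/H}$. Local connectedness of $G$ at $1$ then forces local connectedness of $U\times H$ at $(1,1)$; since $U$ is locally Euclidean, this forces $H$ itself to be locally connected at $1$, which combined with total disconnectedness makes $H$ discrete, and being compact, finite. Consequently $\pi$ is locally a homeomorphism and $G$ inherits the local Lie group structure of $G/H$. Now any nontrivial $h\in H$ satisfies $h^N=1$ for $N$ its order; writing $h=\exp(X)$ in the local Lie chart with $X$ near $0$ (arranging the initial $V$ small enough that all of $1,h,h^2,\ldots,h^{N-1}$ fit in the exponential chart), the relation $\exp(NX)=\exp(0)$ combined with injectivity of $\exp$ on a small neighborhood of $0$ yields $NX=0$, hence $X=0$ and $h=1$, contradicting $h\neq 1$.

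The two places where real care is needed are (i) promoting $H$ to a genuine normal sublocal group of $G$, which requires combining compactness with the fact that conjugates of subgroups are subgroups and iterating Lemma \ref{L:neighandgroup} to match the neighborhoods up correctly, and (ii) producing the local cross-section for $\pi$ in the local group setting; both are standard arguments in the global topological-group context but need to be adapted with attention to the domains of definition in the local framework.
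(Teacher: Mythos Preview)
Your overall architecture---pass to a compact totally disconnected normal sublocal group $H$, show $G/H$ is $\Ns$, then lift structure back to $G$---matches the paper's. The divergence is in the endgame, and that is where your proposal has a genuine gap.

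You assert that ``using compactness of $H$, the projection $\pi\colon G\to G/H$ admits a continuous local section'', giving a local product decomposition $\pi^{-1}(U)\cong U\times H$. Compactness of $H$ alone does not buy this, even for global groups; the classical local cross-section theorems (Gleason, Mostert, Montgomery--Zippin) require either that $H$ be a Lie group or that one already knows something about the fine structure of $G$ or $G/H$, and none of them is stated for local groups. Since your deduction that $H$ is discrete (hence finite) rests entirely on this product decomposition, the argument is incomplete at exactly the point where the real work lies. The paper does not sidestep this: it \emph{constructs} what amounts to a local section by showing that $L(\pi)\colon L(G)\to L(G/H)$ is a linear isomorphism (surjective by Lemma~\ref{L:lift}, injective because $L(H)$ is trivial) and then writing each $x$ near $1$ as $\x(1)\cdot x(H)$ with $\x=L(\pi)^{-1}\bigl((E')^{-1}(\pi(x))\bigr)$; the map $x\mapsto \x(1)$ is the section you want. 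So filling your gap would essentially reproduce the paper's proof.

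A smaller issue: your closing argument that $h=\exp(X)$ and $h^N=1$ force $NX=0$ by injectivity of $\exp$ is circular, since $N$ depends on $h$ and you cannot choose $V$ small enough in advance to guarantee $NX$ stays in the injectivity domain. This is easily repaired: once $H$ is finite, the nonidentity elements of $H$ are isolated from $1$, so $\pi$ is injective on a neighborhood of $1$, and $\Ns$ for $G$ follows directly from $\Ns$ for $G/H$. The paper's version of this last step uses connectedness of a small neighborhood and total disconnectedness of $H$ to force the ``$H$-part'' $x(H)$ of each $x$ to be identically $1$, which is morally the same idea.
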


\begin{proof}
Let $V$ be a compact symmetric neighborhood of $1$ in $G$ such that $V$ contains no nontrivial connected subgroups.  Choose an open neighborhood $W$ of $1$ in $G$ with $W\subseteq V$ and a compact subgroup $H_1$ of $G$ with $H_1\subseteq V$ with the property that every subgroup of $G$ contained in $W$ is contained in $H_1$; these are guaranteed to exist by Lemma \ref{L:neighandgroup}.  Without loss of generality, we can assume $W\subseteq \u_6$.  We now observe that if $a\in \boldsymbol{\mu}$ is degenerate, then $a\in H_1^*$, for the internal subgroup internally generated by $a$ is contained in $\boldsymbol{\mu}\subseteq W^*$.  Since $G$ is pure, if $a\in \boldsymbol{\mu}$ is nondegenerate, then it is pure; for such an $a$, if $a\in H_1^*$, then $a^\nu\in H_1^*\subseteq V^*$ for all $\nu$, contradicting the fact that some power of $a$ lives outside of $V$.  Hence, nondegenerate infinitesimals live outside of $H_1^*$.  

\

\noindent Since $H_1\subseteq V$, $H_1$ admits no nontrivial 1-parameter subgroups, and hence $H_1$ must be totally disconnected by Lemma \ref{L:hirsch}.  Choose an open (in $H_1$) subgroup $H$ of $H_1$ contained in $H_1\cap W$.  Write $H=H_1\cap W_1$ with $W_1$ an open neighborhood of $1$ in $G$ and $W_1\subseteq W$.  Since $H$ is an open subgroup of $H_1$, it is also closed in $H_1$, and thus $H$ is a compact subset of $G$.  This implies that the set $U:=\{a\in W_1 \ | \ aHa^{-1}\subseteq W_1\}$ is open.  Fix $a\in U$.  Since $aHa^{-1}$ is a subgroup of $G$ contained in $W_1\subseteq W$, we have $aHa^{-1}\subseteq H_1$.  Thus $aHa^{-1}\subseteq H_1\cap W_1=H$ implying that $H$, considered as a sublocal group of $G$, is normal.  Replacing $U$ by $U\cap U^{-1}$, we can take $U$ as the associated normalizing neighborhood for $H$.

\

\noindent Note that $H\subseteq \u_6$ and $H^6\subseteq U$.  Find symmetric open neighborhoods $W_2$ and $W_3$ of $1$ in $G$ so that $H\subseteq W_3 \subseteq \overline{W_3} \subseteq W_2$ and so that $W_2 \subseteq \u_6$ and $W_2^6 \subseteq U$; this is possible by the regularity of $G$ as a topological space and the fact that $H$ is compact.  Let $G':=(G/H)_{W_2}$.  Suppose $a\in \boldsymbol{\mu}$.  If $a$ is degenerate, then we have that $a\in H_1^* \cap W_1^* = H^*$, whence $\pi(a)=1_{G'}$.  Now suppose that $a$ is nondegenerate.  Let $\tau:=\ord_{\overline{W_3}}(a)$.  Then $a^\tau \in (\overline{W_3})^*$, but $a^{\tau+1}\in W_2^* \setminus (\overline{W_3})^*$.  Choose an open neighborhood $U'$ of $1$ in $G$ so that $U'H\subseteq W_3$.  Suppose $\pi(a^{\tau+1})=\pi(x)$ for some $x\in (U')^*$.  Then $a^{\tau+1}=xh$ for some $h\in H^*$, whence $a^{\tau+1}\in W_3^*$, a contradiction.  Hence $\pi(a)^{\tau+1}\notin \pi((U')^*)$.  Meanwhile, for $i=\lilo(\tau)$, we have $\pi(a)^i\in \boldsymbol{\mu}(G')$.  Thus $\pi(a)$ is pure in $G'$ and we have shown that $G/H$ has no degenerate infinitesimals other than $1_{G'}$; in other words, we have shown that $G'$ is $\Ns$.

\

\noindent  Since $G$ and $G'$ are pure, $L(\pi):L(G)\rightarrow L(G')$ is a continuous $\r$-linear map.  Since $H$ is totally disconnected, $L(\pi)$ is surjective by Lemma \ref{L:lift}.  Note that if $\x \in \ker(L(\pi))$, then $\x$ can be considered a local 1-ps of $H$.  Since $H$ is totally disconnected, $L(H)$ is trivial, and hence $\x=\O$.  Since $L(G')$ is finite-dimensional, we can conclude that the map $L(\pi)$ is an isomorphism of real topological vector spaces.

\

\noindent Now take a special neighborhood $\u'$ of $G'$ and let $E':\k'\rightarrow K'$ denote the local exponential map for $G'$.  Take a connected neighborhood $\u$ of $1$ in $G$ such that $\u\subseteq W_2$ and $\pi(\u)\subseteq K'$.  Let $x\in \u$.  Since $E'$ is a bijection, there is a unique $\y \in \k'$ such that $\pi(x)=E(\y)$.  Since $L(\pi)$ is a bijection, there is a unique $\x\in L(G)$ such that $\pi\circ \x=\y$.  Thus we can write $x=\x(1)x(H)$ where $x(H)\in H$.  One can easily verify that the map which assigns to each $x\in \u$ the above $\y \in L(G')$ is continuous.  From this and the fact that $L(\pi)$ is a homeomorphism, we see that the map $x\mapsto x(H):\u \rightarrow H$ is continuous.  Since $\u$ is connected, $H$ is totally disconnected, and $1(H)=1$, it follows that $x(H)=1$ for all $x\in \u$.  Now by the injectivity of $L(\pi)$ and $E'$, we have that $\pi|\u$ is injective, implying that $G$ cannot have any subgroups other than $\{1\}$ contained in $\u$.  We thus conclude that $G$ is $\Ns$.
\end{proof}

\end{document}